\sloppy\pagestyle{plain}
\theoremstyle{definition}
\newtheorem{example}[equation]{Example}
\newtheorem{definition}[equation]{Definition}
\newtheorem{theorem}[equation]{Theorem}
\newtheorem{lemma}[equation]{Lemma}
\newtheorem{corollary}[equation]{Corollary}
\newtheorem*{question*}{Question}
\newtheorem{problem}[equation]{Problem}
\newtheorem*{problem*}{Problem}
\theoremstyle{remark}
\newtheorem{remark}[equation]{Remark}
\makeatletter\@addtoreset{equation}{section} \makeatother
\author{Ivan Cheltsov and Andrew Wilson}
\title{Del Pezzo surfaces with many symmetries}
\address{\begin{tabbing}
\hspace*{28 em}\=\kill School of Mathematics, University of Edinburgh, Edinburgh EH9 3JZ, UK\\
\end{tabbing}}
\thanks{The~first author would like to thank Institut des Hautes Etudes Scientifiques  for hospitality.}%
\begin{document}

\begin{abstract}
We classify smooth del Pezzo surfaces whose $\alpha$-invariant~of
Tian is bigger than $1$.
\end{abstract}

\maketitle

We assume that all varieties are projective, normal, and defined
over $\mathbb{C}$.

\section{Introduction}
\label{section:intro}

Let $X$ be a~smooth Fano variety, and let $G$ be a finite subgroup
in $\mathrm{Aut}(X)$.~Put
$$
\mathrm{lct}_{n}\big(X, G\big)=\mathrm{sup}\left\{\lambda\in\mathbb{Q}\ \left|%
\aligned
&\text{the~log pair}\ \Bigg(X, \frac{\lambda}{n} D\Bigg)\ \text{is log canonical}\\
&\text{for any $G$-invariant divisor}\ D\in \big|-nK_{X}\big|\\
\endaligned\right.\right\}\in\mathbb{Q}\cup\big\{+\infty\big\}%
$$
for every $n\in\mathbb{N}$. Then $\mathrm{lct}_{n}(X)\ne+\infty$
$\iff$ $|-nK_{X}|$ contains a~$G$-invariant divisor. Put
$$
\mathrm{lct}\big(X,G\big)=\mathrm{inf}\Big\{\mathrm{lct}_{n}\big(X, G\big)\ \Big|\ n\in\mathbb{N}\Big\}\in\mathbb{R},%
$$
and put $\mathrm{lct}(X)=\mathrm{lct}(X,G)$ in the case when $G$
is a trivial group.

\begin{example}[{\cite[Theorem~1.7]{Ch07b}}]
\label{example:GAFA} Suppose that $\mathrm{dim}(X)=2$. Then
$$
\mathrm{lct}\big(X\big)=\left\{%
\aligned
&1\ \mathrm{when}\ K_{X}^{2}=1\ \mathrm{and}\ |-K_{X}|\ \mathrm{has\ no\ cuspidal\ curves},\\%
&5/6\ \mathrm{when}\ K_{X}^{2}=1\ \mathrm{and}\ |-K_{X}|\ \mathrm{has\ a\ cuspidal\ curve},\\%
&5/6\ \mathrm{when}\ K_{X}^{2}=2\ \mathrm{and}\ |-K_{X}|\ \mathrm{has\ no\ tacnodal\ curves},\\%
&3/4\ \mathrm{when}\ K_{X}^{2}=2\ \mathrm{and}\ |-K_{X}|\ \mathrm{has\ a\ tacnodal\ curve},\\%
&3/4\ \mathrm{when}\ X\ \mathrm{is\ a\ cubic\ surface\ in}\ \mathbb{P}^{3}\ \mathrm{without\ Eckardt\ points},\\%
&2/3\ \mathrm{when}\ K_{X}^{2}=4\ \mathrm{or}\ X\ \mathrm{is\ a\ cubic\ surface\ in}\ \mathbb{P}^{3}\ \mathrm{with\ an\ Eckardt\ point},\\%
&1/2\ \mathrm{when}\ X\cong\mathbb{P}^{1}\times\mathbb{P}^{1}\ \mathrm{or}\ K_{X}^{2}\in\big\{5,6\big\},\\%
&1/3\ \mathrm{in\ the\ remaining\ cases}.\\%
\endaligned\right.%
$$
\end{example}

The number $\mathrm{lct}(X,G)$ plays an important role in K\"ahler
Geometry, since
$$
\mathrm{lct}\big(X,G\big)=\alpha_{G}\big(X\big)
$$
by \cite[Theorem~A.3]{ChSh08c},~where $\alpha_{G}(X)$ is
the~$\alpha$-invariant introduced in \cite{Ti87}.

\begin{theorem}[{\cite{Ti87}}]
\label{theorem:KE} The  variety $X$ admits a~$G$-invariant
K\"ahler--Einstein metric~if
$$
\mathrm{lct}\big(X,G\big)>\frac{\mathrm{dim}\big(X\big)}{\mathrm{dim}\big(X\big)+1}.
$$
\end{theorem}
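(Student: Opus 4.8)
The plan is to reduce the statement to Tian's original analytic criterion using the identification $\mathrm{lct}(X,G)=\alpha_{G}(X)$ from \cite[Theorem~A.3]{ChSh08c}, and then to sketch the proof of that criterion, namely: a Fano manifold $X$ of dimension $n$ carrying a finite (or compact) group action $G$ admits a $G$-invariant Kähler--Einstein metric provided $\alpha_{G}(X)>n/(n+1)$. First I would fix a smooth $G$-invariant Kähler metric $\omega_{0}\in 2\pi c_{1}(X)$, obtained by averaging an arbitrary metric over $G$, together with a Ricci potential $h$ satisfying $\mathrm{Ric}(\omega_{0})-\omega_{0}=\sqrt{-1}\partial\bar\partial h$, and run the continuity method for the family of complex Monge--Amp\`ere equations $(\omega_{0}+\sqrt{-1}\partial\bar\partial\varphi_{t})^{n}=e^{h-t\varphi_{t}}\,\omega_{0}^{n}$, $t\in[0,1]$, over the fixed subspace of $G$-invariant potentials. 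Openness of the solution set follows from the implicit function theorem, since the linearization $\Delta_{\omega_{t}}+t$ is invertible for $t<1$ on the $G$-invariant functions; nonemptiness at $t=0$ is Yau's theorem (applied equivariantly); so the whole content is closedness, i.e.\ uniform a priori estimates in a fixed H\"older norm along $t\to1$.

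The main obstacle, and the place where the hypothesis enters, is the uniform $C^{0}$ estimate $\|\varphi_{t}\|_{C^{0}}\le C$ independent of $t$. Here I would use the Green's function and the mean-value inequality applied to the Monge--Amp\`ere equation to reduce the bound on the oscillation of $\varphi_{t}$ to a bound on $\sup_{X}\varphi_{t}$ (and then on the energy $\int_{X}\varphi_{t}\,\omega_{0}^{n}$). The function $\psi_{t}:=\varphi_{t}-\sup_{X}\varphi_{t}$ is a $G$-invariant $\omega_{0}$-plurisubharmonic function with $\sup_{X}\psi_{t}=0$, so the H\"ormander/Skoda-type uniform integrability estimate attached to the $\alpha$-invariant gives $\int_{X}e^{-\alpha\psi_{t}}\,\omega_{0}^{n}\le C_{\alpha}$ for every $\alpha<\alpha_{G}(X)$; it is essential that this estimate is taken over $G$-invariant plurisubharmonic functions, which matches the fact that $\mathrm{lct}(X,G)=\alpha_{G}(X)$ is computed over $G$-invariant divisors. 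Plugging $e^{-t\varphi_{t}}=e^{-t\sup_{X}\varphi_{t}}e^{-t\psi_{t}}$ into the equation, using $\int_{X}(\omega_{0}+\sqrt{-1}\partial\bar\partial\varphi_{t})^{n}=\int_{X}\omega_{0}^{n}$, Jensen's inequality, and an interpolation between $\int_{X}\psi_{t}\,\omega_{0}^{n}$ and $\int_{X}e^{-\alpha\psi_{t}}\,\omega_{0}^{n}$, one extracts $\sup_{X}\varphi_{t}\le C$ precisely when $\alpha_{G}(X)>n/(n+1)$, the threshold being dictated by the exponents appearing in this interpolation; the complementary bound $-\inf_{X}\varphi_{t}\le C$ then comes from the equation and the Green's function again.

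Once the uniform $C^{0}$ bound is established, the higher estimates are standard and can be carried out $G$-equivariantly throughout. The Aubin--Yau second-order inequality, using a lower bound on the bisectional curvature of the fixed metric $\omega_{0}$, yields $n+\Delta_{\omega_{0}}\varphi_{t}\le C(1+\sup_{X}|\varphi_{t}|)$, hence $\omega_{t}$ is uniformly equivalent to $\omega_{0}$; the Calabi third-order estimate, or alternatively the Evans--Krylov theorem, promotes this to a uniform $C^{2,\beta}$ bound, and Schauder bootstrapping gives uniform $C^{k}$ bounds for all $k$. Arzel\`a--Ascoli then shows closedness, so a $G$-invariant solution $\varphi_{1}$ exists at $t=1$, and $\omega_{0}+\sqrt{-1}\partial\bar\partial\varphi_{1}$ is the desired $G$-invariant Kähler--Einstein metric (it is $G$-invariant because $\varphi_{1}$ is, by uniqueness on the invariant subspace). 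In practice one would simply cite \cite{Ti87} for the analytic argument: the only addition needed in our setting is the observation that all the constructions above can be performed on the $G$-invariant subspace by averaging, which is exactly compatible with the definition of $\alpha_{G}(X)$.
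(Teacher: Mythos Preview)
The paper does not prove this theorem at all: it is stated as a citation from \cite{Ti87} with no argument given, and the identification $\mathrm{lct}(X,G)=\alpha_{G}(X)$ is likewise quoted from \cite[Theorem~A.3]{ChSh08c} in the paragraph immediately preceding the statement. So there is no ``paper's own proof'' to compare against; the authors treat the result as a black box from the K\"ahler geometry literature.

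Your proposal goes well beyond this and supplies a correct outline of Tian's original analytic argument: continuity method on the $G$-invariant subspace, openness from the linearized operator, Yau's theorem at $t=0$, the crucial $C^{0}$ estimate via the uniform integrability bound $\int_{X}e^{-\alpha\psi}\omega_{0}^{n}\leqslant C_{\alpha}$ for $\alpha<\alpha_{G}(X)$, and then the standard Aubin--Yau / Evans--Krylov bootstrap. The role of the threshold $n/(n+1)$ and of the $G$-invariance (averaging to stay in the invariant subspace, matching the definition of $\alpha_{G}$) are identified correctly. As a sketch this is sound; if you wanted to tighten it, the one place to be more careful is the derivation of $\sup_{X}\varphi_{t}\leqslant C$ from the integrability bound, where the precise interpolation producing the exponent $n/(n+1)$ deserves an explicit line. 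But for the purposes of this paper, simply citing \cite{Ti87} together with \cite[Theorem~A.3]{ChSh08c} is exactly what the authors do, and your final paragraph already acknowledges that this is the pragmatic route.
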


The problem of the~existence of K\"ahler-Einstein metrics on
smooth del Pezzo surfaces~is~solved.

\begin{theorem}[{\cite{Tian90}}]
\label{thm:E-KE-metric} If $\mathrm{dim}(X)=2$, then the~following
conditions are equivalent:
\begin{itemize}
\item the~surface $X$ admits a~K\"ahler-Einstein metric,%
\item the~surface $X$ is not the~blow up of $\mathbb{P}^2$ in one or two points.%
\end{itemize}
\end{theorem}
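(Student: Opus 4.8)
The plan is to establish the two implications in turn, using Theorem~\ref{theorem:KE}, the identity $\mathrm{lct}(X,G)=\alpha_{G}(X)$, and the values of $\mathrm{lct}(X)$ recorded in Example~\ref{example:GAFA}. For the implication ``$X$ admits a K\"ahler--Einstein metric $\Longrightarrow$ $X$ is not the blow up of $\mathbb{P}^{2}$ in one or two points'' I would argue by contradiction, invoking Matsushima's theorem: the connected component of the identity in the automorphism group of a compact K\"ahler--Einstein Fano manifold is reductive. It then suffices to observe that $\mathrm{Aut}^{0}$ of the blow up of $\mathbb{P}^{2}$ at one point is isomorphic to the stabilizer in $\mathrm{PGL}_{3}(\mathbb{C})$ of a point of $\mathbb{P}^{2}$, with two-dimensional unipotent radical, and that $\mathrm{Aut}^{0}$ of the blow up of $\mathbb{P}^{2}$ at two points is the stabilizer of two distinct points, again with non-trivial unipotent radical; since neither group is reductive, neither surface can be K\"ahler--Einstein.

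For the converse, let $X$ be a smooth del Pezzo surface that is neither of these. Then $X\cong\mathbb{P}^{2}$, or $X\cong\mathbb{P}^{1}\times\mathbb{P}^{1}$, or $X$ is the blow up of $\mathbb{P}^{2}$ at $r$ points in general position with $3\le r\le 8$, so $K_{X}^{2}\in\{1,2,3,4,5,6\}$. On $\mathbb{P}^{2}$ and $\mathbb{P}^{1}\times\mathbb{P}^{1}$ take the Fubini--Study metric and the product of two Fubini--Study metrics. If $K_{X}^{2}\in\{1,2\}$, or $X$ is a cubic surface in $\mathbb{P}^{3}$ with no Eckardt points, then Example~\ref{example:GAFA} gives $\mathrm{lct}(X)\ge 3/4>2/3$, and Theorem~\ref{theorem:KE} with $G$ trivial yields a K\"ahler--Einstein metric at once.

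This leaves the surfaces with $K_{X}^{2}\in\{4,5,6\}$ and the cubic surfaces with Eckardt points, for which $\mathrm{lct}(X)\le 2/3$, so the criterion does not apply with the trivial group; here the strategy is to exploit their large symmetry groups. For $K_{X}^{2}=5$ one has $\mathrm{Aut}(X)\cong S_{5}$, and I would show $\mathrm{lct}(X,S_{5})=\alpha_{S_{5}}(X)>2/3$ (in fact it equals $2$), so Theorem~\ref{theorem:KE} applies with $G=S_{5}$. For $K_{X}^{2}=4$ the group $\mathrm{Aut}(X)$ always contains $(\mathbb{Z}/2\mathbb{Z})^{4}$; I would classify the $(\mathbb{Z}/2\mathbb{Z})^{4}$-invariant divisors in $|-nK_{X}|$ and check that each has log canonical threshold $>2/3$, giving $\mathrm{lct}(X,(\mathbb{Z}/2\mathbb{Z})^{4})>2/3$. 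For $K_{X}^{2}=6$ the automorphism group contains a two-dimensional torus and no finite subgroup can play the role of ``everything'', so instead I would use that $X$ is a toric Fano surface whose moment polytope (a hexagon) is centrally symmetric, whence its Futaki invariant vanishes and a K\"ahler--Einstein metric exists by the toric existence criterion. Finally, for a cubic surface that has Eckardt points but whose automorphism group fixes none of them, one checks that every $\mathrm{Aut}(X)$-invariant divisor in every $|-nK_{X}|$ has log canonical threshold $>2/3$ --- the only anticanonical divisors attaining $2/3$ are the tritangent plane sections through Eckardt points, and $\mathrm{Aut}(X)$ permutes these without a fixed point --- so Theorem~\ref{theorem:KE} again applies.

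The genuine obstacle is a cubic surface $X$ with Eckardt points whose automorphism group fixes one of them, say $p$: then the tritangent plane $\Pi$ through $p$ is $\mathrm{Aut}(X)$-invariant, so $\Pi\cap X$ --- three lines meeting at $p$ in an ordinary triple point --- is an $\mathrm{Aut}(X)$-invariant anticanonical divisor with log canonical threshold exactly $2/3$, whence $\alpha_{\mathrm{Aut}(X)}(X)=2/3$ and Theorem~\ref{theorem:KE} genuinely does not apply (in particular this covers every cubic with a single Eckardt point). For such $X$ one must instead carry out the hard analysis directly, as in Tian's original argument: apply the continuity method to the complex Monge--Amp\`ere equation, establish the needed $C^{0}$-estimate away from a possible bubbling sequence, and exclude the limiting degeneration by a geometric study of which cubic surfaces and their degenerations can arise --- in present-day terms, show that every smooth cubic surface is K-stable and invoke the solution of the Yau--Tian--Donaldson problem for del Pezzo surfaces. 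Supplying this last step, and not the log canonical threshold estimates, is the crux of the theorem.
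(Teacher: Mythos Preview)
The paper does not prove this theorem; it is stated with a bare citation to \cite{Tian90} and no argument is supplied. There is therefore nothing to compare your proposal against --- you are attempting to reprove a result the authors simply quote as background.

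That said, your outline is honest and structurally sound. The forward implication via Matsushima's reductivity obstruction is correct and standard. For the converse, you correctly identify which surfaces are handled by Theorem~\ref{theorem:KE} with the trivial group (via Example~\ref{example:GAFA}), which require an equivariant improvement, and --- crucially --- which are \emph{not} reached by any $\alpha$-invariant argument at all. Your observation that a cubic surface whose automorphism group fixes an Eckardt point has $\alpha_{\mathrm{Aut}(X)}(X)=2/3$ exactly, so that Theorem~\ref{theorem:KE} genuinely fails to apply, is the heart of the matter; you are right that Tian's original analytic work (or, in modern language, K-stability of smooth cubic surfaces) is indispensable here, and that this is ``the crux of the theorem.''

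Two minor remarks on the intermediate cases. For $K_X^2=4$ and $K_X^2=5$ the paper itself supplies the estimates you sketch: Corollary~\ref{corollary:dP4-Gamma} gives $\mathrm{lct}(X,\Gamma)=1$ for $\Gamma\cong\mathbb{Z}_2^4$, and Example~\ref{example:dP5} records $\mathrm{lct}(X,\mathbb{S}_5)=2$. Your treatment of cubics with several Eckardt points permuted nontrivially by $\mathrm{Aut}(X)$ is loose --- Theorem~\ref{theorem:dP3-lct-lct1} only pins $\mathrm{lct}(X,G)$ down to $\{2/3,5/6,1\}$ when $|-K_X|$ has a $G$-invariant curve, and ruling out $2/3$ would still require casework --- but since you already concede that the hard analytic case is unavoidable, this does not affect your overall conclusion.
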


Let $g_{0}=g_{i\overline{j}}$ be a~$G$-invariant K\"ahler metric
on the~variety $X$ with a~K\"ahler form
$$
\omega_{0}=\frac{\sqrt{-1}}{2\pi}\sum g_{i\overline{j}}dz_i\wedge d\overline{z}_j\in \mathrm{c}_1\big(X\big),%
$$
and let $\omega_{1},\omega_{2},\ldots,\omega_{m}$ be K\"ahler
forms of some $G$-invariant metrics on $X$ such that
\begin{equation}
\label{equation:iteration} \left\{\aligned
&\mathrm{Ric}\big(\omega_{m}\big)=\omega_{m-1},\\
&\cdots\\
&\mathrm{Ric}\big(\omega_{2}\big)=\omega_{1},\\
&\mathrm{Ric}\big(\omega_{1}\big)=\omega_{0},\\
\endaligned
\right.
\end{equation}
and $\omega_{i}\in\mathrm{c}_1(X)$ for every $i$. By \cite{Yau78},
a solution to $(\ref{equation:iteration})$ always exist.

\begin{theorem}[{\cite[Theorem~3.3]{Rub08}}]
\label{theorem:Yanir} Suppose that $\mathrm{lct}(X,G)>1$. Then in
$C^{\infty}(X)$-topology
$$
\lim_{m\to+\infty}\omega_{m}=\omega_{KE},
$$
where $\omega_{KE}$ is a~K\"ahler form of a~$G$-invariant
K\"ahler--Einstein metric on the~variety $X$.
\end{theorem}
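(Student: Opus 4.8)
The plan is to recast the Ricci iteration $(\ref{equation:iteration})$ as an iteration of complex Monge--Amp\`ere equations and then run the classical a priori estimate machinery, invoking the hypothesis $\mathrm{lct}(X,G)>1$ at the one step where it is indispensable. Put $n=\mathrm{dim}(X)$ and fix a $G$-invariant reference form $\omega_{0}\in\mathrm{c}_{1}(X)$ with Ricci potential $h_{0}$, so that $\mathrm{Ric}(\omega_{0})-\omega_{0}=\frac{\sqrt{-1}}{2\pi}\partial\overline{\partial}h_{0}$ and $\int_{X}e^{h_{0}}\omega_{0}^{n}=V:=\int_{X}\omega_{0}^{n}$. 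Writing $\omega_{m}=\omega_{0}+\frac{\sqrt{-1}}{2\pi}\partial\overline{\partial}\varphi_{m}$ with $\varphi_{m}$ a $G$-invariant potential normalized by $\sup_{X}\varphi_{m}=0$, a short computation turns $\mathrm{Ric}(\omega_{m})=\omega_{m-1}$ into
$$
\Big(\omega_{0}+\tfrac{\sqrt{-1}}{2\pi}\partial\overline{\partial}\varphi_{m}\Big)^{n}=C_{m}\,e^{h_{0}-\varphi_{m-1}}\,\omega_{0}^{n},\qquad C_{m}=\frac{V}{\int_{X}e^{h_{0}-\varphi_{m-1}}\omega_{0}^{n}},
$$
whose right-hand side is a prescribed smooth positive volume form of total mass $V$; so by Yau's solution of the Calabi conjecture \cite{Yau78} each $\varphi_{m}$ exists and is unique, and it is $G$-invariant by that uniqueness. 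The theorem therefore reduces to showing that $\varphi_{m}$ converges in $C^{\infty}(X)$ to a $G$-invariant K\"ahler--Einstein potential.

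The heart of the matter is a \emph{uniform} zeroth-order estimate, and this is exactly where $\mathrm{lct}(X,G)>1$ enters. By \cite[Theorem~A.3]{ChSh08c} one has $\mathrm{lct}(X,G)=\alpha_{G}(X)$, so $\alpha_{G}(X)>1$; hence, by the very definition of the $\alpha$-invariant together with the normalization $\sup_{X}\varphi_{m-1}=0$, there are $p>1$ and a constant \emph{independent of $m$} with $\int_{X}e^{-p\varphi_{m-1}}\omega_{0}^{n}\le\mathrm{const}$. Consequently the density $C_{m}e^{h_{0}-\varphi_{m-1}}$ of the Monge--Amp\`ere equation above is bounded in $L^{p}(\omega_{0}^{n})$ uniformly in $m$, and Ko\l odziej's $L^{\infty}$-estimate for complex Monge--Amp\`ere equations produces a uniform oscillation bound, whence $\|\varphi_{m}\|_{C^{0}(X)}\le\mathrm{const}$. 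One may also phrase this through energy functionals: the Ding functional
$$
F(\varphi)=J_{\omega_{0}}(\varphi)-\frac{1}{V}\int_{X}\varphi\,\omega_{0}^{n}-\log\Bigg(\frac{1}{V}\int_{X}e^{h_{0}-\varphi}\,\omega_{0}^{n}\Bigg)
$$
is nonincreasing along the iteration, $F(\varphi_{m})\le F(\varphi_{m-1})$ (an integration by parts, valid on any Fano manifold), while $\alpha_{G}(X)>n/(n+1)$ makes $F$ proper on $G$-invariant potentials, $F(\varphi)\ge\varepsilon\,(I-J)_{\omega_{0}}(\varphi)-\mathrm{const}$ with $\varepsilon>0$; together these bound $(I-J)_{\omega_{0}}(\varphi_{m})$, hence once more $\|\varphi_{m}\|_{C^{0}(X)}$, uniformly in $m$. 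The whole point is this uniformity --- a naive inductive bound would deteriorate from step to step.

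With the uniform $C^{0}$ bound the remainder is the classical bootstrap. Since $h_{0}-\varphi_{m-1}$ is then uniformly bounded, Yau's Laplacian estimate bounds $\mathrm{tr}_{\omega_{0}}\omega_{m}$ uniformly, the Evans--Krylov estimate (or Calabi's third-order estimate) and Schauder theory upgrade this to uniform $C^{k,\alpha}(X)$ bounds for all $k$, and by Arzel\`a--Ascoli a subsequence $\varphi_{m_{j}}$ converges in $C^{\infty}(X)$ to a $G$-invariant limit $\varphi_{\infty}$. Since $F(\varphi_{m})$ is nonincreasing and, by the $C^{0}$ bound, bounded below, the decrements $F(\varphi_{m-1})-F(\varphi_{m})$ tend to $0$; as a step of the iteration that leaves $F$ unchanged forces the underlying metric to be K\"ahler--Einstein, a standard continuity argument identifies $\omega_{\infty}=\omega_{0}+\frac{\sqrt{-1}}{2\pi}\partial\overline{\partial}\varphi_{\infty}$ as K\"ahler--Einstein. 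Finally, Bando--Mabuchi uniqueness of the K\"ahler--Einstein metric in $\mathrm{c}_{1}(X)$ modulo $\mathrm{Aut}(X)^{\circ}$ --- the residual ambiguity killed by $G$-invariance and the fixed normalization --- together with the uniform estimates that forbid any loss of compactness, promotes subsequential convergence to convergence of the entire sequence; thus $\omega_{m}\to\omega_{KE}$ in $C^{\infty}(X)$, as claimed.

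The genuine obstacle is the second paragraph: the \emph{uniform} $C^{0}$ estimate on $\varphi_{m}$. Monotonicity of $F$ is a formal identity and the higher-order estimates are off-the-shelf consequences of Yau's work and the pluripotential theory of complex Monge--Amp\`ere equations; but obtaining a bound on $\|\varphi_{m}\|_{C^{0}(X)}$ whose constant does not depend on the iteration index requires combining the hypothesis $\alpha_{G}(X)>1$, the $L^{p}$-control it gives on $e^{-\varphi_{m-1}}$ (equivalently, the properness of $F$), and the nonlinear equation linking $\varphi_{m}$ to $\varphi_{m-1}$, in precisely the right order.
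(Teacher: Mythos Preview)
The paper does not contain a proof of this theorem: it is quoted verbatim as \cite[Theorem~3.3]{Rub08} and used only as motivation for Problems~\ref{problem:I} and~\ref{problem:II}. There is therefore no ``paper's own proof'' to compare your attempt against; the authors simply cite Rubinstein's result and move on.

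That said, your sketch is broadly in line with the argument of \cite{Rub08}: reformulating the iteration as a sequence of complex Monge--Amp\`ere equations, using $\alpha_{G}(X)>1$ (via \cite[Theorem~A.3]{ChSh08c}) to get a uniform $L^{p}$ bound on $e^{-\varphi_{m-1}}$ for some $p>1$, invoking Ko\l odziej-type estimates together with the monotonicity of the Ding functional for a uniform $C^{0}$ bound, and then bootstrapping via Yau's second-order estimate, Evans--Krylov, and Schauder. One point you gloss over is the passage from subsequential to full-sequence convergence: Bando--Mabuchi gives uniqueness of the K\"ahler--Einstein metric only modulo $\mathrm{Aut}(X)^{\circ}$, and $G$-invariance for a \emph{finite} $G$ does not by itself kill this ambiguity when $\mathrm{Aut}(X)^{\circ}$ is nontrivial. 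In \cite{Rub08} this is handled by a more careful analysis of the functional along the iteration (or by noting that properness forces the limit to lie at a genuine minimum of $F$, which is then unique in the relevant sense); your phrase ``the residual ambiguity killed by $G$-invariance and the fixed normalization'' is not quite a proof of this step. Otherwise the outline is sound, but since the present paper offers no argument of its own, there is nothing further to compare.
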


Smooth Fano varieties that satisfy all hypotheses of
Theorem~\ref{theorem:Yanir} do exist.

\begin{example}
\label{example:Shokurov} If $X\cong\mathbb{P}^{1}$, then
$\mathrm{lct}(\mathbb{P}^{1},G)>1$ $\iff$ either
$G\cong\mathbb{A}_{4}$ or $G\cong\mathbb{S}_{4}$ or
$G\cong\mathbb{A}_{5}$.
\end{example}

\begin{theorem}[{\cite[Lemma~2.30]{ChSh08c}}]
\label{theorem:lct-product} Let $X_{1}$ and $X_{2}$ be smooth Fano
varieties. Then
$$
\mathrm{lct}\Big(X_{1}\times X_{2}, G_{1}\times G_{2}\Big)=\mathrm{min}\Big(\mathrm{lct}\big(X_1,G_1\big),\mathrm{lct}\big(X_2,G_2\big)\Big),%
$$
where $G_{1}$ and $G_{2}$
are~finite~subgroups~in~$\mathrm{Aut}(X_{1})$ and
$\mathrm{Aut}(X_{2})$ respectively.
\end{theorem}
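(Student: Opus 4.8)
The plan is to prove the two inequalities ``$\le$'' and ``$\ge$'' separately; for ``$\le$'' I compare the numbers $\mathrm{lct}_{n}$ for each fixed $n$ and use $\inf_{n}\min(a_{n},b_{n})=\min(\inf_{n}a_{n},\inf_{n}b_{n})$, while for ``$\ge$'' I argue directly with the full invariants. Along the way I use that for every sufficiently divisible $n$ the system $|-nK_{X_{i}}|$ contains a $G_{i}$-invariant divisor — replace any $D_{i}\in|-nK_{X_{i}}|$ by $\sum_{g\in G_{i}}g(D_{i})\in|-|G_{i}|nK_{X_{i}}|$ — so that none of the three invariants is vacuously $+\infty$ and one may work with honest divisors.

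\emph{The inequality ``$\le$''.} Fix $n$, a $G_{1}$-invariant $D_{1}\in|-nK_{X_{1}}|$, a $G_{2}$-invariant $D_{2}\in|-nK_{X_{2}}|$, let $p_{i}$ be the projections, and put $D=p_{1}^{*}D_{1}+p_{2}^{*}D_{2}\in|-nK_{X_{1}\times X_{2}}|$; it is $G_{1}\times G_{2}$-invariant. Near a point $(x_{1},x_{2})$ the divisor $D$ is cut out by a product $f_{1}f_{2}$ with $f_{i}$ a local equation of $D_{i}$ in the coordinates of the $i$-th factor only, and the log canonical threshold of such a separated product equals $\min(\mathrm{lct}_{x_{1}}(f_{1}),\mathrm{lct}_{x_{2}}(f_{2}))$ — take the product of log resolutions of $f_{1}$ and $f_{2}$ and read off discrepancies. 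Hence $\mathrm{lct}(X_{1}\times X_{2},D)=\min(\mathrm{lct}(X_{1},D_{1}),\mathrm{lct}(X_{2},D_{2}))$; letting $D_{1},D_{2}$ vary independently and taking infima gives $\mathrm{lct}_{n}(X_{1}\times X_{2},G_{1}\times G_{2})\le\min(\mathrm{lct}_{n}(X_{1},G_{1}),\mathrm{lct}_{n}(X_{2},G_{2}))$, and then one passes to $\inf_{n}$.

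\emph{The inequality ``$\ge$''.} Fix $n$, a rational $\lambda<\min(\mathrm{lct}(X_{1},G_{1}),\mathrm{lct}(X_{2},G_{2}))$, and a $G_{1}\times G_{2}$-invariant $D\in|-nK_{X_{1}\times X_{2}}|$; I must show $(X_{1}\times X_{2},\frac{\lambda}{n}D)$ is log canonical. The key is a slicing observation: for every $x_{2}\in X_{2}$ with $X_{1}\times\{x_{2}\}\not\subseteq\mathrm{Supp}(D)$ the restriction $D|_{X_{1}\times\{x_{2}\}}$ lies in $|-nK_{X_{1}}|$ and is $G_{1}$-invariant — $G_{1}$ acts on $X_{1}\times\{x_{2}\}\cong X_{1}$ through its given action and preserves this slice — so $(X_{1},\frac{\lambda}{n}D|_{X_{1}\times\{x_{2}\}})$ is log canonical, and similarly for the slices $\{x_{1}\}\times X_{2}$. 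Now suppose $(X_{1}\times X_{2},\frac{\lambda}{n}D)$ fails to be log canonical at $P=(p_{1},p_{2})$. If $X_{1}\times\{p_{2}\}\not\subseteq\mathrm{Supp}(D)$, pick general members $H^{(1)},\dots,H^{(d_{2})}$ ($d_{2}=\dim X_{2}$) of a very ample system on $X_{2}$ through $p_{2}$ with $\bigcap_{k}H^{(k)}=\{p_{2}\}$; iterated inversion of adjunction along the smooth divisors $p_{2}^{*}H^{(k)}$ identifies log canonicity of $(X_{1}\times X_{2},\frac{\lambda}{n}D+\sum_{k}p_{2}^{*}H^{(k)})$ near $X_{1}\times\{p_{2}\}$ with log canonicity of $(X_{1},\frac{\lambda}{n}D|_{X_{1}\times\{p_{2}\}})$, which holds by the slicing observation; since $\frac{\lambda}{n}D\le\frac{\lambda}{n}D+\sum_{k}p_{2}^{*}H^{(k)}$, the pair $(X_{1}\times X_{2},\frac{\lambda}{n}D)$ is log canonical near $X_{1}\times\{p_{2}\}$ — contradiction. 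If instead $X_{1}\times\{p_{2}\}$ (and, by the symmetric argument, also $\{p_{1}\}\times X_{2}$) lies in $\mathrm{Supp}(D)$, some component $S$ of $D$ through $P$ is vertical, or degenerate, over a factor; write $\frac{\lambda}{n}D=cS+\Delta'$, note $c\le 1$ (all coefficients of $\frac{\lambda}{n}D$ are $\le 1$ by the slicing observation applied to a slice meeting $S$ transversally), add $(1-c)S$, and apply inversion of adjunction along $S$ to land on a pair of the same ``product'' type on $S$ whose first factor has strictly smaller dimension; then recurse.

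\emph{Main obstacle.} The delicate part is making the inversion-of-adjunction reductions of the previous paragraph rigorous: one must check that the boundary coefficients produced stay $\le 1$, that the differents along the $p_{2}^{*}H^{(k)}$ and along $S$ are just the scheme-theoretic restrictions — here genericity of the $H^{(k)}$ keeps everything simple normal crossing — and one must deal with the cases where $S$, or its image in a factor, or several components of $D$ through $P$, are singular; for these one localizes at a general point, using that the non-log-canonical locus is closed and $G_{1}\times G_{2}$-invariant, and, if necessary, passes to a $G_{1}$-equivariant log resolution of the relevant divisor in the first factor. The inequality ``$\le$'' and the slicing observation itself are routine; it is this equivariant inversion-of-adjunction bookkeeping that carries the weight of the proof.
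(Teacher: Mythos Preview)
The paper does not prove this statement at all: it is quoted verbatim from \cite[Lemma~2.30]{ChSh08c} and used as a black box, so there is no ``paper's own proof'' to compare against. I can therefore only assess your argument on its merits.

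Your ``$\le$'' direction is fine, and in the ``$\ge$'' direction the case where $X_{1}\times\{p_{2}\}\not\subseteq\mathrm{Supp}(D)$ is handled correctly by iterated inversion of adjunction along general hyperplane pullbacks. The problem is the residual case. You assert that if both $X_{1}\times\{p_{2}\}$ and $\{p_{1}\}\times X_{2}$ lie in $\mathrm{Supp}(D)$ then ``some component $S$ of $D$ through $P$ is vertical, or degenerate, over a factor''; this is false. On $\mathbb{A}^{2}\times\mathbb{A}^{2}$ with coordinates $(x_{1},x_{2},y_{1},y_{2})$ the irreducible hypersurface $S=\{x_{1}y_{1}+x_{2}y_{2}=0\}$ contains both fibres through the origin yet dominates each factor, and $S$ is certainly not of product type; your proposed recursion ``land on a pair of the same product type on $S$ whose first factor has strictly smaller dimension'' therefore has no meaning for such an $S$. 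The difficulties you flag in your ``Main obstacle'' paragraph are real, but they are downstream of this more basic structural gap.

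One way to repair the argument is to separate off the genuinely vertical part first: write $D=p_{1}^{*}E_{1}+p_{2}^{*}E_{2}+D''$ with $D''$ having no component vertical over either factor, observe that $E_{i}\le D|_{\text{general fibre over }X_{i}}$ so that $(X_{i},\lambda E_{i})$ is log canonical by the slicing observation, and then run inversion of adjunction with the pulled-back $E_{i}$ playing the role of your $\sum p_{2}^{*}H^{(k)}$ (after scaling up to coefficient $1$), so that only the horizontal $D''$ has to be restricted and the restriction is always defined. Alternatively, consult the short argument in \cite{ChSh08c} directly.
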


\begin{corollary}
\label{corollary:binary-groups} Let $G_{1}$ and $G_{2}$ be finite
subgroups in $\mathrm{Aut}(\mathbb{P}^1)$. Then
$$
\mathrm{lct}\Big(\mathbb{P}^1\times\mathbb{P}^1, G_{1}\times G_{2}\Big)>1\iff G_{1}\in\{\mathbb{A}_{4},\mathbb{S}_{4},\mathbb{A}_{5}\}\ni G_{2}.%
$$
\end{corollary}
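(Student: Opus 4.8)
The plan is to deduce this statement directly from Theorem~\ref{theorem:lct-product} and Example~\ref{example:Shokurov}, with essentially no extra work. Since $\mathbb{P}^1$ is a smooth Fano variety and the group $G_1\times G_2$ acts on $\mathbb{P}^1\times\mathbb{P}^1$ as the product of the two given actions, Theorem~\ref{theorem:lct-product} applies verbatim with $X_1=X_2=\mathbb{P}^1$ and gives
$$
\mathrm{lct}\Big(\mathbb{P}^1\times\mathbb{P}^1, G_{1}\times G_{2}\Big)=\mathrm{min}\Big(\mathrm{lct}\big(\mathbb{P}^1,G_1\big),\mathrm{lct}\big(\mathbb{P}^1,G_2\big)\Big).
$$

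First I would record the trivial observation that for real numbers $a,b$ one has $\mathrm{min}(a,b)>1$ if and only if $a>1$ and $b>1$; applied to the displayed identity this shows that $\mathrm{lct}(\mathbb{P}^1\times\mathbb{P}^1,G_1\times G_2)>1$ holds if and only if both $\mathrm{lct}(\mathbb{P}^1,G_1)>1$ and $\mathrm{lct}(\mathbb{P}^1,G_2)>1$. Then I would invoke Example~\ref{example:Shokurov}, which says that for a finite subgroup $G\subset\mathrm{Aut}(\mathbb{P}^1)$ the inequality $\mathrm{lct}(\mathbb{P}^1,G)>1$ is equivalent to $G$ being isomorphic to $\mathbb{A}_4$, $\mathbb{S}_4$, or $\mathbb{A}_5$. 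Substituting this characterisation for each of $G_1$ and $G_2$ yields exactly the claimed equivalence $G_1\in\{\mathbb{A}_4,\mathbb{S}_4,\mathbb{A}_5\}\ni G_2$.

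There is no genuine obstacle here: the corollary is a purely formal consequence of the product formula for $\mathrm{lct}$ and the one-dimensional classification quoted in Example~\ref{example:Shokurov}. The only point worth a sentence of care is that the hypotheses of Theorem~\ref{theorem:lct-product} demand the symmetry group to split as a direct product $G_1\times G_2$ — which is precisely the situation the corollary is stated for — so no reduction or case analysis is needed.
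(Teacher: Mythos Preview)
Your proposal is correct and matches the paper's approach exactly: the corollary is stated immediately after Theorem~\ref{theorem:lct-product} and Example~\ref{example:Shokurov} with no separate proof, precisely because it is the formal consequence you describe. There is nothing to add.
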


The purpose of this paper is to consider the~following two
problems.

\begin{problem}
\label{problem:I} Describe all smooth del Pezzo surfaces that
satisfy all hypotheses of Theorem~\ref{theorem:Yanir}.
\end{problem}

\begin{problem}
\label{problem:II} For a~smooth del Pezzo surface $X$ that satisfy
all hypotheses of Theorem~\ref{theorem:Yanir}, describe all finite
subgroups of the~group $\mathrm{Aut}(X)$ that satisfy all
hypotheses of Theorem~\ref{theorem:Yanir}.
\end{problem}

There exists a~partial solution to Problem~\ref{problem:I} (cf.
Corollary~\ref{corollary:binary-groups}).

\begin{example}[{\cite{MarPr99}, \cite{Ch07b},  \cite{ChSh09a}}]
\label{example:partial-answer} If $\mathrm{dim}(X)=2$ and
$\mathrm{Aut}(X)$ is finite, then
\begin{itemize}
\item $\mathrm{lct}(X,\mathrm{Aut}(X))=2$ if $X$ is the Clebsch
cubic surface in $\mathbb{P}^{3}$, which can be given by
$$
x^{2}y+xz^{2}+zt^{2}+tx^{2}=0\subset\mathbb{P}^{3}\cong\mathrm{Proj}\Big(\mathbb{C}[x,y,z,t]\Big),
$$

\item $\mathrm{lct}(X,\mathrm{Aut}(X))=4$ if $X$ is the Fermat cubic surface in $\mathbb{P}^{3}$,%

\item $\mathrm{lct}(X,\mathrm{Aut}(X))=2$ if $X$ is the~blow up of $\mathbb{P}^{2}$ at four general points.%
\end{itemize}
\end{example}

There exists a~complete solution to Problem~\ref{problem:II} for
$\mathbb{P}^{2}$ (cf. Theorem~\ref{theorem:smooth-quadric}).

\begin{example}[{\cite{MarPr99}, \cite{ChSh09a}}]
\label{example:complete-answer} Suppose that
$X\cong\mathbb{P}^{2}$. Then the following are equivalent:
\begin{itemize}
\item the inequality $\mathrm{lct}(X,G)>1$ holds, %
\item the inequality $\mathrm{lct}(X,G)\geqslant 4/3$ holds,%
\item there are no $G$-invariant curves in $|L|$, $|2L|$, $|3L|$, where $L$ is a line on $\mathbb{P}^{2}$,%
\item the subgroup $G$ is conjugate to one of the following subgroups:%
\begin{itemize}
\item the subgroup isomorphic to $\mathbb{PSL}(2,\mathbb{F}_{7})$
that leaves invariant th~quartic curve
$$
x^3y+y^3z+z^3x=0\subset\mathbb{P}^2\cong\mathrm{Proj}\Big(\mathbb{C}[x,y,z]\Big),%
$$

\item the subgroup isomorphic to $\mathbb{A}_{6}$ that leaves
invariant the sextic curve
$$
10x^{3}y^{3}+9zx^{5}+9zy^{5}+27z^{6}=45x^{2}y^{2}z^{2}+135xyz^{4}\subset\mathbb{P}^{2}\cong\mathrm{Proj}\Big(\mathbb{C}[x,y,z]\Big),
$$
\item the~Hessian subgroup of order $648$ (see \cite{YauYu93}),
\item an~index $3$ subgroup of the~Hessian subgroup.%
\end{itemize}
\end{itemize}
\end{example}

In this paper, we prove the following result, which solves
Problem~\ref{problem:I}.

\begin{theorem}
\label{theorem:main} Suppose that $\mathrm{dim}(X)=2$. Then
the~following~are~equivalent:
\begin{itemize}
\item there exists a~finite subgroup $G\subset\mathrm{Aut}(X)$ such that $\mathrm{lct}(X,G)>1$,%
\item one of the~following cases hold:
\begin{itemize}
\item either $X\cong\mathbb{P}^{2}$ or $X\cong\mathbb{P}^{1}\times \mathbb{P}^{1}$,%
\item or $\mathrm{Aut}(X)$ is finite and $X$ is one of the~following surfaces:%
\begin{itemize}
\item a~sextic surface in $\mathbb{P}(1,1,2,3)$ such that $\mathrm{Aut}(X)$ is not Abelian%
\item a~quartic surface in $\mathbb{P}(1,1,1,3)$ such that
$$
\mathrm{Aut}\big(X\big)\in\Big\{\mathbb{S}_4\times\mathbb{Z}_2, \big(\mathbb{Z}_4^2\rtimes\mathbb{S}_3\big)\times\mathbb{Z}_2, \mathbb{PSL}\big(2,\mathbb{F}_7\big)\times\mathbb{Z}_2\Big\},%
$$%
\item either the Clebsch cubic surface or the Fermat cubic surface in $\mathbb{P}^3$,%
\item an~intersection of two quadrics in $\mathbb{P}^4$ such that $\mathrm{Aut}(X)\in\{\mathbb{Z}_2^4 \rtimes\mathbb{S}_3,\mathbb{Z}_2^4\rtimes\mathbb{D}_{5}\}$,%
\item the~blow of $\mathbb{P}^{2}$ at four general points.
\end{itemize}
\end{itemize}
\end{itemize}
\end{theorem}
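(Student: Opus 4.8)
The plan is to prove both implications by running through the classification of smooth del Pezzo surfaces, handling each surface — equivalently, each value of $K_{X}^{2}$ together with each possible automorphism group — separately. Throughout we use the identity $\mathrm{lct}(X,G)=\alpha_{G}(X)$, the elementary inequality $\mathrm{lct}(X,G)\leqslant\mathrm{lct}_{n}(X,G)\leqslant\mathrm{lct}(X,D)$ for any $G$-invariant $D\in|-nK_{X}|$, and, for the positive direction, the fact that whenever $(X,\frac{\lambda}{n}D)$ fails to be log canonical the locus of its non-log-canonical points is $G$-invariant and proper, hence a finite union of $G$-orbits of points; this we combine with multiplicity bounds of the shape $\mathrm{mult}_{p}(D)\leqslant D\cdot C$ along $G$-invariant curves $C$ and with the numerical constraints that several conjugate non-log-canonical points impose on $(-K_{X})^{2}$. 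The cases with infinite automorphism group and the simplest finite ones are settled first: for $X\cong\mathbb{P}^{2}$ and $X\cong\mathbb{P}^{1}\times\mathbb{P}^{1}$ subgroups with $\mathrm{lct}(X,G)>1$ come from Example~\ref{example:complete-answer} and Corollary~\ref{corollary:binary-groups}; if $X$ is the blow up of $\mathbb{P}^{2}$ at one or two points then $X$ has no K\"ahler--Einstein metric by Theorem~\ref{thm:E-KE-metric}, so Theorem~\ref{theorem:Yanir} forces $\mathrm{lct}(X,G)\leqslant 1$ for every finite $G$; if $K_{X}^{2}=6$ the hexagon $\Delta\in|-K_{X}|$ cut out by the six $(-1)$-curves is $\mathrm{Aut}(X)$-invariant with $\mathrm{lct}(X,\Delta)=1$, so again $\mathrm{lct}(X,G)\leqslant 1$; and if $K_{X}^{2}=5$ then $\mathrm{lct}(X,\mathbb{S}_{5})=2$ by Example~\ref{example:partial-answer}.

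For $K_{X}^{2}\in\{3,4\}$ we invoke the known classification of automorphism groups of del Pezzo surfaces. Among cubic surfaces the Clebsch and Fermat cubics occur in Example~\ref{example:partial-answer} with $\mathrm{lct}(X,\mathrm{Aut}(X))$ equal to $2$ and $4$, so they belong in the list; for every other cubic surface one exhibits a $G$-invariant (pluri-)anticanonical divisor with a non-log-canonical point — a singular invariant hyperplane section (a tritangent triangle, or a cuspidal or tacnodal plane cubic), or an invariant union of lines through an Eckardt point — giving $\mathrm{lct}(X,G)\leqslant 1$, the values $3/4$ and $2/3$ of Example~\ref{example:GAFA} disposing of the cubics whose automorphism group is small. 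When $K_{X}^{2}=4$ every $X$ carries the subgroup $(\mathbb{Z}_{2})^{4}$ acting by sign changes on $\mathbb{P}^{4}$, and a coordinate-hyperplane section or the divisor supported on the sixteen lines is singular enough to force $\mathrm{lct}(X,G)\leqslant 1$ unless the residual symmetry is $\mathbb{S}_{3}$ or $\mathbb{D}_{5}$; in those two cases one proves instead that no $\mathrm{Aut}(X)$-invariant effective $\mathbb{Q}$-divisor $D\equiv-K_{X}$ admits a non-log-canonical point, since its non-log-canonical locus would be a $G$-orbit too small to occur on $X$, whence $\mathrm{lct}(X,\mathrm{Aut}(X))>1$.

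For $K_{X}^{2}=2$ the surface $X$ is the double cover of $\mathbb{P}^{2}$ branched in a quartic and the Geiser involution is central in $\mathrm{Aut}(X)$; running through the branch quartics with extra symmetry, every $X$ other than the three in the statement carries an $\mathrm{Aut}(X)$-invariant anticanonical curve that is tacnodal or worse, so $\mathrm{lct}(X,G)\leqslant 3/4$ by Example~\ref{example:GAFA}, whereas for the three exceptional automorphism groups the orbit-and-curve argument again gives $\mathrm{lct}(X,\mathrm{Aut}(X))>1$. For $K_{X}^{2}=1$ the linear system $|-K_{X}|$ is a pencil with a single base point $p$, necessarily fixed by $\mathrm{Aut}(X)$: if $\mathrm{Aut}(X)$ is abelian then either it acts trivially on $|-K_{X}|$, or one of its two fixed anticanonical members is singular, or a suitable invariant member of $|-2K_{X}|$ is singular near $p$, and in every case $\mathrm{lct}(X,G)\leqslant 1$; if $\mathrm{Aut}(X)$ is non-abelian then it leaves invariant no reducible or non-reduced anticanonical curve and has no short orbit near $p$, and $\mathrm{lct}(X,\mathrm{Aut}(X))>1$ follows.

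The hard part is the positive direction for the surfaces not already covered by the cited examples: the two of degree $4$, the three of degree $2$, and the non-abelian ones of degree $1$. For each of these one must show $\mathrm{lct}(X,\mathrm{Aut}(X))>1$, that is, exclude a non-log-canonical point of $(X,D)$ for \emph{every} $\mathrm{Aut}(X)$-invariant effective $\mathbb{Q}$-divisor $D\equiv-K_{X}$; as such a point comes in an $\mathrm{Aut}(X)$-orbit, this demands a precise description of the orbits of $\mathrm{Aut}(X)$ on $X$ — above all sharp lower bounds for the lengths of the orbits lying off the finitely many $\mathrm{Aut}(X)$-invariant curves — which is then played against the multiplicity bounds above. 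The companion task of checking that every del Pezzo surface absent from the list genuinely carries a bad invariant (pluri-)anticanonical divisor for \emph{every} finite $G$, not only for $G=\mathrm{Aut}(X)$, is the other place where the bookkeeping is delicate.
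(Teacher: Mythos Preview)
Your case-by-case skeleton matches the paper's, and the negative direction is largely right in spirit, but several pieces are off or circular, and the positive direction is seriously undersold.

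\medskip

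\textbf{Negative direction: small corrections.} For $K_{X}^{2}\in\{7,8\}$ the appeal to Theorems~\ref{thm:E-KE-metric} and~\ref{theorem:Yanir} is logically valid but heavy-handed; the paper simply writes down a $G$-invariant anticanonical divisor with a component of multiplicity $\geqslant 2$ (Lemmas~\ref{lemma:dP7}, \ref{lemma:dP8}), which works for \emph{every} finite $G$ in one line. For $K_{X}^{2}=2$ you overclaim: you do not need an invariant anticanonical curve that is ``tacnodal or worse''---any $\mathrm{Aut}(X)$-invariant curve in $|-K_{X}|$ already gives $\mathrm{lct}\leqslant 1$. The paper (Corollary~\ref{corollary:dP2-main}) reduces the question to whether such a curve exists, then reads the answer off the Dolgachev--Iskovskikh tables. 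Likewise for $K_{X}^{2}=3$ (Corollary~\ref{corollary:dP3-K}) and $K_{X}^{2}=4$ (Corollary~\ref{corollary:dP4-K}). For $K_{X}^{2}=1$ your ``two fixed anticanonical members'' argument is not sound as stated: an abelian group can act on $|-K_{X}|\cong\mathbb{P}^{1}$ through a Klein four-group with no fixed point. The paper again invokes the explicit classification of $\mathrm{Aut}(X)$ to see that ``$\mathrm{Aut}(X)$ abelian'' is equivalent to ``$|-K_{X}|$ has an invariant member'' (Corollary~\ref{corollary:dP1-main}).

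\medskip

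\textbf{Positive direction: the genuine gap.} Your description of the positive direction---bound orbit lengths, intersect with invariant curves, conclude---is not enough. Knowing $\mathrm{mult}_{P}(D)\leqslant D\cdot C$ only gives $\mathrm{mult}_{P}(D)$ bounded by some number typically $\geqslant 1$, whereas ruling out a non-KLT point requires controlling the full infinitely-near behaviour of $D$, not just its multiplicity. The paper's actual proofs (Theorems~\ref{theorem:dP1-lct-lct2}, \ref{theorem:dP2-lct-lct2}, \ref{theorem:dP4-2K}) proceed by assuming $(X,\lambda D)$ is not log canonical with $\lambda<\mathrm{lct}_{2}$ (or $\mathrm{lct}_{3}$), using Lemma~\ref{lemma:Nadel-IILC} to pin down the size and position of $\mathrm{LCS}(X,\lambda D)$, then performing one or two rounds of blow-ups and tracking the non-KLT locus upstairs via Remark~\ref{remark:blow-up-inequality}; the contradiction finally comes from intersecting the proper transform of $D$ with carefully chosen auxiliary curves (the ramification curve $R$, tangent hyperplane sections, special conics), and in degree~$2$ from the inequality of Corollary~\ref{corollary:Cheltsov-Kosta}. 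None of this is captured by ``orbit-and-curve''. For degree~$4$ the paper bypasses most of this by computing the possible $\mathrm{Aut}(X)$-invariant linear subspaces of $\mathbb{P}^{4}$ directly and checking they miss $X$ or meet it in too many points, which is closer to what you sketch---but you should be aware that degrees~$1$ and~$2$ genuinely require the blow-up machinery.
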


\begin{proof}
This follows from Examples~\ref{example:partial-answer} and
\ref{example:complete-answer},
Corollaries~\ref{corollary:binary-groups},
\ref{corollary:dP1-main}, \ref{corollary:dP2-main},
\ref{corollary:dP3-K}, \ref{corollary:dP4-2K} and
\ref{corollary:big-degree}.
\end{proof}

\begin{corollary}
\label{corollary:main-I} If $\mathrm{dim}(X)=2$ and
$\mathrm{Aut}(X)$ is finite, then the~following~are~equivalent:
\begin{itemize}
\item the inequality $\mathrm{lct}(X,\mathrm{Aut}(X))>1$ holds,%
\item the linear system $|-K_{X}|$ contains no $\mathrm{Aut}(X)$-invariant curves.%
\end{itemize}
\end{corollary}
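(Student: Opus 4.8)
The plan is to obtain the corollary from Theorem~\ref{theorem:main} and from the degree-by-degree results used in its proof, together with one elementary log canonicity computation and one monotonicity remark. The implication ``$\mathrm{lct}(X,\mathrm{Aut}(X))>1\Rightarrow|-K_{X}|$ has no $\mathrm{Aut}(X)$-invariant curve'' uses nothing from the classification and I would dispose of it first: if $C\in|-K_{X}|$ were $\mathrm{Aut}(X)$-invariant, then at a general point $p$ of some component of $C$ the boundary $\lambda C$ has multiplicity $\geqslant\lambda$, so $(X,\lambda C)$ fails to be log canonical for $\lambda>1$; hence $\mathrm{lct}_{1}(X,\mathrm{Aut}(X))\leqslant 1$ and therefore $\mathrm{lct}(X,\mathrm{Aut}(X))\leqslant 1$. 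The same computation shows that $\mathrm{lct}_{1}(X,G)=+\infty$ when $|-K_{X}|$ has no $G$-invariant member and $\mathrm{lct}_{1}(X,G)\leqslant 1$ otherwise, so that ``$|-K_{X}|$ has no $\mathrm{Aut}(X)$-invariant curve'' is just a reformulation of ``$\mathrm{lct}_{1}(X,\mathrm{Aut}(X))>1$''.

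For the reverse implication I would argue by contraposition after two reductions. Since $\mathrm{Aut}(X)$ is finite, $X$ is a del Pezzo surface with $K_{X}^{2}\leqslant 5$, as del Pezzo surfaces of degree at least $6$ have infinite automorphism group. Next, for every subgroup $G\subseteq\mathrm{Aut}(X)$ each $\mathrm{Aut}(X)$-invariant divisor is also $G$-invariant, so $\mathrm{lct}_{n}(X,\mathrm{Aut}(X))\geqslant\mathrm{lct}_{n}(X,G)$ for all $n$ and hence $\mathrm{lct}(X,\mathrm{Aut}(X))\geqslant\mathrm{lct}(X,G)$; together with Theorem~\ref{theorem:main} this shows that $\mathrm{lct}(X,\mathrm{Aut}(X))>1$ if and only if $X$ is one of the surfaces with finite automorphism group listed in that theorem. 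It then remains to prove: \emph{if $X$ is a del Pezzo surface with $K_{X}^{2}\leqslant 5$ not appearing in the list of Theorem~\ref{theorem:main}, then $|-K_{X}|$ contains an $\mathrm{Aut}(X)$-invariant curve.} I would verify this degree by degree --- for $K_{X}^{2}=5$ there is a single surface and it lies in the list, so there is nothing to check; for $K_{X}^{2}\in\{4,3,2,1\}$ I would invoke Corollaries~\ref{corollary:dP4-2K}, \ref{corollary:dP3-K}, \ref{corollary:dP2-main} and~\ref{corollary:dP1-main} respectively, the same results that underlie Theorem~\ref{theorem:main}, each of which produces, for a del Pezzo surface of the given degree outside that list, an explicit $\mathrm{Aut}(X)$-invariant anticanonical curve (for instance an invariant hyperplane section of a cubic surface that is neither the Clebsch nor the Fermat cubic, or an $\mathrm{Aut}(X)$-invariant member of the anticanonical pencil when $K_{X}^{2}=1$).

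The hardest step is the low-degree one, $K_{X}^{2}\in\{1,2\}$, where $\dim|-K_{X}|\leqslant 2$ and no general-position argument is available: there one must understand the linear action of $\mathrm{Aut}(X)$ on $H^{0}(X,-K_{X})$ well enough to conclude that it has a one-dimensional invariant subspace --- equivalently, that $|-K_{X}|$ has an invariant member --- unless $\mathrm{Aut}(X)$ is one of the few exceptional groups appearing in Theorem~\ref{theorem:main}. This representation-theoretic bookkeeping is precisely what Corollaries~\ref{corollary:dP1-main} and~\ref{corollary:dP2-main} carry out, so in the write-up this case reduces to quoting them.
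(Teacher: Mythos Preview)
Your proposal is correct and follows essentially the same route the paper intends: the corollary is stated without proof immediately after Theorem~\ref{theorem:main}, and your argument is precisely the natural unpacking --- the easy direction is the trivial observation that an $\mathrm{Aut}(X)$-invariant $C\in|-K_X|$ forces $\mathrm{lct}_1\leqslant 1$, while the converse is obtained degree by degree from Corollaries~\ref{corollary:dP1-main}, \ref{corollary:dP2-main}, \ref{corollary:dP4-2K} and the remark (citing \cite{DoIs06}) preceding Corollary~\ref{corollary:dP3-K}, together with the monotonicity $\mathrm{lct}(X,\mathrm{Aut}(X))\geqslant\mathrm{lct}(X,G)$ that lets you read off the $\mathrm{Aut}(X)$-case from Theorem~\ref{theorem:main}. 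One small citation adjustment: Corollary~\ref{corollary:dP3-K} itself only asserts $\mathrm{lct}\leqslant 1$; the existence of the invariant anticanonical curve for cubics other than Clebsch and Fermat is the sentence immediately before it, so cite that line (or \cite{DoIs06} directly) rather than the corollary.
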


The proof of Theorem~\ref{theorem:main} is based on auxiliary
results (see Theorems~\ref{theorem:dP1-lct-lct1},
\ref{theorem:dP1-lct-lct2}, \ref{theorem:dP2-lct-lct1},
\ref{theorem:dP2-lct-lct2}~and~\ref{theorem:dP3-lct-lct1}) that
can be used to explicitly compute the number $\mathrm{lct}(X,G)$
in many cases.

\begin{example}
\label{example:dP1-S4} Let $X$ be a sextic hypersurface in
$\mathbb{P}(1,1,2,3)$ that is given by
$$
t^2=z^3+xy\big(x^4-y^4\big)\subset\mathbb{P}\big(1,1,2,3\big)\cong\mathrm{Proj}\Big(\mathbb{C}[x,y,z,t]\Big),%
$$
where $\mathrm{wt}(x)=\mathrm{wt}(y)=1$, $\mathrm{wt}(z)=2$,
$\mathrm{wt}(t)=3$. Then
$\mathrm{Aut}(X)\cong\mathbb{Z}_3\times\mathbb{Z}_{2\bullet}\mathbb{S}_4$,
which implies that
$$
\mathrm{lct}\Big(X,\mathrm{Aut}\big(X\big)\Big)=\mathrm{lct}_{2}\Big(X,\mathrm{Aut}\big(X\big)\Big)=\frac{5}{3}
$$
by Theorems~\ref{theorem:main} and \ref{theorem:dP1-lct-lct2},
since there is a~$\mathrm{Aut}(X)$-invariant cuspidal curve in
$|-2K_{X}|$.
\end{example}

We decided not to solve Problem~\ref{problem:II} in this paper as
the~required amount of computations~is too big (a priori this can
be done using Theorem~\ref{theorem:main} and
Theorem~\ref{theorem:smooth-quadric}).

\begin{example}
\label{example:dP5} Suppose that $X$ is the~blow of
$\mathbb{P}^{2}$ at four general points. Then
$\mathrm{Aut}(X)\cong\mathbb{S}_{5}$~and
$$
\mathrm{lct}\big(X,G\big)>1\iff \mathrm{lct}\big(X,G\big)=2\iff |G|\in\big\{60,120\big\},%
$$
since it easily follows from Example~\ref{example:GAFA},
Corollary~\ref{corollary:weakly-exceptional},
\cite[Lemma~5.7]{Ch07b} and \cite[Lemma~5.8]{Ch07b} that
$$
\mathrm{lct}\big(X,G\big)=\left\{%
\aligned
&2\ \text{if}\ G\cong\mathbb{S}_5,\\
&2\ \text{if}\ G\cong\mathbb{A}_5,\\
&1\ \text{if}\ G\cong\mathbb{Z}_5\rtimes \mathbb{Z}_4,\\
&4/5\ \text{if}\ G\cong\mathbb{D}_{5},\\
&4/5\ \text{if}\ G\cong\mathbb{Z}_5,\\
&1/2\ \text{if $G$ is a trivial group}.\\
\endaligned\right.%
$$
\end{example}

Note that the number $\mathrm{lct}(X,G)$ plays an important role
in Birational Geometry (see \cite{ChSh08c},~\cite{Ch07b}), but we
decided not to discuss birational applications of
Theorem~\ref{theorem:main} in this paper.

\section{Preliminaries}
\label{section:preliminaries}

Let $X$ be a smooth surface, and let $D$ be an effective
$\mathbb{Q}$-divisor on $X$.~Put
$$
D=\sum_{i=1}^{r}a_{i}D_{i},
$$
where $D_{i}$ is an irreducible curve, and $a_{i}\in\mathbb{Q}$
such that $a_{i}\geqslant 0$. Suppose that $B_{i}\neq B_{j}$ for
$i\neq j$.

Let $\pi\colon\bar{X}\to X$ be a birational morphism such that
$\bar{X}$ is smooth as well. Put
$\bar{D}=\sum_{i=1}^{r}a_{i}\bar{D}_{i}$, where $\bar{D}_{i}$ is a
proper transform of the~curve $D_{i}$ on the~surface $\bar{X}$.
Then
$$
K_{\bar{X}}+\bar{D}\sim_{\mathbb{Q}}\pi^{*}\Big(K_{X}+D\Big)+\sum_{i=1}^{n}c_{i}E_{i},
$$
where $c_{i}\in\mathbb{Q}$ and $E_{i}$ is a $\pi$-exceptional
curve. Suppose that
$\sum_{i=1}^{r}\bar{D}_{i}+\sum_{i=1}^{n}E_{i}$
is~a~s.n.c.~divisor.

\begin{definition}
\label{definition:log canonical-singularities} The log pair $(X,
D)$ is KLT (respectively, log canonical) if
\begin{itemize}
\item the~inequality $a_{i}<1$ holds (respectively, the~inequality $a_{i}\leqslant 1$ holds),%
\item the~inequality $c_{j}>-1$ holds (respectively, the~inequality $c_{j}\geqslant -1$ holds),%
\end{itemize}
for every $i\in\{1,\ldots,r\}$ and $j\in\{1,\ldots,n\}$.
\end{definition}

We say that $(X, D)$ is strictly log canonical if $(X,D)$ is log
canonical and not KLT.

\begin{remark}
\label{remark:log-pull-back} The log pair $(X,D)$ is KLT $\iff$
the~log pair $(\bar{X},\bar{D}-\sum_{i=1}^{n}c_{i}E_{i})$ is KLT.
\end{remark}

Note that Definition~\ref{definition:log canonical-singularities}
has local nature and it does not depend on the~choice of $\pi$.

\begin{remark}
\label{remark:convexity}  Let $\hat{D}$ be an~effective
$\mathbb{Q}$-divisor on the~surface $X$ such that $(X,\hat{D})$ is KLT and
$$
\hat{D}=\sum_{i=1}^{r}\hat{a}_{i}D_{i}\sim_{\mathbb{Q}} D,%
$$
where $\hat{a}_{i}$ is a non-negative rational number. Suppose
that $(X,D)$ is not KLT. Put
$$
\alpha=\mathrm{min}\Bigg\{\frac{a_{i}}{\hat{a}_{i}}\ \Big\vert\ \hat{a}_{i}\ne 0\Bigg\},%
$$
where $\alpha$ is well defined and $\alpha<1$, since $(X,D)$ is
not KLT. Put
$$
D^{\prime}=\sum_{i=1}^{r}\frac{a_{i}-\alpha\hat{a}_{i}}{1-\alpha}D_{i}\sim_{\mathbb{Q}} \hat{D}\sim_{\mathbb{Q}} D,%
$$
and choose $k\in\{1,\ldots,r\}$ such that
$\alpha=a_{k}/\hat{a}_{k}$. Then
$D_{k}\not\subset\mathrm{Supp}(D^{\prime})$ and $(X,D^{\prime})$
is not KLT.
\end{remark}

Let $P$ be a point of the~surface $X$. Recall that $X$ is smooth
by assumption. Then
$$
\mathrm{mult}_{P}\big(D\big)\geqslant 2\Longrightarrow P\in\mathrm{LCS}\big(X,D\big)\Longrightarrow\mathrm{mult}_{P}\big(D\big)\geqslant 1.%
$$

\begin{example}
\label{example:three-conics} If $r=4$, $a_{1}=1/2$,
$a_{2}=a_{3}=a_{4}=2/5$ and
$$
3\geqslant \mathrm{mult}_{P}\Big(D_{2}\cdot D_{1}\Big)\geqslant 2=\mathrm{mult}_{P}\Big(D_{3}\cdot D_{1}\Big)\geqslant\mathrm{mult}_{P}\Big(D_{4}\cdot D_{1}\Big)=1,%
$$
then the~log pair $(X,D)$ is log canonical at the~point $P\in X$.
\end{example}

The set of non-KLT points of the~log pair $(X,D)$ is denoted by
$\mathrm{LCS}(X,D)$. Put
$$
\mathcal{I}\big(X, D\big)=\pi_{*}\Bigg(\sum_{i=1}^{n}\lceil c_{i}\rceil E_{i}-\sum_{i=1}^{r}\lfloor a_{i}\rfloor D_{i}\Bigg),%
$$
and let $\mathcal{L}(X, D)$ be a~subscheme that corresponds to
the~ideal sheaf $\mathcal{I}(X, D)$. Then
$$
\mathrm{LCS}\big(X, D\big)=\mathrm{Supp}\Big(\mathcal{L}\big(X, D\big)\Big).%
$$

\begin{theorem}[{\cite[Theorem~9.4.8]{La04}}]
\label{theorem:Shokurov-vanishing} Let $H$ be a~nef and big
$\mathbb{Q}$-divisor on $X$~such~that
$$
K_{X}+D+H\equiv L
$$
for some Cartier divisor $L$ on the~surface $X$. Then
$H^{1}(\mathcal{I}(X, D)\otimes\mathcal{O}_{X}(L))=0$.
\end{theorem}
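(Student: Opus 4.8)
The plan is to deduce this (a~form of the~Nadel vanishing theorem) from the~Kawamata--Viehweg vanishing theorem applied on a~log resolution of $(X,D)$, with the~passage between $X$ and that resolution handled by the~relative version of the~same vanishing theorem. I keep the~notation fixed just above, so $\pi\colon\bar{X}\to X$ is a~birational morphism with $\bar{X}$ smooth, the~divisor $\sum_{i}\bar{D}_{i}+\sum_{i}E_{i}$ has simple normal crossings, and $K_{\bar{X}}+\bar{D}\sim_{\mathbb{Q}}\pi^{*}(K_{X}+D)+\sum_{i}c_{i}E_{i}$.

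First I would put $\Delta=\bar{D}-\sum_{i}c_{i}E_{i}=\sum_{i}a_{i}\bar{D}_{i}-\sum_{i}c_{i}E_{i}$, so that $K_{\bar{X}}+\Delta\sim_{\mathbb{Q}}\pi^{*}(K_{X}+D)$ and, straight from the~definition of $\mathcal{I}(X,D)$, one has $\mathcal{I}(X,D)=\pi_{*}\mathcal{O}_{\bar{X}}(-\lfloor\Delta\rfloor)$. Since $L$ is Cartier, the~projection formula then identifies $\mathcal{I}(X,D)\otimes\mathcal{O}_{X}(L)$ with $\pi_{*}\mathcal{O}_{\bar{X}}(\pi^{*}L-\lfloor\Delta\rfloor)$, and subtracting $K_{\bar{X}}$ from $\pi^{*}L-\lfloor\Delta\rfloor$ and using $K_{X}+D+H\equiv L$ gives the~numerical equivalence $\pi^{*}L-\lfloor\Delta\rfloor\equiv K_{\bar{X}}+\{\Delta\}+\pi^{*}H$, in which $\{\Delta\}=\Delta-\lfloor\Delta\rfloor$ is an~effective simple normal crossing $\mathbb{Q}$-divisor with every coefficient in $[0,1)$, so that $\lfloor\{\Delta\}\rfloor=0$.

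Next I would carry out the~cohomological part in three steps. (1)~The $\mathbb{Q}$-divisor $\pi^{*}H$ added to $K_{\bar{X}}+\{\Delta\}$ above is numerically trivial over $X$, hence $\pi$-nef, and it is $\pi$-big since $\pi$ is birational; therefore relative Kawamata--Viehweg vanishing — which is exactly the~local vanishing theorem for multiplier ideals — yields $R^{j}\pi_{*}\mathcal{O}_{\bar{X}}(\pi^{*}L-\lfloor\Delta\rfloor)=0$ for all $j>0$. (2)~The Leray spectral sequence then degenerates and produces an~isomorphism $H^{1}(X,\mathcal{I}(X,D)\otimes\mathcal{O}_{X}(L))\cong H^{1}(\bar{X},\mathcal{O}_{\bar{X}}(\pi^{*}L-\lfloor\Delta\rfloor))$. (3)~On $\bar{X}$ the~integral divisor $\pi^{*}L-\lfloor\Delta\rfloor$ is numerically equivalent to $K_{\bar{X}}$ plus the~nef and big $\mathbb{Q}$-divisor $\pi^{*}H$ (nef and big because $H$ is and $\pi$ is birational) plus the~simple normal crossing boundary $\{\Delta\}$ with $\lfloor\{\Delta\}\rfloor=0$, so Kawamata--Viehweg vanishing on $\bar{X}$ forces $H^{1}(\bar{X},\mathcal{O}_{\bar{X}}(\pi^{*}L-\lfloor\Delta\rfloor))=0$. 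Combining (2) and (3) finishes the~proof.

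I expect step~(1), the~relative vanishing $R^{j}\pi_{*}=0$, to be the~delicate point: it is what turns the~assertion into a~bare cohomology computation on $\bar{X}$, and it is where one must check that the~rounding $\lfloor\Delta\rfloor$ is compatible with the~simple normal crossing structure and that $\pi^{*}H$ really is $\pi$-nef and $\pi$-big. It is also worth stressing that both vanishing steps genuinely use the~full strength of Kawamata--Viehweg rather than Kodaira vanishing, precisely because $H$ is only assumed nef and big; this is exactly why the~hypothesis is phrased in that generality.
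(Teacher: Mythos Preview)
The paper does not supply its own proof of this statement: it is quoted as \cite[Theorem~9.4.8]{La04} and used as a black box. So there is nothing in the paper to compare against directly.

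That said, your argument is correct and is essentially the standard proof one finds in Lazarsfeld's book: pass to a log resolution, rewrite the multiplier ideal as $\pi_{*}\mathcal{O}_{\bar{X}}(-\lfloor\Delta\rfloor)$, use local (relative) Kawamata--Viehweg vanishing to kill the higher direct images, collapse the Leray spectral sequence, and then apply global Kawamata--Viehweg on $\bar{X}$ using that $\pi^{*}H$ is nef and big and $\{\Delta\}$ is an SNC boundary with $\lfloor\{\Delta\}\rfloor=0$. Your identification $-\lfloor\Delta\rfloor=\sum_{i}\lceil c_{i}\rceil E_{i}-\sum_{i}\lfloor a_{i}\rfloor\bar{D}_{i}$ matches the paper's definition of $\mathcal{I}(X,D)$, and your numerical computation $\pi^{*}L-\lfloor\Delta\rfloor\equiv K_{\bar{X}}+\{\Delta\}+\pi^{*}H$ is accurate. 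One small remark on your step~(1): the $\pi$-bigness of $\pi^{*}H$ is automatic simply because $\pi$ is birational (the generic fiber is a point), so no property of $H$ is needed there; the nef-and-big hypothesis on $H$ is only used in step~(3).
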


Let $\eta\colon X\to Z$ be a~surjective morphism with connected
fibers.

\begin{theorem}[{\cite[Theorem~7.4]{Ko97}}]
\label{theorem:connectedness} Let $F$ be a~fiber of the~morphism
$\eta$. Then the~locus
$$
\mathrm{LCS}\Big(X, D\Big)\cap F
$$
is connected if $-(K_{X}+D)$ is $\eta$-nef and $\eta$-big.
\end{theorem}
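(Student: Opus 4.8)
The plan is to run the classical multiplier ideal argument of Shokurov and Koll\'ar. Write $\mathcal{I}=\mathcal{I}(X,D)$, so that by construction there is a short exact sequence
$$
0\longrightarrow \mathcal{I}\longrightarrow \mathcal{O}_X\longrightarrow \mathcal{O}_{\mathcal{L}(X,D)}\longrightarrow 0 .
$$
Applying $\eta_{*}$ gives the exact sequence
$$
\eta_{*}\mathcal{O}_X\longrightarrow \eta_{*}\mathcal{O}_{\mathcal{L}(X,D)}\longrightarrow R^{1}\eta_{*}\mathcal{I}\longrightarrow\cdots ,
$$
so the first step is the relative vanishing $R^{1}\eta_{*}\mathcal{I}=0$. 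Since $-(K_X+D)$ is $\eta$-nef and $\eta$-big, this is precisely a relative version of Theorem~\ref{theorem:Shokurov-vanishing} with $L=0$ and $H=-(K_X+D)$. To obtain it from the stated absolute form, fix an ample Cartier divisor $A_Z$ on $Z$; on the surface $X$ the divisor $-(K_X+D)+\eta^{*}(mA_Z)$ is nef and big for $m\gg 0$ (the relative Kleiman criterion makes nefness immediate, and bigness is clear on a surface — the only point to check being $\dim Z=1$, where $\eta$-bigness means $(-(K_X+D))\cdot F>0$, which forces the top self-intersection to grow with $m$). Theorem~\ref{theorem:Shokurov-vanishing} then gives $H^{1}(X,\mathcal{I}\otimes\eta^{*}\mathcal{O}_Z(mA_Z))=0$, and the Leray spectral sequence together with Serre vanishing on $Z$ yields $H^{0}(Z,R^{1}\eta_{*}\mathcal{I}\otimes\mathcal{O}_Z(mA_Z))=0$ for $m\gg 0$, hence $R^{1}\eta_{*}\mathcal{I}=0$ by coherence. (If $\dim Z=0$ then $\eta\colon X\to\mathrm{pt}$, $F=X$, and this is the absolute statement.)

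Granting $R^{1}\eta_{*}\mathcal{I}=0$, the map $\eta_{*}\mathcal{O}_X\to\eta_{*}\mathcal{O}_{\mathcal{L}(X,D)}$ is surjective. Since $\eta$ is a projective morphism of normal varieties with connected fibres, Stein factorization together with Zariski's main theorem gives $\eta_{*}\mathcal{O}_X=\mathcal{O}_Z$, so $\eta_{*}\mathcal{O}_{\mathcal{L}(X,D)}$ is a quotient of $\mathcal{O}_Z$. We may assume $\mathrm{LCS}(X,D)\cap F\neq\varnothing$. Let $z=\eta(F)$. Then the stalk $(\eta_{*}\mathcal{O}_{\mathcal{L}(X,D)})_{z}$ is a quotient of the local ring $\mathcal{O}_{Z,z}$, hence local, and therefore its $\mathfrak{m}_{z}$-adic completion is again a local ring, containing no idempotents besides $0$ and $1$. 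On the other hand, the theorem on formal functions identifies this completion with $\varprojlim_{n}H^{0}\!\big(\mathcal{L}(X,D)\times_{Z}\mathrm{Spec}(\mathcal{O}_{Z,z}/\mathfrak{m}_{z}^{n}),\mathcal{O}\big)$, and every scheme in this inverse system has underlying topological space equal to $\mathrm{LCS}(X,D)\cap F$. If that set were disconnected, each term would carry a nontrivial idempotent; these can be chosen compatibly, since idempotents are detected by connected components and the latter are unchanged under the nilpotent thickenings; and so the inverse limit would carry a nontrivial idempotent — a contradiction. Hence $\mathrm{LCS}(X,D)\cap F$ is connected.

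I expect the main obstacle to be the relative vanishing $R^{1}\eta_{*}\mathcal{I}(X,D)=0$: the excerpt records only the absolute Theorem~\ref{theorem:Shokurov-vanishing}, so one must either invoke its relative analogue (which is standard, and also available in \cite{La04}) or carry out the ample-twist reduction above, in which case the one slightly delicate verification is that $-(K_X+D)+\eta^{*}(mA_Z)$ is genuinely nef and big on $X$ for $m\gg 0$, using the relative Kleiman criterion and the description of $\overline{NE}(X)$ in terms of $\overline{NE}(X/Z)$. Everything after the vanishing — the passage from surjectivity of $\eta_{*}\mathcal{O}_X\to\eta_{*}\mathcal{O}_{\mathcal{L}(X,D)}$ to connectedness — is formal, resting only on $\eta_{*}\mathcal{O}_X=\mathcal{O}_Z$, the theorem on formal functions, and the absence of nontrivial idempotents in local rings.
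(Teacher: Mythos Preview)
The paper does not supply its own proof of this statement: Theorem~\ref{theorem:connectedness} is quoted verbatim from \cite[Theorem~7.4]{Ko97} and used as a black box. Your argument is precisely the standard Shokurov--Koll\'ar proof one finds in that reference (and in \cite[\S9.5]{La04}): relative Nadel vanishing gives $R^{1}\eta_{*}\mathcal{I}(X,D)=0$, hence $\eta_{*}\mathcal{O}_{X}\to\eta_{*}\mathcal{O}_{\mathcal{L}(X,D)}$ is onto, and connectedness of the fibre of $\mathcal{L}(X,D)$ over $z$ follows from $\eta_{*}\mathcal{O}_{X}=\mathcal{O}_{Z}$ together with the theorem on formal functions. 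So there is nothing to compare against in the paper itself, and your write-up is correct in substance.

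One small remark on the reduction of the relative vanishing to the absolute one. The step ``$-(K_X+D)+\eta^{*}(mA_Z)$ is nef for $m\gg 0$'' is not automatic from the relative Kleiman criterion alone: $\eta$-nefness controls only curves contracted by $\eta$, and one needs a uniform $m$ over all curves. This is nonetheless a standard fact for projective morphisms (see e.g.\ \cite[Theorem~1.7.8]{La04} for the ample version, and approximate), and in the surface setting at hand it is elementary since $\overline{NE}(X)$ is polyhedral in a neighbourhood of the face of $\eta$-contracted curves. Alternatively, one can simply invoke the relative form of Nadel vanishing directly from \cite{La04}, which avoids the detour entirely. Either way, the argument stands.
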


\begin{corollary}
\label{corollary:connectedness} If $-(K_{X}+D)$ is ample, then
$\mathrm{LCS}(X,D)$ is connected.
\end{corollary}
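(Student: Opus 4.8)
The plan is to reduce this to Theorem~\ref{theorem:connectedness} by choosing the target of the morphism $\eta$ to be a point. Concretely, I would let $Z=\mathrm{Spec}(\mathbb{C})$ and take $\eta\colon X\to Z$ to be the structure morphism, which is certainly surjective with connected fibers (its unique fiber is $X$ itself, which is connected because $X$ is a variety). The only thing to check before quoting the theorem is that the hypothesis ``$-(K_{X}+D)$ is $\eta$-nef and $\eta$-big'' is satisfied in this situation.

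The key observation is that relative positivity over a point coincides with absolute positivity: a $\mathbb{Q}$-divisor on $X$ is $\eta$-nef exactly when it is nef, and it is $\eta$-big exactly when it is big. Since ampleness implies both nefness and bigness, the assumption that $-(K_{X}+D)$ is ample gives at once that $-(K_{X}+D)$ is $\eta$-nef and $\eta$-big. Hence Theorem~\ref{theorem:connectedness} applies with $F=X$, and it tells us that
$$
\mathrm{LCS}\big(X,D\big)\cap X=\mathrm{LCS}\big(X,D\big)
$$
is connected, which is the assertion.

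There is essentially no obstacle here; the statement is just the ``absolute'' specialization of the relative connectedness theorem, and the only (trivial) point to be careful about is the translation between $\eta$-nef/$\eta$-big over a point and nef/big. If one wanted a proof that does not pass through the relative formulation, one could instead argue directly: write $-(K_{X}+D)\equiv H$ with $H$ ample, so $K_{X}+D+H\equiv 0$, apply Theorem~\ref{theorem:Shokurov-vanishing} with $L=0$ to get $H^{1}(\mathcal{I}(X,D))=0$, and then use the exact sequence $0\to\mathcal{I}(X,D)\to\mathcal{O}_{X}\to\mathcal{O}_{\mathcal{L}(X,D)}\to 0$ together with $H^{0}(\mathcal{O}_{X})=\mathbb{C}$ to deduce that $H^{0}(\mathcal{O}_{\mathcal{L}(X,D)})$ is one-dimensional, forcing $\mathrm{LCS}(X,D)=\mathrm{Supp}(\mathcal{L}(X,D))$ to be connected. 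Either route is short; I would present the first one as an immediate corollary of Theorem~\ref{theorem:connectedness}.
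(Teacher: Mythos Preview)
Your proposal is correct and matches the paper's intent exactly: the corollary is stated without proof immediately after Theorem~\ref{theorem:connectedness}, and the intended argument is precisely the specialization $Z=\mathrm{Spec}(\mathbb{C})$ that you spell out. The alternative direct argument via Theorem~\ref{theorem:Shokurov-vanishing} is also fine but is not what the paper has in mind.
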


Recall that $\mathcal{I}(X, D)$ is known as the~multiplier ideal
sheaf (see \cite[Section~9.2]{La04}).

\begin{lemma}[{\cite[Theorem~7.5]{Ko97}}]
\label{lemma:adjunction} Suppose that the~log pair $(X,D)$ is KLT
in a~punctured neighborhood of the~point $P$, but the~log pair
$(X,D)$ is not KLT at the~point $P$. Then
$$
\Bigg(\sum_{i=2}^{r}a_{i}D_{i}\Bigg)\cdot D_{1}>1
$$
in the~case when $P\in D_{1}\setminus\mathrm{Sing}(D_{1})$.
\end{lemma}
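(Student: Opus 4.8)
The plan is to reduce the statement to inversion of adjunction for the curve $D_{1}$, which is smooth near $P$, and then to establish that special case by induction on the length of an embedded resolution over $P$.

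First I would extract what the hypotheses give locally at $P$. Put $\Delta=\sum_{i=2}^{r}a_{i}D_{i}$, so that $D=a_{1}D_{1}+\Delta$. Because $(X,D)$ is KLT in a punctured neighbourhood of $P$, every component of $D$ passing through $P$ has coefficient $<1$; in particular $a_{1}<1$. Also $P$ is isolated in $\mathrm{LCS}(X,D)$, so there is a divisor $F$ over $X$ with $\mathrm{center}_{X}(F)=P$ and discrepancy $a(F,X,D)\leqslant -1$. I would then replace $(X,D)$ by the pair $(X,D_{1}+\Delta)=(X,D+(1-a_{1})D_{1})$: since $(1-a_{1})D_{1}$ is effective and $\mathrm{ord}_{F}(D_{1})\geqslant 1$ (as $\mathrm{center}_{X}(F)=P\in D_{1}$), we get $a(F,X,D_{1}+\Delta)=a(F,X,D)-(1-a_{1})\mathrm{ord}_{F}(D_{1})\leqslant -1-(1-a_{1})<-1$, so $(X,D_{1}+\Delta)$ is \emph{not} log canonical at $P$. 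This is the only step that really uses $a_{1}<1$.

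It now suffices to prove the following form of inversion of adjunction: if $S$ is a curve on a smooth surface $X$, smooth at a point $P$, and $\Theta$ is an effective $\mathbb{Q}$-divisor with $S\not\subseteq\mathrm{Supp}(\Theta)$ such that $(X,S+\Theta)$ is not log canonical at $P$, then the local intersection multiplicity $(\Theta\cdot S)_{P}$ is $>1$; applying this with $S=D_{1}$ and $\Theta=\Delta$ yields the lemma. I would prove it by induction on the number $N$ of point blow-ups in an embedded resolution of $(X,S+\Theta)$ over $P$, arranged to begin with the blow-up of $P$. If $N=0$ then $(X,S+\Theta)$ is s.n.c.\ at $P$, hence some coefficient is $>1$; it is not the coefficient $1$ of $S$, so a component $D_{j}\ni P$ of $\Theta$ has $a_{j}>1$, giving $(\Theta\cdot S)_{P}\geqslant a_{j}(D_{j}\cdot S)_{P}\geqslant a_{j}>1$. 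If $N\geqslant 1$, let $\sigma\colon X'\to X$ be the blow-up of $P$ with exceptional curve $E$, let $P'$ be the point $E\cap\widetilde S$, and set $m=\mathrm{mult}_{P}(\Theta)$. Then $\widetilde\Theta\cdot E=m$, $(\Theta\cdot S)_{P}=m+\widetilde\Theta\cdot\widetilde S$, and $\sigma^{*}(K_{X}+S+\Theta)=K_{X'}+\widetilde S+\widetilde\Theta+mE$, so $(X',\widetilde S+\widetilde\Theta+mE)$ fails to be log canonical at some point $Q\in E$. If $m>1$ we are done, as $(\Theta\cdot S)_{P}\geqslant m>1$. If $Q=P'$, the inductive hypothesis applied to $(X',\widetilde S+(mE+\widetilde\Theta))$ at $P'$ gives $\big((mE+\widetilde\Theta)\cdot\widetilde S\big)_{P'}>1$, which reads $m+(\widetilde\Theta\cdot\widetilde S)_{P'}>1$ because $E$ and $\widetilde S$ are transverse at $P'$, whence $(\Theta\cdot S)_{P}=m+\widetilde\Theta\cdot\widetilde S>1$. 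If $Q\neq P'$, then raising the coefficient $m\leqslant 1$ of $E$ to $1$ makes $(X',E+\widetilde\Theta)$ not log canonical at $Q$, so the inductive hypothesis gives $(\widetilde\Theta\cdot E)_{Q}>1$, contradicting $(\widetilde\Theta\cdot E)_{Q}\leqslant\widetilde\Theta\cdot E=m\leqslant 1$; hence this case cannot occur.

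The hard part will be exactly this last case, $Q\notin\widetilde S$: what closes it is the identity $\widetilde\Theta\cdot E=\mathrm{mult}_{P}(\Theta)$, which shows that once the non-log-canonical point slides off $\widetilde S$ onto the new exceptional curve it carries too little multiplicity to persist there. One must also be a bit careful that the induction is well founded, i.e.\ that the new pairs at $P'$ and $Q$ genuinely admit shorter embedded resolutions; this holds since an embedded resolution over $P$ may be taken to start by blowing up $P$. Alternatively, one can simply invoke \cite[Theorem~7.5]{Ko97}, or deduce the needed inversion of adjunction from the connectedness principle in Theorem~\ref{theorem:connectedness} (or from the vanishing in Theorem~\ref{theorem:Shokurov-vanishing}); the blow-up induction sketched above is the route I would take for a self-contained proof.
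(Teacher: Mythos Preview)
The paper does not give its own proof of this lemma: it is simply quoted from \cite[Theorem~7.5]{Ko97} and used as a black box, with only the observation that the hypothesis forces $a_{i}<1$ for all $i$ recorded immediately afterwards. So there is no argument in the paper to compare against; your proposal supplies a self-contained proof where the paper has none.

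Your argument is correct. The reduction step is clean: the KLT-in-a-punctured-neighbourhood hypothesis forces the centre of the offending valuation $F$ to be the point $P$ itself (not a curve through $P$), and then $P\in D_{1}$ gives $\mathrm{ord}_{F}(D_{1})\geqslant 1$, so passing from $a_{1}D_{1}$ to $D_{1}$ strictly decreases the discrepancy and makes $(X,D_{1}+\Delta)$ genuinely not log canonical at $P$. The inversion-of-adjunction statement you then prove by blow-up induction is the standard surface argument; the three cases ($m>1$; $Q=P'$; $Q\neq P'$) are handled correctly, and the key identity $\widetilde{\Theta}\cdot E=m$ is exactly what kills the third case. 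One small point of notation: in the line ``$(\Theta\cdot S)_{P}=m+\widetilde{\Theta}\cdot\widetilde{S}$'' you are implicitly taking the local intersection at $P'$ on the right-hand side (or, equivalently, observing that all of $\widetilde{\Theta}\cap\widetilde{S}$ lying over $P$ is concentrated at $P'$); this is fine but worth saying explicitly. The well-foundedness of the induction is genuine but routine, as you note: a log resolution of $(X,S+\Theta)$ over $P$ that begins with the blow-up of $P$ restricts to a shorter log resolution of the new pair over~$Q$.

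In short: the paper defers to Koll\'ar, while you give the direct blow-up proof. Both are legitimate; yours has the advantage of being self-contained within the surface setting of the paper.
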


Recall that it follows from Definition~\ref{definition:log
canonical-singularities} that if the log pair $(X,D)$ is KLT in
a~punctured neighborhood of the~point $P\in X$, then $a_{i}<1$ for
every $i\in\{1,\ldots,r\}$.

\begin{theorem}[{\cite[Theorem~1.28]{ChKo10}}]
\label{theorem:Cheltsov-Kosta} In the assumptions and notation of
Lemma~\ref{lemma:adjunction}, suppose~that
$$
P\in\Big(D_{1}\setminus\mathrm{Sing}\big(D_{1}\big)\Big)\bigcap\Big(D_{2}\setminus\mathrm{Sing}\big(D_{2}\big)\Big)
$$
and the curve $D_{1}$ intersects the curve $D_{2}$ transversally
at the point $P\in X$. Then
$$
\Bigg(\sum_{i=3}^{r}a_{i}D_{i}\Bigg)\cdot D_{1}\geqslant M+Aa_{1}-a_{2}\ \text{or}\ \Bigg(\sum_{i=3}^{r}a_{i}D_{i}\Bigg)\cdot D_{2}\geqslant N+Ba_{2}-a_{1}%
$$
for some non-negative rational numbers $A,B,M,N,\alpha,\beta$ that
satisfy the following conditions:
\begin{itemize}
\item $\alpha a_{1}+\beta a_{2}\leqslant 1$ and $A(B-1)\geqslant 1\geqslant\mathrm{max}(M,N)$,%
\item $\alpha(A+M-1)\geqslant A^{2}(B+N-1)\beta$ and  $\alpha(1-M)+A\beta\geqslant A$,%
\item either $2M+AN\leqslant 2$ or
$\alpha(B+1-MB-N)+\beta(A+1-AN-M)\geqslant AB-1$.%
\end{itemize}
\end{theorem}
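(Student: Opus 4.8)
This theorem is a refined adjunction/inversion-of-adjunction statement, and I expect its proof to run entirely on the surface $\bar X$ obtained by a suitable sequence of blow-ups over the point $P$, combined with a careful bookkeeping of discrepancies. Let me think about how this would go.

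First, the setup: we have $(X,D)$ with $D=\sum a_iD_i$, strictly not KLT at $P$ but KLT in a punctured neighborhood, with $D_1,D_2$ smooth at $P$ meeting transversally there. Since $(X,D)$ is not KLT at $P$, there is a divisorial valuation $E$ over $P$ with discrepancy $\le -1$ with respect to $(X,D)$; take $\bar X\to X$ a log resolution extracting this $E$ as one of its exceptional divisors. The strategy is to extract inequalities from the fact that $a(E; X, D)\le -1$ by running adjunction along $\bar D_1$ (the proper transform of $D_1$) and along $\bar D_2$: by Lemma~\ref{lemma:adjunction} applied on $\bar X$ to these curves, the "different" term $(\sum_{i\ge 3}a_i\bar D_i + \text{correction from }\bar D_2)\cdot \bar D_1$ must exceed $1$ at the point where $E$ (or rather the center) meets $\bar D_1$, and symmetrically for $\bar D_2$.

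The key quantitative input is how the intersection numbers $D_1\cdot D_2 = 1$ at $P$ (transversality), $D_1\cdot E$, $D_2\cdot E$ behave after the blow-ups, together with the coefficients $a_1,a_2$ landing on the exceptional curves. I would introduce parameters: let the relevant valuation have been obtained by a weighted blow-up data encoded by $A,B$ (the "weights" seen from the two curves) and let $M,N$ measure the residual self-intersection defects on the exceptional configuration; the numbers $\alpha,\beta$ come from the requirement that the log pair restricted to $E$ is still not too singular — concretely $\alpha a_1+\beta a_2\le 1$ encodes that the coefficient of $E$ in $\bar D$ is at most $1$ in the strictly log canonical case. The chain of inequalities $A(B-1)\ge 1$, $\alpha(A+M-1)\ge A^2(B+N-1)\beta$, etc., are exactly the constraints that survive after imposing $a(E)\le -1$ and the transversality of the two curves at $P$ — they are the "boundary of the cone" of admissible configurations.

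The hard part will be organizing the combinatorics of the exceptional configuration so that all six parameters appear with the stated constraints, rather than proving any single inequality. I would proceed as follows: (1) reduce to the case where $\bar X\to X$ is a minimal log resolution and the non-KLT center is a single exceptional curve $E$; (2) compute $K_{\bar X}+\bar D = \pi^*(K_X+D)+\sum c_iE_i$ and isolate the condition $c_E\ge -1$ (log canonical) together with $c_E\le -1$ forced by failure of KLT, so $c_E=-1$; (3) intersect the equivalence $\pi^*(K_X+D)\equiv K_{\bar X}+\bar D + \sum(-c_i)E_i$ with $\bar D_1$ and with $\bar D_2$ and use adjunction (Lemma~\ref{lemma:adjunction}) on each, which after clearing the contributions of $\bar D_2\cdot\bar D_1$ (resp. $\bar D_1\cdot\bar D_2$) and the exceptional curves yields the two displayed alternatives with $A,B$ reading off the intersection multiplicities of $E$ with $\bar D_1,\bar D_2$ and $M,N$ the defect $E_i\cdot\bar D_j$ summed appropriately; (4) verify the list of constraints by pushing the numerical relations on $\bar X$ (the matrix of the exceptional curves is negative definite, $\bar D_1\cdot\bar D_2=1$ is preserved away from $E$, and $E^2<0$) through the adjunction formula on $E\cong\mathbb P^1$. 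The dichotomy "$2M+AN\le 2$ or $\alpha(\cdots)+\beta(\cdots)\ge AB-1$" reflects two sub-cases according to whether the center of non-KLT-ness on $E$ is a single point or the generic point, which is why only one of the two clauses need hold. I would expect to cite \cite[Theorem~1.28]{ChKo10} for the full verification of the parameter constraints, as reproducing them in detail is precisely the "too big" computation the authors allude to elsewhere.
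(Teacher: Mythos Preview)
The paper does not prove this theorem at all: it is quoted verbatim from \cite[Theorem~1.28]{ChKo10} and used as a black-box tool (via Corollary~\ref{corollary:Cheltsov-Kosta}) in the proof of Theorem~\ref{theorem:dP2-lct-lct2}. There is therefore no ``paper's own proof'' to compare your proposal against.

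Your sketch is a reasonable outline of the kind of argument one expects behind such a result---a tower of blow-ups over $P$, tracking discrepancies, and applying adjunction along the proper transforms of $D_1$ and $D_2$---and indeed you yourself end by saying you would cite \cite{ChKo10} for the full verification. That is exactly what the present paper does from the outset: it states the result with the citation and moves on. If your goal was to supply a self-contained proof, you would need to actually carry out the combinatorial bookkeeping you describe in steps (3)--(4), and in particular justify precisely how the six parameters $A,B,M,N,\alpha,\beta$ arise from the exceptional configuration and why the listed constraints are exhaustive; as written, your proposal names the ingredients but does not execute the computation, so it is a plan rather than a proof.
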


\begin{corollary}
\label{corollary:Cheltsov-Kosta} In the assumptions and notation
of Theorem~\ref{theorem:Cheltsov-Kosta}, if $6a_{1}+a_{2}<4$, then
$$
\Bigg(\sum_{i=3}^{r}a_{i}D_{i}\Bigg)\cdot D_{1}>2a_{1}-a_{2}\ \text{or}\ \Bigg(\sum_{i=3}^{r}a_{i}D_{i}\Bigg)\cdot D_{2}> 1+\frac{3}{2}a_{2}-a_{1}.%
$$
\end{corollary}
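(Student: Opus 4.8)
The plan is to deduce this directly from Theorem~\ref{theorem:Cheltsov-Kosta} by choosing one explicit admissible tuple of the six parameters, tailored so that the two conclusions of that theorem become verbatim the two inequalities asserted here. Concretely, I would take
\[
A=2,\qquad B=\frac{3}{2},\qquad M=0,\qquad N=1,\qquad \alpha=\frac{3}{2},\qquad \beta=\frac{1}{4},
\]
for which $M+Aa_{1}-a_{2}=2a_{1}-a_{2}$ and $N+Ba_{2}-a_{1}=1+\frac{3}{2}a_{2}-a_{1}$. Thus, once Theorem~\ref{theorem:Cheltsov-Kosta} is applicable with these values, it yields precisely $(\sum_{i=3}^{r}a_{i}D_{i})\cdot D_{1}\geqslant 2a_{1}-a_{2}$ or $(\sum_{i=3}^{r}a_{i}D_{i})\cdot D_{2}\geqslant 1+\frac{3}{2}a_{2}-a_{1}$, which is the desired statement up to replacing ``$\geqslant$'' by ``$>$''.

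Next I would verify the bulleted conditions for this tuple. Each condition not involving $a_{1},a_{2}$ holds, and in fact with equality: $A(B-1)=1$, so $A(B-1)\geqslant 1\geqslant\max(M,N)=1$; $\alpha(A+M-1)=\tfrac32$ equals $A^{2}(B+N-1)\beta=4\cdot\tfrac32\cdot\tfrac14=\tfrac32$; $\alpha(1-M)+A\beta=\tfrac32+\tfrac12=2=A$; and $2M+AN=2\leqslant 2$, so the last bullet is satisfied through its first alternative. The only remaining condition, $\alpha a_{1}+\beta a_{2}\leqslant 1$, reads $\tfrac32 a_{1}+\tfrac14 a_{2}=\tfrac14(6a_{1}+a_{2})\leqslant 1$, which is guaranteed --- and guaranteed \emph{strictly} --- by the hypothesis $6a_{1}+a_{2}<4$. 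This is the single place the hypothesis enters, and it is exactly matched to it.

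Finally, to promote ``$\geqslant$'' to ``$>$'' I would exploit the strict slack $\tfrac14(6a_{1}+a_{2})<1$. One clean way is to rerun Theorem~\ref{theorem:Cheltsov-Kosta} with a perturbed tuple such as $(2,\,\tfrac32+\varepsilon,\,\varepsilon,\,1,\,\tfrac32+\tfrac52\varepsilon,\,\tfrac14)$ for a small rational $\varepsilon>0$: a direct check shows all six conditions survive (the binding one, $\alpha a_{1}+\beta a_{2}\leqslant 1$, still holds for $\varepsilon$ small precisely because it held strictly before), and the conclusions become $(\sum_{i=3}^{r}a_{i}D_{i})\cdot D_{1}\geqslant 2a_{1}-a_{2}+\varepsilon$ or $(\sum_{i=3}^{r}a_{i}D_{i})\cdot D_{2}\geqslant 1+\tfrac32 a_{2}-a_{1}+\varepsilon a_{2}$, both of which are strict once $a_{2}>0$; the leftover case $a_{2}=0$ is then handled directly, since Lemma~\ref{lemma:adjunction} applied along $D_{2}$ already gives $(\sum_{i=3}^{r}a_{i}D_{i})\cdot D_{2}>1-a_{1}=1+\tfrac32 a_{2}-a_{1}$ (and symmetrically along $D_{1}$ when $a_{1}$ is small). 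I expect this last passage to be the only real obstacle: the neat tuple $(2,\tfrac32,0,1,\tfrac32,\tfrac14)$ sits on the boundary of essentially every condition and so carries no strictness of its own, so one must either push a slightly perturbed tuple through the delicate six-condition list and dispose of the degenerate coefficient cases as above, or else simply invoke that the proof of Theorem~\ref{theorem:Cheltsov-Kosta} turns the strict bound $\alpha a_{1}+\beta a_{2}<1$ into strict conclusions.
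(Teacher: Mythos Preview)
Your approach is exactly the intended one: the paper states the corollary with no proof, and the implicit derivation is precisely to specialize Theorem~\ref{theorem:Cheltsov-Kosta} at $(A,B,M,N,\alpha,\beta)=(2,\tfrac32,0,1,\tfrac32,\tfrac14)$, which you have identified and verified correctly. Your extra work justifying the passage from ``$\geqslant$'' to ``$>$'' via a perturbed tuple (noting that the last bullet then holds through its second alternative, since $2M+AN=2+2\varepsilon>2$) together with the $a_2=0$ endgame via Lemma~\ref{lemma:adjunction} is sound and fills in a detail the paper leaves implicit.
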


Let $\sigma\colon\tilde{X}\to X$ be a blow up of the~point $P$,
and let $F$ be the~$\sigma$-exceptional curve. Then
$$
K_{\tilde{X}}+\tilde{D}\sim_{\mathbb{Q}}\sigma^{*}\big(K_{X}+D\big)+\Big(1-\mathrm{mult}_{P}\big(D\big)\Big)F
$$
where $\tilde{D}$ is the~proper transform of the~divisor $D$ on
the~surface $\tilde{X}$.

\begin{remark}
\label{remark:blow-up-inequality}  Suppose that
$\mathrm{mult}_{P}(D)<2$, the~log pair  $(X,D)$ is KLT in
a~punctured neighborhood of the~point~$P$, and $(X,D)$ is not KLT
at the~point $P$. Then there is a~point $Q\in F$~such~that
$$
\mathrm{LCS}\Big(\tilde{X},\ \tilde{D}+\big(\mathrm{mult}_{P}\big(D\big)-1\big)F\Big)\cap F=Q%
$$
by Theorem~\ref{theorem:connectedness}, which implies that
$\mathrm{mult}_{Q}(\tilde{D})+\mathrm{mult}_{P}(D)\geqslant 2$.
\end{remark}

Suppose that $X$ is a~smooth del Pezzo surface and
$D\sim_{\mathbb{Q}} -\lambda K_{X}$ for some
$\lambda\in\mathbb{Q}$.

\begin{lemma}
\label{lemma:Nadel-IILC} Suppose that $\mathrm{LCS}(X,D)$ is a
non-empty finite set. Then
$$
\big|\mathrm{LCS}(X,D)\big|\leqslant h^0\Big(X,\mathcal{O}_X\big(-\lceil\lambda-1\rceil K_X\big)\Big)%
$$
and for every point $P\in \mathrm{LCS}(X,D)$ there exists a~curve
$C\in |-\lceil\lambda-1\rceil K_X|$ such that
$$
\mathrm{LCS}\big(X,D\big)\setminus P\subset\mathrm{Supp}\big(C\big)\not\ni P.%
$$
\end{lemma}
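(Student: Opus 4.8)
The statement combines a Nadel-type vanishing with the connectedness of the non-KLT locus. The key idea is to apply Theorem~\ref{theorem:Shokurov-vanishing} with a suitable choice of nef and big divisor $H$ and Cartier divisor $L$, producing a long exact sequence in cohomology that forces sections of $\mathcal{O}_X(L)$ to separate the points of $\mathrm{LCS}(X,D)$.

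First I would set $L = -\lceil\lambda-1\rceil K_X$ and $H = D - K_X - L \sim_{\mathbb{Q}} -\lambda K_X - K_X + \lceil\lambda-1\rceil K_X = (\lceil\lambda-1\rceil - \lambda - 1)K_X$. Since $\lceil\lambda-1\rceil - \lambda \in (-1,0]$, we get $H \equiv \mu(-K_X)$ with $\mu = \lambda + 1 - \lceil\lambda-1\rceil > 0$, so $H$ is an ample $\mathbb{Q}$-divisor on the del Pezzo surface $X$; in particular it is nef and big. By construction $K_X + D + H \equiv L$, so Theorem~\ref{theorem:Shokurov-vanishing} gives $H^1(\mathcal{I}(X,D)\otimes\mathcal{O}_X(L)) = 0$. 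Because $\mathrm{LCS}(X,D)$ is a non-empty finite set, the subscheme $\mathcal{L}(X,D)$ is zero-dimensional with support exactly $\mathrm{LCS}(X,D)$, so from the short exact sequence
$$
0 \to \mathcal{I}(X,D)\otimes\mathcal{O}_X(L) \to \mathcal{O}_X(L) \to \mathcal{O}_{\mathcal{L}(X,D)}\otimes\mathcal{O}_X(L) \to 0
$$
and the vanishing of $H^1$ of the first term, the restriction map
$$
H^0\big(X,\mathcal{O}_X(L)\big) \to H^0\big(\mathcal{L}(X,D), \mathcal{O}_{\mathcal{L}(X,D)}\otimes\mathcal{O}_X(L)\big)
$$
is surjective. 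The target has dimension at least $|\mathrm{LCS}(X,D)|$ (length of a nonreduced zero-dimensional scheme is at least the number of its points), which immediately gives the bound $|\mathrm{LCS}(X,D)| \leqslant h^0(X,\mathcal{O}_X(L)) = h^0(X,\mathcal{O}_X(-\lceil\lambda-1\rceil K_X))$.

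For the second assertion, fix $P \in \mathrm{LCS}(X,D)$. Surjectivity of the restriction map lets me choose a section $s \in H^0(X,\mathcal{O}_X(L))$ whose image in $H^0(\mathcal{O}_{\mathcal{L}(X,D)}\otimes\mathcal{O}_X(L))$ vanishes on the components of $\mathcal{L}(X,D)$ supported at the points of $\mathrm{LCS}(X,D)\setminus P$ but is a unit at (the local component over) $P$; such a section exists precisely because the restriction map to the zero-dimensional scheme is onto and the scheme splits as a disjoint union over its points. Then $C = \{s=0\} \in |-\lceil\lambda-1\rceil K_X|$ contains $\mathrm{LCS}(X,D)\setminus P$ in its support while $P \notin \mathrm{Supp}(C)$, which is the required curve. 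A minor point to check: $-\lceil\lambda-1\rceil K_X$ is indeed effective (when $\lambda > 1$ it is a positive multiple of the ample $-K_X$, and when $\lambda \leqslant 1$ it is trivial, so $h^0 = 1$ and the statement degenerates to $|\mathrm{LCS}(X,D)| \leqslant 1$, consistent with Corollary~\ref{corollary:connectedness}).

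**Main obstacle.** The only subtle point is verifying that one can choose the section $s$ to simultaneously pass through all points of $\mathrm{LCS}(X,D)\setminus P$ and avoid $P$: this requires knowing that $\mathcal{L}(X,D)$, as a zero-dimensional scheme, decomposes as the disjoint union of its local pieces so that the evaluation $H^0(X,\mathcal{O}_X(L)) \twoheadrightarrow \bigoplus_{Q}\mathcal{O}_{\mathcal{L},Q}\otimes\mathcal{O}_X(L)$ is onto each summand independently — which is exactly what the surjectivity coming from the Shokurov vanishing delivers. Everything else is bookkeeping with ceilings and the fact that $-K_X$ is ample on a del Pezzo surface.
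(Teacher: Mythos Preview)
Your proof is correct and is precisely the argument the paper has in mind: the paper's own proof is the single sentence ``The required assertions follow from Theorem~\ref{theorem:Shokurov-vanishing},'' and you have simply spelled out the details of that Nadel-vanishing-plus-restriction argument. One tiny slip: the range should be $\lceil\lambda-1\rceil-\lambda\in[-1,0)$ rather than $(-1,0]$, but this does not affect the conclusion that $\mu=\lambda+1-\lceil\lambda-1\rceil\geqslant 1>0$, so $H$ is ample.
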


\begin{proof}
The required assertions follow from
Theorem~\ref{theorem:Shokurov-vanishing}.
\end{proof}

Let $G$ be a finite subgroup in $\mathrm{Aut}(X)$ such that the
following two conditions are satisfied:
\begin{itemize}
\item a $G$-invariant subgroup of the~group $\mathrm{Pic}(X)$ is generated by $-K_{X}$,%
\item the divisor $D$ is $G$-invariant.%
\end{itemize}

\begin{remark}
\label{remark:abelian-groups} If $G$ is Abelian, then
$\mathrm{lct}(X,G)\leqslant 1$.
\end{remark}

Let $\xi$ be the~smallest integer such that $|-\xi K_X|$ contains
a $G$-invariant curve.

\begin{lemma}
\label{lemma:LCS-zero-dimensional} If $\xi>\lambda$, then
$\mathrm{LCS}(X,D)$ is zero-dimensional.
\end{lemma}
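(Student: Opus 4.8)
The plan is to argue by contradiction: suppose $\xi > \lambda$ but $\mathrm{LCS}(X,D)$ is not zero-dimensional. Since $D \sim_{\mathbb{Q}} -\lambda K_X$ and $(X,D)$ is a log pair whose boundary is effective, the locus $\mathrm{LCS}(X,D)$ is a closed subset of $X$. If it is not zero-dimensional, then it contains a curve $Z$; decompose $Z = \sum Z_j$ into irreducible components. Because $D$ is $G$-invariant and $\mathrm{LCS}(X,D)$ is determined by $D$ in a way that commutes with automorphisms, the locus $\mathrm{LCS}(X,D)$ is $G$-invariant, hence so is the one-dimensional part $Z$ of it. Thus $Z$ is a $G$-invariant curve on $X$.

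Next I would identify the linear system to which $Z$ belongs. By the first of the two standing conditions on $G$, the $G$-invariant part of $\mathrm{Pic}(X)$ is generated by $-K_X$, so the class of the $G$-invariant curve $Z$ is $-m K_X$ for some positive integer $m$. By definition of $\xi$ as the smallest integer with $|-\xi K_X|$ containing a $G$-invariant curve, we get $m \geqslant \xi$. On the other hand, each component $Z_j$ of $Z$ lies in $\mathrm{Supp}(D)$ — indeed, from Definition~\ref{definition:log canonical-singularities}, a curve contained in $\mathrm{LCS}(X,D)$ must appear in $D$ with coefficient at least $1$ (if $a_i < 1$ for a curve $D_i$, then that curve cannot by itself force a non-KLT point along its generic point, and any exceptional contribution over it would only be relevant at finitely many points). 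Hence we may write $D = Z' + D''$ where $Z' \geqslant Z$ (so $Z'$ contains each $Z_j$ with coefficient $\geqslant 1$) and $D''$ is effective. Therefore $-m K_X \sim Z \preccurlyeq Z' \preccurlyeq D \sim_{\mathbb{Q}} -\lambda K_X$, and intersecting with the ample divisor $-K_X$ gives $m(-K_X)^2 \leqslant \lambda(-K_X)^2$, i.e. $m \leqslant \lambda$. Combining, $\xi \leqslant m \leqslant \lambda$, contradicting the hypothesis $\xi > \lambda$.

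The main obstacle is the precise justification that every one-dimensional component of $\mathrm{LCS}(X,D)$ occurs in $\mathrm{Supp}(D)$ with multiplicity at least $1$; this is standard (it is exactly the content of the first bullet of Definition~\ref{definition:log canonical-singularities} applied at the generic point of such a component, since along that generic point no blow-up can change the order of vanishing and the pair fails to be KLT precisely because some $a_i \geqslant 1$), but it must be stated cleanly so that the numerical comparison $Z \preccurlyeq D$ of $\mathbb{Q}$-divisors is valid. Once that is in place, the argument is purely a Picard-group-and-intersection-number computation using the two standing hypotheses on $G$ (invariant Picard group generated by $-K_X$, and $D$ being $G$-invariant) together with the ampleness of $-K_X$ on the del Pezzo surface $X$.
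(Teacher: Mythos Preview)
Your proposal is correct and follows essentially the same approach as the paper. The paper phrases the decomposition as $D=\gamma B+D'$ with $B$ a $G$-invariant Weil divisor and $\gamma\geqslant 1$, then uses $B\sim -nK_X$ with $n\geqslant\xi$ and intersects with $-K_X$ to get $\lambda\geqslant n\gamma\geqslant\xi$; this is exactly your argument with $Z$ playing the role of $B$ and your ``coefficient at least $1$'' observation playing the role of $\gamma\geqslant 1$.
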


\begin{proof}
Suppose that $\mathrm{LCS}(X,D)$ is not zero-dimensional. Then
$$
D=\gamma B+D^{\prime},
$$
where $B$ is a~$G$-invariant effective Weil divisor on $X$,
$\gamma$ is a rational number such that $\gamma\geqslant 1$ and
$D^{\prime}$ is a~$G$-invariant effective $\mathbb{Q}$-divisor $D$
on the~surface $X$. We have that
$$
B\sim -nK_{X}
$$
for some positive integer $n$ such that $n\geqslant\xi$. Thus, we
see that
$$
\lambda\Big(-K_{X}\Big)^{2} =-K_{X}\cdot D=\gamma \Big(-K_{X}\cdot B\Big)+\Big(-K_{X}\cdot D^{\prime}\Big)\geqslant\gamma \Big(-K_{X}\cdot B\Big)=n\gamma\Big(-K_{X}\Big)^{2}\geqslant \xi\Big(-K_{X}\Big)^{2},%
$$
which implies that $\xi\leqslant \lambda$.
\end{proof}

\begin{corollary}
\label{corollary:GAFA} Let $k$ be the~length of the~smallest
$G$-orbit in $X$. Then $\mathrm{lct}(X,G)=\xi$ if
$$
h^0\Big(X,\mathcal{O}_X\Big(\big(1-\xi\big)K_{X}\Big)\Big)<k.
$$
\end{corollary}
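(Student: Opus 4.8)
The plan is to show that under the stated hypothesis, $\mathrm{lct}(X,G)=\xi$ by combining the lower bound $\mathrm{lct}(X,G)\geqslant\xi$ (which is essentially automatic from the definition of $\xi$) with a matching upper bound coming from a clever choice of $G$-invariant divisor. First I would establish that $\mathrm{lct}_n(X,G)\geqslant\xi$ for all $n$: if $n<\xi$ then $|-nK_X|$ contains no $G$-invariant divisor, so $\mathrm{lct}_n(X,G)=+\infty$; if $n\geqslant\xi$, then for any $G$-invariant $D\in|-nK_X|$ the pair $(X,\tfrac{\xi}{n}D)$ should be log canonical — here I would invoke Lemma~\ref{lemma:LCS-zero-dimensional} (with $\lambda=\xi$, noting $\xi\not>\xi$ does not quite apply, so one instead argues directly) to rule out a non-KLT locus of positive dimension, since such a locus would force a $G$-invariant curve in $|-mK_X|$ with $m<\xi$, contradicting minimality of $\xi$. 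Hence $\mathrm{lct}(X,G)\geqslant\xi$.

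For the upper bound, the idea is to produce a single $G$-invariant divisor $D\in|-\xi K_X|$ for which $(X,D)$ fails to be log canonical, which would give $\mathrm{lct}_\xi(X,G)\leqslant 1$, hence $\mathrm{lct}(X,G)\leqslant\xi$ (using $\mathrm{lct}_\xi(X,G)=\mathrm{lct}_\xi(X,G)\cdot\xi/\xi$ together with the scaling $\mathrm{lct}_n\leqslant n\cdot\mathrm{lct}_1$-type normalization built into the definition). Concretely, let $Z$ be the smallest $G$-orbit in $X$, so $|Z|=k$. I would look at the linear subsystem of $|-\xi K_X|$ consisting of divisors passing through $Z$ with high multiplicity: imposing multiplicity roughly $\xi$ at a single point of $Z$ costs about $\binom{\xi+1}{2}=h^0(X,\mathcal{O}_X((1-\xi)K_X))$ conditions by the usual count (the dimension of the space of jets), and since the orbit has $k$ points with $k$ strictly larger than this number, some nonzero section vanishes to that order simultaneously along all of $Z$ — or, more carefully, one iterates/blows up. The resulting divisor $D$ is $G$-invariant after averaging over $G$ (the orbit $Z$ is already $G$-invariant) and has $\mathrm{mult}_P(D)$ large enough at $P\in Z$ that $P\in\mathrm{LCS}(X,D)$, e.g.\ $\mathrm{mult}_P(D)\geqslant 1$, making $(X,D)$ not KLT, and a slightly sharper count (or a blow-up argument as in Remark~\ref{remark:blow-up-inequality}) makes it not log canonical.

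The step I expect to be the main obstacle is making the dimension count above genuinely produce a divisor of $|-\xi K_X|$ that is \emph{not log canonical} (not merely non-KLT), since log canonicity is the borderline case and a naive multiplicity-$\xi$ section at one point of $Z$ might only give a strictly log canonical pair. The honest fix is to compare the two relevant dimensions sharply: $h^0(X,\mathcal{O}_X((1-\xi)K_X))$ is exactly the number of conditions for a section of $\mathcal{O}_X(-\xi K_X)$ to have an assigned $\xi$-jet vanish at a point (since $\mathcal{O}_X(-\xi K_X)/\mathfrak{m}_P^{\xi}\cdot\mathcal{O}_X(-\xi K_X)$ has that dimension on a surface when $\mathcal{O}_X(-K_X)$ is very ample, or one reduces to it), so the strict inequality $h^0(X,\mathcal{O}_X((1-\xi)K_X))<k$ gives a section vanishing to order $\geqslant\xi$ at \emph{every} point of the orbit $Z$; then $D=\tfrac1{|G|}\sum_{g\in G}g^*(\text{that divisor})$ has $\mathrm{mult}_P(D)\geqslant\xi\geqslant 1$ at each $P\in Z$, and if $\mathrm{mult}_P(D)>1$ we are done immediately, while if $\mathrm{mult}_P(D)=1$ exactly one checks that the pair is still non-log-canonical because the $\xi$-jet condition forces a non-reduced or sufficiently tangent local structure. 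In writing this up I would lean on Theorem~\ref{theorem:Shokurov-vanishing} exactly as in the proof of Lemma~\ref{lemma:Nadel-IILC} to control $\mathrm{LCS}$, and on Corollary~\ref{corollary:connectedness} to keep the non-KLT locus under control; combining the two bounds yields $\mathrm{lct}(X,G)=\xi$.
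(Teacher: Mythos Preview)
Your argument has the roles of the two bounds reversed, and this is a genuine gap.

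The upper bound $\mathrm{lct}(X,G)\leqslant\xi$ is trivial and does \emph{not} use the hypothesis $h^{0}(X,\mathcal{O}_{X}((1-\xi)K_{X}))<k$: by definition of $\xi$ there is a $G$-invariant curve $C\in|-\xi K_{X}|$, and for any $\lambda>\xi$ the pair $(X,\tfrac{\lambda}{\xi}C)$ has a component with coefficient $>1$, hence is not log canonical; thus $\mathrm{lct}_{\xi}(X,G)\leqslant\xi$ and so $\mathrm{lct}(X,G)\leqslant\xi$. Your elaborate construction of a divisor vanishing to high order along the shortest orbit is unnecessary here, and in any case cannot work as written: the hypothesis $h^{0}<k$ points the wrong way for a dimension count producing such a section (fewer sections than points, not more), and the identification $\binom{\xi+1}{2}=h^{0}(X,\mathcal{O}_{X}((1-\xi)K_{X}))$ is false on a del Pezzo surface of degree $d\ne 1$.

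The lower bound $\mathrm{lct}(X,G)\geqslant\xi$ is \emph{not} automatic from the definition of $\xi$, and this is exactly where the hypothesis enters. The paper's (implicit) argument is: suppose $\mathrm{lct}(X,G)<\xi$, so there is a $G$-invariant effective $\mathbb{Q}$-divisor $D\sim_{\mathbb{Q}}-K_{X}$ and a rational $\lambda<\xi$ with $(X,\lambda D)$ not KLT. By Lemma~\ref{lemma:LCS-zero-dimensional} (here $\lambda<\xi$ strictly, so it applies) the locus $\mathrm{LCS}(X,\lambda D)$ is a nonempty finite set; by Lemma~\ref{lemma:Nadel-IILC} its cardinality is at most $h^{0}(X,\mathcal{O}_{X}(-\lceil\lambda-1\rceil K_{X}))\leqslant h^{0}(X,\mathcal{O}_{X}((1-\xi)K_{X}))<k$. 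But $\mathrm{LCS}(X,\lambda D)$ is $G$-invariant, hence a union of $G$-orbits, so it has at least $k$ points --- a contradiction. Your sketch for the lower bound never invokes the orbit-length hypothesis and only attempts to exclude a positive-dimensional non-KLT locus, which by itself proves nothing about log canonicity once the locus is zero-dimensional.
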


\begin{corollary}
\label{corollary:weakly-exceptional} If $X$ does not contain
$G$-fixed points, then $\mathrm{lct}(X,G)\geqslant 1$.
\end{corollary}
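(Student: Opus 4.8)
The plan is to argue by contradiction, reducing everything to the connectedness results already recorded. So suppose $\mathrm{lct}(X,G)<1$. Unwinding the definitions of $\mathrm{lct}_{n}(X,G)$ and $\mathrm{lct}(X,G)$, I would first produce a positive integer $n$, a rational number $\mu$ with $0<\mu<1$, and a $G$-invariant divisor $D\in|-nK_{X}|$ such that the log pair $(X,\tfrac{\mu}{n}D)$ is not log canonical: for a fixed $D$ the set of $\lambda\geqslant 0$ with $(X,\tfrac{\lambda}{n}D)$ log canonical is an interval $[0,c_{D}]$, so $\mathrm{lct}_{n}(X,G)=\inf_{D}c_{D}<1$ for some $n$ forces some $c_{D}<1$, and then any $\mu\in(c_{D},1)$ works. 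Put $\Delta=\tfrac{\mu}{n}D$, a $G$-invariant effective $\mathbb{Q}$-divisor with $\Delta\sim_{\mathbb{Q}}-\mu K_{X}$ and $(X,\Delta)$ not log canonical. The strictness $\mu<1$ is the whole point of this reduction and is what makes the two remaining steps go through.

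Next I would check that $\mathrm{LCS}(X,\Delta)$ is a non-empty finite set of points. It is non-empty because $(X,\Delta)$ is not log canonical, hence a fortiori not KLT, so it has a non-KLT point. It is zero-dimensional by Lemma~\ref{lemma:LCS-zero-dimensional}: since $\xi$ is a positive integer we have $\xi\geqslant 1>\mu$, and the lemma applies to $\Delta\sim_{\mathbb{Q}}-\mu K_{X}$. (If one prefers not to invoke that lemma, one can simply rerun its proof: a one-dimensional component of $\mathrm{LCS}(X,\Delta)$ would give $\Delta-B$ effective for a $G$-invariant curve $B\sim -mK_{X}$ with $m\geqslant 1$, whence $\mu(-K_{X})^{2}=-K_{X}\cdot\Delta\geqslant -K_{X}\cdot B=m(-K_{X})^{2}\geqslant(-K_{X})^{2}$, forcing $\mu\geqslant 1$, a contradiction.)

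Finally I would invoke connectedness. Since $-(K_{X}+\Delta)\sim_{\mathbb{Q}}(1-\mu)(-K_{X})$ is ample, Corollary~\ref{corollary:connectedness} shows $\mathrm{LCS}(X,\Delta)$ is connected; being also non-empty and zero-dimensional, it is a single point $P\in X$. Because $\Delta$ is $G$-invariant, the set $\mathrm{LCS}(X,\Delta)$ is $G$-invariant, so $\{P\}$ is $G$-invariant and $P$ is a $G$-fixed point of $X$. This contradicts the hypothesis, so $\mathrm{lct}(X,G)\geqslant 1$.

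I do not expect any step here to be a genuine obstacle: the mathematical content is carried entirely by Lemma~\ref{lemma:LCS-zero-dimensional} and Corollary~\ref{corollary:connectedness}, together with the standing hypothesis that the $G$-invariant part of $\mathrm{Pic}(X)$ is $\mathbb{Z}(-K_{X})$ — this is what makes $\xi$ well defined and at least $1$, and what prevents $G$-invariant curves of anticanonical degree smaller than $(-K_{X})^{2}$. The only place calling for a little care is the opening reduction: converting the numerical inequality $\mathrm{lct}(X,G)<1$ into an honest non-log-canonical $G$-invariant pair $(X,\Delta)$ with $\Delta\sim_{\mathbb{Q}}-\mu K_{X}$ and, crucially, $\mu<1$, since that strict inequality is exactly what delivers both the zero-dimensionality of $\mathrm{LCS}(X,\Delta)$ and the ampleness of $-(K_{X}+\Delta)$.
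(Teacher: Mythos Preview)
Your argument is correct and is precisely the route the paper intends: the corollary is left unproved because it follows immediately from Lemma~\ref{lemma:LCS-zero-dimensional} (zero-dimensionality of $\mathrm{LCS}$, using $\xi\geqslant 1>\mu$) together with connectedness (Corollary~\ref{corollary:connectedness}), exactly as you lay it out. The only cosmetic difference is that in the analogous passages (e.g.\ the proof of Theorem~\ref{theorem:dP1-lct-lct1}) the paper cites Lemma~\ref{lemma:Nadel-IILC} alongside connectedness to pin down a single point, but your use of Corollary~\ref{corollary:connectedness} alone suffices and is equivalent here.
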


Most of results described in this section are valid in more
general settings (see \cite{Ko97}).

\section{Double quadric cone}
\label{sec:degree-one}

Let $X$ be a~smooth sextic surface in $\mathbb{P}(1,1,2,3)$. Then
$X$ can be given by an~equation
$$
t^2=z^3+zf_4\big(x,y\big)+f_6\big(x,y\big)\subset\mathbb{P}\big(1,1,2,3\big)\cong\mathrm{Proj}\Big(\mathbb{C}[x,y,z,t]\Big),%
$$
where $\mathrm{wt}(x)=\mathrm{wt}(y)=1$, $\mathrm{wt}(z)=2$,
$\mathrm{wt}(t)=3$, and $f_i(x,y)$ is a form of degree $i$.

\begin{remark}
\label{remark:dP1-smoothness} It follows from the smoothness of
the surface $X$ that
\begin{itemize}
\item a~common root of the~forms $f_4(x,y)$ and $f_6(x,y)$ is not a~multiple root of the~form~$f_6(x,y)$,%
\item the~form $f_6(x,y)$ is not a zero form.%
\end{itemize}
\end{remark}

Let $\tau$ be the~involution in $\mathrm{Aut}(X)$ such that
$\tau([x:y:z:t])=[x:y:z:-t]$.

\begin{lemma}[{\cite[Lemma~6.18]{DoIs06}}]
\label{lemma:dP1-Bertini} A $\tau$-invariant subgroup in
$\mathrm{Pic}(X)$ is generated by $-K_{X}$.
\end{lemma}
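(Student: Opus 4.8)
The plan is to pin down the full action of $\tau^{*}$ on $\mathrm{Pic}(X)$ by a Lefschetz-number count, and then extract the invariant sublattice by linear algebra. Since $X$ is a del Pezzo surface of degree $1$ it is rational, so $H^{1}(X,\mathbb{Q})=H^{3}(X,\mathbb{Q})=0$, $H^{2}(X,\mathbb{Q})\cong\mathrm{Pic}(X)\otimes\mathbb{Q}$ has dimension $9$, and $(-K_{X})^{2}=1$. Because $(-K_{X})^{2}=1$, for every $L\in\mathrm{Pic}(X)$ the class $v=L-(L\cdot(-K_{X}))(-K_{X})$ is integral and lies in $(-K_{X})^{\perp}$; as $\tau$ is an automorphism it fixes $-K_{X}$ and preserves the intersection form, so $\tau^{*}$ preserves $(-K_{X})^{\perp}$. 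Hence, if one shows that $\tau^{*}$ acts as $-\mathrm{id}$ on the rank $8$ lattice $(-K_{X})^{\perp}$, then $\tau^{*}L=L$ forces $v=-v$, i.e. $v=0$ and $L\in\mathbb{Z}(-K_{X})=\mathbb{Z}K_{X}$, which is exactly the statement.

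Next I would identify the fixed locus of $\tau$. A point of $X$ is $\tau$-fixed precisely when $t=0$ or $x=y=0$. The locus $\{t=0\}$ meets $X$ in the curve $R\sim-3K_{X}$, which in $\mathbb{P}(1,1,2)$ is cut out by $z^{3}+zf_{4}(x,y)+f_{6}(x,y)=0$; smoothness of $X$ (cf. Remark~\ref{remark:dP1-smoothness}) forces $R$ to be a reduced smooth curve contained in the smooth locus of $\mathbb{P}(1,1,2,3)$, and adjunction gives $2p_{a}(R)-2=(-3K_{X})\cdot(-2K_{X})=6$, so $\chi_{\mathrm{top}}(R)=2(1-p_{a}(R))=-6$. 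The locus $x=y=0$ on $X$ is the single point $P_{0}=[0:0:1:1]$, a smooth point of $X$ not lying on $R$. Thus $\mathrm{Fix}(\tau)=R\sqcup\{P_{0}\}$ is a disjoint union of a smooth curve and a point, with $\chi_{\mathrm{top}}(\mathrm{Fix}(\tau))=-6+1=-5$.

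Finally, since $\tau$ has order $2$, the topological Lefschetz fixed point formula gives $\sum_{i}(-1)^{i}\mathrm{tr}(\tau^{*}\mid H^{i}(X,\mathbb{Q}))=\chi_{\mathrm{top}}(\mathrm{Fix}(\tau))=-5$; as $\tau$ is holomorphic it acts as $+1$ on $H^{0}$ and $H^{4}$, so $\mathrm{tr}(\tau^{*}\mid H^{2}(X,\mathbb{Q}))=-7$, whence $\mathrm{tr}(\tau^{*}\mid(-K_{X})^{\perp}\otimes\mathbb{Q})=-7-1=-8$. But $\tau^{*}$ restricts to an order $2$ isometry of the negative definite rank $8$ lattice $(-K_{X})^{\perp}$, so its eigenvalues are $\pm1$; a trace of $-8$ on an $8$-dimensional space forces every eigenvalue to be $-1$, i.e. $\tau^{*}=-\mathrm{id}$ on $(-K_{X})^{\perp}$. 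Combined with the first paragraph, this proves the lemma.

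The step I expect to be the main obstacle is the precise description of $\mathrm{Fix}(\tau)$ together with the claim that $R$ is smooth of arithmetic genus $4$: one must check that the two singular points $[0:0:1:0]$ and $[0:0:0:1]$ of $\mathbb{P}(1,1,2,3)$ do not lie on $X$ (so that everything in sight is an honest complex manifold and the identity $L(\tau)=\chi_{\mathrm{top}}(\mathrm{Fix}\,\tau)$ for the finite order map $\tau$ applies), and that the Jacobian criterion for $X$ restricts to the Jacobian criterion for $R$ along $\{t=0\}$. After that the argument is pure linear algebra. An alternative would be to invoke the classical description of the Bertini involution — which is precisely $\tau$, the deck transformation of the double cover $X\to\mathbb{P}(1,1,2)$ defined by $|-2K_{X}|$ — as acting by $-\mathrm{id}$ on $K_{X}^{\perp}$, but a self-contained proof of that fact essentially reproduces the computation above.
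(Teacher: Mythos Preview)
Your argument is correct. The paper does not supply its own proof of this lemma; it simply cites \cite[Lemma~6.18]{DoIs06}, where the Bertini involution is treated via the classical description of $\mathrm{Pic}(X)$ as a lattice with the $E_{8}$ root system sitting in $K_{X}^{\perp}$, and the deck transformation of $|-2K_{X}|$ is identified with the element acting by $-\mathrm{id}$ on that root lattice.

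Your Lefschetz fixed-point computation is a genuinely different and fully self-contained route. Once you check that $\mathrm{Fix}(\tau)=R\sqcup\{P_{0}\}$ with $\chi_{\mathrm{top}}=-5$ (and you do verify the two delicate points: that the singular points of $\mathbb{P}(1,1,2,3)$ miss $X$, and that smoothness of $X$ along $t=0$ forces $R$ smooth), the trace $-7$ on $H^{2}$ and hence $-8$ on the $8$-dimensional orthogonal complement pins down $\tau^{*}|_{(-K_{X})^{\perp}}=-\mathrm{id}$, and the integral splitting $\mathrm{Pic}(X)=\mathbb{Z}(-K_{X})\oplus(-K_{X})^{\perp}$ (available precisely because $(-K_{X})^{2}=1$) finishes. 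One small remark: $R$ need not a priori be irreducible, but your Euler-characteristic step only uses that $R$ is a smooth (possibly disconnected) curve, for which the adjunction identity $\chi_{\mathrm{top}}(R)=-R\cdot(R+K_{X})$ still holds, so nothing is lost.
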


Let $G$ be a subgroup in $\mathrm{Aut}(X)$ such that $\tau\in G$.
Recall that $\mathrm{Aut}(X)$ is finite.

\begin{lemma}
\label{lemma:dP1-2K} There exists a~$G$-invariant curve in
$|-2K_X|$.
\end{lemma}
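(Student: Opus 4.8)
The plan is to combine the standard description of $|-2K_X|$ on a degree-one del Pezzo surface with complete reducibility of complex representations of the finite group $G$. First I would recall that $-K_X$ is cut out on $X$ by $\mathcal{O}_{\mathbb{P}(1,1,2,3)}(1)$, so that $W:=H^0(X,-K_X)=\langle x,y\rangle$ is two-dimensional, while $H^0(X,-2K_X)=H^0(X,\mathcal{O}_X(2))$ is four-dimensional with basis $x^2,xy,y^2,z$ (here one uses that the defining sextic imposes no relation among $x,y,z$ in degree $2$, and that for a quasi-smooth well-formed hypersurface sections in non-negative degrees are computed by the graded coordinate ring). Both of these spaces carry the natural linear action of $G\subseteq\mathrm{Aut}(X)$, and the multiplication of sections gives an injective homomorphism of $G$-representations $\mathrm{Sym}^{2}W\to H^0(X,-2K_X)$ whose image $\langle x^2,xy,y^2\rangle$ has codimension one.

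Next I would consider the resulting short exact sequence of $G$-modules
$$0\longrightarrow \mathrm{Sym}^{2}W\longrightarrow H^0\big(X,-2K_X\big)\longrightarrow L\longrightarrow 0,$$
where $L$ is one-dimensional. Since $G$ is finite, Maschke's theorem provides a $G$-equivariant splitting, so $H^0(X,-2K_X)$ contains a one-dimensional $G$-invariant subspace $L'$ mapping isomorphically onto $L$. Any non-zero $s\in L'$ is then a $G$-semi-invariant section of $-2K_X$, i.e. $g^{*}s\in\mathbb{C}^{\times}s$ for every $g\in G$, and therefore its zero divisor is a curve in $|-2K_X|$ preserved by $G$. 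Concretely, such an $s$ has the shape $z+q(x,y)$ with $q$ a binary quadratic form, so the invariant curve is the preimage of the conic $\{z+q=0\}$ under the double cover $X\to\mathbb{P}(1,1,2)$, though neither this description nor the irreducibility of that preimage is needed.

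There is essentially no hard step here: the only points demanding a little care are the dimension count $h^0(X,-2K_X)=4=\dim\mathrm{Sym}^{2}W+1$ (which also matches the del Pezzo formula $h^0(-nK_X)=1+\tfrac{1}{2}n(n+1)$ for degree one) and the fact that $\mathrm{Sym}^{2}W$ is a proper, $G$-stable subspace of $H^0(X,-2K_X)$, after which finiteness of $G$ does all the work. I note that the hypothesis $\tau\in G$ is not actually used for this lemma; it is retained only because it is in force throughout this section through Lemma~\ref{lemma:dP1-Bertini}.
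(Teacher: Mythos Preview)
Your proof is correct, but it takes a different route from the paper. The paper's proof is a single line: the curve $C$ cut out by $z=0$ is $G$-invariant. The implicit justification is that the defining equation has been placed in the Weierstrass normal form $t^{2}=z^{3}+zf_{4}(x,y)+f_{6}(x,y)$ with no $z^{2}$ term; any automorphism of $X$ extends to an automorphism of $\mathbb{P}(1,1,2,3)$ preserving this equation up to scalar, and comparing $z^{2}$ coefficients forces $z\mapsto\alpha z$ for some scalar $\alpha$, so $\{z=0\}$ is in fact $\mathrm{Aut}(X)$-invariant.

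Your argument instead uses Maschke's theorem to split off a $G$-stable line complementary to $\mathrm{Sym}^{2}H^{0}(-K_{X})$ inside $H^{0}(-2K_{X})$, producing a semi-invariant section whose zero locus is $G$-invariant. This is longer but entirely self-contained: it needs only the finiteness of $G$ and the codimension-one inclusion, not the normal form. The paper's approach has the advantage of exhibiting a single canonical curve invariant under the full $\mathrm{Aut}(X)$, which is what is actually referenced later (e.g.\ in the proof of Lemma~\ref{lemma:dP1-D12}). Your remark that the hypothesis $\tau\in G$ plays no role here is correct for both arguments.
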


\begin{proof}
Let $C$ be the~curve on $X$ that is cut out by $z=0$. Then $C$ is
$G$-invariant.
\end{proof}

\begin{corollary}
\label{corollary:dP1-2K} The inequality
$\mathrm{lct}(X,G)\leqslant 2$ holds.
\end{corollary}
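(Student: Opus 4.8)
The plan is to use Lemma~\ref{lemma:dP1-2K} together with the definition of $\mathrm{lct}(X,G)$ as an infimum over the quantities $\mathrm{lct}_n(X,G)$. First I would invoke Lemma~\ref{lemma:dP1-2K} to fix a $G$-invariant curve $C\in|-2K_X|$; this guarantees that $|-2K_X|$ contains a $G$-invariant divisor, so $\mathrm{lct}_2(X,G)\ne+\infty$. Next I would bound $\mathrm{lct}_2(X,G)$ from above by exhibiting a specific $G$-invariant divisor in $|-2K_X|$ whose log canonical threshold is at most $2$: the natural choice is the divisor $D=C$ itself, viewed with coefficient $1$ (equivalently one checks that $(X,\lambda D/2)$ fails to be log canonical for $\lambda>2$). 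Since $C=\{z=0\}$ on a smooth del Pezzo surface of degree $1$, the pair $(X,\tfrac{1}{2}C)$ has $\tfrac{1}{2}$-coefficient, so $(X,\tfrac{\lambda}{2}C)$ has coefficient $\lambda/2$ along $C$; for $\lambda>2$ this coefficient exceeds $1$ and the pair is not log canonical (by the first bullet of Definition~\ref{definition:log canonical-singularities}, since $C$ is reduced and appears in $D$ with coefficient bigger than $1$). Hence $\mathrm{lct}_2(X,G)\leqslant 2$.

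Once that is in hand, the conclusion is immediate from the definition: $\mathrm{lct}(X,G)=\inf_n\mathrm{lct}_n(X,G)\leqslant\mathrm{lct}_2(X,G)\leqslant 2$. So the argument is essentially two lines after the curve $C$ is produced, and the only content beyond Lemma~\ref{lemma:dP1-2K} is the elementary observation that a reduced curve taken with coefficient strictly greater than $1$ violates the log canonical condition.

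I do not expect any real obstacle here; this is a routine corollary. The one point worth being slightly careful about is that the definition of $\mathrm{lct}_n$ in the introduction already builds in the normalization by $1/n$, so one should phrase the upper bound in terms of the pair $(X,\tfrac{\lambda}{2}D)$ with $D\in|-2K_X|$, rather than accidentally re-normalizing. A cleaner alternative, if one prefers to avoid even mentioning $\mathrm{lct}_2$, is to argue directly: for any $\lambda>2$ the $G$-invariant divisor $2D\in|-2K_X|$ — no, more simply, take $D=C\in|-2K_X|$ and note $(X,\tfrac{\lambda}{2}C)$ is not log canonical for $\lambda>2$, so by definition $\lambda\notin$ the set defining $\mathrm{lct}_2(X,G)$, giving $\mathrm{lct}_2(X,G)\leqslant 2$ and therefore $\mathrm{lct}(X,G)\leqslant 2$. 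Either formulation works; I would go with whichever matches the notational conventions used for the analogous corollaries elsewhere in the paper (e.g.\ Corollary~\ref{corollary:dP1-2K}'s siblings).
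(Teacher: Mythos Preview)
Your proposal is correct and matches the paper's approach: the paper states the corollary with no proof, treating it as immediate from Lemma~\ref{lemma:dP1-2K} and the definition of $\mathrm{lct}(X,G)$, which is exactly what you spell out. The only minor remark is that your parenthetical about $C$ being reduced is unnecessary---for any nonzero effective $D\in|-2K_X|$ the pair $(X,\tfrac{\lambda}{2}D)$ has some coefficient exceeding $1$ once $\lambda>2$---but this does not affect the argument.
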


The main purpose of this section is to prove the~following two
results.

\begin{theorem}
\label{theorem:dP1-lct-lct1} Suppose that there exists
a~$G$-invariant curve in $|-K_X|$. Then
$$
\mathrm{lct}\big(X,G\big)=\mathrm{lct}_1\big(X,G\big)\in\big\{5/6,1\big\}.
$$
\end{theorem}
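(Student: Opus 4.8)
The plan is to establish the theorem in two stages. First, since $X$ is a smooth del Pezzo surface of degree $1$ and $C \in |-K_X|$ is a $G$-invariant curve, we have $\xi = 1$ in the notation preceding Lemma~\ref{lemma:LCS-zero-dimensional}. By Corollary~\ref{corollary:dP1-2K} we already know $\mathrm{lct}(X,G) \leqslant 2$; here the presence of a curve in $|-K_X|$ forces $\mathrm{lct}_1(X,G) \leqslant 1$, hence $\mathrm{lct}(X,G) \leqslant 1$. To pin down the exact value, I would argue that $\mathrm{lct}(X,G) = \mathrm{lct}_1(X,G)$: for any $n \geqslant 2$ and any $G$-invariant $D \in |-nK_X|$, one uses Remark~\ref{remark:convexity} together with the fact that $-K_X$ is the generator of the $G$-invariant part of $\mathrm{Pic}(X)$ (Lemma~\ref{lemma:dP1-Bertini}) to reduce log canonicity questions to divisors numerically proportional to $C$; alternatively, apply Lemma~\ref{lemma:LCS-zero-dimensional} to see that for $\lambda < 1$ close to $1$ the locus $\mathrm{LCS}(X, \frac{\lambda}{n}D)$ is zero-dimensional and then apply Lemma~\ref{lemma:Nadel-IILC} with $\lceil \lambda/n - 1\rceil = 0$, so $\mathrm{LCS}$ is empty — giving $\mathrm{lct}_n(X,G) \geqslant 1 \geqslant \mathrm{lct}_1(X,G)$ for all $n \geqslant 1$.

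The second and harder stage is to show that $\mathrm{lct}_1(X,G) \in \{5/6, 1\}$, i.e.\ that it cannot be strictly smaller than $5/6$. Write $\mathcal{C} = \{C_1, \dots, C_k\}$ for the set of $G$-invariant curves in $|-K_X|$; each such $C_i$ is irreducible (it is a member of an anticanonical pencil on a degree-one del Pezzo, hence either irreducible or a union of components that $G$ would have to permute, but $-K_X$ is $G$-indecomposable in $\mathrm{Pic}(X)$ by Lemma~\ref{lemma:dP1-Bertini}, so $C_i$ is irreducible). For a single irreducible $C_i$ of arithmetic genus $1$, one has $\mathrm{lct}(X, C_i) \geqslant 5/6$ precisely when $C_i$ is not cuspidal, and $\mathrm{lct}(X, C_i) = 5/6$ when it has a cusp — this is exactly the degree-one case of Example~\ref{example:GAFA}. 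So the worst-case behaviour along any honest anticanonical curve is bounded below by $5/6$. The remaining point is that an arbitrary effective $G$-invariant $\mathbb{Q}$-divisor $D \sim_{\mathbb{Q}} -K_X$ need not be a single anticanonical curve, but by the convexity trick of Remark~\ref{remark:convexity} (subtracting off a fixed member $C_1 \in |-K_X|$) the computation reduces to the case where $D$ has no common component with $C_1$, whence $D \cdot C_1 = K_X^2 = 1$ controls the multiplicities of $D$.

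For that reduced situation I would use the local inequalities: if $(X, \lambda D)$ fails to be log canonical at a point $P$ for some $\lambda < 5/6$, then $\mathrm{mult}_P(D) > 1/\lambda > 6/5$, and combining this with $D \cdot C_1 = 1$ (so $\mathrm{mult}_P(D) \leqslant 1$ when $P \in C_1$ and $C_1 \not\subset \mathrm{Supp}(D)$) yields a contradiction unless $P$ lies off every $G$-invariant anticanonical curve — but by Corollary~\ref{corollary:connectedness} the set $\mathrm{LCS}(X, \lambda D)$ is connected and, being $G$-invariant and zero-dimensional by Lemma~\ref{lemma:LCS-zero-dimensional}, it must be a single $G$-fixed point; a $G$-fixed point on a degree-one del Pezzo is a base point of $|-K_X|$, which does lie on every anticanonical curve, giving the contradiction. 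The main obstacle I anticipate is the careful bookkeeping in this last step: ruling out the cuspidal-versus-non-cuspidal dichotomy cleanly, and handling the blow-up inequality of Remark~\ref{remark:blow-up-inequality} to propagate the bound $\mathrm{mult}_P(D) > 6/5$ through a sequence of infinitely near points while keeping track of $G$-invariance — this is where the precise threshold $5/6$ (rather than some smaller number) gets extracted, and it will require the adjunction-type estimates of Lemma~\ref{lemma:adjunction}.
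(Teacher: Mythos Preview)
Your overall architecture is close to the paper's, but there are two genuine gaps and a structural inversion.

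First, the structure is backwards. Showing $\mathrm{lct}_1(X,G)\in\{5/6,1\}$ is the \emph{easy} part, not the ``harder stage'': every $G$-invariant curve in $|-K_X|$ is irreducible (by Lemma~\ref{lemma:dP1-Bertini}, as you note) of arithmetic genus~$1$, hence smooth, nodal, or cuspidal, with $\mathrm{lct}$ equal to $1$, $1$, or $5/6$ respectively. No $\mathbb{Q}$-divisors enter here. The substantive claim is $\mathrm{lct}(X,G)=\mathrm{lct}_1(X,G)$; when $\mathrm{lct}_1(X,G)=5/6$ this is immediate from Example~\ref{example:GAFA}, so only the case $\mathrm{lct}_1(X,G)=1$ requires work. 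Your Stage~1 attempt at this via Lemma~\ref{lemma:Nadel-IILC} is wrong: with $\lceil\lambda-1\rceil=0$ you get $|\mathrm{LCS}|\leqslant h^0(\mathcal{O}_X)=1$, not emptiness.

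Second, and more seriously, your final paragraph contains a false claim: ``a $G$-fixed point on a degree-one del Pezzo is a base point of $|-K_X|$''. This fails already for the trivial group, and in general $G$ can have many fixed points away from the base locus. What the paper uses instead is the observation you are missing: once you know $\mathrm{LCS}(X,\lambda D)=\{P\}$ with $P$ a $G$-fixed point, the \emph{unique} curve $C\in|-K_X|$ through $P$ (unique because $P$ is not the base point, or take any $G$-invariant member if it is) is automatically $G$-invariant, since $G$ fixes $P$ and acts on the pencil. Then Remark~\ref{remark:convexity} lets you assume $C\not\subset\mathrm{Supp}(D)$, and
\[
1>\lambda=\lambda D\cdot C\geqslant \lambda\,\mathrm{mult}_P(D)\geqslant 1
\]
gives the contradiction for any $\lambda<1$. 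No blow-ups, no adjunction, no threshold $5/6$ to ``extract'' --- the anticipated obstacles in your last sentence do not arise.
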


\begin{proof}
If $\mathrm{lct}_1(X,G)=5/6$, then $\mathrm{lct}(X,G)=5/6$ by
Example~\ref{example:GAFA}, since
$\mathrm{lct}_1(X,G)\in\{5/6,1\}$.

Suppose that  $\mathrm{lct}(X,G)<\mathrm{lct}_1(X,G)=1$. Let us
derive a contradiction.

There exists a~$G$-invariant effective $\mathbb{Q}$-divisor $D$ on
the~surface $X$ such that
$$
D\sim_{\mathbb{Q}} -K_{X}
$$
and the~log pair $(X,\lambda D)$ is strictly log canonical for
some rational number $\lambda<\mathrm{lct}_1(X,G)$.

By Theorem~\ref{theorem:connectedness} and
Lemma~\ref{lemma:Nadel-IILC}, the~locus $\mathrm{LCS}(X,\lambda
D)$ consists of a single point $P\in X$ such that $P$ is not
the~base point of the~pencil $|-K_{X}|$. Then $P$ is
$G$-invariant.

Let $C$ be the~unique curve in the~pencil $|-K_{X}|$ that passes
through $P$. Then $C$ is $G$-invariant, and we may assume that
$C\not\subseteq\mathrm{Supp}(D)$ (see
Remark~\ref{remark:convexity}). Then
$$
1>\lambda= \lambda D \cdot C\geqslant \lambda\mathrm{mult}_P\big(D\big)>1,%
$$
which is a contradiction.
\end{proof}

\begin{theorem}
\label{theorem:dP1-lct-lct2} Suppose that there are no
$G$-invariant curves in $|-K_X|$. Then
$$
1\leqslant\mathrm{lct}\big(X,G\big)=\mathrm{lct}_2\big(X,G\big)\leqslant 2.%
$$
\end{theorem}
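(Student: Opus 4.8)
The plan is to prove in turn that $\mathrm{lct}_2(X,G)\leqslant 2$, that $\mathrm{lct}(X,G)\geqslant 1$, and that $\mathrm{lct}(X,G)=\mathrm{lct}_2(X,G)$; since $\mathrm{lct}(X,G)\leqslant\mathrm{lct}_2(X,G)$ always, this yields the statement. The first is immediate: the reduced curve $\{z=0\}\in|-2K_X|$ is $G$-invariant (Lemma~\ref{lemma:dP1-2K}), so $(X,\frac{\lambda}{2}\{z=0\})$ has a component of coefficient exceeding $1$ whenever $\lambda>2$. I shall repeatedly use the following geometry, all consequences of Remark~\ref{remark:dP1-smoothness}: the pencil $|-K_X|$ has a unique base point $O$; every member of $|-K_X|$ is irreducible and reduced (a reducible or non-reduced member meets $-K_X$ with multiplicity $\geqslant 2$, whereas $(-K_X)^2=1$) and has multiplicity $1$ at $O$ (two general members meet only at $O$, with total intersection $1$); the point $O$ is the only $G$-fixed point of $X$, since every other point lies on a unique and hence $G$-invariant member of $|-K_X|$, which the hypothesis forbids; and the $G$-equivariant blow-up $\sigma\colon\widetilde X\to X$ of $O$ is a rational elliptic surface whose fibre class is $-K_{\widetilde X}$, all of whose fibres are irreducible, and whose exceptional curve $F$ is a section meeting each fibre in a smooth point. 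Finally $\mathrm{lct}_1(X,G)=+\infty$ because $|-K_X|$ has no $G$-invariant member, so $\mathrm{lct}(X,G)=\inf_{n\geqslant 2}\mathrm{lct}_n(X,G)$, and the smallest $\xi$ with a $G$-invariant curve in $|-\xi K_X|$ equals $2$.

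It thus suffices to show that every $G$-invariant effective $\mathbb Q$-divisor $D\sim_{\mathbb Q}-K_X$ satisfies both $\mathrm{lct}(X,D)\geqslant 1$ and $\mathrm{lct}(X,D)\geqslant\mathrm{lct}_2(X,G)$. Put $c=\mathrm{lct}(X,D)$ and suppose $c<2$; then $(X,cD)$ is strictly log canonical, and by Lemma~\ref{lemma:LCS-zero-dimensional} (using $\xi=2>c$) the set $\mathrm{LCS}(X,cD)$ is non-empty and finite, of cardinality at most $h^0(-\lceil c-1\rceil K_X)\leqslant h^0(-K_X)=2$ by Lemma~\ref{lemma:Nadel-IILC}. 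If $c<1$, then $-(K_X+cD)\sim_{\mathbb Q}(1-c)(-K_X)$ is ample, so $\mathrm{LCS}(X,cD)$ is connected (Corollary~\ref{corollary:connectedness}) and therefore a single $G$-fixed point $P$; if $P\neq O$ the member of $|-K_X|$ through $P$ is $G$-invariant, a contradiction, while if $P=O$ then, choosing $C\in|-K_X|$ not contained in $\mathrm{Supp}(D)$, we get $1=D\cdot C\geqslant\mathrm{mult}_O(D)\geqslant 1/c>1$, again a contradiction. Hence $c\geqslant 1$, which gives $\mathrm{lct}(X,G)\geqslant 1$ and $\mathrm{lct}_2(X,G)\geqslant 1$; so from now on I assume $1\leqslant c<\mathrm{lct}_2(X,G)\leqslant 2$ and derive a contradiction.

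If $\mathrm{LCS}(X,cD)$ is a single point it must be $O$ (otherwise a $G$-invariant member of $|-K_X|$ appears), and then the bounds $1/c\leqslant\mathrm{mult}_O(D)\leqslant 1$ force $c=1$ and $\mathrm{mult}_O(D)=1$. On $\widetilde X$ the proper transform $\widetilde D$ is then numerically $-K_{\widetilde X}$, i.e.\ an effective divisor of zero intersection with a fibre, hence a non-negative combination of irreducible fibres; as $\mathrm{LCS}(X,D)=\{O\}$ is zero-dimensional it is not a single fibre with coefficient $1$, so all its coefficients are less than $1$; since $F$ meets each such fibre in a single smooth point, $\mathrm{mult}_Q(\widetilde D)<1$ for every $Q\in F$, contradicting Remark~\ref{remark:blow-up-inequality}. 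There remains the case $\mathrm{LCS}(X,cD)=\{P_1,P_2\}$, which forces $c>1$ (for $c=1$ Lemma~\ref{lemma:Nadel-IILC} allows only $h^0(\mathcal O_X)=1$ point); this is the crux of the argument. Here $G$ swaps $P_1$ and $P_2$, neither is $O$, each $P_i$ lies on a unique member $C_i'\in|-K_X|$, and Lemma~\ref{lemma:Nadel-IILC} gives $P_1\notin C_2'$ and $P_2\notin C_1'$, so $C_1'\neq C_2'$ and $Z:=C_1'+C_2'$ is a $G$-invariant curve in $|-2K_X|$ meeting itself only in a node at $O$. I would then carry out a local analysis at $P_1$ and $P_2$ — using that $(X,cD)$ is strictly log canonical there, the identities $D\cdot C_i'=1$ and $D\cdot Z=2$, the adjunction inequalities of Lemma~\ref{lemma:adjunction}, Theorem~\ref{theorem:Cheltsov-Kosta} and Corollary~\ref{corollary:Cheltsov-Kosta} applied along the curves $C_i'$, and the blow-up inequality of Remark~\ref{remark:blow-up-inequality}, organised by whether $C_i'\subseteq\mathrm{Supp}(D)$ and by the singularity type of $C_i'$ at $P_i$ (for instance $C_i'\not\subseteq\mathrm{Supp}(D)$ forces $\mathrm{mult}_{P_i}(D)\,\mathrm{mult}_{P_i}(C_i')\leqslant 1$, hence $C_i'$ smooth at $P_i$, since otherwise $c\geqslant 2$) — to conclude that either $(X,\frac{c}{2}Z)$, or another explicit $G$-invariant curve of small anticanonical degree, fails to be log canonical, which contradicts $\mathrm{lct}_2(X,G)>c$, or else that the refined estimates drive $\mathrm{mult}_O(D)$ or $\mathrm{mult}_{P_i}(D)$ out of their admissible range. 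This two-point case is where essentially all of the work lies.
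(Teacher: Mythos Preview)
Your framework---the bound $\mathrm{lct}_2(X,G)\leqslant 2$ via the curve $\{z=0\}$, the inequality $\mathrm{lct}(X,G)\geqslant 1$ by ruling out a single non-KLT point when $c<1$, and the reduction of $\mathrm{lct}(X,G)=\mathrm{lct}_2(X,G)$ to the analysis of a two-point orbit $\{P_1,P_2\}$---matches the paper. There is a slip in your handling of a single non-KLT point at $O$ with $c\geqslant 1$: the chain $1/c\leqslant\mathrm{mult}_O(D)\leqslant 1$ does \emph{not} force $c=1$, so your conclusion that $\widetilde D$ is a combination of fibres is unjustified. The repair is simpler than what you wrote: after blowing up $O$, Remark~\ref{remark:blow-up-inequality} produces a single $G$-fixed non-KLT point $Q$ on the exceptional curve $F$, but a $G$-fixed point of $F$ corresponds to a $G$-invariant fibre of your elliptic fibration and hence to a $G$-invariant member of $|-K_X|$, which is excluded by hypothesis.

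The substantive gap is the two-point case, which you correctly identify as the heart of the proof but leave as a sketch. Your list of tools (adjunction along the $C_i'$, Theorem~\ref{theorem:Cheltsov-Kosta}, Remark~\ref{remark:blow-up-inequality}) does not suffice without geometric input that you never invoke. The paper exploits the standing hypothesis $\tau\in G$ in an essential way: it forces $P_1,P_2\in R=\{t=0\}$, and after blowing up $P_i$ the only $\tau$-fixed points on the exceptional curve $E_i$ are $\bar R\cap E_i$ and $\bar C_i\cap E_i$, which pins down the infinitely near non-KLT point as $Q_i=\bar R\cap E_i$ (and similarly after a second blow-up). The decisive construction is then to use the anticanonical double cover $\pi\colon X\to Q\subset\mathbb P^3$ onto a quadric cone and build $G$-invariant curves $Z_1,Z_2\in|-2K_X|$ as preimages of hyperplane sections of $Q$ tangent to the branch curve $\pi(R)$ at $\pi(P_1),\pi(P_2)$; these satisfy $\mathrm{mult}_{P_i}(Z_1+Z_2)=3$ and, via a case analysis on whether $Z_1=Z_2$ and on whether $(X,\tfrac{\lambda}{4}(Z_1+Z_2))$ is KLT, give the sharp multiplicity bounds that close the argument. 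Neither the involution $\tau$, the ramification curve $R$, nor the tangent curves $Z_i$ appears anywhere in your outline, and without them it is not clear how to locate the infinitely near non-KLT points or to manufacture the inequalities you need.
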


\begin{proof}
Arguing as in the~proof of Theorem~\ref{theorem:dP1-lct-lct1}, we
see that $\mathrm{lct}(X,G)\geqslant 1$. Then
$$
1\leqslant\mathrm{lct}\big(X,G\big)\leqslant\mathrm{lct}_2\big(X,G\big)\leqslant 2%
$$
by Corollary~\ref{corollary:dP1-2K}. Suppose that
$\mathrm{lct}(X,G)<\mathrm{lct}_2(X,G)$. Let us derive a
contradiction.

There exists a~$G$-invariant effective $\mathbb{Q}$-divisor $D$ on
the~surface $X$ such that
$$
D\sim_{\mathbb{Q}} -K_{X}
$$
and $(X,\lambda D)$ is strictly log canonical for some rational
number $\lambda<\mathrm{lct}_2(X,G)$.

By Lemmata~\ref{lemma:LCS-zero-dimensional},
\ref{lemma:Nadel-IILC} and \ref{lemma:dP1-Bertini}, the~locus
$\mathrm{LCS}(X,\lambda D)\ne\varnothing$ consists of exactly two
points, which are different from the~base point of the~pencil
$|-K_X|$.

Let $P_{1}$ and $P_{2}$ be two points in $\mathrm{LCS}(X, \lambda
D)$. Then
$$
\mathrm{mult}_{P_{1}}\big(D\big)=\mathrm{mult}_{P_{2}}\big(D\big)\geqslant\frac{1}{\lambda}>\frac{1}{2}.
$$

Let $C_{1}$ and $C_{2}$ be the~curves in $|-K_{X}|$ such that
$P_{1}\in C_{1}$ and $P_{2}\in C_{2}$. Then
$$
C_{1}\ne C_{2}
$$
by Lemma~\ref{lemma:LCS-zero-dimensional}. Note that $C_{1}+C_{2}$
is $G$-invariant and $C_{1}+C_{2}\sim -2K_{X}$.

By Remark~\ref{remark:convexity}, we may assume that $C_1$ and
$C_2$ are not contained in $\mathrm{Supp}(D)$. Then
$$
2=D\cdot\Big(C_1+C_{2}\Big)\geqslant\sum_{i=1}^{2}\mathrm{mult}_{P_i}\big(D\big)\mathrm{mult}_{P_i}\big(C_i\big)\geqslant 2\mathrm{mult}_{P_1}\big(D\big)=2\mathrm{mult}_{P_2}\big(D\big)>1,%
$$
which implies that
$\mathrm{mult}_{P_1}(D)=\mathrm{mult}_{P_2}(D)\leqslant 1$ and
$\mathrm{mult}_{P_1}(C_{1})=\mathrm{mult}_{P_2}(C_{2})=1$.

Let $\sigma\colon\bar{X}\to X$ be the~blow-up of the~surface $X$
at the~points $P_1$ and $P_2$, let $E_{1}$ and $E_{2}$ be
the~exceptional curves of the~morphism $\sigma$ such that
$\sigma(E_{1})=P_{1}$ and $\sigma(E_{2})=P_{2}$. Then
$$
K_{\bar{X}}+\lambda\bar{D}+\Big(\lambda\mathrm{mult}_{P_1}\big(D\big)-1\Big)E_1+\Big(\lambda\mathrm{mult}_{P_2}\big(D\big)-1\Big)E_2\sim_{\mathbb{Q}}\sigma^{*}\Big(K_X + \lambda D\Big),%
$$
where $\bar{D}$ is the~proper transform of the~divisor $D$ on
the~surface $\bar{X}$.

It follows from Remark~\ref{remark:blow-up-inequality} that there
are points $Q_{1}\in E_{1}$ and $Q_{2}\in E_{2}$ such that
$$
\mathrm{LCS}\Bigg(\bar{X},\lambda\bar{D}+\Big(\lambda\mathrm{mult}_{P_1}\big(D\big)-1\Big)E_1+\Big(\lambda\mathrm{mult}_{P_2}\big(D\big)-1\Big)E_2\Bigg)=\Big\{Q_{1},Q_{2}\Big\},
$$
as
$\lambda\mathrm{mult}_{P_1}(D)-1=\lambda\mathrm{mult}_{P_2}(D)-1<1$.
By Remark~\ref{remark:blow-up-inequality}, we have
\begin{equation}
\label{equation:dP1-inequality-I}
\mathrm{mult}_{P_{1}}\big(D\big)+\mathrm{mult}_{Q_{1}}\big(\bar{D}\big)=\mathrm{mult}_{P_{2}}\big(D\big)+\mathrm{mult}_{Q_{2}}\big(\bar{D}\big)\geqslant\frac{2}{\lambda}>1.
\end{equation}

Note that the~action of the~group $G$ on the~surface $X$ naturally
lifts to an action on $\bar{X}$.

Let $\bar{C}_{1}$ and $\bar{C}_{2}$ be the~proper transforms of
the~curves $C_{1}$ and $C_{2}$ on the~surface $\bar{X}$,
respectively.~Then
$$
1-\mathrm{mult}_{P_{1}}\big(D\big)=\bar{C}_{1}\cdot\bar{D}\geqslant\mathrm{mult}_{Q_{1}}\big(\bar{C}_{1}\big)\mathrm{mult}_{Q_{1}}\big(\bar{D}\big),
$$
which implies that $Q_{1}\not\in\bar{C}_{1}$ by
$(\ref{equation:dP1-inequality-I})$. Similarly, we see that
$Q_{2}\not\in\bar{C}_{2}$.

Let $R$ be a curve that is cut out on $X$ by $t=0$. Then $P_{1}\in
R\ni P_{2}$, since $\tau\in G$.

Let $\bar{R}$ be the~proper transform of the~curve $R$ on
the~surface $\bar{X}$.~Then
$$
Q_{1}=\bar{R}\cap E_{1},
$$
since $\bar{R}\cap E_{1}$ and $\bar{C}_{1}\cap E_{1}$ are the only
$\tau$-fixed points in $E_{1}$. Similarly, we see that
$Q_{2}=\bar{R}\cap E_{2}$.

By Remark~\ref{remark:convexity}, we may assume that
$\bar{R}\not\subseteq\mathrm{Supp}(\bar{D})$, since $R$ is smooth.
Then
$$
\mathrm{mult}_{Q_1}\big(\bar{D}\big)+\mathrm{mult}_{Q_2}\big(\bar{D}\big)\leqslant\bar{D}\cdot\bar{R}=3-\mathrm{mult}_{P_1}\big(D\big)-\mathrm{mult}_{P_2}\big(D\big),%
$$
which implies that
$\mathrm{mult}_{Q_1}(\bar{D})+\mathrm{mult}_{P_1}(D)=\mathrm{mult}_{Q_2}(\bar{D})+\mathrm{mult}_{P_2}(D)\leqslant
3/2$. Thus, we have
\begin{equation}
\label{equation:dP1-inequality-II}
\frac{3}{2}\geqslant\mathrm{mult}_{Q_1}\big(\bar{D}\big)+\mathrm{mult}_{P_1}\big(D\big)=\mathrm{mult}_{Q_2}\big(\bar{D}\big)+\mathrm{mult}_{P_2}\big(D\big)\geqslant\frac{2}{\lambda}>1.%
\end{equation}

The linear system $|-2K_{X}|$ induces a double cover $\pi\colon
X\to Q$ that is branched over $\pi(R)$, where $Q$ is an
irreducible quadric cone in $\mathbb{P}^{3}$. Let $\Pi_{1}$ and
$\Pi_{2}$ be the~planes in $\mathbb{P}^{3}$ such that
$$
\pi\big(P_{1}\big)\in\Pi_{1}\cap\Pi_{2}\ni \pi\big(P_{2}\big),
$$
the plane $\Pi_{1}$ is tangent to $\pi(R)$ at $\pi(P_1)$ and
$\Pi_{2}$ is tangent to $\pi(R)$ at   $\pi(P_2)$. Then
$$
\Pi_{1}\not\ni\mathrm{Sing}\big(Q\big)\not\in\Pi_{2},
$$
since $C_{1}$ and $C_{2}$ are smooth at $P_{1}$ and $P_{2}$
respectively. Then $\Pi_{1}\cap Q$ and $\Pi_{2}\cap Q$ are smooth.

Let $Z_{1}$ and $Z_{2}$ be curves in $|-2K_{X}|$ such that
$\pi(Z_{1})=\Pi_{1}\cap Q$ and  $\pi(Z_{2})=\Pi_{2}\cap Q$. Then
$$
Z_{1}+Z_{2}\in \big|-4K_{X}\big|
$$
and the~curve $Z_{1}+Z_{2}$ is $G$-invariant. Note that the~case
$Z_{1}=Z_{2}$ is also possible.

Suppose that $Z_{1}=Z_{2}$. It follows from
Remark~\ref{remark:convexity} that we may assume that
$Z_{1}\not\subset\mathrm{Supp}(D)$, as we have $Z_{1}\in
|-2K_{X}|$. It should be mentioned (we need this for
Corollary~\ref{corollary:dP1-lct-lct2-5-3}) that either
$$
\Bigg(X, \frac{5}{6} Z_{1}\Bigg)
$$
is strictly log canonical or the~log pair $(X,Z_{1})$ is strictly
log canonical. Then
$$
2=Z_1\cdot D\geqslant\mathrm{mult}_{P_1}\big(Z_{1}\big)\mathrm{mult}_{P_1}\big(D\big)+\mathrm{mult}_{P_2}\big(Z_{1}\big)\mathrm{mult}_{P_2}\big(D\big)\geqslant 2\mathrm{mult}_{P_1}\big(D\big)+2\mathrm{mult}_{P_2}\big(D\big)\geqslant\frac{4}{\lambda}>2,%
$$
by $(\ref{equation:dP1-inequality-I})$. The obtained contradiction
implies that $Z_{1}\ne Z_{2}$.

Note that
$\mathrm{mult}_{P_{1}}(Z_{1}+Z_{2})=\mathrm{mult}_{P_{1}}(Z_{1}+Z_{2})=3$
by construction. Suppose that
\begin{equation}
\label{equation:dP1-special-log-pair}
\Bigg(X,\frac{\lambda}{4}\Big(Z_1 + Z_2\Big)\Bigg)
\end{equation}
is KLT. By Remark~\ref{remark:convexity}, we may assume that
$\mathrm{Supp}(D)\cap Z_{1}$ and $\mathrm{Supp}(D)\cap Z_{2}$ are
finite subsets.

Let $\bar{Z}_{1}$ and $\bar{Z}_{2}$ be the~proper transforms of
the~curves $Z_{1}$ and $Z_{2}$ on the~surface $\bar{X}$,
respectively.~Then
$$
0\leqslant \bar{D}\cdot\Big(\bar{Z_1}+\bar{Z_2}\Big)=4-3\Big(\mathrm{mult}_{P_1}\big(D\big)+\mathrm{mult}_{P_2}\big(D\big)\Big)=4-6\mathrm{mult}_{P_1}\big(D\big)=4-6\mathrm{mult}_{P_2}\big(D\big),%
$$
since
$\mathrm{mult}_{P_{1}}(Z_{1}+Z_{2})=\mathrm{mult}_{P_{2}}(Z_{1}+Z_{2})=3$.
Then
\begin{equation}
\label{equation:dP1-inequality-III}
\mathrm{mult}_{P_1}\big(D\big)=\mathrm{mult}_{P_2}\big(D\big)\leqslant\frac{2}{3}.
\end{equation}

Let  $\rho\colon\tilde{X} \to \bar{X}$ be a blow up of the~surface
$\bar{X}$ at the~points $Q_{1}$ and $Q_{2}$, let $F_{1}$ and
$F_{2}$ be the~exceptional curves of the~morphism $\rho$ such that
$\rho(F_{1})=Q_{1}$ and $\rho(F_{2})=Q_{2}$. Then
$$
K_{\tilde{X}}+\lambda\tilde{D}+\sum_{i=1}^2\Big(\lambda \mathrm{mult}_{P_i}\big(D\big)-1\Big)\tilde{E}_i+\sum_{i=1}^2\Big(\lambda \mathrm{mult}_{Q_i}\big(\bar{D}\big)+\lambda\mathrm{mult}_{P_i}\big(D\big)-2\Big)F_i \sim_{\mathbb{Q}}\big(\sigma\circ\rho\big)^{*}\Big(K_{X}+\lambda D\Big),%
$$
where $\tilde{D}$ and $\tilde{E}_{i}$ are proper transforms of
the~divisors $D$ and $E_{i}$ on the~surface $\tilde{X}$,
respectively.

It follows from Remark~\ref{remark:blow-up-inequality} that there
are points $O_{1}\in F_{1}$ and $O_{2}\in F_{2}$ such that
$$
\mathrm{LCS}\Bigg(\tilde{X}, \lambda\tilde{D}+\sum_{i=1}^2\Big(\lambda \mathrm{mult}_{P_i}\big(D\big)-1\Big)\tilde{E}_i+\sum_{i=1}^2\Big(\lambda \mathrm{mult}_{Q_i}\big(\bar{D}\big)+\lambda\mathrm{mult}_{P_i}\big(D\big)-2\Big)F_i\Bigg)=\Big\{O_{1},O_{2}\Big\},%
$$
as $\lambda
\mathrm{mult}_{Q_1}(\bar{D})+\lambda\mathrm{mult}_{P_1}(D)-2=\lambda
\mathrm{mult}_{Q_2}(\bar{D})+\lambda\mathrm{mult}_{P_2}(D)-2<1$ by
$(\ref{equation:dP1-inequality-II})$.

The~action of the~group $G$ on the~surface $\bar{X}$ naturally
lifts to an action on $\tilde{X}$ such that the~curves $F_{1}$
and $F_{2}$ contain exactly two points that are fixed by $\tau$,
respectively.

Let $\tilde{R}$ be the~proper transform of the~curve $R$ on
the~surface $\tilde{X}$. Then
\begin{itemize}
\item either $O_{1}=\tilde{E}_{1}\cap F_{1}$ and $O_{2}=\tilde{E}_{2}\cap F_{2}$,%
\item or $O_{1}=\tilde{R}\cap F_{1}$ and $O_{2}=\tilde{R}\cap F_{2}$.%
\end{itemize}

Suppose that $O_{1}=\tilde{E}_{1}\cap F_{1}$ and
$O_{2}=\tilde{E}_{2}\cap F_{2}$. It follows from
Lemma~\ref{lemma:adjunction} that
$$
2\lambda\mathrm{mult}_{P_1}\big(D\big)-2=\Bigg(\lambda\tilde{D}+\Big(\lambda\mathrm{mult}_{Q_1}\big(\bar{D}\big)+\lambda\mathrm{mult}_{P_1}\big(D\big)-2\Big)F_1\Bigg)\cdot\tilde{E}_1>1,%
$$
which implies that $\mathrm{mult}_{P_1}(D)>3/4$, which is
impossible by $(\ref{equation:dP1-inequality-III})$.

Thus, we see that $O_{1}=\tilde{R}\cap F_{1}$ and
$O_{2}=\tilde{R}\cap F_{2}$. Then
$$
\mathrm{LCS}\Bigg(\tilde{X}, \lambda\tilde{D}+\sum_{i=1}^2\Big(\lambda \mathrm{mult}_{Q_i}\big(\bar{D}\big)+\lambda\mathrm{mult}_{P_i}\big(D\big)-2\Big)F_i\Bigg)=\Big\{O_{1},O_{2}\Big\},%
$$
since $O_{1}\not\in\tilde{E}_{1}$ and
$O_{2}\not\in\tilde{E}_{2}$. Then it follows from
Remark~\ref{remark:blow-up-inequality} that
\begin{equation}
\label{equation:dP1-inequality-V}
\mathrm{mult}_{O_{1}}\big(\tilde{D}\big)+\mathrm{mult}_{Q_1}\big(\bar{D}\big)+\mathrm{mult}_{P_1}\big(D\big)=\mathrm{mult}_{O_{2}}\big(\tilde{D}\big)+\mathrm{mult}_{Q_2}\big(\bar{D}\big)+\mathrm{mult}_{P_2}\big(D\big)\geqslant\frac{3}{\lambda},%
\end{equation}
as $\lambda
\mathrm{mult}_{Q_i}(\bar{D})+\lambda\mathrm{mult}_{P_i}(D)-2\geqslant
0$ by $(\ref{equation:dP1-inequality-II})$. But
$$
3-\Big(\mathrm{mult}_{P_1}\big(D\big)+\mathrm{mult}_{P_2}\big(D\big)+\mathrm{mult}_{Q_1}\big(\bar{D}\big)+\mathrm{mult}_{Q_2}\big(\bar{D}\big)\Big)=\tilde{R}\cdot\tilde{D}\geqslant\mathrm{mult}_{O_1}\big(\tilde{D}\big)+\mathrm{mult}_{O_2}\big(\tilde{D}\big),%
$$
which contradicts $(\ref{equation:dP1-inequality-V})$, since
$\lambda<\mathrm{lct}_{2}(X,G)\leqslant 2$.

The obtained contradiction shows that
$(\ref{equation:dP1-special-log-pair})$ is not KLT.

It should be pointed out that we may apply all arguments we
already used for our~original log pair $(X,\lambda D)$ to the~log
pair $(\ref{equation:dP1-special-log-pair})$ with one exception:
we can not use $(\ref{equation:dP1-inequality-III})$.~Then
$$
\frac{3}{2}\geqslant\frac{\mathrm{mult}_{Q_1}\big(\bar{Z}_{1}+\bar{Z}_{2}\big)}{4}+\frac{\mathrm{mult}_{P_1}\big(Z_{1}+Z_{2}\big)}{4}=\frac{\mathrm{mult}_{Q_2}\big(\bar{Z}_{1}+\bar{Z}_{2}\big)}{4}+\frac{\mathrm{mult}_{P_2}\big(Z_{1}+Z_{2}\big)}{4}\geqslant\frac{2}{\lambda}>1%
$$
by $(\ref{equation:dP1-inequality-II})$. But
$\mathrm{mult}_{P_1}(Z_1+Z_2)=\mathrm{mult}_{P_2}(Z_1+Z_2)=3$.
Thus, we see that
$$
3\geqslant\mathrm{mult}_{Q_1}\big(\bar{Z}_{1}+\bar{Z}_{2}\big)=\mathrm{mult}_{Q_2}\big(\bar{Z}_{1}+\bar{Z}_{2}\big)\geqslant\frac{8}{\lambda}-3>1,%
$$
which implies that one of the~following two cases holds:
\begin{itemize}
\item either $\mathrm{mult}_{Q_1}(\bar{Z}_{1}+\bar{Z}_{2})=\mathrm{mult}_{Q_2}(\bar{Z}_{1}+\bar{Z}_{2})=2$,%
\item or $\mathrm{mult}_{Q_1}(\bar{Z}_{1}+\bar{Z}_{2})=\mathrm{mult}_{Q_2}(\bar{Z}_{1}+\bar{Z}_{2})=3$.%
\end{itemize}

It follows from the~construction of the~curves $Z_{1}$ and $Z_{2}$
that
$$
\bar{Z}_{2}\cap E_{1}=\bar{C}_{1}\cap E_{1}\ne Q_{1}\in\bar{R}\ni Q_{2}\ne \bar{C}_{2}\cap E_{2}=\bar{Z}_{1}\cap E_{2},%
$$
because $Z_{1}$ is smooth at the~point $P_{2}$ and $Z_{2}$ is
smooth at the~point $P_{1}$. Hence, we must have
$$
\mathrm{mult}_{Q_1}\big(\bar{Z}_{1}+\bar{Z}_{2}\big)=\mathrm{mult}_{Q_2}\big(\bar{Z}_{1}+\bar{Z}_{2}\big)=\mathrm{mult}_{Q_1}\big(\bar{Z}_{1}\big)=\mathrm{mult}_{Q_2}\big(\bar{Z}_{2}\big)=2,
$$
as $2=\mathrm{mult}_{P_1}(Z_{1})\geqslant
\mathrm{mult}_{Q_1}(\bar{Z}_{1})$ and
$2=\mathrm{mult}_{P_2}(Z_{2})\geqslant
\mathrm{mult}_{Q_2}(\bar{Z}_{2})$.

Let $\tilde{Z}_{i}$ be the~proper transforms of the~curve $Z_{i}$
on the~surface $\tilde{X}$. Then
$$
\varnothing\ne\mathrm{LCS}\Bigg(\tilde{X}, \frac{\lambda}{4}\Big(\tilde{Z}_{1}+\tilde{Z}_{2}\Big)+\frac{3\lambda-4}{4}\Big(\tilde{E}_{1}+\tilde{E}_{2}\Big)+\frac{5\lambda-8}{4}\Big(F_{1}+F_{2}\Big)\Bigg)\subsetneq F_{1}\cup F_{2},%
$$
since $3\lambda/4-1<1$ and $5\lambda/4-2<1$. On the~other hand, we
know that
$$
\tilde{Z}_{1}\cap\tilde{E}_{1}=\varnothing=\tilde{Z}_{2}\cap\tilde{E}_{2},$$
as we have
$\mathrm{mult}_{P_1}(Z_{1})=\mathrm{mult}_{Q_1}(\bar{Z}_{1})$ and
$\mathrm{mult}_{P_2}(Z_{2})=\mathrm{mult}_{Q_2}(\bar{Z}_{2})$.
Then
$$
\mathrm{LCS}\Bigg(\tilde{X}, \frac{\lambda}{4}\Big(\tilde{Z}_{1}+\tilde{Z}_{2}\Big)+\frac{5\lambda-8}{4}\Big(F_{1}+F_{2}\Big)\Bigg)=\Big\{\tilde{R}\cap F_{1},\tilde{R}\cap F_{2}\Big\}.%
$$

We can put $O_{1}=\tilde{R}\cap F_{1}$ and $O_{2}=\tilde{R}\cap
F_{i}$. Since $5\lambda/4-2\geqslant 0$, we must have
$$
\frac{\lambda}{4}\mathrm{mult}_{O_{1}}\big(\tilde{Z}_{1}\big)+\frac{5\lambda-8}{4}=\frac{\lambda}{4}\mathrm{mult}_{O_{2}}\big(\tilde{Z}_{2}\big)+\frac{5\lambda-8}{4}\geqslant 1,%
$$
which implies that $\mathrm{mult}_{O_{1}}(\tilde{Z}_{1})\geqslant
12/\lambda-5$ and $\mathrm{mult}_{O_{2}}(\tilde{Z}_{2})\geqslant
12/\lambda-5$. Whence
$$
2=\tilde{R}\cdot\Big(\tilde{Z}_{1}+\tilde{Z}_{2}\Big)\geqslant\mathrm{mult}_{O_{1}}\big(\tilde{Z}_{1}\big)+\mathrm{mult}_{O_{2}}\big(\tilde{Z}_{2}\big)\geqslant\frac{24}{\lambda}-10>2,%
$$
as $\lambda<2$. The obtained contradiction implies that
$(\ref{equation:dP1-special-log-pair})$ is KLT. In fact, we proved
that
$$
\Bigg(X,\frac{1}{2}\Big(Z_1 + Z_2\Big)\Bigg)
$$
is log canonical (this is only important for
Corollary~\ref{corollary:dP1-lct-lct2-5-3}).
\end{proof}

Arguing as in the~proof of Theorem~\ref{theorem:dP1-lct-lct2}, we
obtain the~following two corollaries.

\begin{corollary}
\label{corollary:dP1-lct-lct2-5-3} If there are no $G$-invariant
curves in $|-K_X|$, then $\mathrm{lct}(X,G)\in\{5/3, 2\}$.
\end{corollary}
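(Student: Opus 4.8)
\emph{Proposal.} The strategy is to combine Theorem~\ref{theorem:dP1-lct-lct2} with a sharpening of the blow-up computation used in its proof. By Theorem~\ref{theorem:dP1-lct-lct2} we already have $\mathrm{lct}(X,G)=\mathrm{lct}_{2}(X,G)$ and $1\leqslant\mathrm{lct}(X,G)\leqslant 2$, and by Lemma~\ref{lemma:dP1-2K} the system $|-2K_X|$ contains a $G$-invariant curve, so
$$
\mathrm{lct}_{2}\big(X,G\big)=\inf\big\{2\,\mathrm{lct}(X,D)\ \big|\ D\in|-2K_X|\ \text{is $G$-invariant}\big\}.
$$
Hence it suffices to prove that $\mathrm{lct}_{2}(X,G)=5/3$ whenever $\mathrm{lct}_{2}(X,G)<2$. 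One inequality is immediate: if $\mathrm{lct}_{2}(X,G)<2$ there is a $G$-invariant $D\in|-2K_X|$ with $\mathrm{lct}(X,D)<1$, and the log canonical threshold of any curve on a smooth surface lies in $(0,5/6]\cup\{1\}$ (it is $1$ exactly when the curve has at worst nodes, and at most $5/6$ otherwise), so $\mathrm{lct}(X,D)\leqslant 5/6$ and therefore $\mathrm{lct}_{2}(X,G)\leqslant 5/3$.

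For the reverse inequality I would show that $(X,\tfrac56 D)$ is log canonical for every $G$-invariant $D\in|-2K_X|$. Assume not, put $\hat D=\tfrac12 D\sim_{\mathbb Q}-K_X$, and choose a rational $\lambda<5/3$ with $(X,\lambda\hat D)$ strictly log canonical; this is exactly the input of the proof of Theorem~\ref{theorem:dP1-lct-lct2}, so I would run that proof essentially verbatim. It produces two points $P_1,P_2$ interchanged by $G$ and distinct from the base point of the pencil $|-K_X|$, the members $C_1\ni P_1$ and $C_2\ni P_2$ of that pencil, the blow-ups $\sigma\colon\bar X\to X$ of $P_1,P_2$ and $\rho\colon\tilde X\to\bar X$ of the points $Q_i=\bar R\cap E_i$ on the proper transform of $R=\{t=0\}$, and the curves $Z_1,Z_2\in|-2K_X|$ built from the planes $\Pi_i$ tangent to $\pi(R)$ at $\pi(P_i)$, with $Z_1+Z_2\in|-4K_X|$ $G$-invariant and $\mathrm{mult}_{P_i}(Z_i)=2$. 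In the proof of Theorem~\ref{theorem:dP1-lct-lct2} one kept from this only that $(X,\tfrac12(Z_1+Z_2))$ is log canonical and (in one sub-case) that $\mathrm{mult}_{P_i}(\hat D)\leqslant 2/3$; now one must retain all the coefficients. Feeding the multiplicities of $\hat D,\bar D,\tilde D$ and of $Z_1,Z_2$ into the intersection numbers $\bar D\cdot(\bar Z_1+\bar Z_2)$, $\tilde R\cdot\tilde D$, $\tilde R\cdot(\tilde Z_1+\tilde Z_2)$ and applying Remark~\ref{remark:blow-up-inequality} and Lemma~\ref{lemma:adjunction} at the infinitely near points, the aim is the sharper bound $\mathrm{mult}_{P_i}(\hat D)\leqslant 3/5$; since strict log canonicity at $P_i$ gives $\mathrm{mult}_{P_i}(\hat D)\geqslant 1/\lambda$, this forces $\lambda\geqslant 5/3$, a contradiction. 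Thus $\mathrm{lct}_{2}(X,G)\geqslant 5/3$, and combined with the previous paragraph $\mathrm{lct}_{2}(X,G)=5/3$.

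The hard part is this last step: one has to carry out the whole cascade of blow-ups and adjunction (and, via Remark~\ref{remark:convexity}, convexity) estimates from Theorem~\ref{theorem:dP1-lct-lct2}, this time keeping every boundary discrepancy — those on $E_i$ and $F_i$ being of the shape $\tfrac{3\lambda-4}{4}$ and $\tfrac{5\lambda-8}{4}$ — so that the crude estimate $\mathrm{mult}_{P_i}(\hat D)\leqslant 2/3$ gets improved to $3/5$; and when some of these discrepancies become negative, i.e. for the smaller admissible values of $\lambda$, one must either insert a further round of blow-ups or argue directly using the $G$-orbit of $P_1$. The genuinely new geometric ingredient, compared with Theorem~\ref{theorem:dP1-lct-lct2}, is the dichotomy — possible only because $X$ is smooth — according to which the plane $\Pi_i$ is either simply tangent to the branch sextic $\pi(R)$ at $\pi(P_i)$, so that $Z_i$ is nodal there, or has higher contact, so that $Z_i$ is cuspidal there; it is precisely this alternative that pins the exceptional value to $5/3$ rather than to some smaller number, and handling it carefully is the main obstacle.
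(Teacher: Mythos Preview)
Your overall framework is right and matches the paper's: by Theorem~\ref{theorem:dP1-lct-lct2} one has $\mathrm{lct}(X,G)=\mathrm{lct}_2(X,G)$, and it remains to show $\mathrm{lct}_2(X,G)\in\{5/3,2\}$. Your argument for $\mathrm{lct}_2(X,G)\leqslant 5/3$ when it is $<2$, via the gap in log canonical thresholds of curve singularities on a smooth surface, is correct and is exactly what the parenthetical remark about $Z_1$ in the proof of Theorem~\ref{theorem:dP1-lct-lct2} encodes.

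The gap is in your plan for the bound $\mathrm{lct}_2(X,G)\geqslant 5/3$. You propose to sharpen the estimate $\mathrm{mult}_{P_i}(\hat D)\leqslant 2/3$ to $\mathrm{mult}_{P_i}(\hat D)\leqslant 3/5$ by ``keeping every boundary discrepancy'' and possibly ``insert[ing] a further round of blow-ups''. This is not the mechanism, and no such improvement is available or needed. Look again at the proof of Theorem~\ref{theorem:dP1-lct-lct2}: the final contradiction comes from $\lambda<2$, not from any multiplicity bound at $P_i$. The \emph{only} place where the stronger hypothesis $\lambda<\mathrm{lct}_2(X,G)$ (rather than $\lambda<2$) enters is in the applications of Remark~\ref{remark:convexity}, which require the auxiliary log pairs $(X,\tfrac{\lambda}{2}(C_1+C_2))$, $(X,\tfrac{\lambda}{3}R)$, and (when $Z_1=Z_2$) $(X,\tfrac{\lambda}{2}Z_1)$ to be KLT. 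The two parenthetical remarks in the proof---that $(X,\tfrac{5}{6}Z_1)$ is log canonical when $Z_1=Z_2$, and that $(X,\tfrac{1}{2}(Z_1+Z_2))$ is log canonical---are there precisely to tell you that these auxiliary pairs are KLT once $\lambda<5/3$ (the pair with $C_1+C_2$ is handled because anticanonical curves have at worst cusps, hence $\mathrm{lct}(X,C_1+C_2)\geqslant 5/6$; the pair with $R$ because $R$ is smooth). With those checks in place, the entire proof of Theorem~\ref{theorem:dP1-lct-lct2} runs verbatim under the weaker assumption $\lambda<5/3$ and produces the same contradiction. That is the whole argument; there is no new cascade of blow-ups and no refinement of $(\ref{equation:dP1-inequality-III})$. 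Your ``dichotomy'' about $Z_i$ being nodal or cuspidal is indeed the geometric content, but it is used only to verify $\mathrm{lct}(X,Z_1)\geqslant 5/6$, not to feed into a longer computation.
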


\begin{corollary}
\label{corollary:dP1-lct} We have $\mathrm{lct}(X,G)\in\{5/6, 1,
5/3, 2\}$.
\end{corollary}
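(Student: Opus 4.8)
The plan is to reduce Corollary~\ref{corollary:dP1-lct} to the two results that immediately precede it by a trivial dichotomy on the anticanonical system. First I would split into cases according to whether $|-K_X|$ contains a $G$-invariant curve. If it does, Theorem~\ref{theorem:dP1-lct-lct1} gives $\mathrm{lct}(X,G)=\mathrm{lct}_1(X,G)\in\{5/6,1\}$. If it does not, Corollary~\ref{corollary:dP1-lct-lct2-5-3} gives $\mathrm{lct}(X,G)\in\{5/3,2\}$. These two alternatives are exhaustive, so $\mathrm{lct}(X,G)$ lies in $\{5/6,1\}\cup\{5/3,2\}=\{5/6,1,5/3,2\}$, which is exactly the assertion. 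At this level there is no real obstacle: the statement is pure bookkeeping once the two feeder results are in hand, and the upper bound $\mathrm{lct}(X,G)\leqslant 2$ is in any case already recorded by Corollary~\ref{corollary:dP1-2K}.

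For completeness I would note that the hypotheses of both feeder results are automatically met here. Because $\tau\in G$, Lemma~\ref{lemma:dP1-Bertini} shows that the $G$-invariant subgroup of $\mathrm{Pic}(X)$ is generated by $-K_X$; the relevant divisors are $G$-invariant by construction; and $\mathrm{Aut}(X)$ is finite. Hence the connectedness and adjunction machinery invoked in the proofs of Theorems~\ref{theorem:dP1-lct-lct1} and \ref{theorem:dP1-lct-lct2} applies verbatim, and nothing extra needs to be checked before quoting them.

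The one place where genuine work hides is Corollary~\ref{corollary:dP1-lct-lct2-5-3}, which I am entitled to assume but whose shape explains the gap $(1,5/3)$ in the final list. Theorem~\ref{theorem:dP1-lct-lct2} only delivers $\mathrm{lct}(X,G)=\mathrm{lct}_2(X,G)\in[1,2]$ when $|-K_X|$ has no $G$-invariant member; to collapse this interval onto $\{5/3,2\}$ one reruns the blow-up analysis of that proof while tracking the exact multiplicities forced on an extremal $G$-invariant curve in $|-2K_X|$. Via the double cover $\pi\colon X\to Q$ such a curve is the preimage of a hyperplane section of the cone $Q$, and the book-keeping on the curves $R$, $Z_1$, $Z_2$ already carried out inside the proof of Theorem~\ref{theorem:dP1-lct-lct2}—notably the conclusions that $\big(X,\tfrac{\lambda}{4}(Z_1+Z_2)\big)$ is KLT and that $\big(X,\tfrac12(Z_1+Z_2)\big)$ is log canonical—is precisely what collapses $[1,2]$ onto the two points $5/3$ and $2$. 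The underlying numerology (a cuspidal curve has log canonical threshold $5/6$, so a $G$-invariant cuspidal curve in $|-2K_X|$ pins $\mathrm{lct}_2$ at $2\cdot\tfrac56=\tfrac53$, and in its absence $\mathrm{lct}_2=2$) is the substantive content, and that computation is the part of the section I would expect to be the main obstacle.
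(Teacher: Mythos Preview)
Your argument is correct and matches the paper's approach: the corollary is stated immediately after Corollary~\ref{corollary:dP1-lct-lct2-5-3} as a direct consequence, and your dichotomy on whether $|-K_X|$ contains a $G$-invariant curve, invoking Theorem~\ref{theorem:dP1-lct-lct1} in one case and Corollary~\ref{corollary:dP1-lct-lct2-5-3} in the other, is exactly the intended derivation. Your additional commentary on how the interval $[1,2]$ collapses to $\{5/3,2\}$ correctly identifies where the real work lies, though for the corollary itself you are entitled to quote Corollary~\ref{corollary:dP1-lct-lct2-5-3} as given.
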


Using description of the~group $\mathrm{Aut}(X)$ (see
\cite{DoIs06}), we obtain the~following result.

\begin{corollary}
\label{corollary:dP1-main} The following conditions are
equivalent:
\begin{itemize}
\item the~inequality $\mathrm{lct}(X,\mathrm{Aut}(X))>1$ holds,%

\item either $\mathrm{lct}(X,\mathrm{Aut}(X))=5/3$ or $\mathrm{lct}(X,\mathrm{Aut}(X))=2$,%

\item the~pencil $|-K_X|$ does not contain $G$-invariant curves,%

\item the~group $\mathrm{Aut}(X)$ is not Abelian.%
\end{itemize}
\end{corollary}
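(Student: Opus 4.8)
The plan is to establish the circle $\mathrm{(1)}\Rightarrow\mathrm{(2)}\Rightarrow\mathrm{(3)}\Rightarrow\mathrm{(4)}\Rightarrow\mathrm{(1)}$, numbering the four conditions by their order in the statement; almost everything will come from the results of this section, and only the implication $\mathrm{(4)}\Rightarrow\mathrm{(3)}$ will require extra work. Put $G=\mathrm{Aut}(X)$. Writing $F=t^2-z^3-zf_4-f_6$ for the defining equation, every automorphism $g$ of $X$ satisfies $g^{\ast}F=\lambda F$ for some scalar $\lambda$, and comparing the parts odd in $t$ forces $g^{\ast}t=\beta t$ for a scalar $\beta$; hence $\tau$ is central in $\mathrm{Aut}(X)$, so in particular $\tau\in G$, and by Lemma~\ref{lemma:dP1-Bertini} the $G$-invariant part of $\mathrm{Pic}(X)$ equals $\mathbb{Z}(-K_X)$. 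Thus every result of this section applies to $G$.

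By Corollary~\ref{corollary:dP1-lct} we have $\mathrm{lct}(X,G)\in\{5/6,1,5/3,2\}$, so $\mathrm{(1)}\Leftrightarrow\mathrm{(2)}$. If $|-K_X|$ contains a $G$-invariant curve, then $\mathrm{lct}(X,G)\in\{5/6,1\}$ by Theorem~\ref{theorem:dP1-lct-lct1}, so $\mathrm{(2)}$ fails; and if $|-K_X|$ contains no $G$-invariant curve, then $\mathrm{lct}(X,G)\in\{5/3,2\}$ by Theorem~\ref{theorem:dP1-lct-lct2} and Corollary~\ref{corollary:dP1-lct-lct2-5-3}. Hence $\mathrm{(2)}\Leftrightarrow\mathrm{(3)}$. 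Finally, if $G$ is Abelian then $\mathrm{lct}(X,G)\leqslant 1$ by Remark~\ref{remark:abelian-groups}, so $\mathrm{(1)}$ forces $G$ to be non-Abelian; combined with the equivalences just proved this gives $\mathrm{(3)}\Rightarrow\mathrm{(4)}$. So far nothing about $\mathrm{Aut}(X)$ beyond the centrality of $\tau$ has been used.

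For the remaining implication $\mathrm{(4)}\Rightarrow\mathrm{(3)}$, note that $-K_X=\mathcal{O}_X(1)$, so $|-K_X|$ is the pencil with coordinate $[x:y]$, on which $G$ acts through a finite subgroup $\bar G\subseteq\mathrm{PGL}_2(\mathbb{C})$; a $G$-invariant member exists exactly when $\bar G$ fixes a point of $\mathbb{P}^1$, i.e. exactly when $\bar G$ is cyclic. It therefore suffices to show that $\bar G$ cyclic implies $G$ Abelian. Here one uses, again from the equation, that every $g\in G$ acts by $g^{\ast}z=\alpha z$ and $g^{\ast}t=\beta t$ for scalars $\alpha,\beta$ (vanishing of the coefficient of $z^2$ in $g^{\ast}F$ kills the possible quadratic term in $g^{\ast}z$), so the kernel $K$ of $G\to\bar G$ consists of scalings of $z$ and $t$ alone; thus $K$ is cyclic — namely $\langle\tau\rangle$ if $f_4\neq 0$, and of order $6$ if $f_4=0$ — and central in $G$. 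If $\bar G=\langle\bar g\rangle$ is cyclic and $g$ is a lift of $\bar g$, then $G=K\langle g\rangle$ with $K$ central, so $G$ is Abelian, as required. The main obstacle is precisely this step, i.e. controlling the extension $1\to K\to\mathrm{Aut}(X)\to\bar G\to 1$; once one knows that every automorphism of $X$ scales both $z$ and $t$ — so that $K$ is central — it becomes routine. Alternatively, one may run through the classification of the groups $\mathrm{Aut}(X)$ in \cite{DoIs06}, as the wording of the corollary suggests, and verify the dichotomy between $\mathrm{(3)}$ and $\mathrm{(4)}$ directly from the list.
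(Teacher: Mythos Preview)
Your argument is correct. The equivalences $(1)\Leftrightarrow(2)\Leftrightarrow(3)$ and the implication $(3)\Rightarrow(4)$ are exactly what the paper's surrounding results give, and your presentation of these is clean.

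Where you differ from the paper is in $(4)\Rightarrow(3)$. The paper does not argue this directly: it simply writes ``Using description of the group $\mathrm{Aut}(X)$ (see \cite{DoIs06})'' and reads the dichotomy off the classification tables there. You instead give a self-contained structural argument: by inspecting the defining equation $t^2=z^3+zf_4+f_6$ you show that every automorphism of $X$ must scale both $z$ and $t$, so the kernel $K$ of $\mathrm{Aut}(X)\to\bar G\subset\mathrm{PGL}_2(\mathbb{C})$ is a central cyclic group (of order $2$ if $f_4\ne 0$, of order $6$ if $f_4=0$); then $\bar G$ cyclic forces $\mathrm{Aut}(X)=K\langle g\rangle$ to be Abelian. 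This is genuinely more elementary than the paper's route, since it avoids any appeal to the Dolgachev--Iskovskikh tables and works uniformly for all $X$. The paper's approach, in exchange, requires no computation with the equation but treats the classification as a black box. You correctly note both options at the end.
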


Let us show how to compute $\mathrm{lct}(X,G)$  in one case.

\begin{lemma}\label{lemma:dP1-D12} If
$f_4(x,y)=x^2y^2$ and $f_6(x,y)=x^6+y^6+x^3y^3$, then
$\mathrm{lct}(X,\mathrm{Aut}(X))=2$.
\end{lemma}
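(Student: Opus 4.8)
The plan is to determine $\mathrm{Aut}(X)$, read off the $\mathrm{Aut}(X)$-invariant members of $|-K_X|$ and $|-2K_X|$, and then conclude via Theorem~\ref{theorem:dP1-lct-lct2}.

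First I would compute the automorphism group. With $X$ given by $t^2=z^3+zx^2y^2+x^6+y^6+x^3y^3$, any automorphism has the form $(x:y:z:t)\mapsto(\ell(x,y):\alpha z+q(x,y):\beta t)$, and comparing coefficients of powers of $z$ after substitution forces $q=0$ and makes $\ell$ multiply both $x^2y^2$ and $x^6+y^6+x^3y^3$ by scalars (with $\alpha,\beta$ then determined). Preserving $x^2y^2$ up to a scalar restricts $\ell$ to the diagonal torus and the transposition $x\leftrightarrow y$; preserving $x^6+y^6+x^3y^3$ up to a scalar cuts this down to the copy of $\mathbb{S}_3$ generated by $\phi\colon(x:y:z:t)\mapsto(x:\omega y:\omega z:t)$ and $\sigma\colon(x:y:z:t)\mapsto(y:x:z:t)$, where $\omega$ is a primitive cube root of unity. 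Together with $\tau$ these generate $\mathrm{Aut}(X)=\langle\phi,\sigma\rangle\times\langle\tau\rangle\cong\mathbb{S}_3\times\mathbb{Z}_2$ (as in the classification \cite{DoIs06}); in particular $\mathrm{Aut}(X)$ is not Abelian.

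Next I would locate the invariant curves. On $H^0(X,-K_X)=\langle x,y\rangle$ the operators $\phi$ and $\sigma$ have no common eigenvector (the $\phi$-eigenlines are $\langle x\rangle$ and $\langle y\rangle$, which $\sigma$ interchanges), so $|-K_X|$ has no $\mathrm{Aut}(X)$-invariant member; since $\tau\in\mathrm{Aut}(X)$, Lemma~\ref{lemma:dP1-Bertini} and Theorem~\ref{theorem:dP1-lct-lct2} then give $1\leqslant\mathrm{lct}(X,\mathrm{Aut}(X))=\mathrm{lct}_2(X,\mathrm{Aut}(X))\leqslant 2$, so it suffices to show $\mathrm{lct}_2(X,\mathrm{Aut}(X))=2$, i.e. that $(X,D)$ is log canonical for every $\mathrm{Aut}(X)$-invariant $D\in|-2K_X|$. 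On $H^0(X,-2K_X)=\langle x^2,xy,y^2,z\rangle$ the element $\tau$ acts trivially, $\phi$ has eigenlines $\langle x^2\rangle,\langle y^2\rangle$ and eigenplane $\langle xy,z\rangle$, and $\sigma$ acts as the identity on $\langle xy,z\rangle$ but interchanges $\langle x^2\rangle$ and $\langle y^2\rangle$; hence the common semi-invariants span exactly $\langle xy,z\rangle$, and the $\mathrm{Aut}(X)$-invariant members of $|-2K_X|$ form the pencil $\mathcal{P}=\{\,\{axy+bz=0\}\cap X:(a:b)\in\mathbb{P}^1\,\}$.

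Finally I would check that every member of $\mathcal{P}$ yields a log canonical pair. The member $\{z=0\}\cap X$ is $\{t^2=x^6+x^3y^3+y^6\}$, a double cover of $\mathbb{P}^1$ branched over the six distinct roots of $s^6+s^3+1$, hence smooth; the member $\{xy=0\}\cap X$ is the sum of the two smooth elliptic curves $\{x=0\}\cap X$ and $\{y=0\}\cap X$, which meet transversally only at the base point $[0:0:1:1]$ of $|-K_X|$, so the corresponding pair is simple normal crossing; and for $(a:b)=(u:1)$ with $u\neq0$, substituting $z=-uxy$ yields $\{t^2=x^6+y^6+\kappa x^3y^3\}$ with $\kappa=1-u-u^3$, where $s^6+\kappa s^3+1$ is square-free unless $\kappa=\pm2$ and never has a root of multiplicity $\geqslant3$. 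So this last member is smooth when $\kappa\neq\pm2$, and when $\kappa=\pm2$ it is $D'+D''$ with $D'$ and $D''$ two smooth rational curves (each of self-intersection $-1$, with $D'\cdot D''=3$ since $(D'+D'')^2=(-2K_X)^2=4$) meeting transversally at three points; in all cases $(X,D)$ is log canonical. Thus $\mathrm{lct}_2(X,\mathrm{Aut}(X))=2$ and hence $\mathrm{lct}(X,\mathrm{Aut}(X))=2$. The hardest part will be the automorphism computation together with the square-free/no-triple-root analysis of $s^6+\kappa s^3+1$: it is precisely the absence of a triple root that prevents an $\mathrm{Aut}(X)$-invariant cuspidal curve in $|-2K_X|$, and so excludes the value $\mathrm{lct}(X,\mathrm{Aut}(X))=5/3$ left open by Corollary~\ref{corollary:dP1-lct-lct2-5-3}.
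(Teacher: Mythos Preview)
Your argument is correct and follows exactly the strategy of the paper: determine $\mathrm{Aut}(X)$, show $|-K_X|$ has no invariant member, invoke Theorem~\ref{theorem:dP1-lct-lct2} to reduce to $\mathrm{lct}_2$, and then check that every $\mathrm{Aut}(X)$-invariant curve in $|-2K_X|$ defines a log canonical pair.

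Where you differ from the paper is in the enumeration of the invariant members of $|-2K_X|$, and here your version is the accurate one. The paper asserts that the invariant curves are exactly the four cut out by $z=0$, $xy=0$, $x^2+y^2=0$, $x^2-y^2=0$. But under your automorphism $\phi\colon(x:y:z:t)\mapsto(x:\omega y:\omega z:t)$ the curve $x^2+y^2=0$ is carried to $x^2+\omega^2 y^2=0$, so neither $x^2+y^2=0$ nor $x^2-y^2=0$ is $\mathrm{Aut}(X)$-invariant; conversely, the paper's finite list misses the generic member of the pencil $\{axy+bz=0\}$, which \emph{is} invariant since $\langle xy,z\rangle$ is the common $\omega$-eigenspace of $\phi$ on which $\sigma$ and $\tau$ act trivially. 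Your case analysis of that pencil---smooth for $\kappa\ne\pm2$, and for $\kappa=\pm2$ a pair of $(-1)$-curves meeting transversally in three points (so simple normal crossing)---is what actually establishes $\mathrm{lct}_2(X,\mathrm{Aut}(X))=2$, and your observation that $s^6+\kappa s^3+1$ never acquires a triple root is precisely what rules out an invariant cuspidal member and hence the value $5/3$ from Corollary~\ref{corollary:dP1-lct-lct2-5-3}. In short, same method, but your execution repairs an error in the paper's list.
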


\begin{proof}
Suppose that $f_4(x,y)=x^2y^2$ and $f_6(x,y)=x^6+y^6+x^3y^3$. By
\cite{DoIs06}, we have
$$
\mathrm{Aut}\big(X\big)\cong\mathbb{D}_{6},
$$
and all $\mathrm{Aut}(X)$-invariant curves in $|-2K_{X}|$ can be
described as follows:
\begin{itemize}
\item an irreducible curve that is cut out on $X$ by $z=0$ (see the~proof of Lemma~\ref{lemma:dP1-2K}),%

\item a reducible curve that is cut out on $X$ by $xy=0$,%

\item a reducible curve that is cut out on $X$ by $x^2+y^2=0$,%

\item a reducible curve that is cut out on $X$ by $x^2-y^2=0$.%
\end{itemize}

One can show that $\mathrm{Aut}(X)$-invariant curves in
$|-2K_{X}|$ have at most ordinary double~points, which implies
that $\mathrm{lct}(X,\mathrm{Aut}(X))=2$ by
Theorem~\ref{theorem:dP1-lct-lct2}.
\end{proof}

\section{Double plane ramified in quartic}
\label{sec:degree-two}

Let $X$ be a~smooth quartic surface in $\mathbb{P}(1,1,1,2)$. Then
$X$ can be given by an~equation
$$
t^2=f_4\big(x,y,z\big)\subset\mathbb{P}\big(1,1,1,2\big)\cong\mathrm{Proj}\Big(\mathbb{C}[x,y,z,t]\Big),%
$$
where $\mathrm{wt}(x)=\mathrm{wt}(y)=\mathrm{wt}(z)=1$,
$\mathrm{wt}(t)=2$, and $f_4(x,y,z)$ is a form of degree $4$.

Let $\tau$ be the~involution in $\mathrm{Aut}(X)$ such that
$\tau([x:y:z:t])=[x:y:z:-t]$.

\begin{lemma}[{\cite[Theorem~6.17]{DoIs06}}]
\label{lemma:dP2-Bertini} A $\tau$-invariant subgroup in
$\mathrm{Pic}(X)$ is generated by $-K_{X}$.
\end{lemma}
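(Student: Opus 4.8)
The plan is to identify $\tau$ with the Geiser involution of the del Pezzo surface $X$ and then determine the induced action $\tau^{*}$ on $\mathrm{Pic}(X)$. First I would record the geometry. By adjunction $K_{X}=\mathcal{O}_{X}(-1)$, so $X$ is a del Pezzo surface of degree $2$, and $|-K_{X}|$ is cut out by $x,y,z$; hence it defines the double cover $\pi\colon X\to\mathbb{P}^{2}$ with $-K_{X}=\pi^{*}\mathcal{O}_{\mathbb{P}^{2}}(1)$, branched over the smooth plane quartic $C=\{f_{4}=0\}$ (smoothness of $C$ follows from that of $X$). The involution $\tau$ is exactly the deck transformation of $\pi$, so $\pi\circ\tau=\pi$ and therefore $\tau^{*}(-K_{X})=\tau^{*}\pi^{*}\mathcal{O}(1)=\pi^{*}\mathcal{O}(1)=-K_{X}$. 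In particular $\mathbb{Z}(-K_{X})$ is contained in the $\tau$-invariant sublattice of $\mathrm{Pic}(X)$, and it remains to prove the reverse inclusion.

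For that it suffices to show that the $\tau^{*}$-invariant subspace of $\mathrm{Pic}(X)\otimes\mathbb{Q}$ is just the line $\mathbb{Q}(-K_{X})$, because $-K_{X}$ is primitive in $\mathrm{Pic}(X)$ (it meets every $(-1)$-curve with multiplicity $1$), and hence $\mathrm{Pic}(X)\cap\mathbb{Q}(-K_{X})=\mathbb{Z}(-K_{X})$. Since $X$ is a smooth rational surface we have $H^{1}(X,\mathbb{Q})=H^{3}(X,\mathbb{Q})=0$ and $H^{2}(X,\mathbb{Q})\cong\mathrm{Pic}(X)\otimes\mathbb{Q}$ of dimension $8$; as $K_{X}^{\perp}$ is a $\tau^{*}$-stable subspace of rank $7$, let $a$ denote the multiplicity of the eigenvalue $+1$ of $\tau^{*}$ on it. Then the Lefschetz fixed point formula gives $\chi(X^{\tau})=1+\bigl(1+a-(7-a)\bigr)+1=2a-4$. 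On the other hand, $\tau$ is the deck transformation of a double cover branched over the smooth curve $C$, so its fixed locus $X^{\tau}$ is the ramification curve, isomorphic to the smooth plane quartic $C$ of genus $3$, whence $\chi(X^{\tau})=-4$. Therefore $a=0$, so the $\tau^{*}$-fixed subspace of $\mathrm{Pic}(X)\otimes\mathbb{Q}$ equals $\mathbb{Q}(-K_{X})$, which completes the proof.

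An alternative to the topological step avoids cohomology: $\tau^{*}$ preserves the intersection form and fixes $-K_{X}$, so it permutes the $56$ classes of $(-1)$-curves; each such $\ell$ satisfies $\ell\cdot(-K_{X})=1$ and $\pi(\ell)$ is a bitangent line of $C$ with $\pi^{*}\pi(\ell)=\ell+\tau^{*}\ell$, forcing $\tau^{*}\ell=(-K_{X})-\ell$. Since the $(-1)$-curves generate $\mathrm{Pic}(X)$, linearity gives $\tau^{*}(D)=\bigl(D\cdot(-K_{X})\bigr)(-K_{X})-D$ for every $D$; then $\tau^{*}D=D$ forces $2D=\bigl(D\cdot(-K_{X})\bigr)(-K_{X})$, so $D\in\mathbb{Z}(-K_{X})$ by primitivity.

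The main obstacle is pinning down the precise action of $\tau^{*}$ on the hyperbolic lattice $\mathrm{Pic}(X)\cong\mathbb{Z}^{1,7}$: a priori $\tau^{*}$ is merely some order-two isometry fixing $-K_{X}$, and one must rule out extra $+1$-eigenvectors in $K_{X}^{\perp}$ (the $E_{7}$ root lattice). Both arguments above resolve exactly this point — the first by an Euler-characteristic count using that the fixed locus is the genus-$3$ branch quartic, the second by exhibiting enough $(-1)$-curves on which the action is determined. Everything else (adjunction, the double-cover description, primitivity of $-K_{X}$, and the identification of $\tau$ with the deck transformation) is routine.
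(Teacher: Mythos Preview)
Your proposal is correct. Both arguments you give are valid: the Lefschetz fixed-point computation (with $\chi(X^{\tau})=\chi(C)=-4$ for the genus-$3$ ramification quartic forcing the $+1$-eigenspace on $K_{X}^{\perp}$ to be zero) and the bitangent argument (each $(-1)$-curve $\ell$ satisfies $\ell+\tau^{*}\ell=\pi^{*}(\text{bitangent})=-K_{X}$, whence $\tau^{*}D=(D\cdot(-K_{X}))(-K_{X})-D$ on all of $\mathrm{Pic}(X)$) are standard and complete.

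The paper itself does not prove this lemma at all: it is stated with a bare citation to \cite[Theorem~6.17]{DoIs06} and no argument is given. So there is nothing to compare against in the paper proper. What you have written is a self-contained proof of a result the authors import from Dolgachev--Iskovskikh; the second of your two approaches (via the Geiser involution sending each $(-1)$-curve $\ell$ to $-K_{X}-\ell$) is essentially how the result is established in that reference, while your Lefschetz computation is a clean alternative that bypasses the explicit combinatorics of the $56$ exceptional curves.
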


Let $G$ be a subgroup in $\mathrm{Aut}(X)$ such that $\tau\in G$.
Recall that $\mathrm{Aut}(X)$ is finite.

\begin{lemma}
\label{lemma:dP2-2K} There exists a~$G$-invariant curve in
$|-2K_X|$.
\end{lemma}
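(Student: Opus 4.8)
There exists a $G$-invariant curve in $|-2K_X|$.

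The plan is to mimic the proof of Lemma~\ref{lemma:dP1-2K}: in the double-cover model of a del Pezzo surface of degree two, the anticanonical map $\varphi_{|-K_X|}\colon X\to\mathbb{P}^2$ exhibits $X$ as the double cover of $\mathbb{P}^2$ branched over the quartic $f_4(x,y,z)=0$, and $|-2K_X|$ is the pull-back of the system of conics on $\mathbb{P}^2$. So the task is to exhibit a single $G$-invariant conic in $\mathbb{P}^2$, or equivalently a $G$-invariant curve cut out on $X$ by a degree-$2$ form in $x,y,z$.

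First I would observe that $\tau\in G$ acts trivially on the base $\mathbb{P}^2$ of the double cover, so the induced action of $G$ on $\mathbb{P}^2$ factors through $G/\langle\tau\rangle$ (or is simply the given linear action on $x,y,z$), and $G$ acts linearly on the three-dimensional space $W=\langle x,y,z\rangle$ of linear forms. The symmetric square $\mathrm{Sym}^2 W$, which parametrizes curves in $|-2K_X|$ via their pull-backs, is then a six-dimensional $G$-representation. What we need is a one-dimensional $G$-subrepresentation of $\mathrm{Sym}^2 W$, i.e.\ a $G$-semiinvariant quadratic form; because $X$ is $G$-invariant, the quartic $f_4$ is a $G$-semiinvariant, but that alone does not immediately produce a quadratic semiinvariant. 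The cleanest universal choice is the following: the form $f_4(x,y,z)$ has a well-defined Hessian, but in degree four that is again degree four; instead one uses that the dual representation $W^\vee$ and $W$ together give a canonical invariant bilinear form only when $W$ is self-dual. A safer route, valid for \emph{every} finite $G$, is to average: pick any quadratic form $q_0\in\mathrm{Sym}^2 W$ and set $q=\sum_{g\in G}\chi(g)^{-1}\,g\cdot q_0$ where $\chi$ is the character by which $G$ acts on $\mathrm{Sym}^2 W$ modulo scalars; but this can vanish. The correct observation is simpler still.

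The key point is that $\mathrm{Sym}^2 W$ always contains a $G$-invariant element up to scalars because $G\subset\mathrm{GL}(W)$ is finite, hence preserves a positive-definite Hermitian form on $W$; over $\mathbb{C}$ this is only Hermitian, not bilinear, so instead one argues representation-theoretically: the curve cut out by $t=0$ on $X$, namely the ramification curve $R$ with $R\sim -2K_X$, is manifestly $\tau$-invariant, and since $t$ spans a $G$-stable line (as $\tau([x:y:z:t])=[x:y:z:-t]$ shows $t$ transforms by a character of $G$), the divisor $\{t=0\}\in|-2K_X|$ is $G$-invariant. This is precisely the analogue of the curve $\{z=0\}$ used in degree one. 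So I would take $C=\{t=0\}\cap X$, note that $t$ has weight $2=\deg(-2K_X)$ under the grading and that $G$ scales $t$ by a scalar (it must, since the one-dimensional space of weight-$3$\dots weight-$2$ elements is preserved and the $t^2$-coefficient is fixed), and conclude $C$ is a $G$-invariant curve in $|-2K_X|$. The main obstacle is the short verification that $G$ indeed acts on the coordinate $t$ by a \emph{scalar} rather than mixing it with quadratic forms in $x,y,z$; this follows because any $g\in\mathrm{Aut}(X)$ with $\tau\in G$ commutes with $\tau$, and the $(-1)$-eigenspace of $\tau$ on the relevant graded piece $H^0(\mathbb{P}(1,1,1,2),\mathcal{O}(2))$ is exactly the line spanned by $t$, so $g$ preserves that line.
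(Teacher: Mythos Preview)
Your proposal is correct and ultimately lands on exactly the same curve the paper uses: the ramification curve $R=\{t=0\}\in|-2K_X|$. The paper's entire proof is one sentence (``Let $R$ be the curve on $X$ that is cut out by $t=0$. Then $R$ is $G$-invariant.''), so your detours through conics on $\mathbb{P}^2$, Hessians, and averaging are unnecessary; the eigenspace argument you give at the end is a valid justification of the paper's bare assertion, though the phrase ``any $g\in\mathrm{Aut}(X)$ with $\tau\in G$ commutes with $\tau$'' should simply read ``$\tau$ is central in $\mathrm{Aut}(X)$'' (equivalently, $R$ is the fixed locus of $\tau$ and hence is preserved by anything commuting with $\tau$).
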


\begin{proof}
Let $R$ be the~curve on $X$ that is cut out by $t=0$. Then $R$ is
$G$-invariant.
\end{proof}

\begin{corollary}
\label{corollary:dP2-2K} The inequality
$\mathrm{lct}(X,G)\leqslant 2$ holds.
\end{corollary}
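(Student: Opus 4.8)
The statement to prove is Corollary~\ref{corollary:dP2-2K}: that $\mathrm{lct}(X,G)\leqslant 2$ holds, where $X$ is a smooth quartic surface in $\mathbb{P}(1,1,1,2)$ and $G$ is a subgroup of $\mathrm{Aut}(X)$ containing $\tau$.

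This should be a very short argument, directly parallel to Corollary~\ref{corollary:dP1-2K}. The plan is as follows. By Lemma~\ref{lemma:dP2-2K}, there exists a $G$-invariant curve $R \in |-2K_X|$, namely the curve cut out by $t = 0$, which is the ramification curve of the double cover. Now I invoke the definition of $\mathrm{lct}_n(X,G)$ with $n = 2$: since $R$ is a $G$-invariant divisor in $|-2K_X|$, the number $\mathrm{lct}_2(X,G)$ is the supremum of those $\lambda \in \mathbb{Q}$ such that $(X, \frac{\lambda}{2}D)$ is log canonical for every $G$-invariant $D \in |-2K_X|$; in particular $\mathrm{lct}_2(X,G) \leqslant \mathrm{lct}(X,\frac{1}{2}R)\cdot\ldots$ — more precisely, $\frac{\lambda}{2}R$ must be log canonical, so $\lambda/2 \leqslant \mathrm{lct}(X,R)$.

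The key numerical input is that $\mathrm{lct}(X,R) \leqslant 1$. This holds because $R$ is an irreducible (or at least nonzero effective) divisor: if $\lambda/2 > 1$, then the coefficient $\lambda/2$ on any component of $R$ exceeds $1$, violating the first condition in Definition~\ref{definition:log canonical-singularities}. Hence $(X, \frac{\lambda}{2}R)$ fails to be log canonical as soon as $\lambda > 2$, giving $\mathrm{lct}_2(X,G) \leqslant 2$. Since $\mathrm{lct}(X,G) = \inf_n \mathrm{lct}_n(X,G) \leqslant \mathrm{lct}_2(X,G) \leqslant 2$, the corollary follows.

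There is no real obstacle here — the argument is immediate from the definitions together with Lemma~\ref{lemma:dP2-2K}. The only point requiring a word of care is the passage from "$R \in |-2K_X|$ is $G$-invariant and effective" to "$(X,\frac{\lambda}{2}R)$ is not log canonical for $\lambda > 2$," which is just the observation that a nonzero effective divisor scaled by a coefficient $> 1$ on one of its components violates log canonicity. Thus the proof is a one-line deduction:

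\begin{proof}
By Lemma~\ref{lemma:dP2-2K} there is a $G$-invariant curve $R\in|-2K_X|$, and $R\ne 0$. Hence for $\lambda>2$ the log pair $(X,\tfrac{\lambda}{2}R)$ is not log canonical, so $\mathrm{lct}_2(X,G)\leqslant 2$, and therefore $\mathrm{lct}(X,G)\leqslant\mathrm{lct}_2(X,G)\leqslant 2$.
\end{proof}
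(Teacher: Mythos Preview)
Your proof is correct and is precisely the intended argument: the paper states the corollary without proof, as an immediate consequence of Lemma~\ref{lemma:dP2-2K} and the definitions of $\mathrm{lct}_n(X,G)$ and $\mathrm{lct}(X,G)$, which is exactly what you have written out.
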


The main purpose of this section is to prove the~following two
results.

\begin{theorem}
\label{theorem:dP2-lct-lct1} Suppose that there exists
a~$G$-invariant curve in $|-K_X|$. Then
$$
\mathrm{lct}\big(X,G\big)=\mathrm{lct}_1\big(X,G\big)\in\big\{3/4,5/6,1\big\}.
$$
\end{theorem}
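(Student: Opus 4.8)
The plan is to mirror the proof of Theorem~\ref{theorem:dP1-lct-lct1}: first pin down the possible values of $\mathrm{lct}_1(X,G)$, and then upgrade $\mathrm{lct}_1$ to $\mathrm{lct}$ by a single‑point argument. The one new ingredient needed in degree two is that, since $\tau\in G$, every $G$‑fixed point must lie on the ramification curve $R=\mathrm{Fix}(\tau)$.

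First I would determine $\mathrm{lct}_1(X,G)$. Let $\pi\colon X\to\mathbb{P}^{2}$ be the double cover given by $|-K_X|$, branched over a smooth quartic $B\subset\mathbb{P}^2$. Then every curve in $|-K_X|$ is $\pi^{-1}(\ell)$ for a line $\ell\subset\mathbb{P}^2$, and its singularities lie over the points of $\ell\cap B$ of intersection multiplicity $\geq 2$; a point of multiplicity $2$, $3$ or $4$ produces a node, a cusp or a tacnode respectively, and the partitions of $4$ leave no other possibility. Hence $\mathrm{lct}(X,C)\in\{3/4,5/6,1\}$ for every $C\in|-K_X|$, so $\mathrm{lct}_1(X,G)$, being the infimum over the non‑empty (by hypothesis) set of $G$‑invariant such curves, already lies in $\{3/4,5/6,1\}$; this is consistent with Example~\ref{example:GAFA}.

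Since $\mathrm{lct}(X,G)\leq\mathrm{lct}_1(X,G)$ by definition, it remains to prove the reverse inequality. Assume it fails. Then there is a $G$‑invariant effective $\mathbb{Q}$‑divisor $D\sim_{\mathbb{Q}}-K_X$ with $(X,\lambda D)$ strictly log canonical for some rational $\lambda<\mathrm{lct}_1(X,G)\leq 1$. By Lemma~\ref{lemma:dP2-Bertini} the $G$‑invariant subgroup of $\mathrm{Pic}(X)$ is generated by $-K_X$, and the hypothesis gives $\xi=1$; since $\xi=1>\lambda$, Lemma~\ref{lemma:LCS-zero-dimensional} shows $\mathrm{LCS}(X,\lambda D)$ is zero‑dimensional, and then Lemma~\ref{lemma:Nadel-IILC} (with $\lceil\lambda-1\rceil=0$, as $0<\lambda<1$) forces it to be a single point $P$. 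As $D$ and $\lambda$ are $G$‑invariant, $P$ is fixed by $G$; in particular $\tau(P)=P$, so $P\in R$ and $\bar P:=\pi(P)\in B$. Now let $T$ be the tangent line to $B$ at $\bar P$. Since $\bar P$ is fixed by the image $\bar G$ of $G$ in $\mathrm{Aut}(\mathbb{P}^2)$ and $B$ is $\bar G$‑invariant and smooth at $\bar P$, the line $T$ is $\bar G$‑invariant, so $C:=\pi^{-1}(T)\in|-K_X|$ is $G$‑invariant; moreover $C$ passes through $P$ with $\mathrm{mult}_P(C)=2$, because locally $C$ is cut out by $t^{2}=u^{m}\cdot(\mathrm{unit})$ with $m\geq 2$. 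Arguing as in the proof of Theorem~\ref{theorem:dP1-lct-lct1} via Remark~\ref{remark:convexity}, I would assume $C\not\subseteq\mathrm{Supp}(D)$, and then
$$
2\lambda=\lambda D\cdot C\geq\mathrm{mult}_P\big(\lambda D\big)\,\mathrm{mult}_P\big(C\big)=2\lambda\,\mathrm{mult}_P\big(D\big),
$$
so $\mathrm{mult}_P(\lambda D)=\lambda\,\mathrm{mult}_P(D)\leq\lambda<1$, contradicting $P\in\mathrm{LCS}(X,\lambda D)$. This gives $\mathrm{lct}(X,G)=\mathrm{lct}_1(X,G)\in\{3/4,5/6,1\}$. (As a cross‑check, when $\mathrm{lct}_1(X,G)=3/4$ one also has $\mathrm{lct}(X,G)\geq\mathrm{lct}(X)=3/4$ directly from Example~\ref{example:GAFA}, since a $G$‑invariant tacnodal curve then exists.)

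The step I expect to be the delicate one is "$C\not\subseteq\mathrm{Supp}(D)$": Remark~\ref{remark:convexity} is stated for a \emph{klt} comparison divisor, whereas the natural candidates here (an anticanonical curve, or $\tfrac12 R$) are only log canonical, or do not contain the curve $C$ that the argument manufactures after $P$ has been found; so one must either choose the comparison divisor with a little care or treat the case $C\subseteq\mathrm{Supp}(D)$ separately by splitting off the component of $C$ that occurs in $D$. The only other point needing attention is the singularity classification of anticanonical curves on a degree‑two del Pezzo surface used in the second paragraph, which is precisely what prevents $\mathrm{lct}_1(X,G)$ from dropping below $3/4$.
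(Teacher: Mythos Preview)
Your argument is correct and matches the paper's proof essentially step for step: both reduce $\mathrm{LCS}(X,\lambda D)$ to a single $G$-fixed point $P\in R$, take the unique anticanonical curve singular at $P$ (your $C$, the paper's $L$), remove its components from $\mathrm{Supp}(D)$ via Remark~\ref{remark:convexity}, and obtain the contradiction $2=C\cdot D\geqslant 2\,\mathrm{mult}_P(D)\geqslant 2/\lambda>2$.

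Your closing worry about Remark~\ref{remark:convexity} dissolves once you notice that the comparison divisor is $\lambda C$, not $C$ itself: since $C$ is a $G$-invariant member of $|-K_X|$ you have $\mathrm{lct}(X,C)\geqslant\mathrm{lct}_1(X,G)>\lambda$, so $(X,\lambda C)$ is genuinely klt and the remark applies as stated. Moreover the convexity argument is local, so the non-klt locus of the modified divisor still contains $P$; there is no need to iterate or to worry about $P$ moving.
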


\begin{proof}
One can easily check that $\mathrm{lct}_1(X,G)\in\{3/4,5/6,1\}$.
It follows from Example~\ref{example:GAFA} that
$$
\mathrm{lct}\big(X,G\big)=\mathrm{lct}_1\big(X,G\big)=\frac{3}{4}
$$
if $\mathrm{lct}_1(X,G)=3/4$. Suppose that
$\mathrm{lct}(X,G)<\mathrm{lct}_1(X,G)$. Let us derive a
contradiction.

There exists a~$G$-invariant effective $\mathbb{Q}$-divisor $D$ on
the~surface $X$ such that
$$
D\sim_{\mathbb{Q}} -K_{X}
$$
and the~log pair $\big(X,\lambda D\big)$ is strictly log canonical
for some rational number $\lambda<\mathrm{lct}_1(X,G)$.

By Theorem~\ref{theorem:connectedness} and
Lemma~\ref{lemma:Nadel-IILC}, the~locus $\mathrm{LCS}(X,\lambda
D)$ consists of a single point $P\in X$.

Let $R$ be the~curve on $X$ that is cut out by $t=0$. Then $P\in
R$, since $\tau\in G$.

Let $L$ be the~unique curve in $|-K_X|$ such that $L$ is singular
at the~point $P$. Then we may assume that $\mathrm{Supp}(D)$ does
not contain any component of the~curve~$L$ by
Remark~\ref{remark:convexity}.~Then
$$
2=L \cdot D\geqslant\mathrm{mult}_P\big(L\big)\cdot \mathrm{mult}_P\big(D\big)\geqslant 2\mathrm{mult}_P\big(D\big)\geqslant \frac{2}{\lambda}>1.%
$$
which is a contradiction.
\end{proof}

\begin{theorem}\label{theorem:dP2-lct-lct2}
Suppose that there are no $G$-invariant curves in $|-K_X|$. Then
$$
1\leqslant\mathrm{lct}\big(X,G\big)=\mathrm{min}\Big(\mathrm{lct}_2\big(X,G\big),\mathrm{lct}_3\big(X,G\big)\Big)\leqslant 2.%
$$
\end{theorem}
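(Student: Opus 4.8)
\medskip
\noindent\textbf{Proof proposal.}
The plan is to mimic the proof of Theorem~\ref{theorem:dP1-lct-lct2}, with the double cover $\pi\colon X\to\mathbb{P}^{2}$ given by $|-K_{X}|$ (branched over a smooth plane quartic $B$) playing the role of the double cover of the quadric cone. First I would dispose of the easy inequalities. The bound $\mathrm{lct}(X,G)\leqslant 2$ is Corollary~\ref{corollary:dP2-2K}, and $\mathrm{lct}(X,G)\leqslant\min(\mathrm{lct}_{2}(X,G),\mathrm{lct}_{3}(X,G))$ holds by the definition of $\mathrm{lct}(X,G)$ as an infimum. For $\mathrm{lct}(X,G)\geqslant 1$ I would argue as in the proof of Theorem~\ref{theorem:dP2-lct-lct1}: assuming the contrary there is a $G$-invariant effective $\mathbb{Q}$-divisor $D\sim_{\mathbb{Q}}-K_{X}$ with $(X,\lambda D)$ strictly log canonical for some rational $\lambda<1$; since $-(K_{X}+\lambda D)\sim_{\mathbb{Q}}(1-\lambda)(-K_{X})$ is ample, $\mathrm{LCS}(X,\lambda D)$ is connected (Corollary~\ref{corollary:connectedness}) and, because $\xi\geqslant 2>\lambda$, zero-dimensional (Lemma~\ref{lemma:LCS-zero-dimensional}), hence a single $G$-invariant point $P$; then $P$ is $\tau$-fixed, so $P\in R$, and the $\pi$-pull-back $L\in|-K_{X}|$ of the tangent line to $B$ at $\pi(P)$ has $\mathrm{mult}_{P}(L)\geqslant 2$, whence (removing common components by Remark~\ref{remark:convexity}) $2=L\cdot D\geqslant 2\,\mathrm{mult}_{P}(D)\geqslant 2/\lambda>2$, a contradiction. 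Thus it remains to prove $\mathrm{lct}(X,G)\geqslant\min(\mathrm{lct}_{2}(X,G),\mathrm{lct}_{3}(X,G))$.

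Suppose this fails. Then there is a $G$-invariant effective $\mathbb{Q}$-divisor $D\sim_{\mathbb{Q}}-K_{X}$ and a rational $\lambda$ with $1\leqslant\lambda<\min(\mathrm{lct}_{2}(X,G),\mathrm{lct}_{3}(X,G))\leqslant 2$ such that $(X,\lambda D)$ is strictly log canonical. Since $|-K_{X}|$ contains no $G$-invariant curve, $\xi\geqslant 2>\lambda$, so $\mathrm{LCS}(X,\lambda D)$ is zero-dimensional by Lemma~\ref{lemma:LCS-zero-dimensional}, and by Lemma~\ref{lemma:Nadel-IILC} (with $h^{0}(X,\mathcal{O}_{X}(-K_{X}))=3$) it is a non-empty $G$-invariant set of at most three points. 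The key point is that $\tau$ acts trivially on $\mathbb{P}^{2}$, so that the $G$-invariant curves in $|-K_{X}|$, $|-2K_{X}|$ and $|-3K_{X}|$ are precisely the $\pi$-pull-backs of the $\overline{G}$-invariant lines, conics and cubics, where $\overline{G}=G/\langle\tau\rangle\hookrightarrow\mathrm{Aut}(\mathbb{P}^{2},B)$. Using this I would first show that $\mathrm{LCS}(X,\lambda D)$ contains no $G$-fixed point: such a point would lie on $R$, its image would be an $\overline{G}$-fixed point of the smooth quartic $B$, and the tangent line to $B$ there would be an $\overline{G}$-invariant line, contradicting the hypothesis. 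Hence $\mathrm{LCS}(X,\lambda D)$ is a single $G$-orbit, and a short case analysis — two distinct, or three collinear, images in $\mathbb{P}^{2}$ would again produce an $\overline{G}$-invariant line — pins it down to three points of $R$ lying over the vertices of an $\overline{G}$-invariant triangle inscribed in $B$; in particular $\mathrm{lct}_{3}(X,G)<+\infty$. (The degenerate cases, where the ramification curve $R\in|-2K_{X}|$ and $\overline{G}$-invariant conics intervene, are disposed of in the same spirit; this is where $\mathrm{lct}_{2}(X,G)$ enters.)

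With the configuration fixed, I would run the blow-up machinery of the proof of Theorem~\ref{theorem:dP1-lct-lct2}. Each point $P_{i}$ of $\mathrm{LCS}(X,\lambda D)$ satisfies $1/\lambda\leqslant\mathrm{mult}_{P_{i}}(D)$ and, by log canonicity, $\mathrm{mult}_{P_{i}}(D)\leqslant 2/\lambda$; intersecting $D$ with $R$, with a general $\pi$-pull-back of a line, with the inscribed triangle (pulled back into $|-3K_{X}|$) and, when available, with an $\overline{G}$-invariant conic (pulled back into $|-2K_{X}|$) sharpens these bounds. One then blows up the $P_{i}$, uses Remark~\ref{remark:blow-up-inequality} repeatedly to extract the infinitely near points $Q_{i}$ (and, if needed, a further layer $O_{i}$), keeps track of which of them lie on the proper transforms of $R$ and of the auxiliary curves — exploiting, exactly as in the degree-one case, that $\tau$ has precisely two fixed points on each exceptional curve over a point of $R$ — and applies Lemma~\ref{lemma:adjunction}, Theorem~\ref{theorem:Cheltsov-Kosta} and Corollary~\ref{corollary:Cheltsov-Kosta} for the adjunction-type inequalities at each stage. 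In every branch one should arrive either at a numerical contradiction that uses only $\lambda<2$, or at the conclusion that $(X,\tfrac{\lambda}{3}\Gamma)$ (respectively $(X,\tfrac{\lambda}{2}\Gamma)$) is not log canonical for the relevant $G$-invariant cubic (respectively conic) $\Gamma$, contradicting $\lambda<\mathrm{lct}_{3}(X,G)$ (respectively $\lambda<\mathrm{lct}_{2}(X,G)$).

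The main obstacle is precisely this final case analysis — the multiplicity bookkeeping through the successive blow-ups, and above all the sub-cases in which the available auxiliary curve $\Gamma$ is itself singular at the points of $\mathrm{LCS}(X,\lambda D)$, which force one to reproduce the most delicate part of the proof of Theorem~\ref{theorem:dP1-lct-lct2} (the analysis of the divisor $Z_{1}+Z_{2}$). Identifying which $\overline{G}$-invariant conics and cubics are actually available in each configuration may also require the classification of automorphism groups of smooth degree-two del Pezzo surfaces from \cite{DoIs06}, as used afterwards for Corollary~\ref{corollary:dP2-main}.
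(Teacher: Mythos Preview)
Your overall architecture---reduce to a three-point $G$-orbit on $R$, blow up, use the $\tau$-fixed points on the exceptional curves to locate the infinitely near centres $Q_i$, then iterate and invoke Corollary~\ref{corollary:Cheltsov-Kosta}---matches the paper. But two points deserve correction.

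First, the role of $\mathrm{lct}_2$ and $\mathrm{lct}_3$ is reversed. The paper never concludes that some auxiliary $G$-invariant divisor $\Gamma$ fails log canonicity; on the contrary, the hypothesis $\lambda<\min(\mathrm{lct}_2,\mathrm{lct}_3)$ is exactly what guarantees that $(X,\tfrac{\lambda}{2}\Gamma)$ or $(X,\tfrac{\lambda}{3}\Gamma)$ is Kawamata log terminal for \emph{every} $G$-invariant $\Gamma$ in $|-2K_X|$ or $|-3K_X|$, so that Remark~\ref{remark:convexity} strips its components out of $D$. Your proposed dichotomy (``numerical contradiction \emph{or} auxiliary divisor not log canonical'') is therefore not how the argument closes.

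Second, and more seriously, the auxiliary curves you list are not enough, and no bootstrap in the style of the $Z_1+Z_2$ analysis from Theorem~\ref{theorem:dP1-lct-lct2} is used. The paper builds, from the three points alone and without any appeal to \cite{DoIs06}, a hierarchy of $G$-invariant test divisors: the triangle $L_1+L_2+L_3$, the three tangent lines $T_1+T_2+T_3$, and three conics $Z_1+Z_2+Z_3\in|-6K_X|$ (each $Z_i$ passing through all three images and tangent to the branch quartic at two of them), which yield $\mathrm{mult}_{P_i}(D)\leqslant 4/5$. After the two blow-ups and the Cheltsov--Kosta inequality one obtains $7\,\mathrm{mult}_{P_i}(D)+\mathrm{mult}_{Q_i}(\bar D)\geqslant 12/\lambda$. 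The decisive step, which your proposal does not anticipate, is then to take the $G$-invariant linear subsystem $\mathcal{B}\subset|-3K_X|$ of curves singular at every $P_i$ \emph{and} whose proper transforms pass through the infinitely near points $Q_i$; a general (or the unique) member $B$ satisfies $\mathrm{mult}_{P_i}(B)=2$ and gives $2\,\mathrm{mult}_{P_i}(D)+\mathrm{mult}_{Q_i}(\bar D)\leqslant 2$, forcing $\mathrm{mult}_{P_i}(D)>4/5$ and the contradiction. Without this extra cubic the numerics do not close.
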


\begin{proof}
Arguing as in the~proof of Theorem~\ref{theorem:dP2-lct-lct1} and
using Corollary~\ref{corollary:dP2-2K},~we~have
$$
1\leqslant\mathrm{lct}\big(X,G\big)\leqslant\mathrm{lct}_2\big(X,G\big)\leqslant 2.%
$$

Suppose that $\mathrm{lct}(X,G)<\mathrm{lct}_2(X,G)$ and
$\mathrm{lct}(X,G)<\mathrm{lct}_3(X,G)$. Let us derive a
contradiction.

There exists a~$G$-invariant effective $\mathbb{Q}$-divisor $D$ on
the~surface $X$ such that
$$
D\sim_{\mathbb{Q}} -K_{X}
$$
and  $(X,\lambda D)$ is strictly log canonical for some
$\lambda\in\mathbb{Q}$ such that $\lambda<\mathrm{lct}_2(X,G)$ and
$\lambda<\mathrm{lct}_3(X,G)$.

Let $R$ be the~curve on $X$ that is cut out by $t=0$. It follows
from Lemmata~\ref{lemma:LCS-zero-dimensional} and
\ref{lemma:dP2-Bertini}~that
$$
\mathrm{LCS}\big(X,\lambda D\big)\subset R,
$$
and it follows from Lemma~\ref{lemma:Nadel-IILC} that
$|\mathrm{LCS}(X,\lambda D)|=3$.

Let $P_{1}$, $P_{2}$, $P_{3}$ be three points in $\mathrm{LCS}(X,
\lambda D)$. Then
$$
\mathrm{mult}_{P_{1}}\big(D\big)=\mathrm{mult}_{P_{2}}\big(D\big)=\mathrm{mult}_{P_{3}}\big(D\big)\geqslant\frac{1}{\lambda}>\frac{1}{2}.
$$

Let $\pi\colon X\to\mathbb{P}^{2}$ be a natural projection. Then
$\pi$ is a double cover ramified over the~curve~$\pi(R)$ and
the~points $\pi(P_{1})$, $\pi(P_{2})$, $\pi(P_{3})$ are not
contained in one line by Lemma~\ref{lemma:Nadel-IILC}.

Let $L_{1}$, $L_{2}$, $L_{3}$ be curves in $|-K_{X}|$ such that
$P_{2}\in L_{1}\ni P_{3}$, $P_{1}\in L_{2}\ni P_{3}$, $P_{1}\in
L_{3}\ni P_{2}$.~Then
$$
L_{1}+L_{2}+L_{3}\sim -3K_{X}
$$
and the~divisor $L_{1}+L_{2}+L_{3}$ is $G$-invariant. We may
assume that $\mathrm{Supp}(D)$ does not contain any components of
the~curves $L_{1}$, $L_{2}$, $L_{3}$ by
Remark~\ref{remark:convexity}. Using
\cite[Proposition~8.21]{Ko97}, we see that
\begin{equation}
\label{equation:dP2-special-log-pair}
\Bigg(X, \frac{5}{8}\Big(L_{1}+L_{2}+L_{3}\Big)\Bigg)%
\end{equation}
is log canonical (this is only important for
Corollary~\ref{corollary:dP2-lct-lct2-5-3}). In fact, one can show
that
$$
\Bigg(X, \frac{2}{3}\Big(L_{1}+L_{2}+L_{3}\Big)\Bigg)%
$$
is log canonical $\iff$ $(\ref{equation:dP2-special-log-pair})$ is
KLT. Note that $\pi(L_{1})$, $\pi(L_{2})$, $\pi(L_{3})$ are lines.
We have
$$
6=D\cdot\Big(L_{1}+L_{2}+L_{3}\Big)\geqslant 2\sum_{i=1}^{3}\mathrm{mult}_{P_{i}}\big(D\big)=6\mathrm{mult}_{P_{1}}\big(D\big)=6\mathrm{mult}_{P_{2}}\big(D\big)=6\mathrm{mult}_{P_{3}}\big(D\big),%
$$
which implies that
$\mathrm{mult}_{P_{1}}(D)=\mathrm{mult}_{P_{2}}(D)=\mathrm{mult}_{P_{3}}(D)\leqslant
1$.

Let $T_{1}$, $T_{2}$, $T_{3}$ be the~curves in $|-K_{X}|$ that are
singular at $P_{1}$, $P_{2}$, $P_{3}$, respectively.~Then
$$
T_{1}+T_{2}+T_{3}\sim -3K_{X}
$$
and the~divisor $T_{1}+T_{2}+T_{3}$ is $G$-invariant. We may
assume that $\mathrm{Supp}(D)$ does not contain any components of
the~curves $T_{1}$, $T_{2}$, $T_{3}$ by
Remark~\ref{remark:convexity}. Using
\cite[Proposition~8.21]{Ko97}, we see that
\begin{equation}
\label{equation:dP2-special-log-pair-tangent}
\Bigg(X, \frac{5}{8}\Big(T_{1}+T_{2}+T_{3}\Big)\Bigg)%
\end{equation}
is log canonical (this is only important for
Corollary~\ref{corollary:dP2-lct-lct2-5-3}). Moreover, one can
show that
$$
\Bigg(X, \frac{2}{3}\Big(T_{1}+T_{2}+T_{3}\Big)\Bigg)%
$$
is log canonical $\iff$
$(\ref{equation:dP2-special-log-pair-tangent})$ is KLT $\iff$
$T_{1}+T_{2}+T_{3}\ne L_{1}+L_{2}+L_{3}$.

Note that $\pi(T_{1})$, $\pi(T_{2})$, $\pi(T_{3})$ are lines
tangent to $\pi(R)$ at  $\pi(P_{1})$, $\pi(P_{2})$,
$\pi(P_{3})$,~respectively.

If $T_{1}+T_{2}+T_{3}=L_{1}+L_{2}+L_{3}$, then
$\mathrm{mult}_{P_{1}}(D)=\mathrm{mult}_{P_{2}}(D)=\mathrm{mult}_{P_{3}}(D)\leqslant
2/3$, since
$$
6=D\cdot\Big(L_{1}+L_{2}+L_{3}\Big)\geqslant 3\sum_{i=1}^{3}\mathrm{mult}_{P_{i}}\big(D\big)=9\mathrm{mult}_{P_{1}}\big(D\big)=9\mathrm{mult}_{P_{2}}\big(D\big)=9\mathrm{mult}_{P_{3}}\big(D\big).%
$$

Let $Z_{1}$, $Z_{2}$ and $Z_{3}$ be a curves in $|-2K_{X}|$ such
that $\pi(Z_{1})$, $\pi(Z_{2})$, $\pi(Z_{3})$ are conics where
$$
\Big\{\pi\big(P_{1}\big),\pi\big(P_{2}\big),\pi\big(P_{3}\big)\Big\}\subset\pi\big(Z_{1}\big)\cap\pi\big(Z_{2}\big)\cap\pi\big(Z_{3}\big),%
$$
the~conic $\pi(Z_{1})$ is tangent to $\pi(R)$ at $\pi(P_{2})$ and
$\pi(P_{3})$, the~conic $\pi(Z_{2})$ is tangent to  $\pi(R)$ at
the~points $\pi(P_{1})$ and $\pi(P_{3})$, and $\pi(Z_{3})$ is
tangent to  $\pi(R)$ at  $\pi(P_{1})$ and $\pi(P_{2})$. Then
$$
Z_{1}+Z_{2}+Z_{3}=2\Big(T_{1}+T_{2}+T_{3}\Big)\iff T_{1}+T_{2}+T_{3}=L_{1}+L_{2}+L_{3}%
$$
and the~conics  $\pi(Z_{1})$, $\pi(Z_{2})$, $\pi(Z_{3})$ are
irreducible $\iff$ $T_{1}+T_{2}+T_{3}\ne L_{1}+L_{2}+L_{3}$. Then
$$
\Bigg(X, \frac{1}{3}\Big(Z_{1}+Z_{2}+Z_{3}\Big)\Bigg)%
$$
is log canonical  if $T_{1}+T_{2}+T_{3}\ne L_{1}+L_{2}+L_{3}$ (see
Example~\ref{example:three-conics} and
\cite[Proposition~8.21]{Ko97}).~However
$$
Z_{1}+Z_{2}+Z_{3}\sim -6K_{X}
$$
and the~divisor $Z_{1}+Z_{2}+Z_{3}$ is $G$-invariant. Thus, we may
assume that $\mathrm{Supp}(D)$ does not~contain any components of
the~curves $Z_{1}$, $Z_{2}$, $Z_{3}$ by
Remark~\ref{remark:convexity}. Then
$$
12=D\cdot\Big(Z_{1}+Z_{2}+Z_{3}\Big)\geqslant 5\sum_{i=1}^{3}\mathrm{mult}_{P_{i}}\big(D\big)=15\mathrm{mult}_{P_{1}}\big(D\big)=15\mathrm{mult}_{P_{2}}\big(D\big)=15\mathrm{mult}_{P_{3}}\big(D\big),%
$$
which implies that
$\mathrm{mult}_{P_{1}}(D)=\mathrm{mult}_{P_{2}}(D)=\mathrm{mult}_{P_{3}}(D)\leqslant
4/5$. If $Z_{1}=Z_{2}=Z_{3}$, then
$$
4=D\cdot Z_{1}=D\cdot Z_{2}=D\cdot Z_{3}\geqslant 2\sum_{i=1}^{3}\mathrm{mult}_{P_{i}}\big(D\big)=6\mathrm{mult}_{P_{1}}\big(D\big)=6\mathrm{mult}_{P_{2}}\big(D\big)=6\mathrm{mult}_{P_{3}}\big(D\big),%
$$
which implies that
$\mathrm{mult}_{P_{1}}(D)=\mathrm{mult}_{P_{2}}(D)=\mathrm{mult}_{P_{3}}(D)\leqslant
2/3$.

Let $\sigma\colon\bar{X}\to X$ be the~blow-up of the~surface $X$
at  $P_1$, $P_2$ and $P_{3}$, let $E_{1}$, $E_{2}$ and $E_{3}$ be
the~exceptional curves of the~blow up $\sigma$ such that
$\sigma(E_{1})=P_{1}$, $\sigma(E_{2})=P_{2}$ and
$\sigma(E_{3})=P_{3}$.~Then
$$
K_{\bar{X}}+\lambda\bar{D}+\sum_{i=1}^{3}\Big(\lambda\mathrm{mult}_{P_i}\big(D\big)-1\Big)E_i\sim_{\mathbb{Q}}\sigma^{*}\Big(K_X + \lambda D\Big),%
$$
where $\bar{D}$ is the~proper transform of the~divisor $D$ on
the~surface $\bar{X}$.

It follows from Remark~\ref{remark:blow-up-inequality} that there
are points $Q_{1}\in E_{1}$, $Q_{2}\in E_{2}$ and $Q_{3}\in E_{3}$
such that
$$
\mathrm{LCS}\Bigg(\bar{X},\lambda\bar{D}+\sum_{i=1}^{3}\Big(\lambda\mathrm{mult}_{P_i}\big(D\big)-1\Big)E_i\Bigg)=\Big\{Q_{1},Q_{2},Q_{3}\Big\},%
$$
as
$\lambda\mathrm{mult}_{P_1}(D)-1=\lambda\mathrm{mult}_{P_2}(D)-1=\lambda\mathrm{mult}_{P_3}(D)-1<1$.
By Remark~\ref{remark:blow-up-inequality}, we have
\begin{equation}
\label{equation:dP2-inequality-I}
\mathrm{mult}_{P_{1}}\big(D\big)+\mathrm{mult}_{Q_{1}}\big(\bar{D}\big)=\mathrm{mult}_{P_{2}}\big(D\big)+\mathrm{mult}_{Q_{2}}\big(\bar{D}\big)=\mathrm{mult}_{P_{3}}\big(D\big)+\mathrm{mult}_{Q_{3}}\big(\bar{D}\big)\geqslant\frac{2}{\lambda}>1,
\end{equation}
where
$\mathrm{mult}_{Q_{1}}(\bar{D})=\mathrm{mult}_{Q_{2}}(\bar{D})=\mathrm{mult}_{Q_{3}}(\bar{D})$,
since the~divisor $D$ is $G$-invariant.

Note that the~action of the~group $G$ on the~surface $X$ naturally
lifts to an action on $\bar{X}$.

Since the line $\pi(L_{1})$ is not tangent to $\pi(R)$ at both
$\pi(P_{2})$ and $\pi(P_{3})$, without loss of generality, we~may
assume that $\pi(L_{1})$ intersects transversally $\pi(R)$ at
$\pi(P_{2})$. Similarly, we may assume~that
\begin{itemize}
\item the~line $\pi(L_{2})$ intersects transversally the~curve $\pi(R)$ at the~point $\pi(P_{3})$,%
\item the~line $\pi(L_{3})$ intersects transversally the~curve $\pi(R)$ at the~point $\pi(P_{1})$.%
\end{itemize}

Let $\bar{L}_{1}$, $\bar{L}_{2}$, $\bar{L}_{3}$ be the~proper
transforms of the~curves $L_{1}$, $L_{2}$, $L_{3}$ on the~surface
$\bar{X}$,~respectively.~Then
$$
2-\sum_{i=2}^{3}\mathrm{mult}_{P_{i}}\big(L_{1}\big)\mathrm{mult}_{P_{i}}\big(D\big)=\bar{L}_{1}\cdot\bar{D}\geqslant\sum_{i=2}^{3}\mathrm{mult}_{Q_{i}}\big(\bar{L}_{1}\big)\mathrm{mult}_{Q_{i}}\big(\bar{D}\big),%
$$
which implies that $Q_{2}\not\in\bar{L}_{1}$ by
$(\ref{equation:dP2-inequality-I})$. Similarly, we see that
$Q_{3}\not\in\bar{L}_{2}$ and $Q_{1}\not\in\bar{L}_{3}$.

Let $\bar{R}$ be the~proper transform of the~curve $R$ on
the~surface $\bar{X}$. Then
$$
Q_{1}=\bar{R}\cap E_{1}
$$
since the~$\sigma$-exceptional curve $E_{1}$ contains exactly
two points that are fixed by the~involution~$\tau$, which are
$\bar{R}\cap E_{1}$ and $\bar{L}_{3}\cap E_{1}$. Similarly, we see
that $Q_{2}=\bar{R}\cap E_{2}$ and $Q_{3}=\bar{R}\cap E_{3}$.

By Remark~\ref{remark:convexity}, we may assume that
$\bar{R}\not\subseteq\mathrm{Supp}(\bar{D})$, since $R$ is smooth.
Then
$$
\sum_{i=1}^{3}\mathrm{mult}_{Q_i}\big(\bar{D}\big)\leqslant\bar{D}\cdot\bar{R}=4-\sum_{i=1}^{3}\mathrm{mult}_{P_i}\big(D\big),%
$$
where
$\mathrm{mult}_{Q_1}(\bar{D})+\mathrm{mult}_{P_1}(D)=\mathrm{mult}_{Q_2}(\bar{D})+\mathrm{mult}_{P_2}(D)=\mathrm{mult}_{Q_3}(\bar{D})+\mathrm{mult}_{P_3}(D)$.
Then
\begin{equation}
\label{equation:dP2-4-3}
\mathrm{mult}_{Q_1}\big(\bar{D}\big)+\mathrm{mult}_{P_1}\big(D\big)=\mathrm{mult}_{Q_2}\big(\bar{D}\big)+\mathrm{mult}_{P_2}\big(D\big)=\mathrm{mult}_{Q_3}\big(\bar{D}\big)+\mathrm{mult}_{P_3}\big(D\big)\leqslant\frac{4}{3}.%
\end{equation}

Let  $\rho\colon\tilde{X} \to \bar{X}$ be a blow up of the~surface
$\bar{X}$ at the~points $Q_{1}$, $Q_{2}$, $Q_{3}$ and let $F_{1}$,
$F_{2}$ and $F_{3}$~be the~exceptional curves of the~blow up
$\rho$ such that $\rho(F_{1})=Q_{1}$, $\rho(F_{2})=Q_{2}$ and
$\rho(F_{2})=Q_{3}$.~Then
$$
K_{\tilde{X}}+\lambda\tilde{D}+\sum_{i=1}^3\Big(\lambda \mathrm{mult}_{P_i}\big(D\big)-1\Big)\tilde{E}_i+\sum_{i=1}^3\Big(\lambda \mathrm{mult}_{Q_i}\big(\bar{D}\big)+\lambda\mathrm{mult}_{P_i}\big(D\big)-2\Big)F_i \sim_{\mathbb{Q}}\big(\sigma\circ\rho\big)^{*}\Big(K_{X}+\lambda D\Big),%
$$
where $\tilde{D}$ and $\tilde{E}_{i}$ are proper transforms of
the~divisors $D$ and $E_{i}$ on the~surface $\tilde{X}$,
respectively.

It follows from Remark~\ref{remark:blow-up-inequality} that there
are points $O_{1}\in F_{1}$, $O_{2}\in F_{2}$ and $O_{3}\in F_{3}$
such that
$$
\mathrm{LCS}\Bigg(\tilde{X}, \lambda\tilde{D}+\sum_{i=1}^3\Big(\lambda \mathrm{mult}_{P_i}\big(D\big)-1\Big)\tilde{E}_i+\sum_{i=1}^3\Big(\lambda \mathrm{mult}_{Q_i}\big(\bar{D}\big)+\lambda\mathrm{mult}_{P_i}\big(D\big)-2\Big)F_i\Bigg)=\Big\{O_{1},O_{2},O_{3}\Big\},%
$$
since
$\mathrm{mult}_{Q_1}(\bar{D})+\mathrm{mult}_{P_1}(D)=\mathrm{mult}_{Q_2}(\bar{D})+\mathrm{mult}_{P_2}(D)=\mathrm{mult}_{Q_3}(\bar{D})+\mathrm{mult}_{P_3}(D)\leqslant
4/3$.

The~action of the~group $G$ on the~surface $\bar{X}$ naturally
lifts to an action on the surface $\tilde{X}$ such that every
curve among the curves $F_{1}$, $F_{2}$ and $F_{3}$ contain
exactly two $\tau$-fixed points.

Let $\tilde{R}$ be the~proper transform of the~curve $R$ on
the~surface $\tilde{X}$. Then
\begin{itemize}
\item either $O_{1}=\tilde{E}_{1}\cap F_{1}$, $O_{2}=\tilde{E}_{2}\cap F_{2}$ and $O_{3}=\tilde{E}_{3}\cap F_{3}$,%
\item or $O_{1}=\tilde{R}\cap F_{1}$, $O_{2}=\tilde{R}\cap F_{2}$ and $O_{3}=\tilde{R}\cap F_{3}$.%
\end{itemize}

Suppose that $O_{1}=\tilde{R}\cap F_{1}$, $O_{2}=\tilde{R}\cap
F_{2}$ and $O_{3}=\tilde{R}\cap F_{3}$. Then
$$
\mathrm{LCS}\Bigg(\tilde{X}, \lambda\tilde{D}+\sum_{i=1}^3\Big(\lambda \mathrm{mult}_{Q_i}\big(\bar{D}\big)+\lambda\mathrm{mult}_{P_i}\big(D\big)-2\Big)F_i\Bigg)=\Big\{O_{1},O_{2},O_{3}\Big\},%
$$
since $O_{1}\not\in\tilde{E}_{1}$, $O_{2}\not\in\tilde{E}_{2}$
and $O_{3}\not\in\tilde{E}_{3}$. Then it follows from
Remark~\ref{remark:blow-up-inequality} that
\begin{equation}
\label{equation:dP2-inequality-V}
\mathrm{mult}_{O_{i}}\big(\tilde{D}\big)+\mathrm{mult}_{Q_i}\big(\bar{D}\big)+\mathrm{mult}_{P_i}\big(D\big)\geqslant\frac{3}{\lambda}>\frac{3}{2}%
\end{equation}
for every $i\in\{1,2,3\}$, where
$\mathrm{mult}_{O_{1}}(\tilde{D})=\mathrm{mult}_{O_{2}}(\tilde{D})=\mathrm{mult}_{O_{2}}(\tilde{D})$.
However
$$
4-\sum_{i=1}^{3}\mathrm{mult}_{P_i}\big(D\big)+\sum_{i=1}^{3}\mathrm{mult}_{Q_i}\big(\bar{D}\big)=\tilde{R}\cdot\tilde{D}\geqslant\sum_{i=1}^{3}\mathrm{mult}_{O_i}\big(\tilde{D}\big),%
$$
which contradicts $(\ref{equation:dP2-inequality-V})$. Thus, we
see that $O_{1}=\tilde{E}_{1}\cap F_{1}$, $O_{2}=\tilde{E}_{2}\cap
F_{2}$ and $O_{3}=\tilde{E}_{3}\cap F_{3}$.

If
$6(\lambda\mathrm{mult}_{P_1}(D)-1)+(\lambda\mathrm{mult}_{Q_1}(\bar{D})+\lambda\mathrm{mult}_{P_1}(D)-2)<4$,
then we can apply Corollary~\ref{corollary:Cheltsov-Kosta}~to
$$
\Bigg(\tilde{X}, \lambda\tilde{D}+\Big(\lambda \mathrm{mult}_{P_1}\big(D\big)-1\Big)\tilde{E}_1+\Big(\lambda \mathrm{mult}_{Q_1}\big(\bar{D}\big)+\lambda\mathrm{mult}_{P_1}\big(D\big)-2\Big)F_1\Bigg),%
$$
which immediately gives a contradiction, because
$$
\lambda\tilde{D}\cdot F_{1}=\lambda\mathrm{mult}_{Q_1}\big(\bar{D}\big)\leqslant 1+\frac{3}{2}\Big(\lambda\mathrm{mult}_{Q_1}\big(\bar{D}\big)+\lambda\mathrm{mult}_{P_1}\big(D\big)-2\Big)-\Big(\lambda\mathrm{mult}_{P_1}\big(D\big)-1\Big)%
$$
and $\lambda\tilde{D}\cdot
\tilde{E}_{1}=2(\lambda\mathrm{mult}_{P_1}(D)-1)-(\lambda\mathrm{mult}_{Q_1}(\bar{D})+\lambda\mathrm{mult}_{P_1}(D)-2)$.
Hence
$$
6\Big(\lambda\mathrm{mult}_{P_1}\big(D\big)-1\Big)+\Big(\lambda\mathrm{mult}_{Q_1}\big(\bar{D}\big)+\lambda\mathrm{mult}_{P_1}\big(D\big)-2\Big)\geqslant 4,%
$$
which implies that
$7\mathrm{mult}_{P_1}(D)+\mathrm{mult}_{Q_1}(\bar{D})\geqslant
12/\lambda$. Similarly,
\begin{equation}
\label{equation:dP2-7-1-6}
7\mathrm{mult}_{P_1}\big(D\big)+\mathrm{mult}_{Q_1}\big(\bar{D}\big)=7\mathrm{mult}_{P_2}\big(D\big)+\mathrm{mult}_{Q_2}\big(\bar{D}\big)=7\mathrm{mult}_{P_3}\big(D\big)+\mathrm{mult}_{Q_3}\big(\bar{D}\big)\geqslant \frac{12}{\lambda},%
\end{equation}
which implies that
$\mathrm{mult}_{P_1}(D)=\mathrm{mult}_{P_2}(D)=\mathrm{mult}_{P_3}(D)>7/9$
by $(\ref{equation:dP2-4-3})$. Then
$$
T_{1}+T_{2}+T_{3}\ne L_{1}+L_{2}+L_{3},
$$
since
$\mathrm{mult}_{P_{1}}(D)=\mathrm{mult}_{P_{2}}(D)=\mathrm{mult}_{P_{3}}(D)\leqslant
2/3$ if $T_{1}+T_{2}+T_{3}=L_{1}+L_{2}+L_{3}$. We have
$$
Z_{1}\ne Z_{2}\ne Z_{3}\ne Z_{1},
$$
since
$\mathrm{mult}_{P_{1}}(D)=\mathrm{mult}_{P_{2}}(D)=\mathrm{mult}_{P_{3}}(D)\leqslant
2/3$ if $Z_{1}=Z_{2}=Z_{3}$.

Let $\mathcal{M}$ be linear subsystem in $|-3K_{X}|$ such that
$M\in\mathcal{M}$ if $\pi(M)$ is a cubic curve such that
$$
\Big\{\pi\big(P_{1}\big),\pi\big(P_{2}\big),\pi\big(P_{3}\big)\Big\}\subset\pi\big(M\big)%
$$
and $\pi(M)$ is tangent to $\pi(R)$ at the points $\pi(P_{1})$,
$\pi(P_{2})$ and $\pi(P_{3})$. Then
$$
T_{1}+T_{2}+T_{3}\in\mathcal{M}\ni L_{1}+L_{2}+L_{3}
$$
and every curve in $\mathcal{M}$ is singular at the points
$P_{1}$, $P_{2}$ and $P_{3}$. Note that
$\mathrm{dim}(\mathcal{M})\geqslant 3$.

Let $\bar{\mathcal{M}}$ be the proper transform of the linear
system $\mathcal{M}$ on the surface $\bar{X}$. Then
$$
\bar{\mathcal{M}}\sim\sigma^{*}\Big(-3K_{X}\Big)-\sum_{i=1}^{3}\mathrm{mult}_{P_{i}}\big(\mathcal{M}\big)E_{3},%
$$
where
$\mathrm{mult}_{P_{1}}(\mathcal{M})=\mathrm{mult}_{P_{2}}(\mathcal{M})=\mathrm{mult}_{P_{3}}(\mathcal{M})\geqslant
2$.

Let $\bar{\mathcal{B}}$ be a linear subsystem of the linear system
$\bar{\mathcal{M}}$ consisting of curves that pass through the
points $Q_{1}$,  $Q_{2}$ and $Q_{3}$. Then
$\bar{\mathcal{B}}\ne\varnothing$, since
$\mathrm{dim}(\mathcal{M})\geqslant 3$. Put
$\mathcal{B}=\sigma(\bar{\mathcal{B}})$. Then
$$
\bar{\mathcal{B}}\sim\sigma^{*}\Big(-3K_{X}\Big)-\sum_{i=1}^{3}\mathrm{mult}_{P_{i}}\big(\mathcal{B}\big)E_{3},%
$$
where
$\mathrm{mult}_{P_{1}}(\mathcal{B})=\mathrm{mult}_{P_{2}}(\mathcal{B})=\mathrm{mult}_{P_{3}}(\mathcal{B})\geqslant
\mathrm{mult}_{P_{1}}(\mathcal{M})=\mathrm{mult}_{P_{2}}(\mathcal{M})=\mathrm{mult}_{P_{3}}(\mathcal{M})\geqslant
2$.

Note that the linear systems $\mathcal{M}$, $\bar{\mathcal{B}}$
and $\mathcal{B}$ are $G$-invariant.

Let $B$ be a general curve in the linear system $\mathcal{B}$.
Since $|-K_{X}|$  contains no $G$-invariant curves, we see that
either $\mathcal{B}=B$ or $\mathcal{B}$ has no fixed curves. If
$\mathcal{B}=B$, then $B$ is $G$-invariant and
\begin{equation}
\label{equation:dP2-spacial-log-pair}
\Bigg(X, \frac{\lambda}{3} B\Bigg)%
\end{equation}
is log canonical. Indeed, if the log pair
$(\ref{equation:dP2-spacial-log-pair})$ is not log canonical, then
$$
3>\mathrm{mult}_{P_{1}}\big(B\big)>\frac{7}{3}>2,
$$
because we can apply the arguments we used for $(X,\lambda D)$ to
the log pair $(\ref{equation:dP2-spacial-log-pair})$.

We may assume that $B$ is not contained in $\mathrm{Supp}(D)$ by
Remark~\ref{remark:convexity}. Then
$$
6=B\cdot
D\geqslant\sum_{i=1}^{3}\mathrm{mult}_{P_{i}}\big(B\big)\mathrm{mult}_{P_{i}}\big(D\big)>\frac{7}{9}\sum_{i=1}^{3}\mathrm{mult}_{P_{i}}\big(B\big)=\frac{7}{3}\mathrm{mult}_{P_{1}}\big(B\big)=\frac{7}{3}\mathrm{mult}_{P_{2}}\big(B\big)=\frac{7}{3}\mathrm{mult}_{P_{1}}\big(B\big),%
$$
which implies that
$\mathrm{mult}_{P_{1}}(B)=\mathrm{mult}_{P_{2}}(B)=\mathrm{mult}_{P_{3}}(B)=2$.

Let $\bar{B}$ be the proper transform of the curve $B$ on the
surface $\bar{X}$. Then $\bar{B}\in\bar{\mathcal{B}}$ and
$$
6-6\mathrm{mult}_{P_{1}}\big(D\big)=\bar{B}\cdot\bar{D}\geqslant\sum_{i=1}^{3}\mathrm{mult}_{Q_{i}}\big(\bar{B}\big)\mathrm{mult}_{Q_{i}}\big(\bar{D}\big)\geqslant3\mathrm{mult}_{Q_{1}}\big(\bar{D}\big)=3\mathrm{mult}_{Q_{2}}\big(\bar{D}\big)=3\mathrm{mult}_{Q_{3}}\big(\bar{D}\big),%
$$
which implies that
$2\mathrm{mult}_{P_{1}}(D)+\mathrm{mult}_{Q_{1}}(\bar{D})\leqslant
2$. By $(\ref{equation:dP2-7-1-6})$, we have
$$
5\mathrm{mult}_{P_1}\big(D\big)+2\geqslant 7\mathrm{mult}_{P_1}\big(D\big)+\mathrm{mult}_{Q_1}\big(\bar{D}\big)\geqslant\frac{12}{\lambda}>6,%
$$
which implies that $\mathrm{mult}_{P_1}(D)>4/5$. But we already
proved that $\mathrm{mult}_{P_1}(D)\leqslant 4/5$.
\end{proof}

Arguing as in the~proof of Theorem~\ref{theorem:dP2-lct-lct2}, we
obtain the~following two corollaries.

\begin{corollary}
\label{corollary:dP2-lct-lct2-5-3} If there are no $G$-invariant
curves in $|-K_X|$, then $\mathrm{lct}(X,G)\in\{15/8, 2\}$.
\end{corollary}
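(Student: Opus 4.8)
The plan is to replay the proof of Theorem~\ref{theorem:dP2-lct-lct2} while tracking the sharper numerical bound $15/8$ in place of the crude bound $2$ coming from Corollary~\ref{corollary:dP2-2K}. Since that theorem already gives $\mathrm{lct}(X,G)=\min(\mathrm{lct}_2(X,G),\mathrm{lct}_3(X,G))\in[1,2]$, it suffices to prove two things: that $\mathrm{lct}(X,G)\ge 15/8$, and that no value in the open interval $(15/8,2)$ is attained.

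For the lower bound, suppose to the contrary that there is a $G$-invariant effective $\mathbb{Q}$-divisor $D\sim_{\mathbb{Q}}-K_X$ with $(X,\lambda D)$ strictly log canonical for some $\lambda<15/8$, and run the argument of Theorem~\ref{theorem:dP2-lct-lct2} verbatim: $\mathrm{LCS}(X,\lambda D)$ is three points $P_1,P_2,P_3$ on the ramification curve $R=\{t=0\}$, not collinear in the double-plane model $\pi\colon X\to\mathbb{P}^2$; one forms the $G$-invariant divisors $L_1+L_2+L_3$, $T_1+T_2+T_3\in|-3K_X|$ and $Z_1+Z_2+Z_3\in|-6K_X|$ and performs the two blow-ups $\tilde{X}\to\bar{X}\to X$ at the $P_i$ and then at $Q_i\in E_i$. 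Every multiplicity estimate in that proof was obtained under the hypothesis $\lambda<2$, so it holds a fortiori for $\lambda<15/8$; the only places where the weaker hypothesis $\lambda<2$ was genuinely exploited are the degenerate configurations $T_1+T_2+T_3=L_1+L_2+L_3$ and $Z_1=Z_2=Z_3$. There one invokes instead the facts flagged during the proof of Theorem~\ref{theorem:dP2-lct-lct2} precisely for use here, namely that $(X,\tfrac58(L_1+L_2+L_3))$ and $(X,\tfrac58(T_1+T_2+T_3))$ are log canonical and that $(X,\tfrac13(Z_1+Z_2+Z_3))$ is log canonical when the conics $\pi(Z_i)$ are irreducible. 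Because these divisors are $G$-invariant of the stated degrees and, by Remark~\ref{remark:convexity}, $D$ is not contained in their supports, the intersection identities $D\cdot(L_1+L_2+L_3)=6$, $D\cdot(Z_1+Z_2+Z_3)=12$, and so on, then bound $\mathrm{mult}_{P_i}(D)$ strictly below the value needed for $(X,\lambda D)$ to fail log canonicity at $P_i$, contradicting the choice of $D$. Hence $\mathrm{lct}(X,G)\ge 15/8$.

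For the second point one shows that $\mathrm{lct}_2(X,G)$ and $\mathrm{lct}_3(X,G)$ take only the values $2$ and, in the case of $\mathrm{lct}_3$, $15/8$. Using $\tau\in G$ together with Lemma~\ref{lemma:dP2-Bertini} and the hypothesis that $|-K_X|$ has no $G$-invariant curve, one lists the $G$-invariant members of $|-2K_X|$ and $|-3K_X|$ (pre-images of $G$-invariant conics or cubics, the curve $R$, and sums of pre-images of lines) and estimates $\mathrm{lct}(X,\tfrac1n C)$ for each; the no-invariant-curve hypothesis is what excludes the tacnodal and non-reduced members of $|-2K_X|$ exactly as in Example~\ref{example:GAFA}, leaving $\mathrm{lct}_2(X,G)=2$, while the only sub-maximal possibility for $\mathrm{lct}_3(X,G)$ is $15/8=3\cdot\tfrac58$, realised by the distinguished singular curve in $|-3K_X|$ identified above. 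Together with $\mathrm{lct}(X,G)=\min(\mathrm{lct}_2,\mathrm{lct}_3)$ this gives $\mathrm{lct}(X,G)\in\{15/8,2\}$.

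The main obstacle is the degenerate-case analysis in the lower-bound step: one must check that in the two configurations $T_i=L_i$ and $Z_1=Z_2=Z_3$ the auxiliary log-canonical pairs $(X,\tfrac58(L_1+L_2+L_3))$, $(X,\tfrac58(T_1+T_2+T_3))$ and $(X,\tfrac13(Z_1+Z_2+Z_3))$ really do force a contradiction for every $\lambda<15/8$ (and not merely for $\lambda$ near $1$), since this is where the constant $15/8$ is pinned down; keeping the multiplicities along the tower $\tilde{X}\to\bar{X}\to X$ consistent is the delicate bookkeeping.
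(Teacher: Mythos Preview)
Your first step (the lower bound $\mathrm{lct}(X,G)\geqslant 15/8$) is exactly what the paper intends: replay the proof of Theorem~\ref{theorem:dP2-lct-lct2} with $\lambda<15/8$, using the parenthetical facts that $(X,\tfrac{5}{8}(L_1+L_2+L_3))$ and $(X,\tfrac{5}{8}(T_1+T_2+T_3))$ are log canonical (hence KLT for $\lambda/3<5/8$) so that every application of Remark~\ref{remark:convexity} goes through.

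Your second step, however, takes a different and harder route than the paper. You propose to enumerate all $G$-invariant members of $|-2K_X|$ and $|-3K_X|$ and compute their thresholds directly, asserting in particular that $\mathrm{lct}_2(X,G)=2$; your justification (``excludes the tacnodal and non-reduced members \ldots\ exactly as in Example~\ref{example:GAFA}'') is loose, since that example concerns $|-K_X|$, and it is not obvious a~priori that every $G$-invariant conic pull-back in $|-2K_X|$ has only nodes. The paper avoids this enumeration entirely. The key ingredients you have not used are the ``moreover'' clauses flagged in the proof of Theorem~\ref{theorem:dP2-lct-lct2}: that $(X,\tfrac{2}{3}(L_1+L_2+L_3))$ is log canonical $\iff$ $(X,\tfrac{5}{8}(L_1+L_2+L_3))$ is KLT, and the analogous dichotomy for $T_1+T_2+T_3$ (with equality to $L_1+L_2+L_3$ characterising the strict case). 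These say precisely that the normalized threshold of each auxiliary $|-3K_X|$ divisor lies in $\{15/8\}\cup[2,\infty)$; the auxiliary divisors $R$ and $Z_1+Z_2+Z_3$ already have normalized threshold $\geqslant 2$ by the remarks in the proof. Hence, if one runs the argument with $\lambda=\mathrm{lct}(X,G)<2$, some convexity swap must fail, and the only auxiliary divisors at which this can happen have normalized threshold exactly $15/8$; this forces $\mathrm{lct}(X,G)\leqslant 15/8$ and therefore equality. No separate analysis of $\mathrm{lct}_2$ is needed.
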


\begin{corollary}
\label{corollary:dP2-lct-lct2-orbits} The equality
$\mathrm{lct}(X,G)=2$ holds if the~following two conditions are
satisfied:
\begin{itemize}
\item the~linear system $|-K_X|$ does not contain $G$-invariant curves,%
\item the~surface $X$ does not have $G$-orbits of length $3$.
\end{itemize}
\end{corollary}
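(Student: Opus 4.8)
The plan is to argue by contradiction, reducing to the combinatorial core of the proof of Theorem~\ref{theorem:dP2-lct-lct2} and then exploiting the two extra hypotheses. Suppose that $\mathrm{lct}(X,G)<2$. We always have $\mathrm{lct}(X,G)\leqslant 2$ by Corollary~\ref{corollary:dP2-2K}, so by Theorem~\ref{theorem:dP2-lct-lct2} the quantity $\min(\mathrm{lct}_2(X,G),\mathrm{lct}_3(X,G))$ is strictly smaller than $2$; hence there exist a $G$-invariant effective $\mathbb{Q}$-divisor $D\sim_{\mathbb{Q}}-K_X$ and a rational number $\lambda<2$ such that $(X,\lambda D)$ is strictly log canonical. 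Since $|-K_X|$ contains no $G$-invariant curve, the smallest integer $\xi$ for which $|-\xi K_X|$ carries a $G$-invariant curve equals $2$ by Lemma~\ref{lemma:dP2-2K}, so $\xi>\lambda$ and Lemma~\ref{lemma:LCS-zero-dimensional} shows that $\mathrm{LCS}(X,\lambda D)$ is a non-empty finite set. By Lemma~\ref{lemma:Nadel-IILC} we have $|\mathrm{LCS}(X,\lambda D)|\leqslant h^0(X,\mathcal{O}_X(-\lceil\lambda-1\rceil K_X))\leqslant 3$, and the $\tau$-invariance of $D$ together with the fact that every curve in $|-K_X|$ is the $\pi$-pullback of a line (here $\pi\colon X\to\mathbb{P}^2$ is the double cover) forces $\mathrm{LCS}(X,\lambda D)\subset R$, where $R$ is the ramification curve $\{t=0\}$. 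Rerunning the opening of the proof of Theorem~\ref{theorem:dP2-lct-lct2} in this situation, one finds that $\mathrm{LCS}(X,\lambda D)$ in fact consists of exactly three points $P_1,P_2,P_3\in R$.

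It now suffices to prove that $\{P_1,P_2,P_3\}$ is a single $G$-orbit, which will contradict the assumption that $X$ has no $G$-orbit of length $3$. The group $G$ acts on $\mathbb{P}^2$ so that $\pi$ is equivariant, with kernel $\langle\tau\rangle$, and this action preserves $\pi(R)$; moreover $\pi$ restricts to an isomorphism from $R$ onto the smooth plane quartic $\pi(R)$. If some $P_i$ were $G$-fixed, then $\pi(P_i)$ would be a smooth point of $\pi(R)$ fixed by $G$, hence the unique line tangent to $\pi(R)$ at $\pi(P_i)$ would be $G$-invariant, and its $\pi$-preimage would be a $G$-invariant curve in $|-K_X|$, contradicting the first hypothesis. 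Therefore $G$ acts on the three-element set $\{P_1,P_2,P_3\}$ with no fixed point, so this set is one orbit of length $3$, a contradiction. Hence $\mathrm{lct}(X,G)=2$.

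The step I expect to require the most care is the reduction to the configuration of exactly three points of $R$: this means re-deriving the part of the proof of Theorem~\ref{theorem:dP2-lct-lct2} that pins down $\mathrm{LCS}(X,\lambda D)$, including the exclusion of $|\mathrm{LCS}(X,\lambda D)|\in\{1,2\}$. That exclusion is itself an instance of the tangent/secant-line trick used above: a single point of $\mathrm{LCS}(X,\lambda D)$ would be $G$-fixed and yield a $G$-invariant tangent line, and two points would yield a $G$-invariant secant line (their $\pi$-images are distinct because $\pi|_R$ is injective), each producing a forbidden $G$-invariant curve in $|-K_X|$. Once the three-point picture and the equivariance of $\pi$ through $G/\langle\tau\rangle$ are in place, no further intersection estimates beyond those already in Theorem~\ref{theorem:dP2-lct-lct2} are needed.
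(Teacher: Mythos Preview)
Your proof is correct and follows essentially the same route the paper intends. The paper does not spell out a separate argument for this corollary, but simply says to argue as in the proof of Theorem~\ref{theorem:dP2-lct-lct2}; your reduction to a three-point locus $\{P_1,P_2,P_3\}\subset R$ via Lemmata~\ref{lemma:LCS-zero-dimensional} and~\ref{lemma:Nadel-IILC}, followed by the tangent/secant-line trick to force this set to be a single $G$-orbit, is exactly the mechanism underlying that proof, and you have correctly observed that only $\lambda<2$ (not $\lambda<\mathrm{lct}_2(X,G)$ or $\lambda<\mathrm{lct}_3(X,G)$) is needed for this opening portion.
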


\begin{corollary}
\label{corollary:dP2-lct} We have $\mathrm{lct}(X,G)\in\{3/4, 5/6,
1, 15/8, 2\}$.
\end{corollary}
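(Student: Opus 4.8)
The plan is to split into two cases according to whether the anticanonical pencil $|-K_X|$ contains a $G$-invariant curve. This dichotomy is obviously exhaustive, and the whole point is that each of the two cases has already been handled by a result proved earlier in this section, so the corollary should be essentially immediate.

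In the case where $|-K_X|$ does contain a $G$-invariant curve, I would simply quote Theorem~\ref{theorem:dP2-lct-lct1}: under our standing hypotheses (a smooth quartic in $\mathbb{P}(1,1,1,2)$ with $\tau\in G\subseteq\mathrm{Aut}(X)$, which is exactly what that theorem assumes), it gives $\mathrm{lct}(X,G)=\mathrm{lct}_1(X,G)\in\{3/4,5/6,1\}$. Nothing else is needed here.

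In the case where $|-K_X|$ contains no $G$-invariant curve, Theorem~\ref{theorem:dP2-lct-lct2} by itself only yields $1\leqslant\mathrm{lct}(X,G)\leqslant 2$, which is not a finite set, so that is not quite enough. Instead I would invoke the sharper Corollary~\ref{corollary:dP2-lct-lct2-5-3}, which asserts $\mathrm{lct}(X,G)\in\{15/8,2\}$ in precisely this situation. Combining the two cases gives $\mathrm{lct}(X,G)\in\{3/4,5/6,1\}\cup\{15/8,2\}=\{3/4,5/6,1,15/8,2\}$, which is the claim.

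There is really no obstacle to overcome: all the substantive work lives in the proofs of Theorems~\ref{theorem:dP2-lct-lct1}, \ref{theorem:dP2-lct-lct2} and Corollary~\ref{corollary:dP2-lct-lct2-5-3}. The only point that genuinely needs to be noted is that the hypotheses of those statements match the standing assumptions of the section verbatim, so that the two cases together cover every possibility; once that is observed the corollary is a one-line deduction.
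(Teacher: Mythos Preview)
Your proposal is correct and matches the paper's approach: the corollary is stated without its own proof, as an immediate consequence of Theorem~\ref{theorem:dP2-lct-lct1} in the case where $|-K_X|$ has a $G$-invariant curve and of Corollary~\ref{corollary:dP2-lct-lct2-5-3} otherwise. Your case split and your observation that the standing hypotheses of the section are exactly those of the cited results is all that is needed.
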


Using description of the~group $\mathrm{Aut}(X)$ (see
\cite{DoIs06}), we obtain the~following result.

\begin{corollary}
\label{corollary:dP2-main} The following conditions are
equivalent:
\begin{itemize}
\item the~inequality $\mathrm{lct}(X,\mathrm{Aut}(X))>1$ holds,%

\item the~equality $\mathrm{lct}(X,\mathrm{Aut}(X))=2$ holds,%

\item the~linear system $|-K_X|$ does not contain $\mathrm{Aut}(X)$-invariant curves,%

\item the~group $\mathrm{Aut}(X)$ is isomorphic to one of
the~following groups:
$$
\mathbb{S}_4\times\mathbb{Z}_2, \big(\mathbb{Z}_4^2\rtimes\mathbb{S}_3\big)\times\mathbb{Z}_2, \mathbb{PSL}\big(2,\mathbb{F}_7\big)\times\mathbb{Z}_2.%
$$%
\end{itemize}
\end{corollary}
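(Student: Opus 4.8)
The plan is to run the equivalences around the cycle $(2)\Rightarrow(1)\Rightarrow(3)\Rightarrow(4)\Rightarrow(2)$, writing $G=\mathrm{Aut}(X)$; since $\tau\in G$, Theorems~\ref{theorem:dP2-lct-lct1} and \ref{theorem:dP2-lct-lct2} together with Corollaries~\ref{corollary:dP2-lct-lct2-5-3} and \ref{corollary:dP2-lct-lct2-orbits} all apply to $(X,G)$. The implication $(2)\Rightarrow(1)$ is trivial since $2>1$, and $(1)\Rightarrow(3)$ is the contrapositive of Theorem~\ref{theorem:dP2-lct-lct1}: a $G$-invariant curve in $|-K_X|$ would force $\mathrm{lct}(X,G)=\mathrm{lct}_1(X,G)\leqslant 1$.

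The heart of the argument is the equivalence $(3)\Leftrightarrow(4)$, obtained by translating the condition on $|-K_X|$ into the geometry of the branch quartic. Let $\pi\colon X\to\mathbb{P}^2$ be the projection of Section~\ref{sec:degree-two}: it is the anticanonical morphism, a double cover branched over the smooth quartic $C_4=\{f_4=0\}=\pi(R)$ with $R=\{t=0\}$, and $|-K_X|=\pi^{*}|\mathcal{O}_{\mathbb{P}^2}(1)|$, so the members of $|-K_X|$ are exactly the curves $\pi^{-1}(\ell)$ for lines $\ell\subset\mathbb{P}^2$. Every element of $G$ descends to $\mathbb{P}^2$: the kernel of $G\to\mathrm{PGL}_3(\mathbb{C})$ is $\langle\tau\rangle$, and its image $\overline{G}$ is the group $\mathrm{Aut}(\mathbb{P}^2,C_4)$ of projective transformations preserving $C_4$. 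Hence $|-K_X|$ contains a $G$-invariant curve if and only if $\overline{G}$ leaves some line of $\mathbb{P}^2$ invariant; for a finite subgroup of $\mathrm{PGL}_3(\mathbb{C})$ this is equivalent to leaving some point of $\mathbb{P}^2$ invariant, i.e.\ to its action on $\mathbb{C}^3$ being reducible. Thus $(3)$ says precisely that $\overline{G}$ acts irreducibly on $\mathbb{C}^3$. Appealing now to the classification of automorphism groups of degree-$2$ del Pezzo surfaces --- equivalently of smooth plane quartics --- in \cite{DoIs06}, one reads off that the only $\overline{G}$ acting irreducibly are $\mathbb{S}_4$ (realized on $x^4+y^4+z^4+a(x^2y^2+y^2z^2+z^2x^2)=0$ for generic $a$), $\mathbb{Z}_4^2\rtimes\mathbb{S}_3$ (the Fermat quartic), and $\mathbb{PSL}(2,\mathbb{F}_7)$ (the Klein quartic $x^3y+y^3z+z^3x=0$), and that in each of these cases $G\cong\mathbb{Z}_2\times\overline{G}$. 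This gives $(3)\Leftrightarrow(4)$.

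To prove $(4)\Rightarrow(2)$, suppose $(4)$ holds; then $\overline{G}$ is one of the three irreducible groups above, $(3)$ holds, and by Corollary~\ref{corollary:dP2-lct-lct2-orbits} it is enough to show $X$ has no $G$-orbit of length $3$ (this also excludes the alternative value $15/8$ of Corollary~\ref{corollary:dP2-lct-lct2-5-3}). Let $\Sigma\subset X$ be a $G$-orbit with $|\Sigma|\leqslant 3$. Since $\overline{G}$ fixes no point of $\mathbb{P}^2$, the orbit $\pi(\Sigma)$ has length $2$ or $3$; length $2$ is impossible, as the unique line through the two points would be $\overline{G}$-invariant. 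So $|\pi(\Sigma)|=3$ and $\pi|_\Sigma$ is a bijection; then no two points of $\Sigma$ lie in one fibre of $\pi$, which forces $\tau(p)=p$ for every $p\in\Sigma$ (as $\tau(p)$ lies in $\Sigma$ and in the fibre over $\pi(p)$). Hence $\Sigma\subset R$ and $\pi(\Sigma)\subset C_4$ is a length-$3$ orbit of three non-collinear points. No such orbit exists: $\mathbb{PSL}(2,\mathbb{F}_7)$ is simple and so has no subgroup of index $\leqslant 3$; while for $\mathbb{S}_4$ and for $\mathbb{Z}_4^2\rtimes\mathbb{S}_3$ a point-stabilizer of index $3$ is a Sylow $2$-subgroup, each Sylow $2$-subgroup fixes exactly one point of $\mathbb{P}^2$, and the resulting unique length-$3$ orbit consists of the coordinate vertices $[1:0:0]$, $[0:1:0]$, $[0:0:1]$, where $f_4$ does not vanish. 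Therefore $X$ has no $G$-orbit of length $3$, so $\mathrm{lct}(X,\mathrm{Aut}(X))=2$, which is $(2)$.

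The only genuinely external input is the classification in \cite{DoIs06}, and I expect the main obstacle to be the bookkeeping in the middle step: one must run through that list to check both that reducibility of $\overline{G}$ on $\mathbb{C}^3$ is equivalent to the existence of a $G$-invariant anticanonical curve on $X$, and that exactly the three groups in $(4)$ act irreducibly. Once those three groups and their linear actions on $\mathbb{P}^2$ are pinned down, the small-orbit analysis in the last step is elementary.
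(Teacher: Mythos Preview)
Your proof is correct and follows precisely the route the paper gestures at: the paper itself gives no explicit argument for this corollary, only the one-line justification ``Using description of the~group $\mathrm{Aut}(X)$ (see \cite{DoIs06}), we obtain the following result,'' and your cycle $(2)\Rightarrow(1)\Rightarrow(3)\Rightarrow(4)\Rightarrow(2)$ using Theorem~\ref{theorem:dP2-lct-lct1}, the classification in \cite{DoIs06}, and Corollary~\ref{corollary:dP2-lct-lct2-orbits} is exactly the intended filling-in. Your explicit exclusion of length-$3$ orbits in each of the three cases---via the Sylow $2$-subgroup analysis for $\mathbb{S}_4$ and $\mathbb{Z}_4^2\rtimes\mathbb{S}_3$, and simplicity for $\mathbb{PSL}(2,\mathbb{F}_7)$---is the key computation the paper suppresses, and it is carried out correctly.
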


Let us show how to compute $\mathrm{lct}(X,G)$  in two cases.

\begin{lemma}\label{lemma:dP2-Klein}
Suppose that $f_4(x,y,z)=x^3y+y^3z+z^3x$ and $G\cong
\mathbb{Z}_{2}\times (\mathbb{Z}_{7}\rtimes\mathbb{Z}_{3})$. Then
$$
\mathrm{lct}\big(X,G\big)=\mathrm{lct}_{3}\big(X,G\big)=\frac{15}{8}<\mathrm{lct}_{2}\big(X,G\big)=2.
$$
\end{lemma}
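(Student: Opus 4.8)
The plan is to pin down every $G$-invariant curve in $|-K_X|$, $|-2K_X|$ and $|-3K_X|$ by elementary representation theory, and then apply Theorem~\ref{theorem:dP2-lct-lct2}. Here $\pi\colon X\to\mathbb{P}^2$ is the double cover branched over the smooth Klein quartic $C=\{x^3y+y^3z+z^3x=0\}$, and by \cite{DoIs06} one has $\mathrm{Aut}(X)\cong\mathbb{Z}_2\times\mathbb{PSL}(2,\mathbb{F}_7)$ with $\tau$ the central Galois involution; thus $G=\langle\tau\rangle\times H$, where $H\cong\mathbb{Z}_7\rtimes\mathbb{Z}_3$ is the normalizer of a Sylow $7$-subgroup, which (after a coordinate change) is generated by $g_3\colon[x:y:z]\mapsto[y:z:x]$ and $g_7\colon[x:y:z]\mapsto[\eta x:\eta^4y:\eta^2z]$ with $\eta=\exp(2\pi i/7)$. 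Since $\mathrm{wt}(t)=2$ we have $H^0(X,-mK_X)=H^0(\mathbb{P}^2,\mathcal{O}(m))\oplus t\cdot H^0(\mathbb{P}^2,\mathcal{O}(m-2))$, where $\tau$ acts by $+1$ on the first summand and by $-1$ on the second; hence a $G$-invariant member of $|-mK_X|$ for $m\leqslant 3$ is cut out by an $H$-semi-invariant form of degree $m$ in $x,y,z$ (the $t$-part never contributes, since the $H$-module $V^{*}=\langle x,y,z\rangle$ and its symmetric square have no semi-invariant lines: the $g_7$-weights on $V^{*}$ are $\{\eta^{-1},\eta^{-2},\eta^{-4}\}$, and those on $\mathrm{Sym}^2V^{*}$ run over all six nontrivial $7$-th roots of unity).

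First I would observe that $|-K_X|$ contains no $G$-invariant curve, since $H$ fixes no line. By Theorem~\ref{theorem:dP2-lct-lct2} this yields $\mathrm{lct}(X,G)=\min(\mathrm{lct}_2(X,G),\mathrm{lct}_3(X,G))$. Next, the only $G$-invariant curve in $|-2K_X|$ is the ramification curve $R=\{t=0\}$, which maps isomorphically onto the smooth quartic $C$; hence $(X,\tfrac{\lambda}{2}R)$ is log canonical for all $\lambda\leqslant 2$, and together with Corollary~\ref{corollary:dP2-2K} this gives $\mathrm{lct}_2(X,G)=2$.

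The substance of the proof is the identity $\mathrm{lct}_3(X,G)=\tfrac{15}{8}$. The space of $H$-semi-invariant cubics is one-dimensional, spanned by $xyz$ (the $g_7$-fixed subspace of $\mathrm{Sym}^3V^{*}$ is one-dimensional and is $g_3$-invariant), so the unique $G$-invariant curve in $|-3K_X|$ is $D_{0}=\pi^{-1}(\Gamma)$, where $\Gamma=\{xyz=0\}$ is the triangle of coordinate lines; consequently $\mathrm{lct}_3(X,G)=3\,\mathrm{lct}(X,D_{0})$. Now the vertices of $\Gamma$ are $e_{1}=[1:0:0]$, $e_{2}=[0:1:0]$, $e_{3}=[0:0:1]$, all lying on $C$, and a direct substitution shows that each coordinate line is the inflectional tangent of $C$ at one vertex and meets $C$ transversally at another (for instance $\{y=0\}\cap C=3e_{1}+e_{3}$, while $\{z=0\}\cap C=3e_{2}+e_{1}$). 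Writing $\pi$ analytically near the point $\hat e_{1}=\pi^{-1}(e_{1})$ as $(a,v)\mapsto(a,v^{2})$ with $C$ given by the vanishing of the branch coordinate, the inflectional tangent $\{y=0\}$ pulls back to the cusp $\{v^{2}+a^{3}=0\}$ and the transverse line $\{z=0\}$ to the smooth branch $\{a=0\}$, so $D_{0}$ is analytically $\{a(a^{3}+v^{2})=0\}$ at $\hat e_{1}$, and likewise at $\hat e_{2}$ and $\hat e_{3}$; away from these three points $D_{0}$ is smooth (the curves $\pi^{-1}(\{x=0\})$, $\pi^{-1}(\{y=0\})$, $\pi^{-1}(\{z=0\})$ are pairwise disjoint and individually smooth there). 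The log canonical threshold of $a(a^{3}+v^{2})=a^{4}+av^{2}$ at the origin equals $\tfrac{5}{8}$, witnessed by the monomial valuation with weights $(2,3)$ in $(a,v)$ (log discrepancy $4$, multiplicity $8$, and $\tfrac{5}{8}\cdot 8=5=4+1$; the reverse inequality follows from an explicit log resolution). Hence $\mathrm{lct}(X,D_{0})=\tfrac{5}{8}$ and $\mathrm{lct}_3(X,G)=\tfrac{15}{8}$.

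Putting these together, $\mathrm{lct}(X,G)=\min(2,\tfrac{15}{8})=\tfrac{15}{8}<2=\mathrm{lct}_2(X,G)$ and $\mathrm{lct}(X,G)=\mathrm{lct}_3(X,G)=\tfrac{15}{8}$, as claimed; Corollary~\ref{corollary:dP2-lct-lct2-5-3} supplies an independent check that $\mathrm{lct}(X,G)\in\{\tfrac{15}{8},2\}$. The main obstacles I expect are the representation-theoretic bookkeeping that classifies the invariant linear subsystems and the local analysis at the three points $\hat e_{i}$; the latter rests on the classical fact that the triangle vertices lie on the Klein quartic with the coordinate lines as inflectional tangents, which is immediate from the shape of $x^{3}y+y^{3}z+z^{3}x$.
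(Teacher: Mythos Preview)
Your argument is correct and follows essentially the same route as the paper: verify that $|-K_X|$ has no $G$-invariant curve, that the only $G$-invariant curve in $|-2K_X|$ is the smooth ramification curve $R$, and then exhibit the $G$-invariant curve in $|-3K_X|$ whose log canonical threshold is $5/8$. Your curve $D_0=\pi^{-1}(\{xyz=0\})$ is exactly the paper's $C_1+C_2+C_3$, since the tangent lines to the Klein quartic at the three coordinate points are the coordinate axes.

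The one genuine difference is how the equality $\mathrm{lct}_3(X,G)=15/8$ is obtained. The paper only proves the \emph{upper} bound $\mathrm{lct}_3(X,G)\leqslant 15/8$ by exhibiting $C_1+C_2+C_3$, and then invokes Corollary~\ref{corollary:dP2-lct-lct2-5-3} (a by-product of the long proof of Theorem~\ref{theorem:dP2-lct-lct2}) to pin down $\mathrm{lct}(X,G)\in\{15/8,2\}$. You instead prove uniqueness of $D_0$ in $|-3K_X|$ via the representation theory of $H\cong\mathbb{Z}_7\rtimes\mathbb{Z}_3$ and compute $\mathrm{lct}(X,D_0)$ directly, which makes the value of $\mathrm{lct}_3$ exact without appealing to Corollary~\ref{corollary:dP2-lct-lct2-5-3}. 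This is slightly more work but more self-contained.

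One small expository slip: your parenthetical claim that ``the $t$-part never contributes'' is literally false for $m=2$, since $R=\{t=0\}$ itself comes from the $t$-summand $t\cdot H^0(\mathcal{O}_{\mathbb{P}^2})$; what you actually use (and correctly verify) is that $V^{*}$ has no $H$-semi-invariant line (so the $t$-part of $|-3K_X|$ contributes nothing) and that $\mathrm{Sym}^2V^{*}$ has no $H$-semi-invariant line (so the non-$t$ part of $|-2K_X|$ contributes nothing). Also, with your convention $g_7\cdot[x:y:z]=[\eta x:\eta^4 y:\eta^2 z]$ the weights on $V^{*}$ are $\eta,\eta^4,\eta^2$ rather than $\eta^{-1},\eta^{-2},\eta^{-4}$; this is harmless since the conclusion (no trivial weight) is unaffected.
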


\begin{proof}
One can easily check that the~linear system $|-K_{X}|$ does not
contain $G$-invariant curves, and the~only $G$-invariant curve in
$|-2K_{X}|$ is a curve that is cut out on $X$ by $t=0$. Then
$$
2=\mathrm{lct}_{2}\big(X,G\big)\geqslant \mathrm{lct}\big(X,G\big)=\mathrm{min}\Big(2,\mathrm{lct}_{3}\big(X,G\big)\Big)\in\Big\{2, 15/8\Big\}%
$$
by Theorem~\ref{theorem:dP2-lct-lct2} and
Corollary~\ref{corollary:dP2-lct-lct2-5-3}. Note that
$\mathrm{Aut}(X)\cong\mathbb{Z}_{2}\times
\mathbb{PSL}(2,\mathbb{F}_{7})$.

Put $P_{1}=[1:0:0:0]$, $P_{2}=[0:1:0:0]$, $P_{3}=[0:0:1:0]$. Then
\begin{itemize}
\item the points $P_{1}$, $P_{2}$, $P_{3}$ are contained in the unique $\mathrm{Aut}(X)$-orbit consisting of $24$ points,%
\item the stabilizer subgroup of the subset $\{P_{1}, P_{2}, P_{3}\}$ is isomorphic to $\mathbb{Z}_{2}\times (\mathbb{Z}_{7}\rtimes\mathbb{Z}_{3})$.%
\end{itemize}

Without loss of generality, we may assume that  $\{P_{1}, P_{2},
P_{3}\}$~is~$G$-invariant.

The linear system  $|-K_{X}|$ contains curves $C_{1}$, $C_{2}$ and
$C_{3}$ such that
$$
\mathrm{mult}_{P_{1}}\big(C_{1}\big)=\mathrm{mult}_{P_{2}}\big(C_{2}\big)=\mathrm{mult}_{P_{3}}\big(C_{3}\big)=2,
$$
and the~curves $C_{1}$, $C_{2}$ and $C_{3}$ have cusps at
the~points $P_{1}$, $P_{2}$ and $P_{3}$, respectively. Then
$$
\Bigg(X, \frac{5}{8}\Big(C_{1}+C_{2}+C_{3}\Big)\Bigg)
$$
is strictly log canonical, which implies that
$\mathrm{lct}_{3}(X,G)\leqslant 15/8$.
\end{proof}

\begin{lemma}\label{lemma:dP2-Z2-Z2-Z2} Suppose that
$$
f_4\big(x,y,z\big)=t^2+z^4+y^4+x^4+ax^2y^2+bx^2z^2+cy^2z^2,
$$
where $a$, $b$ and $c$ are general complex numbers. Then
$\mathrm{lct}(X,\mathrm{Aut}(X))=1$.
\end{lemma}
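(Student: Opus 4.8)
The plan is to identify $\mathrm{Aut}(X)$ for general $a,b,c$, then list the $\mathrm{Aut}(X)$-invariant curves in $|-K_{X}|$, and finally apply Theorem~\ref{theorem:dP2-lct-lct1}.

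First I would observe that for general $a,b,c$ the surface $X$ is smooth and the branch quartic $R_{0}=\{f_{4}=0\}\subset\mathbb{P}^{2}$ has group of linear automorphisms exactly $\mathbb{Z}_{2}^{2}$, generated by the sign changes $[x:y:z]\mapsto[\pm x:\pm y:\pm z]$ (special parameter values could produce extra symmetries, e.g. coordinate permutations when $a=b=c$, but these are excluded by generality). Since $\pi\colon X\to\mathbb{P}^{2}$ is the anticanonical double cover branched over $R_{0}$, the description of automorphism groups of del Pezzo surfaces of degree $2$ in \cite{DoIs06} gives $\mathrm{Aut}(X)=\langle\tau\rangle\times\mathbb{Z}_{2}^{2}\cong\mathbb{Z}_{2}^{3}$. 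In particular $\tau\in\mathrm{Aut}(X)$, so the results of this section apply with $G=\mathrm{Aut}(X)$.

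Next I would describe the $G$-invariant curves in $|-K_{X}|$. Each member of $|-K_{X}|$ is $\pi^{*}(\ell)$ for a line $\ell\subset\mathbb{P}^{2}$, and it is $G$-invariant if and only if $\ell$ is $\mathbb{Z}_{2}^{2}$-invariant, i.e. $\ell\in\{\{x=0\},\{y=0\},\{z=0\}\}$. Hence $|-K_{X}|$ contains exactly three $G$-invariant curves $C_{x}=\pi^{*}(\{x=0\})$, $C_{y}=\pi^{*}(\{y=0\})$, $C_{z}=\pi^{*}(\{z=0\})$, and for general $a,b,c$ each is smooth: for instance $C_{z}$ is the double cover of $\{z=0\}\cong\mathbb{P}^{1}$ branched over the zero locus of $x^{4}+ax^{2}y^{2}+y^{4}$, which consists of four distinct points for general $a$, so $C_{z}$ is a smooth elliptic curve, and likewise for $C_{x}$ and $C_{y}$.

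Finally, since $|-K_{X}|$ contains a $G$-invariant curve, Theorem~\ref{theorem:dP2-lct-lct1} gives $\mathrm{lct}(X,G)=\mathrm{lct}_{1}(X,G)\in\{3/4,5/6,1\}$, and $\mathrm{lct}_{1}(X,G)$ equals the minimum of $\mathrm{lct}(X,C)$ over the three $G$-invariant curves $C\in|-K_{X}|$. As $C_{x},C_{y},C_{z}$ are smooth, $(X,\lambda C)$ is log canonical precisely for $\lambda\leqslant 1$, so $\mathrm{lct}_{1}(X,G)=1$ and $\mathrm{lct}(X,\mathrm{Aut}(X))=1$. As a cross-check: $\mathrm{Aut}(X)\cong\mathbb{Z}_{2}^{3}$ is abelian, so $\mathrm{lct}(X,\mathrm{Aut}(X))\leqslant 1$ by Remark~\ref{remark:abelian-groups}, while $X$ has no $\mathrm{Aut}(X)$-fixed point because the fixed locus of $\tau$ is the ramification curve $R=\{t=0\}$ (mapping isomorphically onto $R_{0}$) and the only $\mathbb{Z}_{2}^{2}$-fixed points of $\mathbb{P}^{2}$ are $[1:0:0],[0:1:0],[0:0:1]$, none of which lies on $R_{0}$; so Corollary~\ref{corollary:weakly-exceptional} also yields $\mathrm{lct}(X,\mathrm{Aut}(X))\geqslant 1$. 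The main obstacle is the automorphism-group computation: one must know that "general $a,b,c$" is exactly a condition making $X$ smooth and killing all linear symmetries of $R_{0}$ beyond the evident $\mathbb{Z}_{2}^{2}$, which is cleanest to quote from \cite{DoIs06}; once $\mathrm{Aut}(X)=\mathbb{Z}_{2}^{3}$ with its explicit action is granted, the remaining verifications above are routine.
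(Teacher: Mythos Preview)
Your proposal is correct and follows essentially the same route as the paper: cite \cite{DoIs06} for $\mathrm{Aut}(X)\cong\mathbb{Z}_2^3$, identify the $\mathrm{Aut}(X)$-invariant members of $|-K_X|$ as the three pullbacks of the coordinate lines, and invoke Theorem~\ref{theorem:dP2-lct-lct1}. Your version is in fact more complete than the paper's terse proof, since you explicitly check that the three invariant curves are smooth (so that $\mathrm{lct}_1=1$ rather than $5/6$ or $3/4$), and the cross-check via Remark~\ref{remark:abelian-groups} and Corollary~\ref{corollary:weakly-exceptional} is a nice sanity check not present in the original.
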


\begin{proof}
It follows from \cite{DoIs06} that
$$
\mathrm{Aut}\big(X\big)\cong\mathbb{Z}_2\times\mathbb{Z}_2\times\mathbb{Z}_2,%
$$
which implies that every $\mathrm{Aut}(X)$-invariant curve in
$|-K_{X}|$ is cut out on $X$ by one of the~following equations:
$x=0$, $y=0$, $z=0$. Then $\mathrm{lct}(X,\mathrm{Aut}(X))=1$ by
Theorem~\ref{theorem:dP2-lct-lct1}.
\end{proof}

\section{Cubic surfaces}
\label{sec:degree-three}

Let $X$ be a~smooth cubic surface in $\mathbb{P}^3$. Then
$\mathrm{Aut}(X)$ is finite. It follows from \cite{DoIs06} that
\begin{itemize}
\item if $\mathrm{Aut}(X)\cong\mathbb{S}_5$, then $X$ is the Clebsch cubic surface,%
\item if $\mathrm{Aut}(X)\cong\mathbb{Z}_3^2 \rtimes \mathbb{S}_4$, then $X$ is the Fermat cubic surface.%
\end{itemize}

\begin{lemma}[{\cite[Example~1.11]{Ch07b}}]
\label{lemma:dP3-Clebsch} If $\mathrm{Aut}(X)\cong\mathbb{S}_5$,
then $\mathrm{lct}(X,\mathrm{Aut}(X))=2$.
\end{lemma}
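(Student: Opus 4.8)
The plan is to bound $\mathrm{lct}(X,\mathrm{Aut}(X))$ above and below by $2$, using that for the Clebsch cubic the group $G:=\mathrm{Aut}(X)\cong\mathbb{S}_5$ acts on $\mathbb{P}^3$ through the four-dimensional standard representation $V$ of $\mathbb{S}_5$; concretely $\mathbb{P}^3=\{x_0+\cdots+x_4=0\}\subset\mathbb{P}^4$ with $\mathbb{S}_5$ permuting the coordinates. I will freely use the following elementary facts: $V$ and $V|_{\mathbb{A}_5}$ are irreducible; $V|_{\mathbb{S}_4}$ is the sum of the irreducible three-dimensional standard representation of $\mathbb{S}_4$ and the trivial one, so it has no invariant subspace of dimension $2$; $\mathrm{Sym}^2V$ contains the trivial representation with multiplicity exactly one; and $\mathbb{S}_5$ admits no faithful action on a curve of geometric genus at most $2$ (in particular $\mathbb{S}_5\not\hookrightarrow\mathrm{PGL}_2(\mathbb{C})$).

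First I would prove $\mathrm{lct}(X,G)\leqslant 2$. Since $V$ is irreducible, $|-K_X|$ has no $G$-invariant member. The unique $G$-invariant quadric $Q=\{x_0^2+\cdots+x_4^2=0\}$ is smooth, is not a plane, and does not contain the cubic $X$, so it cuts out on $X$ a \emph{reduced} $G$-invariant curve $C\in|-2K_X|$; and $C$ is the only $G$-invariant curve in $|-2K_X|$, because $H^0(X,\mathcal{O}_X(-2K_X))\cong\mathrm{Sym}^2V^{\ast}$ has one-dimensional trivial part. As $C$ is reduced, $\mathrm{lct}(X,C)\leqslant 1$; hence $\mathrm{lct}_2(X,G)=2\,\mathrm{lct}(X,C)\leqslant 2$, so $\mathrm{lct}(X,G)\leqslant 2$.

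Next I would prove $\mathrm{lct}(X,G)\geqslant 2$. Suppose not. Since $V$ is a nontrivial irreducible representation, $X$ has no $G$-fixed point, so $\mathrm{lct}(X,G)\geqslant 1$ by Corollary~\ref{corollary:weakly-exceptional}; therefore there are a $G$-invariant effective $\mathbb{Q}$-divisor $D\sim_{\mathbb{Q}}-K_X$ and a rational $\lambda\in(1,2)$ with $(X,\lambda D)$ not log canonical. Then $\mathrm{LCS}(X,\lambda D)$ is a nonempty $G$-invariant closed set, and I claim it is zero-dimensional. A curve contained in it lies in a $G$-invariant curve $Z$ with $\lambda D-Z$ effective, so $\deg Z\leqslant\lambda(-K_X)^2=3\lambda<6$; but there is no $G$-invariant curve on $X$ of degree $\leqslant 5$. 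Indeed, an irreducible such curve spans a $G$-invariant linear subspace of $\mathbb{P}^3$, which is all of $\mathbb{P}^3$ by irreducibility of $V$, so it is a non-degenerate space curve of degree $\leqslant 5$ and hence has geometric genus $\leqslant 2$, while $\mathbb{S}_5$ acts on it faithfully (the kernel of the action would be $\mathbb{A}_5$ or $\mathbb{S}_5$ and would then fix a point of $\mathbb{P}^3$), a contradiction. A reducible $G$-invariant curve of degree $\leqslant 5$ is a union of $G$-orbits of its components, and an orbit of size $k$ has degree $\geqslant k$, so the only options are $k=1$ (excluded above), $k=2$ (a line or conic stabilized by $\mathbb{A}_5$, spanning an $\mathbb{A}_5$-invariant subspace of dimension $\leqslant 3$ — impossible), or $k=5$ (a line stabilized by $\mathbb{S}_4$, i.e. a two-dimensional $\mathbb{S}_4$-subspace of $V$ — impossible). (If one grants $\mathrm{Pic}(X)^{G}=\mathbb{Z}K_X$, this step is just Lemma~\ref{lemma:LCS-zero-dimensional} with $\xi=2$.)

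So $\mathrm{LCS}(X,\lambda D)$ is a nonempty finite $G$-invariant set; it contains a $G$-orbit, and $X$ has no $G$-orbit of length $\leqslant 4$ (such an orbit would give a homomorphism $\mathbb{S}_5\to\mathbb{S}_4$ with nontrivial, hence normal, kernel equal to $\mathbb{A}_5$ or $\mathbb{S}_5$, which would then fix a point of $\mathbb{P}^3$), so $|\mathrm{LCS}(X,\lambda D)|\geqslant 5$. On the other hand $\lambda-1\in(0,1)$ is not an integer, so with $L=-K_X$ in Theorem~\ref{theorem:Shokurov-vanishing} the auxiliary divisor $H$ is numerically $(2-\lambda)(-K_X)$, which is ample; hence Lemma~\ref{lemma:Nadel-IILC} applies and $|\mathrm{LCS}(X,\lambda D)|\leqslant h^0(\mathcal{O}_X(-K_X))=4$, a contradiction. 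This gives $\mathrm{lct}(X,G)\geqslant 2$, and combined with the upper bound, $\mathrm{lct}(X,\mathrm{Aut}(X))=2$. The step I expect to be the main obstacle is the zero-dimensionality of $\mathrm{LCS}(X,\lambda D)$ — equivalently, ruling out a $G$-invariant curve of degree $<6$ on the Clebsch cubic (that $\xi=2$); once that is secured, the numerical clash $5\leqslant|\mathrm{LCS}(X,\lambda D)|\leqslant 4$ finishes the argument.
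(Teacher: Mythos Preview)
The paper gives no proof of this lemma, only the citation to \cite[Example~1.11]{Ch07b}. Your argument is correct and is essentially the expected one: the upper bound comes from the unique $G$-invariant quadric section, and the lower bound from the clash between the orbit bound $|\mathrm{LCS}(X,\lambda D)|\geqslant 5$ and the cohomological bound $|\mathrm{LCS}(X,\lambda D)|\leqslant h^0(\mathcal{O}_X(-K_X))=4$ coming from Lemma~\ref{lemma:Nadel-IILC}.

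Two small remarks. First, for the upper bound you do not need $C$ to be reduced: any nonzero effective divisor has log canonical threshold at most $1$, so the mere existence of a $G$-invariant member of $|-2K_X|$ already yields $\mathrm{lct}(X,G)\leqslant 2$. Second, your appeal to Corollary~\ref{corollary:weakly-exceptional} tacitly uses $\mathrm{Pic}(X)^{G}=\mathbb{Z}[-K_X]$, since that hypothesis is in force in the paragraph preceding the corollary. This is true for the Clebsch cubic and you acknowledge it parenthetically, but you can also sidestep it entirely: if $\lambda\leqslant 1$ then $\lceil\lambda-1\rceil=0$ and Lemma~\ref{lemma:Nadel-IILC} gives $|\mathrm{LCS}(X,\lambda D)|\leqslant h^0(\mathcal{O}_X)=1$, again contradicting the absence of $G$-fixed points. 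Your direct exclusion of $G$-invariant curves of degree $\leqslant 5$ via the representation theory of $\mathbb{S}_5$ is a clean self-contained substitute for invoking $\mathrm{Pic}(X)^{G}=\mathbb{Z}[-K_X]$ in Lemma~\ref{lemma:LCS-zero-dimensional}.
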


\begin{lemma}[{\cite[Lemma~5.6]{Ch07b}}]
\label{lemma:dP3-Fermat} If $\mathrm{Aut}(X)\cong\mathbb{Z}_3^2
\rtimes \mathbb{S}_4$, then $\mathrm{lct}(X,\mathrm{Aut}(X))=4$.
\end{lemma}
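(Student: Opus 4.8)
The plan is to identify $X$ with the Fermat cubic $\{x^3+y^3+z^3+w^3=0\}\subset\mathbb{P}^3$, so that $-K_X=\mathcal{O}_X(1)$ and $K_X^2=3$, and to compute $\mathrm{lct}(X,G)$ for $G=\mathrm{Aut}(X)$ by hand, using that $G$ contains the diagonal automorphisms (cube-root rescalings of the coordinates) and the coordinate permutations $\mathbb{S}_4$. First I would locate the smallest $\xi$ for which $|-\xi K_X|$ carries a $G$-invariant curve. A $G$-invariant member of $|-nK_X|=|\mathcal{O}_X(n)|$ is a $G$-fixed point of $\mathbb{P}(H^0(\mathcal{O}_X(n)))$; since the torus acts diagonally, its fixed points in $\mathbb{P}(\mathrm{Sym}^n\mathbb{C}^4)$ are spanned by monomials with a common character, and a short bookkeeping shows that for $n\le 2$ these are single monomials, which $\mathbb{S}_4$ permutes nontrivially, while for $n=3$ the only $G$-fixed point of $\mathbb{P}(\mathrm{Sym}^3\mathbb{C}^4)$ is the class of the Fermat form, which dies in $H^0(\mathcal{O}_X(3))$. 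Hence $|-K_X|$, $|-2K_X|$, $|-3K_X|$ contain no $G$-invariant curve. On the other hand the quartic section $C=\{xyzw=0\}\cap X$ is $G$-invariant, and it is the sum of four smooth plane cubics meeting pairwise transversally (because $\{x_i=x_j=0\}$ meets $X$ in three distinct points and $C_i\cdot C_j=K_X^2=3$) with no three through one point (three coordinate hyperplanes meet in a point off $X$); thus $C$ is a reduced simple normal crossing divisor, so $(X,C)$ is log canonical and $\mathrm{lct}(X,C)=1$. Therefore $\xi=4$ and $\mathrm{lct}(X,G)\le\mathrm{lct}_4(X,G)\le 4\,\mathrm{lct}(X,C)=4$.

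For the reverse inequality, assume $\mathrm{lct}(X,G)<4$, so that there is a $G$-invariant $D\sim_{\mathbb{Q}}-\lambda K_X$ with $\lambda<4$ and $(X,D)$ not log canonical. Since $\xi=4>\lambda$, Lemma~\ref{lemma:LCS-zero-dimensional} shows that $\mathrm{LCS}(X,D)$ is a non-empty finite $G$-invariant set. I would then check the $G$-orbits on $X$: the torus has no fixed point of $\mathbb{P}^3$ lying on $X$, every point of $X$ with three or four non-zero coordinates has small stabilizer and hence lies in an orbit of length far bigger than $19$, and the points of $X$ with exactly two vanishing coordinates are precisely the $18$ Eckardt points, forming a single $G$-orbit; so the only $G$-orbit of length $\le 19$ is this one. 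Now Lemma~\ref{lemma:Nadel-IILC} bounds $|\mathrm{LCS}(X,D)|$ by $h^0(\mathcal{O}_X(-\lceil\lambda-1\rceil K_X))$, which equals $h^0(\mathcal{O}_X(2))=10$ when $\lambda\le 3$ and $h^0(\mathcal{O}_X(3))=19$ when $3<\lambda<4$. As $\mathrm{LCS}(X,D)$ is a non-empty union of $G$-orbits, the case $\lambda\le 3$ is already impossible, and for $3<\lambda<4$ we are forced to have $\mathrm{LCS}(X,D)=\{\text{the }18\text{ Eckardt points}\}$. (This is precisely the situation in which the soft bound of Corollary~\ref{corollary:GAFA} just barely fails, by one.)

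The decisive step is then to contradict the second part of Lemma~\ref{lemma:Nadel-IILC}, which for each Eckardt point $P$ would provide a curve in $|-3K_X|=|\mathcal{O}_X(3)|$ passing through the other $17$ Eckardt points and avoiding $P$; I claim no such curve exists. Every Eckardt point has two vanishing coordinates, hence lies on all four reducible cubic sections $\{x_ax_bx_c=0\}\cap X$, which are cut out by the forms $xyz,\,xyw,\,xzw,\,yzw$; these stay linearly independent modulo the Fermat cubic in the $19$-dimensional space $H^0(\mathcal{O}_X(3))$. So the sections of $\mathcal{O}_X(3)$ vanishing at all $18$ Eckardt points span a subspace of dimension at least $4$, i.e. the $18$ Eckardt points impose at most $15$ independent conditions on $|\mathcal{O}_X(3)|$. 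Since $G$ permutes the $18$ points transitively, if one of these evaluation conditions were not in the span of the other $17$ — which is exactly what a curve through the other $17$ missing $P$ would witness — then by transitivity none of them would be, forcing the $18$ conditions to be independent, a contradiction. Hence $\mathrm{LCS}(X,D)$ cannot be the set of Eckardt points, no such $D$ exists, and $\mathrm{lct}(X,G)\ge 4$; together with the first paragraph this gives $\mathrm{lct}(X,G)=4$.

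The main obstacle is exactly the off-by-one encountered in the second paragraph: because $h^0(\mathcal{O}_X(3))=19$ exceeds the minimal $G$-orbit length $18$ by one, Corollary~\ref{corollary:GAFA} does not finish the argument, and one is forced to use the special position of the Eckardt points — their common membership in the four degenerate cubic sections $\{x_ax_bx_c=0\}$ — to exclude $\mathrm{LCS}(X,D)$ being that orbit. The remaining ingredients (the semi-invariant computation giving $\xi=4$, the orbit-length classification, and the normal-crossing check for $\{xyzw=0\}\cap X$) are routine.
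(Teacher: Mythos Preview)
The paper does not prove this lemma; it merely quotes it from \cite[Lemma~5.6]{Ch07b}. Your argument is correct and, pleasantly, is phrased entirely in the language of Section~\ref{section:preliminaries} of the present paper. The upper bound via the simple normal crossing quartic section $\{xyzw=0\}\cap X$ and the determination $\xi=4$ are straightforward; the lower bound is where the content lies, and you handle it well. The key point is that Corollary~\ref{corollary:GAFA} just fails (the unique short orbit has length $18$ while $h^0(\mathcal{O}_X(3))=19$), so one must use the finer statement of Lemma~\ref{lemma:Nadel-IILC}: a curve in $|{-}3K_X|$ through $17$ of the Eckardt points and missing the eighteenth would force, by $G$-transitivity, all $18$ evaluation functionals to be linearly independent, contradicting the four independent sections $xyz,\,xyw,\,xzw,\,yzw\in H^0(\mathcal{O}_X(3))$ that vanish on every Eckardt point. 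This is exactly the kind of refinement one expects when the numerical criterion misses by one, and it is the same idea underlying the original proof in \cite{Ch07b}.
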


By \cite{DoIs06}, there is a~$\mathrm{Aut}(X)$-invariant curve in
$|-K_X|$ if $\mathrm{Aut}(X)\not\cong\mathbb{S}_5$ and
$\mathrm{Aut}(X)\not\cong\mathbb{Z}_3^2 \rtimes \mathbb{S}_4$.

\begin{corollary}
\label{corollary:dP3-K} If $\mathrm{Aut}(X)\not\cong\mathbb{S}_5$
and $\mathrm{Aut}(X)\not\cong\mathbb{Z}_3^2 \rtimes \mathbb{S}_4$,
then $\mathrm{lct}(X,\mathrm{Aut}(X))\leqslant 1$.
\end{corollary}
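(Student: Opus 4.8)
The only external ingredient the plan uses is the assertion stated just before the corollary: when $\mathrm{Aut}(X)\not\cong\mathbb{S}_5$ and $\mathrm{Aut}(X)\not\cong\mathbb{Z}_3^2\rtimes\mathbb{S}_4$, the linear system $|-K_X|$ contains an $\mathrm{Aut}(X)$-invariant curve $C$ (this is where \cite{DoIs06} is invoked). Granting this, the plan is to go straight through the definitions. First I would recall that $\mathrm{lct}(X,G)=\inf_{n}\mathrm{lct}_n(X,G)\leqslant\mathrm{lct}_1(X,G)$ for any finite $G\subset\mathrm{Aut}(X)$, so with $G=\mathrm{Aut}(X)$ it suffices to prove $\mathrm{lct}_1(X,\mathrm{Aut}(X))\leqslant 1$.

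Next I would exhibit a single $\mathrm{Aut}(X)$-invariant member of $|-K_X|$ that already obstructs log canonicity for every $\lambda>1$, namely the curve $C$ itself. Writing $C=\sum_i b_iC_i$ as a sum of irreducible curves with $b_i\in\mathbb{Z}_{\geqslant 1}$, the divisor $\lambda C$ has a component with coefficient $\lambda b_i\geqslant\lambda>1$, so by the first condition of Definition~\ref{definition:log canonical-singularities} the pair $(X,\lambda C)$ is not log canonical. Since $C$ is $\mathrm{Aut}(X)$-invariant and lies in $|-K_X|$, this is exactly what the definition of $\mathrm{lct}_1$ needs in order to conclude $\mathrm{lct}_1(X,\mathrm{Aut}(X))\leqslant 1$, and therefore $\mathrm{lct}(X,\mathrm{Aut}(X))\leqslant 1$.

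I do not anticipate any real obstacle: the content is entirely carried by the cited description of $\mathrm{Aut}(X)$-invariant anticanonical curves on cubic surfaces, and the remaining step is the formal observation that $\mathrm{lct}_1(X,G)\leqslant 1$ whenever $|-K_X|$ carries a $G$-invariant member --- a statement valid for arbitrary smooth del Pezzo surfaces. If one wanted a sharper numerical value one could instead analyse the singularities of $C$ and compare with Example~\ref{example:GAFA}, but that refinement is not needed for the corollary.
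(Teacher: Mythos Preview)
Your proposal is correct and matches the paper's intended argument: the corollary is stated immediately after the cited fact from \cite{DoIs06} that $|-K_X|$ carries an $\mathrm{Aut}(X)$-invariant curve in these cases, and the paper gives no further proof, implicitly relying on exactly the observation you spell out---that any $G$-invariant member of $|-K_X|$ forces $\mathrm{lct}_1(X,G)\leqslant 1$ and hence $\mathrm{lct}(X,G)\leqslant 1$.
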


The main purpose of this section is to prove the following result.

\begin{theorem}
\label{theorem:dP3-lct-lct1} Let $G$ be a subgroup of the group
$\mathrm{Aut}(X)$. Then
$$
\mathrm{lct}\big(X,G\big)=\mathrm{lct}_1\big(X,G\big)\in\big\{2/3,5/6,1\big\}
$$
if the following two conditions are satisfied:
\begin{itemize}
\item the linear system $|-K_{X}|$ contains a~$G$-invariant curve,%
\item a $G$-invariant subgroup in $\mathrm{Pic}(X)$ is generated by $-K_{X}$.%
\end{itemize}
\end{theorem}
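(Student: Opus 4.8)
The plan is to handle the statement in two parts: compute $\mathrm{lct}_1(X,G)$ exactly, and then prove that $\mathrm{lct}(X,G)$ actually equals $\mathrm{lct}_1(X,G)$. Since $-K_X=\mathcal{O}_X(1)$, a $G$-invariant curve $C\in|-K_X|$ is a plane cubic, hence either irreducible (smooth, nodal or cuspidal), or a line plus a conic, or a union of three lines (a triangle or three concurrent lines). In the reducible cases $G$ permutes the components preserving degrees; a conic component would force its residual line to be $G$-invariant, but a line $L\subset X$ has $-K_X\cdot L=1$ and $L^2=-1$, so $[L]$ is not proportional to $-K_X$, contradicting $\mathrm{Pic}(X)^G=\mathbb{Z}(-K_X)$. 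Thus a reducible $G$-invariant $C$ is a union of three lines, no one of which is $G$-invariant, so $G$ permutes them transitively and $C$ is a triangle or three concurrent lines. Reading off the (local) log canonical thresholds of these cubics gives $\mathrm{lct}(X,C)\in\{1,\,5/6,\,2/3\}$, with the value $2/3$ occurring exactly when $C$ consists of three concurrent lines through an Eckardt point of $X$. Hence $\mathrm{lct}_1(X,G)=\inf_C\mathrm{lct}(X,C)\in\{2/3,\,5/6,\,1\}$.

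If $\mathrm{lct}_1(X,G)=2/3$, then $X$ has an Eckardt point, so $\mathrm{lct}(X)=2/3$ by Example~\ref{example:GAFA}, and $\mathrm{lct}(X)\leqslant\mathrm{lct}(X,G)\leqslant\mathrm{lct}_1(X,G)=2/3$ forces $\mathrm{lct}(X,G)=2/3$. So assume $\mathrm{lct}_1(X,G)=\mu\in\{5/6,1\}$ and, for a contradiction, that $\mathrm{lct}(X,G)<\mu$; choose a $G$-invariant effective $D\sim_{\mathbb{Q}}-K_X$ with $(X,\lambda D)$ strictly log canonical for some rational $\lambda<\mu\leqslant 1$. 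By Lemma~\ref{lemma:LCS-zero-dimensional} (with $\xi=1$), Lemma~\ref{lemma:Nadel-IILC} (where $h^0(\mathcal{O}_X)=1$) and Corollary~\ref{corollary:connectedness}, the locus $\mathrm{LCS}(X,\lambda D)$ is a single $G$-fixed point $P$. The plane $T_PX$ is $G$-invariant, so $C:=T_PX\cap X\in|-K_X|$ is $G$-invariant with $\mathrm{mult}_P(C)\geqslant 2$; since $(X,\mu C)$ is log canonical and $\mu>2/3$, we must have $\mathrm{mult}_P(C)=2$, and the classification above then forces $C$ to be irreducible with a node or a cusp at $P$ (a reducible such $C$ would, by the residual-line argument, be a triangle of lines, putting $P$ at a vertex whose $G$-orbit has length three). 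By Remark~\ref{remark:convexity} we may assume $C\not\subseteq\mathrm{Supp}(D)$, so $3=C\cdot D\geqslant 2\,\mathrm{mult}_P(D)$ while $\mathrm{mult}_P(D)\geqslant 1/\lambda$, giving $1/\lambda\leqslant\mathrm{mult}_P(D)\leqslant 3/2$.

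Now blow up $P$, with exceptional curve $E$. Since $\mathrm{mult}_P(\lambda D)<2$, Remark~\ref{remark:blow-up-inequality} produces a $G$-fixed point $Q\in E$ with $\mathrm{LCS}\cap E=\{Q\}$ and $\mathrm{mult}_Q(\bar{D})+\mathrm{mult}_P(D)\geqslant 2/\lambda$. The proper transform $\bar{C}$ meets $E$ only in the node/cusp directions, where it is smooth, and $\bar{C}\cdot\bar{D}=3-2\,\mathrm{mult}_P(D)$; so if $Q\in\bar{C}$ then $\mathrm{mult}_Q(\bar{D})+\mathrm{mult}_P(D)\leqslant 3-\mathrm{mult}_P(D)\leqslant 3-1/\lambda<2/\lambda$ (using $\lambda<1$), a contradiction, and hence $Q\notin\bar{C}$. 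Next one pins down $Q$ using the $G$-action on $E\cong\mathbb{P}^1$: the action is nontrivial, because otherwise $G$ would be cyclic and act near $P$ by a nontrivial scalar, which cannot preserve the germ of the singular reduced curve $C$ at $P$; and $G$ stabilizes the (one or two) points $\bar{C}\cap E$, which together with the constraint $Q\notin\bar{C}$ identifies $Q$ (and, in the nodal case, already yields a contradiction because all $G$-fixed points of $E$ lie on $\bar{C}$). In the remaining (cuspidal) case one iterates: blow up $Q$, locate the next $G$-fixed point on the new exceptional curve using the same $\mathbb{P}^1$-analysis together with the proper transform of $\bar{C}+E\in|-K_{\bar X}|$, and at the decisive stage apply Corollary~\ref{corollary:Cheltsov-Kosta} to the blown-up pair with $D_1,D_2$ the two exceptional curves meeting transversally at that point, exactly as in the proofs of Theorems~\ref{theorem:dP1-lct-lct2} and~\ref{theorem:dP2-lct-lct2}; playing the resulting multiplicity inequalities against the bound $\mathrm{mult}_P(D)\leqslant 3/2$ produces the contradiction, so $\mathrm{lct}(X,G)=\mathrm{lct}_1(X,G)$. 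The main obstacle — and what makes this longer than the degree $\leqslant 2$ cases — is precisely that there is no involution $\tau$ and no distinguished curve playing the role of the branch curve $R$ passing through the successive infinitely near points; that input must be recovered from the case analysis of the $G$-action on each exceptional $\mathbb{P}^1$ (finite subgroups of $\mathrm{PGL}_2$ fixing a point are cyclic, and the degenerate possibility of a trivial action is excluded by the scalar-germ argument above) combined with the proper transforms of $C$.
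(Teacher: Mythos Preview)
Your computation of $\mathrm{lct}_1(X,G)$ and the reduction to a single $G$-fixed non-KLT point $P$ with irreducible tangent section $T$ having $\mathrm{mult}_P(T)=2$ match the paper, as does the first blow-up and the conclusion $Q\notin\bar{T}$. After that the arguments diverge, and yours has a gap. In the nodal case your claim that ``all $G$-fixed points of $E$ lie on $\bar{C}$'' fails when the image of $G$ in $\mathrm{Aut}(E)$ swaps the two branch points: then $G|_E\cong\mathbb{Z}/2$ and its two fixed points lie \emph{off} $\bar{C}$, so $Q$ can sit at one of them with no contradiction. In the cuspidal case your scalar argument (which is valid because $g$ acts projectively on the tangent plane and $C$ is an honest cubic with nonzero quadratic and cubic parts) does force $G|_E$ nontrivial and pins down $Q$; but after blowing up $Q$ there is no singular $G$-invariant curve through $Q$ with which to rerun the argument, so nothing forces $G|_F$ to be nontrivial and you cannot locate $O$. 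Invoking Corollary~\ref{corollary:Cheltsov-Kosta} (equivalently Lemma~\ref{lemma:adjunction}) only excludes $O=\tilde{E}\cap F$; the remaining positions of $O$ on $F$ stay unhandled.

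The paper avoids all of this and needs no nodal/cuspidal split. Since $T$ is irreducible, $\bar{X}$ is a smooth degree-$2$ del Pezzo, and a general $\bar{M}\in|-K_{\bar{X}}|$ through $Q$ gives $\mathrm{mult}_Q(\bar{D})\le 3-\mathrm{mult}_P(D)$, so the coefficient of $F$ stays below $1$. Once $O\ne\tilde{E}\cap F$ is established, the key point is that $|-K_{\tilde{X}}|$ is a \emph{pencil} (as $K_{\tilde{X}}^2=1$), so there is a unique member $\tilde{B}$ through $O$; uniqueness makes $\tilde{B}$ automatically $G$-invariant. Because $O\notin\tilde{E}$ and $Q\notin\bar{T}$, neither $\tilde{E}$ nor $F$ lies in $\tilde{B}$, hence its image $B\in|-K_X|$ is a $G$-invariant curve with $\mathrm{mult}_P(B)=1$, and therefore irreducible. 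Removing $B$ from $\mathrm{Supp}(D)$ via Remark~\ref{remark:convexity} and intersecting gives $\tilde{B}\cdot\tilde{D}=3-\mathrm{mult}_P(D)-\mathrm{mult}_Q(\bar{D})\ge\mathrm{mult}_O(\tilde{D})$, contradicting $\mathrm{mult}_P(D)+\mathrm{mult}_Q(\bar{D})+\mathrm{mult}_O(\tilde{D})\ge 3/\lambda>3$. In other words, the ``distinguished curve'' you say is missing is supplied by the anticanonical pencil on the second blow-up, and this is precisely what keeps the cubic case no harder than degrees $1$ and $2$.
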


\begin{proof}
Suppose $|-K_{X}|$ contain a~$G$-invariant curve. Then
$$
\mathrm{lct}_1\big(X,G\big)\in\big\{2/3,3/4, 5/6,1\big\},
$$
and it follows from Example~\ref{example:GAFA} that
$\mathrm{lct}(X,G)=\mathrm{lct}_1(X,G)=2/3$ if
$\mathrm{lct}_1(X,G)=2/3$.

Suppose that a $G$-invariant subgroup in
$\mathrm{Pic}(X)$~is~$\mathbb{Z}[-K_{X}]$. Then
$\mathrm{lct}_1(X,G)\ne 4/3$.

Suppose that $\mathrm{lct}(X,G)<\mathrm{lct}_1(X,G)\ne 2/3$. Let
us derive a contradiction.

There exists a~$G$-invariant effective $\mathbb{Q}$-divisor $D$ on
the~surface $X$ such that
$$
D\sim_{\mathbb{Q}} -K_{X}
$$
and the~log pair $\big(X,\lambda D\big)$ is strictly log canonical
for some rational number $\lambda<\mathrm{lct}_1(X,G)$.

By Theorem~\ref{theorem:connectedness} and
Lemma~\ref{lemma:Nadel-IILC}, the~locus $\mathrm{LCS}(X,\lambda
D)$ consists of a single point $P\in X$.

Let $T$ be the~curve in $|-K_X|$ such that
$\mathrm{mult}_{P}(T)\geqslant 2$. We may assume that
$\mathrm{Supp}(D)$ does not contain any component of the~curve~$T$
by Remark~\ref{remark:convexity}.~Then
$$
3=T \cdot D\geqslant\mathrm{mult}_P\big(T\big)\cdot \mathrm{mult}_P\big(D\big)\geqslant 2\mathrm{mult}_P\big(D\big)\geqslant \frac{2}{\lambda}>1,%
$$
which implies $\mathrm{mult}_P(T)=2$ and
$\mathrm{mult}_P(D)\leqslant 3/2$.

Note that the curve $T$ is irreducible, which implies that
$P=\mathrm{Sing}(T)$.

Let $\sigma\colon\bar{X}\to X$ be a blow up of the point $P$ and
let $E$ be the $\sigma$-exceptional curve. Then
$$
K_{\bar{X}}+\lambda\bar{D}+\Big(\lambda\mathrm{mult}_{P}\big(D\big)-1\Big)E\sim_{\mathbb{Q}}\sigma^{*}\Big(K_{X}+\lambda D\Big),%
$$
where $\bar{D}$ is the proper transform of the divisor $D$ on the
surface $\bar{X}$.

It follows from  Remark~\ref{remark:blow-up-inequality} that there
exists a point $Q\in E$~such~that
$$
\mathrm{LCS}\Bigg(\bar{X},\ \lambda\bar{D}+\Big(\lambda\mathrm{mult}_{P}\big(D\big)-1\Big)E\Bigg)=Q%
$$
and $\mathrm{mult}_{Q}(\bar{D})+\mathrm{mult}_{P}(D)\geqslant
2/\lambda$.

Let $\bar{T}$ be the proper transform of the curve $T$ on the
surface $\bar{X}$. If $Q\in\bar{T}$, then
$$
3-2\mathrm{mult}_{P}\big(D\big)=\bar{T}\cdot\bar{D}\geqslant\mathrm{mult}_{Q}\big(\bar{T}\big)\mathrm{mult}_{Q}\big(\bar{D}\big)>\mathrm{mult}_{Q}\big(\bar{T}\big)\Big(2-\mathrm{mult}_{P}\big(D\big)\Big)\geqslant 2-\mathrm{mult}_{P}\big(D\big),%
$$
which implies that $\mathrm{mult}_{P}(D)\leqslant 1$. But
$\mathrm{mult}_{P}(D)\geqslant 1/\lambda>1$. Thus, we see that
$Q\not\in\bar{T}$.

As $T$ is irreducible, the surface $\bar{X}$ is a smooth quartic
hypersurface in $\mathbb{P}(1,1,1,2)$.

Let $\bar{M}$ be a general curve in $|-K_{\bar{X}}|$ such that
$Q\in\bar{M}$. Then
$$
3-\mathrm{mult}_{P}\big(D\big)=\bar{M}\cdot\bar{D}\geqslant\mathrm{mult}_{Q}\big(\bar{M}\big)\mathrm{mult}_{Q}\big(\bar{D}\big)\geqslant\mathrm{mult}_{Q}\big(\bar{D}\big),
$$
as $\bar{M}\not\subset\mathrm{Supp}(D)$.

Let  $\rho\colon\tilde{X}\to \bar{X}$ be a blow up of the~point
$Q$ and let $F$ be the~$\rho$-exceptional curve.~Then
$$
K_{\tilde{X}}+\lambda\tilde{D}+\Big(\lambda \mathrm{mult}_{P}\big(D\big)-1\Big)\tilde{E}+\Big(\lambda\mathrm{mult}_{Q}\big(\bar{D}\big)+\lambda\mathrm{mult}_{P}\big(D\big)-2\Big)F\sim_{\mathbb{Q}}\big(\sigma\circ\rho\big)^{*}\Big(K_{X}+\lambda D\Big),%
$$
where $\tilde{D}$ and $\tilde{E}_{i}$ are proper transforms of
the~divisors $D$ and $E$ on the~surface $\tilde{X}$, respectively.

It follows from Remark~\ref{remark:blow-up-inequality} that there
is a~point $O\in F$ such that
$$
\mathrm{LCS}\Bigg(\tilde{X}, \lambda\tilde{D}+\Big(\lambda
\mathrm{mult}_{P}\big(D\big)-1\Big)\tilde{E}+\Big(\lambda\mathrm{mult}_{Q}\big(\bar{D}\big)+\lambda\mathrm{mult}_{P}\big(D\big)-2\Big)F\Bigg)=O,%
$$
since
$\lambda\mathrm{mult}_{Q}(\bar{D})+\lambda\mathrm{mult}_{P}(D)-2\leqslant
3\lambda-2<1$. By Remark~\ref{remark:blow-up-inequality}, we have
\begin{equation}
\label{equation:dP3-inequality-I}
\mathrm{mult}_{O}\big(\tilde{D}\big)+\mathrm{mult}_{Q}\big(\bar{D}\big)+\mathrm{mult}_{P}\big(D\big)\geqslant\frac{3}{\lambda}>3.%
\end{equation}

If $O=\tilde{E}\cap F$, then it follows from
Lemma~\ref{lemma:adjunction} that
$$
2\lambda\mathrm{mult}_{P}\big(D\big)-2=\Bigg(\lambda\tilde{D}+\Big(\lambda\mathrm{mult}_{Q}\big(\bar{D}\big)+\lambda\mathrm{mult}_{P}\big(D\big)-2\Big)F\Bigg)\cdot\tilde{E}>1,%
$$
which implies that $\mathrm{mult}_{P}(D)>3/2$. However
$\mathrm{mult}_{P}(D)\leqslant 3/2$. Thus, we see that
$O\not\in\tilde{E}$.

There exists a unique curve $\tilde{B}$ in the pencil
$|-K_{\tilde{X}}|$ such that $O\in\tilde{B}$. Then
$$
\tilde{E}\not\subset\mathrm{Supp}\big(\tilde{B}\big)\not\supset F,
$$
since both $O\not\in\tilde{E}$ and $Q\not\in\bar{T}$. Put
$B=\sigma\circ\rho(\tilde{B})$. Then $B\in|-K_{X}|$ and $B\ne T$.

The curve $B$ is $G$-invariant, which implies that $B$ is
irreducible since $P\in B$.

By Remark~\ref{remark:convexity}, we may assume that
$B\not\subset\mathrm{Supp}(D)$. Then
$$
3-\mathrm{mult}_{P}\big(D\big)-\mathrm{mult}_{Q}\big(\bar{D}\big)=\tilde{B}\cdot\tilde{D}\geqslant\mathrm{mult}_{O}\big(\tilde{B}\big)\mathrm{mult}_{O}\big(\tilde{D}\big)\geqslant\mathrm{mult}_{O}\big(\tilde{D}\big),
$$
which is impossible by $(\ref{equation:dP3-inequality-I})$.
\end{proof}

Let us show how to compute $\mathrm{lct}(X,G)$  in one case.

\begin{lemma}\label{lemma:dP3-Z2-Z2-Z2}
Suppose that the surface $X$ is given by the equation
$$
x^3+x\Big(y^2+z^2+t^2\Big)+ayzt=0\subset\mathbb{P}^{3}\cong\mathrm{Proj}\Big(\mathbb{C}[x,y,z,t]\Big),%
$$
where $a$ is a general complex number. Then
$\mathrm{lct}(X,\mathrm{Aut}(X))=1$.
\end{lemma}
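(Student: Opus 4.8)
The plan is to deduce the statement from Theorem~\ref{theorem:dP3-lct-lct1}, so the work splits into three parts: verifying the two hypotheses of that theorem, identifying all $\mathrm{Aut}(X)$-invariant curves in $|-K_X|$, and computing $\mathrm{lct}_1(X,\mathrm{Aut}(X))$ by hand. For the first part I would simply quote \cite{DoIs06}: for a general value of $a$ the surface $X$ is smooth, one has
$$
\mathrm{Aut}\big(X\big)\cong\mathbb{S}_4,
$$
generated by the permutations of the coordinates $y,z,t$ together with the sign change $[x:y:z:t]\mapsto[x:-y:-z:t]$ and its conjugates, and the induced action of $\mathrm{Aut}(X)$ on $\mathrm{Pic}(X)$ has $\mathbb{Z}[-K_X]$ as its sublattice of invariants. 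This is exactly the second hypothesis of Theorem~\ref{theorem:dP3-lct-lct1}, and it remains to produce an $\mathrm{Aut}(X)$-invariant curve in $|-K_X|$ and to show $\mathrm{lct}_1(X,\mathrm{Aut}(X))=1$.

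Since $X\subset\mathbb{P}^3$ is anticanonically embedded, $|-K_X|$ is the linear system of hyperplane sections, and an $\mathrm{Aut}(X)$-invariant member corresponds to an $\mathrm{Aut}(X)$-eigenline in the space of linear forms $\langle x,y,z,t\rangle$. As an $\mathbb{S}_4$-representation this space splits as $\langle x\rangle\oplus\langle y,z,t\rangle$, where $\langle x\rangle$ is the trivial representation and $\langle y,z,t\rangle$ is irreducible of dimension $3$: any $\mathbb{S}_3$-invariant linear form in $y,z,t$ is a multiple of $y+z+t$, which is not fixed by the sign change $[x:y:z:t]\mapsto[x:-y:-z:t]$, so $\langle y,z,t\rangle$ contains no invariant line. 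Hence $\{x=0\}$ is the unique $\mathrm{Aut}(X)$-invariant hyperplane, and the unique $\mathrm{Aut}(X)$-invariant curve in $|-K_X|$ is $C=X\cap\{x=0\}$. Because $a\ne 0$, the equation of $C$ inside $\{x=0\}$ is $yzt=0$, so $C$ is the triangle formed by the three lines $\{x=y=0\}$, $\{x=z=0\}$, $\{x=t=0\}$, all of which lie on $X$.

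Next I would check that $C$ is nodal. Its three vertices are $[0:0:0:1]$, $[0:1:0:0]$ and $[0:0:1:0]$; each lies on $X$, they are pairwise distinct, and at each vertex two of the three lines meet with distinct tangent directions on the smooth surface $X$. Thus $C$ has exactly three ordinary double points and no worse singularities, so the log pair $(X,C)$ is log canonical, while $(X,\lambda C)$ is not log canonical for any $\lambda>1$ since then a component of $\lambda C$ has coefficient exceeding $1$. As $C$ is the only $\mathrm{Aut}(X)$-invariant curve in $|-K_X|$, this gives $\mathrm{lct}_1(X,\mathrm{Aut}(X))=1$, and Theorem~\ref{theorem:dP3-lct-lct1} then yields
$$
\mathrm{lct}\big(X,\mathrm{Aut}(X)\big)=\mathrm{lct}_1\big(X,\mathrm{Aut}(X)\big)=1.
$$

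The only non-formal inputs are the description of $\mathrm{Aut}(X)$ and of its action on $\mathrm{Pic}(X)$, which I would take from \cite{DoIs06}. The step that needs care is the representation-theoretic identification of $\{x=0\}$ as the \emph{only} invariant hyperplane: an invariant hyperplane section cutting out three concurrent lines through an Eckardt point of $X$ would instead force $\mathrm{lct}_1(X,\mathrm{Aut}(X))=2/3$, and it is precisely the splitting $\langle x,y,z,t\rangle=\langle x\rangle\oplus\langle y,z,t\rangle$ with $\langle y,z,t\rangle$ having no invariant line that excludes this possibility. Everything else is a routine check.
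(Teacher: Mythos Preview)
Your proposal is correct and follows essentially the same route as the paper: cite \cite{DoIs06} for $\mathrm{Aut}(X)\cong\mathbb{S}_4$ and $\mathrm{Pic}(X)^{G}=\mathbb{Z}[-K_X]$, identify the hyperplane section $\{x=0\}$ as the unique $\mathrm{Aut}(X)$-invariant member of $|-K_X|$, observe that it is the nodal triangle $yzt=0$, and conclude via Theorem~\ref{theorem:dP3-lct-lct1}. The only point worth tightening is your justification that $\langle y,z,t\rangle$ contains no $\mathbb{S}_4$-eigenline: the argument you give rules out a \emph{fixed} line, whereas an invariant hyperplane corresponds to a line stable up to a character; since the only nontrivial $1$-dimensional character of $\mathbb{S}_4$ is the sign, and $\langle y,z,t\rangle$ is the standard irreducible $3$-dimensional representation, this is immediate, but you should say so.
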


\begin{proof}
It follows from \cite{DoIs06} that
$$
\mathrm{Aut}\big(X\big)\cong\mathbb{S}_4,%
$$
which implies that the only $\mathrm{Aut}(X)$-invariant curve in
$|-K_{X}|$ is cut out on $X$ by $x=0$.

The only $\mathrm{Aut}(X)$-invariant curve in $|-K_{X}|$ has
ordinary double points, and $\mathrm{Aut}(X)$-invariant subgroup
in $\mathrm{Pic}(X)$ is generated by $-K_{X}$. Then
$\mathrm{lct}(X,\mathrm{Aut}(X))=1$ by
Theorem~\ref{theorem:dP3-lct-lct1}.
\end{proof}

\section{Intersection of two quadrics}
\label{sec:degree-four}

Let $X$ be a smooth complete intersection of two quadrics in
$\mathbb{P}^{4}$. Then $X$ can be given~by
$$
\sum_{i=0}^4 \alpha_i x_i^2=\sum_{i=0}^4\beta_i x_i^2=0\subset\mathbb{P}^{4}\cong\mathrm{Proj}\Big(\mathbb{C}[x_{0},x_{1},x_{2},x_{3},x_{4}]\Big)%
$$
for some
$[\alpha_{0}:\alpha_{1}:\alpha_{2}:\alpha_{3}:\alpha_{4}]\ne
[\beta_{0}:\beta_{1}:\beta_{2}:\beta_{3}:\beta_{4}]$ in
$\mathbb{P}^{4}$ (see \cite[Lemma~6.5]{DoIs06}).

The group $\mathrm{Aut}(X)$ is finite. Let $\tau_{1}$, $\tau_{2}$,
$\tau_{3}$, $\tau_{4}$ be involutions in $\mathrm{Aut}(X)$ such
that
$$
\left\{%
\aligned
&\tau_{1}\big([x_{0}:x_{1}:x_{2}:x_{3}:x_{4}]\big)=[x_{0}:-x_{1}:x_{2}:x_{3}:x_{4}],\\%
&\tau_{2}\big([x_{0}:x_{1}:x_{2}:x_{3}:x_{4}]\big)=[x_{0}:x_{1}:-x_{2}:x_{3}:x_{4}],\\%
&\tau_{3}\big([x_{0}:x_{1}:x_{2}:x_{3}:x_{4}]\big)=[x_{0}:x_{1}:x_{2}:-x_{3}:x_{4}],\\%
&\tau_{4}\big([x_{0}:x_{1}:x_{2}:x_{3}:x_{4}]\big)=[x_{0}:x_{1}:x_{2}:x_{3}:-x_{4}],\\%
\endaligned\right.%
$$
and let $\Gamma$ be a subgroup in $\mathrm{Aut}(X)$ that is
generated by $\tau_{1}$, $\tau_{2}$, $\tau_{3}$, $\tau_{4}$. Then
$\Gamma\cong\mathbb{Z}_{2}^{4}$.

\begin{lemma}[{\cite[Theorem~6.9]{DoIs06}}]
\label{lemma:dP4-Gamma} A $\Gamma$-invariant subgroup in
$\mathrm{Pic}(X)$ is generated by $-K_{X}$.
\end{lemma}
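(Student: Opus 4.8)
The plan is to prove first that $\mathrm{Pic}(X)^{\Gamma}$ has rank one, and then to pin it down using that $-K_{X}$ is a primitive class. Since $X$ is a smooth del Pezzo surface of degree $4$, it is a rational surface; hence $h^{1}(\mathcal{O}_{X})=h^{2}(\mathcal{O}_{X})=0$, the cycle class map identifies $\mathrm{Pic}(X)\otimes\mathbb{Q}$ with $H^{2}(X,\mathbb{Q})$ $\Gamma$-equivariantly, and $-K_{X}\in\mathrm{Pic}(X)^{\Gamma}$. The class $-K_{X}$ is primitive in $\mathrm{Pic}(X)$ (for instance because it pairs to $1$ with the class of any of the $16$ lines on $X$). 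So a rank-one subgroup of $\mathrm{Pic}(X)$ that contains $-K_{X}$ is forced to equal $\mathbb{Z}(-K_{X})$, and it suffices to show that $\dim_{\mathbb{Q}}H^{2}(X,\mathbb{Q})^{\Gamma}=1$.

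For that I would compute $\dim_{\mathbb{Q}}H^{2}(X,\mathbb{Q})^{\Gamma}=\frac{1}{|\Gamma|}\sum_{g\in\Gamma}\mathrm{tr}\left(g\mid H^{2}(X,\mathbb{Q})\right)$ via the topological Lefschetz fixed point formula. Because $b_{0}(X)=b_{4}(X)=1$, $b_{1}(X)=b_{3}(X)=0$, and $\Gamma$ acts trivially on $H^{0}$ and $H^{4}$, one has $\mathrm{tr}(g\mid H^{2})=\chi_{\mathrm{top}}(X^{g})-2$ for every $g\in\Gamma$, so that
\[
\dim_{\mathbb{Q}}H^{2}(X,\mathbb{Q})^{\Gamma}=\frac{1}{16}\sum_{g\in\Gamma}\chi_{\mathrm{top}}\left(X^{g}\right)-2 .
\]
Thus everything reduces to computing $\chi_{\mathrm{top}}(X^{g})$ for the $16$ elements $g$ of $\Gamma$.

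The identity contributes $\chi_{\mathrm{top}}(X)=8$. Putting $\tau_{0}:=\tau_{1}\tau_{2}\tau_{3}\tau_{4}$, which acts on $\mathbb{P}^{4}$ as $x_{0}\mapsto-x_{0}$, the $15$ non-trivial elements split into the five coordinate reflections $\tau_{0},\dots,\tau_{4}$ and the ten products $\tau_{i}\tau_{j}$ with $0\le i<j\le 4$. For $\tau_{j}$ the fixed locus in $\mathbb{P}^{4}$ is $\{x_{j}=0\}\sqcup\{e_{j}\}$; smoothness of $X$ forces $e_{j}\notin X$, so $X^{\tau_{j}}=X\cap\{x_{j}=0\}$ is a hyperplane section, i.e. a member of $|-K_{X}|$; being the fixed locus of a finite-order automorphism it is smooth, and being a connected anticanonical curve of arithmetic genus $1$ it is a smooth elliptic curve, so $\chi_{\mathrm{top}}(X^{\tau_{j}})=0$. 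For $\tau_{i}\tau_{j}$ the fixed locus in $\mathbb{P}^{4}$ is the plane $\Pi_{ij}=\{x_{i}=x_{j}=0\}$ together with the line $\ell_{ij}=\overline{e_{i}e_{j}}$; on $\ell_{ij}$ the two quadrics restrict to $\alpha_{i}x_{i}^{2}+\alpha_{j}x_{j}^{2}$ and $\beta_{i}x_{i}^{2}+\beta_{j}x_{j}^{2}$, which have no common zero since $[\alpha_{i}:\beta_{i}]\ne[\alpha_{j}:\beta_{j}]$, so $X\cap\ell_{ij}=\varnothing$; on $\Pi_{ij}$ they restrict to two diagonal conics with no common component (again by smoothness of $X$), so $X^{\tau_{i}\tau_{j}}=X\cap\Pi_{ij}$ is a smooth zero-dimensional scheme of length $4$, i.e. four reduced points, and $\chi_{\mathrm{top}}(X^{\tau_{i}\tau_{j}})=4$. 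Summing, $\sum_{g\in\Gamma}\chi_{\mathrm{top}}(X^{g})=8+5\cdot0+10\cdot4=48$, hence $\dim_{\mathbb{Q}}H^{2}(X,\mathbb{Q})^{\Gamma}=48/16-2=1$, which finishes the argument.

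I expect the delicate point to be the analysis of the fixed loci in the third step: one must check, using only that $X$ is smooth, that $X$ contains none of the coordinate lines $\ell_{ij}$, that the diagonal conics cut out on each $\Pi_{ij}$ genuinely share no component (so the four fixed points of $\tau_{i}\tau_{j}$ are distinct rather than merging under a tangency), and that every hyperplane section $X\cap\{x_{j}=0\}$ is an irreducible smooth curve. Each of these follows from the non-degeneracy of the pencil of quadrics defining $X$ — concretely, that the five points $[\alpha_{k}:\beta_{k}]\in\mathbb{P}^{1}$ are pairwise distinct and that no index $k$ has $\alpha_{k}=\beta_{k}=0$ — but translating the smoothness hypothesis into these concrete statements is the part that needs care. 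A longer but purely combinatorial alternative would exhibit the $16$ lines of $X$ in the coordinates $x_{0},\dots,x_{4}$ and verify that $\Gamma$ permutes them so that no nonzero rational combination orthogonal to $K_{X}$ is $\Gamma$-invariant; this is essentially how \cite{DoIs06} proceeds, but the Lefschetz count above is shorter.
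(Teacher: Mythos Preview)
Your proof is correct. The paper does not actually prove this lemma: it is stated with a citation to \cite[Theorem~6.9]{DoIs06} and no argument is given. Your Lefschetz fixed-point computation is therefore a genuine alternative to the approach in that reference, which (as you note at the end) works instead with the explicit permutation action of $\Gamma$ on the configuration of sixteen lines and checks directly that the only invariant classes are multiples of $-K_{X}$. The combinatorial route in \cite{DoIs06} has the advantage of simultaneously yielding the full character of the $\Gamma$-representation on $\mathrm{Pic}(X)$ and of fitting into their broader classification of minimal $G$-surfaces; your topological argument is shorter, needs no bookkeeping of lines, and makes transparent exactly which geometric facts about $X$ are used.

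The verifications you flag as delicate all go through. Smoothness of $X$ in this diagonal presentation is equivalent to the five ratios $[\alpha_{k}:\beta_{k}]\in\mathbb{P}^{1}$ being pairwise distinct (and in particular each $(\alpha_{k},\beta_{k})\ne(0,0)$), since the discriminant of the pencil $\lambda Q_{1}+\mu Q_{2}$ factors as $\prod_{k}(\lambda\alpha_{k}+\mu\beta_{k})$. This immediately gives $e_{j}\notin X$ and $X\cap\ell_{ij}=\varnothing$. For $X\cap\Pi_{ij}$, the two diagonal conics in three variables cannot share a line: if the first is degenerate then some $\alpha_{a}=0$ and its components pass through the coordinate point $e_{a}$, but then $\beta_{a}\ne 0$ forces the second conic to miss $e_{a}$ entirely. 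Finally, smoothness of each fixed locus is automatic (fixed loci of finite-order automorphisms on smooth varieties are smooth), and connectedness of the anticanonical hyperplane sections follows from $H^{1}(X,\mathcal{O}_{X}(K_{X}))=0$; alternatively, the four ratios $[\alpha_{k}:\beta_{k}]$ with $k\ne j$ being distinct already forces $X\cap\{x_{j}=0\}$ to be a smooth complete intersection of two quadrics in $\mathbb{P}^{3}$, hence an elliptic curve.
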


The surface $X$ contains no $\Gamma$-fixed points, which implies
the following result by
Corollary~\ref{corollary:weakly-exceptional}.

\begin{corollary}[{\cite[Example~1.10]{Ch07b}}]
\label{corollary:dP4-Gamma} The equality
$\mathrm{lct}(X,\Gamma)=1$ holds.
\end{corollary}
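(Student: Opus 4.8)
The plan is to establish the two inequalities $\mathrm{lct}(X,\Gamma)\geqslant 1$ and $\mathrm{lct}(X,\Gamma)\leqslant 1$ separately, each being an immediate application of a result from Section~\ref{section:preliminaries}: the first from Corollary~\ref{corollary:weakly-exceptional}, the second from Remark~\ref{remark:abelian-groups}.

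For the lower bound I would first check that $X$ carries no $\Gamma$-fixed point. A point of $\mathbb{P}^{4}$ fixed by the involution $\tau_{i}$ either lies on the hyperplane $\{x_{i}=0\}$ or equals the coordinate point $e_{i}$ having $1$ in the $i$-th slot; imposing this for $i=1,2,3,4$ at once forces the point to be one of the five coordinate points $e_{0},e_{1},e_{2},e_{3},e_{4}$, and conversely each of these is $\Gamma$-fixed. But $e_{j}\in X$ would mean $\alpha_{j}=\beta_{j}=0$, so that both defining quadrics are independent of $x_{j}$ and $X$ is a cone with vertex $e_{j}$, hence singular there, contradicting smoothness. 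Therefore $X$ has no $\Gamma$-fixed point, and Corollary~\ref{corollary:weakly-exceptional} gives $\mathrm{lct}(X,\Gamma)\geqslant 1$.

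For the upper bound I would invoke Remark~\ref{remark:abelian-groups}, whose two standing hypotheses are met here: a $\Gamma$-invariant subgroup of $\mathrm{Pic}(X)$ is generated by $-K_{X}$ by Lemma~\ref{lemma:dP4-Gamma}, and $\Gamma\cong\mathbb{Z}_{2}^{4}$ is Abelian. This yields $\mathrm{lct}(X,\Gamma)\leqslant 1$, and together with the previous paragraph we get $\mathrm{lct}(X,\Gamma)=1$.

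There is no genuinely difficult step here: the corollary is a formal consequence of Remark~\ref{remark:abelian-groups} and Corollary~\ref{corollary:weakly-exceptional} once one records the elementary fact that $X$ avoids all five coordinate points, which itself follows at once from the smoothness of $X$. The only part calling for even a moment's attention is that fixed-point verification; everything else is a citation.
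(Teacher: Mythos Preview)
Your proof is correct and follows the paper's approach: the paper derives the corollary from the assertion that $X$ contains no $\Gamma$-fixed points together with Corollary~\ref{corollary:weakly-exceptional}, and you supply the elementary verification of that assertion plus make explicit the upper bound via Remark~\ref{remark:abelian-groups}, which the paper leaves implicit.
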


It easily follows from \cite{DoIs06} that the following two
conditions are equivalent:
\begin{itemize}
\item the linear system $|-K_X|$ does not contain~$\mathrm{Aut}(X)$-invariant curves,%
\item either $\mathrm{Aut}(X)\cong\mathbb{Z}_2^4\rtimes\mathbb{S}_3$ or $\mathrm{Aut}(X)\cong\mathbb{Z}_2^4\rtimes\mathbb{D}_{5}$.%
\end{itemize}

\begin{corollary}
\label{corollary:dP4-K} If
$\mathrm{Aut}(X)\not\cong\mathbb{Z}_2^4\rtimes\mathbb{S}_3$ and
$\mathrm{Aut}(X)\not\cong\mathbb{Z}_2^4\rtimes\mathbb{D}_{5}$,
then $\mathrm{lct}(X,\mathrm{Aut}(X))=1$.
\end{corollary}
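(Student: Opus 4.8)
The plan is to sandwich $\mathrm{lct}(X,\mathrm{Aut}(X))$ between $1$ and $1$: the lower bound comes from the subgroup $\Gamma\cong\mathbb{Z}_2^4$, and the upper bound from the existence of an $\mathrm{Aut}(X)$-invariant anticanonical curve, which is exactly what the hypothesis on $\mathrm{Aut}(X)$ buys us.

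For the lower bound I would use monotonicity of $\mathrm{lct}$ in the group. Since $\Gamma\subseteq\mathrm{Aut}(X)$, every $\mathrm{Aut}(X)$-invariant divisor in $|-nK_X|$ is in particular $\Gamma$-invariant, so the supremum defining $\mathrm{lct}_n(X,\mathrm{Aut}(X))$ ranges over a subset of the divisors appearing in the definition of $\mathrm{lct}_n(X,\Gamma)$; hence $\mathrm{lct}_n(X,\mathrm{Aut}(X))\geqslant\mathrm{lct}_n(X,\Gamma)$ for every $n$, and therefore $\mathrm{lct}(X,\mathrm{Aut}(X))\geqslant\mathrm{lct}(X,\Gamma)$. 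By Corollary~\ref{corollary:dP4-Gamma} the right-hand side equals $1$, so $\mathrm{lct}(X,\mathrm{Aut}(X))\geqslant 1$. (Equivalently: $X$ has no $\Gamma$-fixed points, hence no $\mathrm{Aut}(X)$-fixed points, and Corollary~\ref{corollary:weakly-exceptional} applies; Lemma~\ref{lemma:dP4-Gamma} also shows the $\mathrm{Aut}(X)$-invariant subgroup of $\mathrm{Pic}(X)$ is $\mathbb{Z}[-K_X]$, which puts us in the standard framework.)

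For the upper bound I would invoke the equivalence recalled just above the statement: because $\mathrm{Aut}(X)$ is assumed to be neither $\mathbb{Z}_2^4\rtimes\mathbb{S}_3$ nor $\mathbb{Z}_2^4\rtimes\mathbb{D}_5$, the linear system $|-K_X|$ contains an $\mathrm{Aut}(X)$-invariant curve $C$. Writing $C=\sum_i a_i C_i$ with $a_i\in\mathbb{Z}_{>0}$, for any rational $\lambda>1$ the divisor $\lambda C$ has a component of coefficient $\lambda a_i\geqslant\lambda>1$, so $(X,\lambda C)$ fails the coefficient condition of Definition~\ref{definition:log canonical-singularities} and is not log canonical. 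Hence $\mathrm{lct}_1(X,\mathrm{Aut}(X))\leqslant 1$, and therefore $\mathrm{lct}(X,\mathrm{Aut}(X))\leqslant\mathrm{lct}_1(X,\mathrm{Aut}(X))\leqslant 1$. Combining the two bounds gives $\mathrm{lct}(X,\mathrm{Aut}(X))=1$. This is essentially bookkeeping with facts already in hand (Lemma~\ref{lemma:dP4-Gamma}, Corollary~\ref{corollary:dP4-Gamma}, and the group-theoretic classification of \cite{DoIs06}); there is no genuine obstacle, the only point to state with care being the group-monotonicity inequality $\mathrm{lct}(X,G)\leqslant\mathrm{lct}(X,G')$ for $G\subseteq G'$, which transports $\mathrm{lct}(X,\Gamma)=1$ into the lower bound for the full automorphism group.
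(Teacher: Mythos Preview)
Your proof is correct and matches the paper's implicit argument: the corollary is stated without proof precisely because it follows at once from the equivalence recorded just before it (giving an $\mathrm{Aut}(X)$-invariant curve in $|-K_X|$ and hence the upper bound $\mathrm{lct}(X,\mathrm{Aut}(X))\leqslant 1$) together with Corollary~\ref{corollary:dP4-Gamma} and the monotonicity $\mathrm{lct}(X,\Gamma)\leqslant\mathrm{lct}(X,\mathrm{Aut}(X))$ for the lower bound.
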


The main purpose of this section is to prove the following result.

\begin{theorem}
\label{theorem:dP4-2K} If
$\mathrm{Aut}(X)\cong\mathbb{Z}_2^4\rtimes\mathbb{S}_3$ or
$\mathrm{Aut}(X)\cong\mathbb{Z}_2^4\rtimes\mathbb{D}_{5}$, then
$\mathrm{lct}(X,\mathrm{Aut}(X))=2$.
\end{theorem}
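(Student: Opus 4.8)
The plan is to derive the equality from Corollary~\ref{corollary:GAFA}. Write $G=\mathrm{Aut}(X)$. Since $\Gamma\cong\mathbb{Z}_{2}^{4}$ is a subgroup of $G$, Lemma~\ref{lemma:dP4-Gamma} shows that a $G$-invariant subgroup of $\mathrm{Pic}(X)$ is generated by $-K_{X}$, so $(X,G)$ satisfies the standing hypotheses of the paragraph preceding Lemma~\ref{lemma:LCS-zero-dimensional}; moreover $X$ has no $\Gamma$-fixed point, hence no $G$-fixed point, so Corollary~\ref{corollary:weakly-exceptional} already gives $\mathrm{lct}(X,G)\geqslant 1$. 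It thus suffices to prove two things: that $\xi=2$, and that every $G$-orbit on $X$ has length at least $6$. Granting these, since $h^{0}\big(X,\mathcal{O}_{X}((1-\xi)K_{X})\big)=h^{0}\big(X,\mathcal{O}_{X}(-K_{X})\big)=K_{X}^{2}+1=5$, Corollary~\ref{corollary:GAFA} gives $\mathrm{lct}(X,G)=\xi=2$.

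To see $\xi=2$: the hypothesis of the theorem says that $|-K_{X}|$ contains no $\mathrm{Aut}(X)$-invariant curve, so $\xi\geqslant 2$. For the opposite direction I would use the explicit diagonal equations $\sum\alpha_{i}x_{i}^{2}=\sum\beta_{i}x_{i}^{2}=0$ of $X$ and the description of $\mathrm{Aut}(X)$ in \cite{DoIs06} to exhibit a $G$-invariant quadratic form --- such as $\sum_{i}x_{i}^{2}$, or an orbit sum $\sum_{i\in O}x_{i}^{2}$ over an orbit $O$ of the permutation part of $G$ --- that does not lie in the pencil $\langle\sum\alpha_{i}x_{i}^{2},\sum\beta_{i}x_{i}^{2}\rangle$; its restriction to $X$ is a $G$-invariant curve in $|-2K_{X}|$, so $\xi=2$.

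The heart of the argument is the orbit bound. Suppose $\Sigma\subset X$ is a $G$-orbit with $|\Sigma|\leqslant 5$. Then $\Gamma$ acts on the set $\Sigma$, and its image in $\mathrm{Sym}(\Sigma)$ is an abelian group of order a power of $2$; since $\mathrm{Sym}(\Sigma)$ embeds into $\mathbb{S}_{5}$, whose Sylow $2$-subgroups are dihedral of order $8$, this image has order at most $4$, so the pointwise stabiliser $K=\ker\big(\Gamma\to\mathrm{Sym}(\Sigma)\big)$ has order at least $4$. Thus $\Sigma\subseteq\mathrm{Fix}_{X}(K)$ for some subgroup $K\leqslant\Gamma$ of rank at least $2$. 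I would compute $\mathrm{Fix}_{X}(K)$ from the eigenspace decomposition of $\mathbb{C}^{5}$ under a lift of $K$: two coordinate lines $\mathbb{C}e_{i}$, $\mathbb{C}e_{j}$ lie in a common eigenspace exactly when $[\alpha_{i}:\alpha_{j}]=[\beta_{i}:\beta_{j}]$, and the $G$-symmetry of $X$, together with its smoothness, controls which of these coincidences occur and hence which subspaces $\mathbb{P}(V)$ of the fixed locus meet $X$ and in how many points. Running through the possibilities for $K$, one finds that $\mathrm{Fix}_{X}(K)$ is always the union of finitely many $G$-orbits of length at least $6$ with a union of curves on each of which $G$ acts through a finite group --- in the decisive case $\mathbb{S}_{4}\subset\mathrm{PGL}_{2}(\mathbb{C})$ acting on a conic $\{x_{0}^{2}+x_{1}^{2}+x_{2}^{2}=0\}\cong\mathbb{P}^{1}$ --- whose smallest orbit has at least $6$ points. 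Hence $|\Sigma|\geqslant 6$, a contradiction.

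I expect this last case analysis to be the main obstacle; it is unavoidable and needs the explicit models of \cite{DoIs06}. For $\mathrm{Aut}(X)\cong\mathbb{Z}_{2}^{4}\rtimes\mathbb{D}_{5}$ it is comparatively clean: the lengths $3$ and $5$ are excluded by divisibility and the absence of the relevant subgroups of $\mathbb{D}_{5}$, and the lengths $2$ and $4$ are excluded using that $\Gamma$ is irreducible as an $\mathbb{F}_{2}[\mathbb{D}_{5}]$-module --- so $\mathbb{D}_{5}$ normalises no proper nonzero subgroup of $\Gamma$ --- and that every rank-$3$ subgroup of $\Gamma$ acts on $X$ without fixed points. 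For $\mathrm{Aut}(X)\cong\mathbb{Z}_{2}^{4}\rtimes\mathbb{S}_{3}$ the analysis is genuinely subtler, because there $\Gamma$ does possess rank-$3$ subgroups with nonempty fixed locus on $X$; one has to check that every such locus consists of two of the six ``special'' points cut out on $X$ by pairs of the first three coordinate hyperplanes --- these six points forming a single $G$-orbit --- and that the remaining $\Gamma$-fixed loci lie either inside $G$-orbits of length $6$ or $12$ or inside the $G$-invariant conic. Once the orbit bound is established the theorem follows at once from Corollary~\ref{corollary:GAFA}.
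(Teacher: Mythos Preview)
Your strategy via Corollary~\ref{corollary:GAFA} is sound for $\mathrm{Aut}(X)\cong\mathbb{Z}_2^4\rtimes\mathbb{D}_5$, and the argument through the irreducibility of $\Gamma$ as an $\mathbb{F}_2[\mathbb{D}_5]$-module is an elegant way to see that every $G$-orbit on $X$ has length at least $8$. But for $\mathrm{Aut}(X)\cong\mathbb{Z}_2^4\rtimes\mathbb{S}_3$ the orbit bound you need is simply false. The rank-$2$ subgroup $K=\langle\tau_3,\tau_4\rangle\subset\Gamma$ is normal in $G$ (it is $\mathbb{S}_3$-invariant), and
$$
\mathrm{Fix}_X(K)=X\cap\{x_3=x_4=0\}=\Big\{[1:\pm 1:\pm 1:0:0]\Big\}
$$
is a single $G$-orbit of length $4$: the stabiliser of $[1:1:1:0:0]$ is $\langle\tau_3,\tau_4,\iota_1,\iota_2\rangle$, of order $24$. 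Hence the smallest orbit length is $4<5=h^0(-K_X)$ and Corollary~\ref{corollary:GAFA} does not apply. Your sketch for this case invokes a $G$-invariant conic on $X$ on which $G/K\cong\mathbb{S}_4$ acts, but there is no such conic: the only proper $G$-invariant linear subspace of $\mathbb{P}^4$ meeting $X$ is the plane $\{x_3=x_4=0\}$, and it meets $X$ in those four isolated points, not a curve.

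The paper's proof succeeds precisely because it exploits more than the orbit length. By Lemma~\ref{lemma:Nadel-IILC}, the points of $\mathrm{LCS}(X,\lambda D)$ impose independent conditions on $|-K_X|$, so they span a $G$-invariant linear subspace of $\mathbb{P}^4$ of dimension $|\mathrm{LCS}(X,\lambda D)|-1$. One then lists the $G$-invariant linear subspaces (for the $\mathbb{S}_3$ case these are only $\{x_3=x_4=0\}$ and $\{x_0=x_1=x_2=0\}$) and checks that none carries an admissible LCS locus. The $4$-point orbit above is ruled out exactly by this linear-position constraint: four points spanning only a $\mathbb{P}^2$ cannot impose independent conditions on hyperplanes. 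To repair your argument for the $\mathbb{S}_3$ case you must bring in this extra input; the orbit-length bound alone is insufficient.
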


\begin{proof}
Suppose that either
$\mathrm{Aut}(X)\cong\mathbb{Z}_2^4\rtimes\mathbb{S}_3$ or
$\mathrm{Aut}(X)\cong\mathbb{Z}_2^4\rtimes\mathbb{D}_{5}$. Then
$$
\mathrm{lct}\big(X,G\big)\leqslant\mathrm{lct}_{2}\big(X,G\big)\leqslant 2,%
$$
since the linear system $|-2K_X|$ contains
a~$\mathrm{Aut}(X)$-invariant curve (see \cite{DoIs06}).

Suppose that $\mathrm{lct}(X,G)<2$. Let us derive a contradiction.

There exists a~$G$-invariant effective $\mathbb{Q}$-divisor $D$ on
the~surface $X$ such that
$$
D\sim_{\mathbb{Q}} -K_{X}
$$
and  $(X,\lambda D)$ is strictly log canonical for some
$\lambda\in\mathbb{Q}$ such that $\lambda<2$.

It follows from Lemmata~\ref{lemma:LCS-zero-dimensional} and
\ref{lemma:Nadel-IILC} that $|\mathrm{LCS}(X,\lambda
D)|\in\{2,3,5\}$ and $\mathrm{LCS}(X,\lambda D)$ imposes
independent linear conditions on hyperplanes in $\mathbb{P}^{4}$,
since $|-K_{X}|$ contains no $G$-invariant curves.

Suppose that
$\mathrm{Aut}(X)\cong\mathbb{Z}_2^4\rtimes\mathbb{S}_3$. Then
$|\mathrm{LCS}(X,\lambda D)|\ne 5$, and $X$ can be given by
$$
x_0^2+\epsilon_3 x_1^2+\epsilon_3^2 x_2^2+x_3^2=x_0^2+\epsilon_3^2 x_1^2+\epsilon_3 x_2^2+x_4^2=0\subset\mathbb{P}^{4}\cong\mathrm{Proj}\Big(\mathbb{C}[x_{0},x_{1},x_{2},x_{3},x_{4}]\Big),%
$$
where $\epsilon_{3}$ is a primitive cube root~of~unity. Let
$\iota_{1}$ and $\iota_{2}$ be elements in $\mathrm{Aut}(X)$ such
that
$$
\left\{%
\aligned
&\iota_{1}\big([x_0:x_1:x_2:x_3:x_4:x_5]\big)=[x_0:x_2:x_1:x_4:x_3],\\%
&\iota_{2}([x_0:x_1:x_2:x_3:x_4:x_5])=[x_1:x_2:x_0:\epsilon_3x_3:\epsilon_3^2x_4],\\%
\endaligned\right.%
$$
and let $\Pi$ be a linear subspace in $\mathbb{P}^{4}$ spanned by
$\mathrm{LCS}(X,\lambda D)$. Then
$$
\mathrm{Aut}\big(X\big)=\big\langle \Gamma,\iota_{1},\iota_{2}\big\rangle%
$$
furthermore, either we have $|\mathrm{LCS}(X,\lambda D)|=2$ and
$\Pi$ is given by the equations $x_{0}=x_{1}=x_{2}=0$, or we have
$|\mathrm{LCS}(X,\lambda D)|=3$ and $\Pi$ is given by
$x_{3}=x_{4}=0$. Since~the~subset
$$
x_0^2+\epsilon_3 x_1^2+\epsilon_3^2 x_2^2+x_3^2=x_0^2+\epsilon_3^2 x_1^2+\epsilon_3 x_2^2+x_4^2=x_{0}=x_{1}=x_{2}=0%
$$
is empty, we have $|\mathrm{LCS}(X,\lambda D)|=3$ and $\Pi$ is
given by $x_{3}=x_{4}=0$. However the subset
$$
x_0^2+\epsilon_3 x_1^2+\epsilon_3^2 x_2^2+x_3^2=x_0^2+\epsilon_3^2 x_1^2+\epsilon_3 x_2^2+x_4^2=x_{3}=x_{4}=0%
$$
consists of four points, which implies that
$|\mathrm{LCS}(X,\lambda D)|\ne 3$. Thus, we have
$\mathrm{Aut}(X)\not\cong\mathbb{Z}_2^4\rtimes\mathbb{S}_3$.

We see that
$\mathrm{Aut}(X)\cong\mathbb{Z}_2^4\rtimes\mathbb{D}_{5}$. Then
$|\mathrm{LCS}(X,\lambda D)|\ne 3$, and $X$ can be given by
$$
\sum_{i=0}^4 \epsilon_5^i x_i^2=\sum_{i=0}^4\epsilon_5^{4-i}x_i^2=0\subset\mathbb{P}^{4}\cong\mathrm{Proj}\Big(\mathbb{C}[x_{0},x_{1},x_{2},x_{3},x_{4}]\Big)%
$$
where $\epsilon_{5}$ is a primitive fifth root of unity. Let
$\chi_{1}$ and $\chi_{2}$ be elements in $\mathrm{Aut}(X)$ such
that
$$
\left\{%
\aligned
&\chi_{1}\big([x_0:x_1:x_2:x_3:x_4:x_5]\big)=[x_1:x_2:x_3:x_4:x_0],\\%
&\chi_{2}([x_0:x_1:x_2:x_3:x_4:x_5])=[x_4:x_3:x_2:x_1:x_0],\\%
\endaligned\right.%
$$
and let $\Pi$ be a linear subspace in $\mathbb{P}^{4}$ spanned by
$\mathrm{LCS}(X,\lambda D)$. Then
$$
\mathrm{Aut}\big(X\big)=\big\langle\Gamma,\chi_{1},\chi_{2}\big\rangle%
$$
and $\Pi\not\cong\mathbb{P}^{1}$. Since $|\mathrm{LCS}(X,\lambda
D)|\in\{2,5\}$, we have $|\mathrm{LCS}(X,\lambda D)|=5$, which is
impossible because the surface $X$ does not have
$\mathrm{Aut}(X)$-orbits of length $5$.
\end{proof}

\begin{corollary}
\label{corollary:dP4-2K} The following four conditions are
equivalent:
\begin{itemize}
\item the linear system $|-K_X|$ does not contain~$\mathrm{Aut}(X)$-invariant curves,%
\item either $\mathrm{Aut}(X)\cong\mathbb{Z}_2^4\rtimes\mathbb{S}_3$ or $\mathrm{Aut}(X)\cong\mathbb{Z}_2^4\rtimes\mathbb{D}_{5}$,%
\item the inequality $\mathrm{lct}(X,\mathrm{Aut}(X))>1$ holds,
\item the equality $\mathrm{lct}(X,\mathrm{Aut}(X))=2$ holds.
\end{itemize}
\end{corollary}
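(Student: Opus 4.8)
The plan is to deduce all four equivalences from results already in hand, since no new geometry is required. First I would recall that the equivalence of the first two conditions---that $|-K_X|$ carries no $\mathrm{Aut}(X)$-invariant curve if and only if $\mathrm{Aut}(X)\cong\mathbb{Z}_2^4\rtimes\mathbb{S}_3$ or $\mathrm{Aut}(X)\cong\mathbb{Z}_2^4\rtimes\mathbb{D}_{5}$---is precisely the statement extracted from \cite{DoIs06} stated immediately before Corollary~\ref{corollary:dP4-K}. So it suffices to wire the remaining two conditions into a cycle with, say, the second.

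Next I would observe that the second condition implies the fourth by Theorem~\ref{theorem:dP4-2K}, which asserts $\mathrm{lct}(X,\mathrm{Aut}(X))=2$ in exactly those two cases; and the fourth condition trivially implies the third, since $2>1$.

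To close the cycle I would prove that the third condition implies the second by contraposition: if $\mathrm{Aut}(X)$ is isomorphic to neither $\mathbb{Z}_2^4\rtimes\mathbb{S}_3$ nor $\mathbb{Z}_2^4\rtimes\mathbb{D}_{5}$, then Corollary~\ref{corollary:dP4-K} gives $\mathrm{lct}(X,\mathrm{Aut}(X))=1$, contradicting $\mathrm{lct}(X,\mathrm{Aut}(X))>1$. Combining these steps, the four conditions are equivalent.

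There is essentially no obstacle here: the whole content of this corollary is already packaged in Theorem~\ref{theorem:dP4-2K} (the substantive log-canonicity argument, where the $\mathbb{Z}_2^4$-symmetry forces $\mathrm{LCS}(X,\lambda D)$ to lie in a coordinate subspace that the defining quadrics forbid) and in Corollary~\ref{corollary:dP4-K} (which in turn rests on Corollary~\ref{corollary:dP4-Gamma} and on the existence of an $\mathrm{Aut}(X)$-invariant anticanonical curve in the remaining cases). The only care needed is bookkeeping---checking that the chain of implications genuinely forms a cycle, and reading the subgroup ``$G$'' that appears in the proof of Theorem~\ref{theorem:dP4-2K} as $\mathrm{Aut}(X)$.
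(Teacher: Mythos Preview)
Your proposal is correct and is exactly the intended argument: the paper gives no separate proof of this corollary, leaving it as an immediate consequence of the equivalence from \cite{DoIs06} stated before Corollary~\ref{corollary:dP4-K}, together with Theorem~\ref{theorem:dP4-2K} and Corollary~\ref{corollary:dP4-K}, which is precisely the cycle you wrote down.
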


\section{Surfaces of big degree}
\label{sec:big-degree}

Let $X$ be a smooth del Pezzo surface and let $G$ be a finite
subgroup in $\mathrm{Aut}(X)$.

\begin{lemma}
\label{lemma:dP6} Suppose that $K_{X}^{2}=6$. Then
$\mathrm{lct}(X,G)\leqslant 1$.
\end{lemma}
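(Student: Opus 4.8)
The plan is to produce, for an arbitrary finite subgroup $G\subset\mathrm{Aut}(X)$, a $G$-invariant curve $C\in|-K_X|$ whose presence forces $\mathrm{lct}_1(X,G)\leqslant 1$; since $\mathrm{lct}(X,G)=\inf_{n}\mathrm{lct}_n(X,G)\leqslant\mathrm{lct}_1(X,G)$, the lemma follows at once.

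First I would use that a smooth del Pezzo surface with $K_X^2=6$ is unique up to isomorphism: it is the blow up $\pi\colon X\to\mathbb{P}^2$ of three non-collinear points $P_1,P_2,P_3$. Writing $H=\pi^{*}\mathcal{O}_{\mathbb{P}^2}(1)$ and $E_i=\pi^{-1}(P_i)$, the surface $X$ carries exactly six $(-1)$-curves, namely $E_1,E_2,E_3$ together with the proper transforms $L_{ij}=H-E_i-E_j$ of the three lines through pairs of the $P_i$, and these six curves form a hexagon in which each of them meets precisely two of the others, transversally, in one point. Their sum is an anticanonical divisor,
$$
C:=E_1+E_2+E_3+L_{12}+L_{13}+L_{23}=3H-E_1-E_2-E_3\sim -K_X,
$$
so $C\in|-K_X|$ is a cycle of six smooth rational curves with exactly six ordinary nodes and no other singularities.

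Next I would observe that $C$ is $G$-invariant. Any automorphism of $X$ preserves $K_X$, hence permutes the set of $(-1)$-curves, these being precisely the irreducible curves $L$ with $L^2=L\cdot K_X=-1$; therefore it fixes their union $C$, and in particular $C$ is $G$-invariant.

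Finally, because $C$ has only nodes the pair $(X,C)$ is log canonical but not KLT (near a node $C$ is analytically $\{xy=0\}$, taken with coefficient $1$). For any rational $\lambda>1$, however, $(X,\lambda C)$ is not log canonical: blowing up a node $P$ of $C$, the standard blow-up formula gives
$$
K_{\bar X}+\lambda\bar C\sim_{\mathbb{Q}}\sigma^{*}\big(K_X+\lambda C\big)+\big(1-2\lambda\big)E,
$$
and $1-2\lambda<-1$ since $\mathrm{mult}_P(C)=2$. Hence no $\lambda>1$ can lie in the set defining $\mathrm{lct}_1(X,G)$, so $\mathrm{lct}_1(X,G)\leqslant 1$ and thus $\mathrm{lct}(X,G)\leqslant 1$. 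I expect no real obstacle here; the only point needing a moment's care is checking that the anticanonical hexagon is as described — that $C$ is a genuine member of $|-K_X|$ and that its six singular points are ordinary nodes — and both are immediate from the blow-up picture above.
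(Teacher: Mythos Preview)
Your proof is correct and follows essentially the same route as the paper's: both exhibit the hexagon of six $(-1)$-curves as a $G$-invariant member of $|-K_X|$, from which $\mathrm{lct}(X,G)\leqslant\mathrm{lct}_1(X,G)\leqslant 1$ follows. The paper's argument is terser, and your blow-up computation at a node is more than necessary---since $C$ is reduced, for $\lambda>1$ every component of $\lambda C$ already has coefficient $\lambda>1$, violating log canonicity directly by Definition~\ref{definition:log canonical-singularities}---but the approach is the same.
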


\begin{proof}
Let $L_{1}$, $L_{2}$, $L_{3}$, $L_{4}$, $L_{5}$ and $L_{6}$ be
smooth rational curves on the surface $X$ such that
$$
L_{1}\cdot L_{1}=L_{2}\cdot L_{2}=L_{3}\cdot L_{3}=L_{4}\cdot L_{4}=L_{5}\cdot L_{5}=L_{6}\cdot L_{6}=-1%
$$
and $L_{i}\ne L_{j}\iff i\ne j$. Then $\sum_{i=1}^{6}L_{i}$ is a
$G$-invariant curve in $|-K_{X}|$.
\end{proof}

\begin{lemma}
\label{lemma:dP7} Suppose that $K_{X}^{2}=7$. Then
$\mathrm{lct}(X,G)=1/3$.
\end{lemma}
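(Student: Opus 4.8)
The plan is to squeeze $\mathrm{lct}(X,G)$ between $1/3$ and $1/3$: the lower bound is immediate from the non-equivariant case, and the upper bound follows from exhibiting one highly non--log canonical $G$-invariant anticanonical divisor, which the geometry of a degree~$7$ del Pezzo surface provides for free.

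For the lower bound I would note that passing from all of $|-nK_X|$ to the $G$-invariant divisors in $|-nK_X|$ only removes constraints in the definition of $\mathrm{lct}_n$, so $\mathrm{lct}_n(X,G)\geqslant\mathrm{lct}_n(X)$ for every $n$, whence $\mathrm{lct}(X,G)\geqslant\mathrm{lct}(X)$. Since $X$ is a del Pezzo surface with $K_X^2=7$, it is neither $\mathbb{P}^1\times\mathbb{P}^1$ nor of degree in $\{1,2,3,4,5,6\}$, so Example~\ref{example:GAFA} gives $\mathrm{lct}(X)=1/3$; hence $\mathrm{lct}(X,G)\geqslant 1/3$.

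For the upper bound I would use that $X\cong\mathrm{Bl}_{p_{1},p_{2}}\mathbb{P}^{2}$ for two distinct points $p_{1},p_{2}$. With $H$ the pullback of a line and $E_{1},E_{2}$ the exceptional curves, the only $(-1)$-curves on $X$ are $E_{1}$, $E_{2}$, and the strict transform $L\sim H-E_{1}-E_{2}$ of the line through $p_{1}$ and $p_{2}$; their intersection pattern is the chain $E_{1}-L-E_{2}$ (one has $E_{1}\cdot L=L\cdot E_{2}=1$ and $E_{1}\cdot E_{2}=0$). Every automorphism of $X$, and in particular every element of $G$, permutes the set of $(-1)$-curves respecting intersections, so it acts through the automorphism group of this chain; since $L$ is the unique one of the three meeting two of the others, $L$ is $G$-invariant, and so is $E_{1}+E_{2}$. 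A direct computation in $\mathrm{Pic}(X)$ now gives
\[
-K_X\;\sim\;3H-E_{1}-E_{2}\;\sim\;3L+2E_{1}+2E_{2},
\]
so that $D_{0}:=3L+2E_{1}+2E_{2}$ is an effective $G$-invariant divisor in $|-K_X|$ whose component $L$ carries coefficient $3$. For any rational $\lambda>1/3$ the coefficient $3\lambda$ of $L$ in $\lambda D_{0}$ exceeds $1$, so $(X,\lambda D_{0})$ is not log canonical by Definition~\ref{definition:log canonical-singularities}; therefore $\mathrm{lct}_{1}(X,G)\leqslant 1/3$ and hence $\mathrm{lct}(X,G)\leqslant 1/3$. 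Combining the two bounds gives $\mathrm{lct}(X,G)=1/3$.

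The argument uses none of the blow-up/connectedness/adjunction apparatus of the previous sections, so there is no real technical obstacle; the only points needing care are verifying that $K_X^2=7$ is covered by the last (``remaining'') case of Example~\ref{example:GAFA}, and checking that $G$ fixes $L$ and preserves $E_{1}+E_{2}$ (i.e. that $G$ acts on the three $(-1)$-curves through the symmetries of a path on three vertices, fixing its midpoint). Granting these, the linear equivalence $-K_X\sim 3L+2E_{1}+2E_{2}$ finishes the proof.
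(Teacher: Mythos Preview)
Your proof is correct and follows essentially the same approach as the paper: both produce the $G$-invariant anticanonical divisor $3L+2E_{1}+2E_{2}$ (the paper writes this, modulo evident typos, as $2L_{1}+3L_{2}+2L_{3}$ with $L_{2}$ the middle $(-1)$-curve of the chain) to get the upper bound $\mathrm{lct}_{1}(X,G)\leqslant 1/3$, and both invoke Example~\ref{example:GAFA} for the lower bound. Your argument is in fact more carefully written than the paper's, which does not spell out why the divisor is $G$-invariant.
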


\begin{proof}
Let $L_{1}$, $L_{2}$ and $L_{3}$ be smooth rational curves on the
surface $X$ such that
$$
L_{1}\cdot L_{1}=L_{2}\cdot L_{2}=L_{3}\cdot L_{3}=-L_{1}\cdot L_{2}=-L_{3}\cdot L_{2}=-1%
$$
and $L_{1}\cdot L_{2}=0$. Then $2L_{1}+3L_{2}+L_{1}\in |-K_{X}|$
and the curve $2L_{1}+3L_{2}+L_{1}$ is $G$-invariant, which
immediately implies that $\mathrm{lct}(X,G)=1/3$ by
Example~\ref{example:GAFA}.
\end{proof}

\begin{lemma}
\label{lemma:dP8} Suppose that $K_{X}^{2}=8$ and
$X\not\cong\mathbb{P}^{1}\times\mathbb{P}^{1}$. Then
$\mathrm{lct}(X,G)\leqslant 1/2$.
\end{lemma}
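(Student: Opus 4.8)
The plan is to identify $X$ explicitly. A smooth del Pezzo surface with $K_X^2=8$ is either $\mathbb{P}^1\times\mathbb{P}^1$ or the blow up of $\mathbb{P}^2$ at one point, so the hypothesis forces $X$ to be the Hirzebruch surface $\mathbb{F}_1$. I would fix the unique $(-1)$-curve $E\subset X$ and the class $F$ of a fibre of the ruling $\phi\colon X\to\mathbb{P}^1$; since $E$ and $F$ generate the two extremal rays of $\overline{\mathrm{NE}}(X)$ and $E^2=-1\ne 0=F^2$, every automorphism of $X$ fixes both of these classes. In particular $E$ is $G$-invariant, the morphism $\phi$ is $G$-equivariant for a suitable action of $G$ on $\mathbb{P}^1$, and a standard computation gives $-K_X\sim 2E+3F$.

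Next I would produce a convenient $G$-invariant divisor in an anticanonical multiple, supported mostly on $E$. Choose any point of $\mathbb{P}^1$ and let $O$ be its $G$-orbit, so $O$ is a non-empty finite set; put $k=|O|$. The effective divisor $\sum_{q\in O}\phi^*(q)$ is a $G$-invariant member of $|kF|$, hence
$$
\Gamma\;=\;2kE+3\sum_{q\in O}\phi^*\big(q\big)
$$
is a $G$-invariant member of $|2kE+3kF|=|-kK_X|$. Since none of the fibres $\phi^*(q)$ coincides with $E$, we have $\mathrm{mult}_E(\Gamma)=2k$.

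Finally I would conclude by a multiplicity bound. For every rational $\lambda>1/2$ the $\mathbb{Q}$-divisor $\frac{\lambda}{k}\Gamma$ contains the irreducible curve $E$ with coefficient $2\lambda>1$, so the log pair $\big(X,\frac{\lambda}{k}\Gamma\big)$ is not log canonical by Definition~\ref{definition:log canonical-singularities}; therefore $\mathrm{lct}_k(X,G)\leqslant 1/2$ and hence $\mathrm{lct}(X,G)\leqslant\mathrm{lct}_k(X,G)\leqslant 1/2$, as claimed. The argument is short, and the only place deserving attention is the preliminary step: verifying that $X\cong\mathbb{F}_1$, that $G$ respects the ruling, and that $-K_X\sim 2E+3F$. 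Once this is set up there is no genuine obstacle, since a single explicit $G$-invariant divisor does all the work; in particular one does not even need to know that $2k$ is the largest multiplicity with which $E$ can occur in $|-kK_X|$, only that the curve $\Gamma$ above exists.
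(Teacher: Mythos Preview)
Your proof is correct and follows essentially the same approach as the paper: both identify the unique $(-1)$-curve $E$ as $G$-invariant, produce a $G$-invariant effective $\mathbb{Q}$-divisor $\sim_{\mathbb{Q}}-K_X$ of the form $2E+\frac{3}{k}C$ with $C$ a $G$-invariant member of $|kF|$, and conclude from the coefficient $2$ of $E$ that $\mathrm{lct}(X,G)\leqslant 1/2$. The only cosmetic difference is that you construct $C$ explicitly as a sum of fibres over a $G$-orbit in $\mathbb{P}^1$, whereas the paper simply asserts the existence of a $G$-invariant curve in $|nL|$ for some $n\gg 0$; your version makes this step transparent.
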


\begin{proof}
Let $L$ and $E$ be smooth rational curves on the surface $X$ such
that $L\cdot L=0$~and~$E\cdot E=-1$, and let $C$ be a
$G$-invariant curve in the linear system $|nL|$ for some $n\gg 0$.
Then
$$
2E+\frac{3}{n}C\sim_{\mathbb{Q}} -K_{X},
$$
which implies that $\mathrm{lct}(X,G)\leqslant 1/2$, since $E$ is
$G$-invariant.
\end{proof}

\begin{corollary}
\label{corollary:big-degree} If $\mathrm{lct}(X,G)>1$ and
$K_{X}^{2}\geqslant 6$, then either $X\cong\mathbb{P}^{2}$ or
$X\cong\mathbb{P}^{1}\times\mathbb{P}^{1}$.
\end{corollary}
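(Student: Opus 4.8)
The plan is to reduce the statement to the three preceding lemmas by invoking the classification of smooth del Pezzo surfaces of degree at least six. Recall that such a surface is one of: $\mathbb{P}^{2}$ with $K_{X}^{2}=9$; $\mathbb{P}^{1}\times\mathbb{P}^{1}$ or the blow up of $\mathbb{P}^{2}$ at one point with $K_{X}^{2}=8$; the blow up of $\mathbb{P}^{2}$ at two distinct points with $K_{X}^{2}=7$; and the blow up of $\mathbb{P}^{2}$ at three non-collinear points with $K_{X}^{2}=6$. In particular, the surfaces with $K_{X}^{2}\geqslant 6$ that are neither $\mathbb{P}^{2}$ nor $\mathbb{P}^{1}\times\mathbb{P}^{1}$ are exactly those appearing in Lemmas~\ref{lemma:dP6}, \ref{lemma:dP7} and \ref{lemma:dP8}.

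So first I would assume $\mathrm{lct}(X,G)>1$ and $K_{X}^{2}\geqslant 6$, and run through the values $K_{X}^{2}\in\{6,7,8,9\}$. If $K_{X}^{2}=6$, then Lemma~\ref{lemma:dP6} gives $\mathrm{lct}(X,G)\leqslant 1$, contradicting the hypothesis. If $K_{X}^{2}=7$, then Lemma~\ref{lemma:dP7} gives $\mathrm{lct}(X,G)=1/3$, again a contradiction. If $K_{X}^{2}=8$ and $X\not\cong\mathbb{P}^{1}\times\mathbb{P}^{1}$, i.e. $X$ is the blow up of $\mathbb{P}^{2}$ at one point, then Lemma~\ref{lemma:dP8} gives $\mathrm{lct}(X,G)\leqslant 1/2$, a contradiction. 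Hence either $K_{X}^{2}=9$, in which case $X\cong\mathbb{P}^{2}$, or $K_{X}^{2}=8$ and $X\cong\mathbb{P}^{1}\times\mathbb{P}^{1}$, which is the desired conclusion.

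I do not expect any genuine obstacle here: the corollary is a bookkeeping consequence of the three lemmas once one recalls the classification of del Pezzo surfaces of degree $\geqslant 6$. The substantive work is already done in those lemmas, each of which rules out one surface by exhibiting an explicit $G$-invariant divisor (a sum of $(-1)$-curves in $|-K_{X}|$, or a divisor of the form $2E+\tfrac{3}{n}C\sim_{\mathbb{Q}}-K_{X}$) whose log canonical threshold is at most $1$. The only point worth stating carefully in the write-up is that these lemmas, together with the cases $X\cong\mathbb{P}^{2}$ and $X\cong\mathbb{P}^{1}\times\mathbb{P}^{1}$, exhaust every smooth del Pezzo surface with $K_{X}^{2}\geqslant 6$.
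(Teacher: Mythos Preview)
Your proposal is correct and is exactly the argument the paper intends: the corollary is stated without proof immediately after Lemmas~\ref{lemma:dP6}, \ref{lemma:dP7} and \ref{lemma:dP8}, and is meant to follow from them together with the classification of smooth del Pezzo surfaces of degree at least six, precisely as you outline.
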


Let us conclude this section by proving the following criterion
(cf. Example~\ref{example:complete-answer}).

\begin{theorem}\label{theorem:smooth-quadric}
Suppose that $X\cong\mathbb{P}^{1}\times\mathbb{P}^{1}$. Then the
following are equivalent:
\begin{itemize}
\item the inequality $\mathrm{lct}(X,G)>1$ holds, %
\item the inequality $\mathrm{lct}(X,G)\geqslant 5/4$ holds,%

\item there are no $G$-invariant curves in the linear systems
$$
\big|L_1\big|, \big|L_2\big|, \big|2L_1\big|, \big|2L_2\big|, \big|L_1+L_2\big|, \big|L_1+2L_2\big|, \big|2L_1+L_2\big|, \big|2L_1+2L_2\big|,%
$$
where $L_{1}$ and $L_{2}$ are fibers of two distinct
natural~projections of the~surface $X$ to $\mathbb{P}^{1}$.
\end{itemize}
\end{theorem}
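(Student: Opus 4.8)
\emph{Proof proposal.} Write $X=\mathbb{P}^1\times\mathbb{P}^1$, so that $-K_X\sim 2L_1+2L_2$, $L_1^2=L_2^2=0$, $L_1\cdot L_2=1$, and the eight linear systems appearing in the statement are precisely the systems $|pL_1+qL_2|$ with $(p,q)\in\{0,1,2\}^2\setminus\{(0,0)\}$. The plan is to prove the cycle $(2)\Rightarrow(1)\Rightarrow(3)\Rightarrow(2)$; the first implication is trivial since $5/4>1$. Throughout, recall $\mathrm{Aut}(X)=\big(\mathrm{Aut}(\mathbb{P}^1)\times\mathrm{Aut}(\mathbb{P}^1)\big)\rtimes\mathbb{Z}_2$, and let $G_0=G\cap\big(\mathrm{Aut}(\mathbb{P}^1)\times\mathrm{Aut}(\mathbb{P}^1)\big)$ be the subgroup of $G$ of index $\leq 2$ that preserves each of the two rulings.

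For $(1)\Rightarrow(3)$ I argue by contraposition: assuming that some $|pL_1+qL_2|$ with $(p,q)\in\{0,1,2\}^2\setminus\{(0,0)\}$ contains a $G$-invariant effective divisor $B$, I produce a $G$-invariant effective $\mathbb{Q}$-divisor $D\sim_{\mathbb{Q}}-K_X$ for which $(X,D)$ is not Kawamata log terminal; then $\mathrm{lct}(X,D)\leq 1$, hence $\mathrm{lct}(X,G)\leq\mathrm{lct}_m(X,G)\leq 1$ for a suitable $m$, so $(1)$ fails. If $G=G_0$ then, for each $i$, the $G$-orbit of a general fibre of the $i$-th ruling is a finite union of fibres whose sum is a $G$-invariant effective divisor $A_i\in|k_iL_i|$ with $k_i\geq 1$, and I take $D=B+\tfrac{2-p}{k_1}A_1+\tfrac{2-q}{k_2}A_2$. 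If $G\neq G_0$, any element of $G\setminus G_0$ swaps the rulings and so carries $|pL_1+qL_2|$ to $|qL_1+pL_2|$; $G$-invariance of $B$ forces $p=q\in\{1,2\}$, and I take $D=\tfrac{2}{p}B$. In either case $D$ is a $G$-invariant effective $\mathbb{Q}$-divisor $\sim_{\mathbb{Q}}-K_X$ containing an irreducible component of coefficient $\geq 1$, so $(X,D)$ is not klt by Definition~\ref{definition:log canonical-singularities}.

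For $(3)\Rightarrow(2)$ assume $(3)$. The first step is that $X$ has no $G$-fixed point: if $P$ were one, the fibres $C_1\in|L_1|$ and $C_2\in|L_2|$ through $P$ are $G_0$-invariant, so either $C_1$ is a $G$-invariant curve in $|L_1|$ (when $G=G_0$), or an element of $G\setminus G_0$ interchanges $C_1$ and $C_2$ and $C_1+C_2$ is a $G$-invariant curve in $|L_1+L_2|$ --- both contradicting $(3)$. Hence $\mathrm{lct}(X,G)\geq 1$ by Corollary~\ref{corollary:weakly-exceptional}. Now suppose $\mathrm{lct}(X,G)<5/4$; then some $\mathrm{lct}_n(X,G)<5/4$, so there is a $G$-invariant effective $\mathbb{Q}$-divisor $D\sim_{\mathbb{Q}}-K_X$ with $\lambda:=\mathrm{lct}(X,D)\in[1,5/4)$ (the lower bound from $\mathrm{lct}(X,G)\geq 1$), and $(X,\lambda D)$ is strictly log canonical, so $\mathrm{LCS}(X,\lambda D)\neq\varnothing$.

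I claim that $\mathrm{LCS}(X,\lambda D)$ is a single point; being $G$-invariant it is then $G$-fixed, contradicting the previous paragraph. First, $\mathrm{LCS}(X,\lambda D)$ contains no curve: if a component of $\lambda D$ had coefficient $\geq 1$, the reduced sum $B$ of all such components would be a nonzero $G$-invariant curve with $B\leq\lambda D$, whence $B\cdot L_1,\,B\cdot L_2\leq 2\lambda<5/2$, i.e.\ $B\cdot L_1,\,B\cdot L_2\in\{0,1,2\}$; breaking $B$ into $G$-orbits of its irreducible components would then yield a $G$-invariant curve in one of the eight systems of $(3)$, a contradiction. So $\mathrm{LCS}(X,\lambda D)$ is a nonempty finite $G$-invariant set. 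If $\lambda=1$ then $\lceil\lambda-1\rceil=0$ and Lemma~\ref{lemma:Nadel-IILC} gives $|\mathrm{LCS}(X,D)|\leq h^0(X,\mathcal{O}_X)=1$; if $1<\lambda<5/4$ then $-(K_X+\lambda D)\sim_{\mathbb{Q}}(\lambda-1)(-K_X)$ is ample, so $\mathrm{LCS}(X,\lambda D)$ is connected by Corollary~\ref{corollary:connectedness} and, being finite, is one point. The step requiring most care is exactly this last one: the constant $5/4$ is forced by the numerical inequality $B\cdot L_j\leq 2\lambda<5/2$ used to exclude positive-dimensional components of $\mathrm{LCS}$, and one cannot invoke Lemma~\ref{lemma:LCS-zero-dimensional} directly here since the hypothesis that $\mathrm{Pic}(X)^{G}$ be generated by $-K_X$ fails on $\mathbb{P}^1\times\mathbb{P}^1$ whenever $G$ preserves the rulings, so both the exclusion of curves from $\mathrm{LCS}$ and the ruling-swapping bookkeeping must be done by hand.
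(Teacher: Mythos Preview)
Your argument for $(3)\Rightarrow(2)$ breaks down at the step where you invoke connectedness. You write that for $1<\lambda<5/4$ one has $-(K_X+\lambda D)\sim_{\mathbb{Q}}(\lambda-1)(-K_X)$ ample, but the sign is wrong: since $D\sim_{\mathbb{Q}}-K_X$,
\[
-(K_X+\lambda D)\;\sim_{\mathbb{Q}}\;-K_X+\lambda K_X\;=\;(\lambda-1)K_X\;=\;(1-\lambda)(-K_X),
\]
which is \emph{anti}-ample for $\lambda>1$. Hence Corollary~\ref{corollary:connectedness} does not apply, and you cannot conclude that $\mathrm{LCS}(X,\lambda D)$ is connected. (Your case $\lambda=1$ via Lemma~\ref{lemma:Nadel-IILC} is fine, and your zero-dimensionality argument using $B\cdot L_i\leq 2\lambda<5/2$ is correct; the failure is purely in bounding the number of points when $\lambda>1$.) A second, smaller issue: Corollary~\ref{corollary:weakly-exceptional} is stated under the standing hypothesis that $\mathrm{Pic}(X)^G=\mathbb{Z}[-K_X]$, which you yourself note can fail here; you should instead argue directly as above with $\lambda<1$.

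This sign issue is exactly why the paper's proof is substantially longer. Since connectedness is unavailable for $\lambda>1$, the paper uses Nadel vanishing with the line bundle $\mathcal{O}_X(L_1+L_2)$ (for which $L_1+L_2-(K_X+\lambda D)\sim_{\mathbb{Q}}(3-2\lambda)(L_1+L_2)$ is nef and big when $\lambda<3/2$) to show that $\mathrm{LCS}(X,\lambda D)$ consists of four points $P_1,\dots,P_4$ imposing independent conditions on $|L_1+L_2|$. After bounding $\mathrm{mult}_{P_i}(D)\leq 4/3$ via the $G$-invariant divisor $C_1+C_2+C_3+C_4$, the paper blows up the four points, locates the new LCS points $Q_i\in E_i$, and applies Nadel vanishing a second time on the blow-up $\bar{X}$ (constructing an auxiliary nef and big class $\bar{R}$, and here the bound $\lambda<5/4$ is genuinely used) to produce a unique, hence $G$-invariant, curve in $|{-K_{\bar{X}}}|$ through the $Q_i$; its image is then a $G$-invariant curve in $|2L_1+2L_2|$, contradicting $(3)$. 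Your shortcut via connectedness would bypass all of this, but it rests on a false premise.
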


\begin{proof}
Let $L_{1}$ and $L_{2}$ be fibers of two distinct
natural~projections of the~surface $X$ to $\mathbb{P}^{1}$. Then
$$
\big|aL_1+bL_2\big|
$$
contains no $G$-invariant curves for every $a$ and $b$ in
$\{0,1,2\}$ whenever $\mathrm{lct}(X,G)>1$.

Suppose that $|L_1|$, $|L_2|$, $|2L_1|$, $|2L_2|$, $|L_1+L_2|$,
$|L_1+2L_2|$, $|2L_1+L_2|$, $|2L_1+2L_2|$ do not contain
$G$-invariant~curves and $\mathrm{lct}(X,G)<4/3$. Let us derive a
contradiction.

There exists a~$G$-invariant effective $\mathbb{Q}$-divisor $D$ on
the~surface $X$ such that
$$
D\sim_{\mathbb{Q}} 2\big(L_1 + L_2\big) \sim -K_X
$$
and $\big(X,\lambda D\big)$ is strictly log canonical for some
$\lambda\in\mathbb{Q}$ such that $\lambda<5/4$. By
Theorem~\ref{theorem:Shokurov-vanishing},~we~have
$$
H^1\Big(X, \mathcal{I}\big(X,\lambda D\big) \otimes \mathcal{O}_{X}\big(L_1+L_2\big)\Big)=0,%
$$
where $\mathcal{I}(X,\lambda D)$ is the~multiplier ideal sheaf of
the log pair $(X,\lambda D)$
(see~Section~\ref{section:preliminaries}).

The ideal sheaf $\mathcal{I}(X,\lambda D)$  defines
a~zero-dimensional subscheme $\mathcal{L}$ of the~surface $X$,
since the~linear system $|aL_1+bL_2|$ has no $G$-invariant
curves for every $a$ and $b$ in $\{0,1,2\}$.

Since the~subscheme $\mathcal{L}$ is zero-dimensional, we have
the~short exact sequence
$$
0\longrightarrow H^0\Big(X, \mathcal{I}\big(X,\lambda D\big)\otimes \mathcal{O}_{X}\big(L_1+L_2\big)\Big)\longrightarrow H^0\Big(X, \mathcal{O}_{X}\big(L_1+L_2\big)\Big)\longrightarrow H^0\big(\mathcal{O}_{\mathcal{L}}\big)\longrightarrow 0,%
$$
which implies that  $\mathrm{Supp}(\mathcal{L})$ consists of four
points that are not contained in one curve in $|L_1+L_2|$.

Let $P_{1}$, $P_{2}$, $P_{3}$ and $P_{4}$ be four points in
$\mathrm{Supp}(\mathcal{L})$. Then  $P_{1}$, $P_{2}$, $P_{3}$ and
$P_{4}$ form a~$G$-orbit.

Write $L_{11}$, $L_{12}$, $L_{13}$, $L_{14}$ for the curves in
$|L_{1}|$ that pass through $P_{1}$, $P_{2}$, $P_{3}$, $P_{4}$,
respectively, write~$L_{21}$, $L_{22}$, $L_{23}$, $L_{24}$ for the
curves in $|L_{2}|$ that pass through $P_{1}$, $P_{2}$, $P_{3}$,
$P_{4}$, respectively.~Then
$$
L_{1i}=L_{1j}\iff i=j\iff L_{2i}=L_{2j},
$$
as $|L_{1}|$, $|L_{2}|$ and $|L_1+L_2|$ do not contain
$G$-invariant curves.

Let $C_{1}$, $C_{2}$, $C_{3}$, $C_{4}$ be the curves in the linear
system $|L_{1}+L_{2}|$ such that each contains exactly
three points in $\mathrm{Supp}(\mathcal{L})$ and $P_{1}\not\in
C_{1}$, $P_{2}\not\in C_{2}$,  $P_{3}\not\in C_{3}$, $P_{4}\not\in
C_{4}$. Then
$$
\Bigg(X, \frac{2}{3}\Big(C_{1}+C_{2}+C_{3}+C_{4}\Big)\Bigg)
$$
is strictly log canonical, since the curves $C_{1}$, $C_{2}$,
$C_{3}$, $C_{4}$ are smooth and irreducible.

By Remark~\ref{remark:convexity}, we may assume that
$\mathrm{Supp}(D)$ does not contain $C_{1}$, $C_{2}$, $C_{3}$ and
$C_{4}$.~Then
$$
16=D\cdot\Big(C_{1}+C_{2}+C_{3}+C_{4}\Big)=3\sum_{i=1}^{4}\mathrm{mult}_{P_{i}}\big(D\big)=12\mathrm{mult}_{P_{1}}\big(D\big)=\cdots=12\mathrm{mult}_{P_{4}}\big(D\big),
$$
which implies that
$\mathrm{mult}_{P_{1}}(D)=\mathrm{mult}_{P_{2}}(D)=\mathrm{mult}_{P_{3}}(D)=\mathrm{mult}_{P_{4}}(D)\leqslant
4/3$.

Let $\sigma\colon\bar{X}\to X$ be the~blow-up of the~points $P_1$,
$P_2$, $P_{3}$ and $P_{4}$, let $E_{1}$, $E_{2}$, $E_{3}$ and
$E_{4}$ be the~$\sigma$-exceptional curves such that
$\sigma(E_{1})=P_{1}$, $\sigma(E_{2})=P_{2}$,
$\sigma(E_{3})=P_{3}$ and $\sigma(E_{4})=P_{4}$.~Then
$$
K_{\bar{X}}+\lambda\bar{D}+\sum_{i=1}^{4}\Big(\lambda\mathrm{mult}_{P_i}\big(D\big)-1\Big)E_i\sim_{\mathbb{Q}}\sigma^{*}\Big(K_X + \lambda D\Big),%
$$
where $\bar{D}$ is the~proper transform of the~divisor $D$ on
the~surface $\bar{X}$.

By Remark~\ref{remark:blow-up-inequality}, there are points
$Q_{1}\in E_{1}$, $Q_{2}\in E_{2}$, $Q_{3}\in E_{3}$ and $Q_{4}\in
E_{4}$ such that
$$
\mathrm{LCS}\Bigg(\bar{X},\lambda\bar{D}+\sum_{i=1}^{4}\Big(\lambda\mathrm{mult}_{P_i}\big(D\big)-1\Big)E_i\Bigg)=\Big\{Q_{1},Q_{2},Q_{3},Q_{4}\Big\},%
$$
since
$\lambda\mathrm{mult}_{P_{1}}(D)=\mathrm{mult}_{P_{2}}(D)=\mathrm{mult}_{P_{3}}(D)=\mathrm{mult}_{P_{4}}(D)\leqslant
5/3<2$.

Since $\bar{D}$ is $G$-invariant, it follows that the~action of
the~group $G$ on the~surface $X$ naturally lifts to an action on
$\bar{X}$ where the~points $Q_{1}$, $Q_{2}$, $Q_{3}$ and $Q_{4}$
form a~$G$-orbit.

Put $\bar{R}=3\sigma^{*}(L_{1}+L_{2})-2\sum_{i=1}^{4}E_{i}$. Then
$\bar{R}\cdot\bar{R}=4$, which implies that $\bar{R}$ is nef and
big, since
$$
\bar{L}_{11}+\bar{L}_{21}+2\bar{C}_{1}\sim 3\sigma^{*}\Big(L_{1}+L_{2}\Big)-2\sum_{i=1}^{4}E_{i}%
$$
and $\bar{L}_{11}\cdot\bar{R}=\bar{L}_{21}\cdot\bar{R}=1$ and
$\bar{C}_{1}\cdot\bar{R}=0$, where we denote by symbols
$\bar{L}_{11}$, $\bar{L}_{21}$ and $\bar{C}_{1}$ the~proper
transforms of the~curves $L_{11}$, $L_{21}$ and $C_{1}$ on the
surface $\bar{X}$, respectively. Then
$$
K_{\bar{X}}+\lambda\bar{D}+\sum_{i=1}^{4}\Big(\lambda\mathrm{mult}_{P_i}\big(D\big)-1\Big)E_i+\frac{1}{2}\Bigg(\bar{R}+\Big(5-4\lambda\Big)\sigma^{*}\Big(L_{1}+L_{2}\Big)\Bigg)\sim_{\mathbb{Q}}2\sigma^{*}\Big(L_{1}+L_{2}\Big)-\sum_{i=1}^{4}E_{i}\sim -K_{\bar{X}},%
$$
where $\bar{R}+(5-4\lambda)\sigma^{*}(L_{1}+L_{2})$ is nef and
big since $\lambda<5/4$. By
Theorem~\ref{theorem:Shokurov-vanishing},~we~have
$$
H^1\Bigg(X, \mathcal{I}\Big(\bar{X},\lambda\bar{D}+\sum_{i=1}^{4}\Big(\lambda\mathrm{mult}_{P_i}\big(D\big)-1\Big)E_i\Big)\otimes \mathcal{O}_{\bar{X}}\Big(-K_{\bar{X}}\Big)\Bigg)=0,%
$$
from which it follows that there is a unique curve $\bar{C}\in
|-K_{\bar{X}}|$ containing $Q_{1}$, $Q_{2}$, $Q_{3}$ and $Q_{4}$.

The curve $\bar{C}$ must be $G$-invariant, however then
$\sigma(\bar{C})$ is also $G$-invariant, which is impossible,
since $\sigma(\bar{C})\in|2L_{1}+2L_{2}|$ and $|2L_{1}+2L_{2}|$
contains no $G$-invariant curves.
\end{proof}


\begin{thebibliography}{99}

\bibitem{Ch07b}
I.\,Cheltsov, \emph{Log canonical thresholds of del Pezzo surfaces}\\
Geometric and Functional Analysis, \textbf{18} (2008), 1118--1144%

\medskip

\bibitem{ChKo10}
I.\,Cheltsov, D.\,Kosta, \emph{Computing $\alpha$-invariants of singular del Pezzo surfaces}\\
arXiv:math/1010.0043 (2010)%

\medskip

\bibitem{ChSh08c}
I.\,Cheltsov, C.\,Shramov, \emph{Log canonical thresholds of smooth Fano threefolds}\\
Russian Mathematical Surveys \textbf{63} (2008), 859-–958%

\medskip

\bibitem{ChSh09a}
I.\,Cheltsov, C.\,Shramov, \emph{On exceptional quotient singularities}\\
arXiv:math/0909.0918 (2009)%

\medskip

\bibitem{DoIs06}
I.\,Dolgachev, V.\,Iskovskikh, \emph{Finite subgroups of the~plane Cremona group}\\
arXiv:math.AG/0610595 (2006)%

\medskip

\bibitem{Ko97}
J.\,Koll\'ar, \emph{Singularities of pairs}\\
Proceedings of Symposia in Pure Mathematics \textbf{62} (1997), 221--287%

\medskip

\bibitem{La04}
R.\,Lazarsfeld, \emph{Positivity in algebraic geometry} \textbf{II}\\
Springer-Verlag, Berlin, 2004%

\medskip

\bibitem{MarPr99}
D.\,Markushevich, Yu.\,Prokhorov, \emph{Exceptional quotient singularities}\\
American Journal of Mathematics \textbf{121} (1999), 1179--1189

\medskip

\bibitem{Rub08}
Y.\,Rubinstein, \emph{Some discretizations of geometric evolution equations\\ and the~Ricci iteration on the~space of K\"ahler metrics}\\
Advances in Mathematics \textbf{218} (2008), 1526--1565%

\medskip

\bibitem{Ti87}
G.\,Tian, \emph{On K\"ahler--Einstein metrics on certain K\"ahler manifolds with $c_{1}(M)>0$}\\
Inventiones Mathematicae \textbf{89} (1987), 225--246%

\medskip

\bibitem{Tian90}
G.\,Tian, \emph{On Calabi's conjecture for complex surfaces with positive first Chern class}\\
Inventiones Mathematicae, \textbf{101} (1990), 101--172%

\medskip

\bibitem{Yau78}
S.\,T.\,Yau, \emph{On the~Ricci curvature of a~compact K\"ahler manifold and the~complex Monge--Amp\`ere equation,~I}\\
Communications on Pure and Applied Mathematics \textbf{31} (1978), 339--411%

\medskip

\bibitem{YauYu93}
S.\,Yau, Y.\,Yu, \emph{Gorenstein quotient singularities in dimension three}\\
Memoirs of the~American Mathematical Society \textbf{505} (1993), Providence%


\end{thebibliography}
\end{document}